\newcommand{\Josnei}[1]{{\color{red}{#1}}}
\newtheorem{theorem}{Theorem}[section]
\newtheorem{lemma}[theorem]{Lemma}
\newtheorem{remark}[theorem]{Remark}
\newtheorem{definition}[theorem]{Definition}
\newtheorem{corollary}[theorem]{Corollary}
\newtheorem{proposition}[theorem]{Proposition}
\newtheorem{lem-def}[theorem]{Lemma-Definition}
\renewenvironment{proof}{{\bfseries Proof.}}{\qed}
\newcommand{\R}{\mathbb R}
\newcommand{\N}{\mathbb N}
\newcommand{\Z}{\mathbb Z}
\newcommand{\Q}{\mathbb Q}
\def\op{\operatorname}
\def\al{\alpha}
\def\alkl{\al_{k,\ell}}  
\def\ars#1{\renewcommand\arraystretch{#1}}
\def\bb{{\mathcal B}}
\def\bbl{{\mathcal B}_\ell}
\def\bct{B^{\op{ct}}}
\def\be{\beta}
\def\bed{\be_d}
\def\bel{\be_\ell}
\def\bs{\vskip.5cm}
\def\bunb{B^{\op{unb}}}
\def\chr{\op{char}}
\def\comb#1#2{\ars{0.9}\left(\!\!\begin{array}{c}
		#1\\#2
	\end{array}\!\!\right)\ars{1}}
\def\cuts{\op{Cuts}}
\def\dgn{\op{deg}_\nu}
\def\dgq{\op{deg}_Q}
\def\diso{\lower.4ex\hbox{$\downarrow$}\raise.4ex\hbox{\mbox{\scriptsize
			$\wr$}}}
\def\dta{\delta}
\def\e{\medskip}
\def\ep{\epsilon}
\def\epb{\overline{\epsilon}}
\def\epm{\epsilon_\mu}
\def\epmin{\epsilon_{\op{min}}}
\def\epn{\epsilon_\nu}
\def\eq{\ep_Q}
\def\g{\Gamma}
\def\ga{\gamma}
\def\gaq{\ga_Q}
\def\gaqq{\ga_\qq}
\def\gar{\ga_R}
\def\gas{\ga_S}
\def\gg{\mathcal{G}}
\def\ggm{\mathcal{G}_\mu}
\def\ggn{\mathcal{G}_\nu}
\def\ggq{\mathcal{G}_Q}
\def\gmin{\ga_{\op{min}}}
\def\gn{\g_\nu}
\def\hk{\hookrightarrow}
\def\imp{\ \Longrightarrow\ }
\def\inii{\op{Init}(I)}
\def\inm{\op{in}_\mu}
\def\inn{\op{in}}
\def\inq{\op{in}_Q}
\def\inu{\op{in}_\nu}
\def\irr{\op{Irr}}
\def\ism{\lower.3ex\hbox{\ars{.08}$\begin{array}{c}\,\to\\\mbox{\tiny $\sim\,$}\end{array}$}}
\def\iso{\ \lower.3ex\hbox{\ars{.08}$\begin{array}{c}\lra\\\mbox{\tiny $\sim\,$}\end{array}$}\ }
\def\kb{\overline{K}}
\def\kbx{\kb[x]}
\def\kh{K^h}
\def\khx{K^h[x]}
\def\kn{k_\nu}
\def\kpi{\op{KP}_\infty}
\def\kpm{\op{KP}(\mu)}
\def\kpn{\op{KP}(\nu)}
\def\kx{K[x]}
\def\la{\lambda}
\def\La{\Lambda}
\def\lcn{\op{lc}_\nu}
\def\lcq{\op{lc}_Q}
\def\lg{l\raise.6ex\hbox to.2em{\hss.\hss}l}
\def\lra{\,\longrightarrow\,}
\def\lx{\operatorname{lex}}
\def\mi{m_\infty}
\def\mlt{\op{mult}}
\def\nn{\noindent}
\def\nuq{\nu_Q}
\def\om{\omega}
\def\orb{\hbox to  .3em{$\backslash$}\backslash}
\def\ord{\op{ord}}
\def\parj{\partial_j}
\def\pbq{\overline{\pi}_Q}
\def\piq{\pi_Q}
\def\ps{\partial_s}
\def\pt{\partial_t}
\def\qq{\mathcal{Q}}
\def\resk{\operatorname{res}_K}
\def\rlx{\R^I_{\lx}}
\def\rlxs{\R^S_{\lx}}
\def\sg{\sigma}
\def\si{s_\infty}
\def\sii{\ \Longleftrightarrow\ }
\def\simq{\sim_Q}
\def\snu{\sim_\nu}
\def\sp{\op{Spec}}
\def\sub{\subseteq}
\def\supp{\op{supp}}
\def\taq{\tau_Q}
\def\tb{\overline{\ttt}}
\def\ttt{\mathcal{T}}
\def\ty{\mathbf{t}}
\def\val{\op{val}}
\def\vb{\bar{v}}
\def\vh{v^h}
\DeclareMathOperator{\inv}{in}
\newcounter{cs}
\newcommand{\casos}{\begin{itemize}}
\newcommand{\fcasos}{\end{itemize}\setcounter{cs}{1}}
\newfont{\tit}{cmr12 scaled \magstep3}
\title{Minimal limit key polynomials}
\author{Enric Nart}
\address{Departament de Matem\`{a}tiques,         Universitat Aut\`{o}noma de Barcelona,         Edifici C, E-08193 Bellaterra, Barcelona, Catalonia}
\email{nart@mat.uab.cat}
\author{Josnei Novacoski}
\address{Departamento de Matem\'{a}tica,         Universidade Federal de S\~ao Carlos, Rod. Washington Luís, 235, 13565--905, S\~ao Carlos -SP, Brazil}
\email{josnei@ufscar.br}
\thanks{Partially supported by grant PID2020-116542GB-I00  funded by the Spanish MCIN/AEI. During the realization of this project the second author was supported by a grant from Funda\c{c}\~ao de Amparo \`a Pesquisa do Estado de S\~ao Paulo (process numbers 2017/17835-9).}
\keywords{defect, limit key polynomials}
\subjclass[2010]{Primary 13A18}
\begin{document}

\begin{abstract}
In this paper, we extend the theory of minimal limit key polynomials of valuations on the polynomial 
ring $\kx$. We use the theory of cuts on ordered abelian groups to show that the previous results on bounded sets of key polynomials of  rank-one valuations, extend to vertically bounded sets of key polynomials of valuations of an arbitrary rank. We discuss as well properties of minimal limit key polynomials in the vertically unbounded case.
\end{abstract}

\maketitle

\section*{Introduction}
The concept of minimal limit key polynomial was introduced (implicitly) in \cite{N2023}. Let $(K,v)$ be a valued field. Consider an increasing family of valuations $\mathfrak v=\{\nu_i\}_{i\in I}$ on $K[x]$, extending $v$, indexed by a totally ordered set $I$ containing no last element. We assume that all the valuations $\nu_i$ take values in the divisible hull $\g$ of the value group $v(K^*)$. For $i\in I$ we denote
\[
I_{\geq i}=\{j\in I\mid j\geq i\},
\]
and we attribute a similar meaning to $I_{>i}$, $I_{\le i}$, $I_{<i}$.
Being an increasing family means that for every $i\in I$ and $j\in I_{\geq i}$ we have
\[
\nu_i(f)\leq \nu_j(f)\quad \mbox{ for every }\,f\in K[x]
\]
and there exists $f\in K[x]$ such that $\nu_i(f)<\nu_j(f)$. A polynomial $f$ is said to be \textbf{$\mathfrak v$-stable} if there exists $i$ such that $\nu_i(f)=\nu_j(f)$ for every $j\in I_{\geq i}$. A \textbf{limit key polynomial for $\mathfrak v$} is a monic polynomial $F$ of smallest degree among the $\mathfrak v$-unstable (i.e.  not $\mathfrak v$-stable) polynomials.

For each $i\in I$ take a monic polynomial $Q_i$ of smallest degree among all the polynomials $f$ for which $\nu_i(f)>\nu_j(f)$ for every $j\in I_{< i}$. Assume that $\deg(Q_i)=m$ is constant for all $i\in I$. In this case, we say that the degree of $\mathfrak v$ is $m$. Let us write
\[
F=F_{i,D}Q_i^D+\cdots+F_{i,1}Q_i+F_{i,0},
\]
 with $\deg F_{i,\ell}<m$ for every $\ell, \ 0\leq \ell\leq D$.
This is called the \textbf{$Q_i$-expansion of $F$}. 
For a subset $N$ of $\{0,\ldots, D\}$ we set
\[
F_{i,N}=\sum_{\ell\in N} F_{i,\ell}Q_i^\ell.
\]
We say that $F$ is a \textbf{minimal limit key polynomial for $\mathfrak v$} if  $F=F_{i,N}$ for some $i\in I$, $N\sub\{0,\dots,D\}$, and the following two conditions hold:

\begin{itemize}
\item For all $j\in I_{\ge i}$ the polynomial $F_{j,N}$ is a limit key polynomial for $\mathfrak v$.
\item For every $\ell\in N$ there exists a cofinal subset $I_\ell$ of $I$ such that for every $j\in I_\ell$, the polynomial $F_{j,N\setminus\{\ell\}}$ is not a limit key polynomial for $\mathfrak v$. 
\end{itemize}

The idea behind this definition is the following. For a limit key polynomial $F$ for $\mathfrak v$, we denote by $\delta=\left(\dta^L,\dta^R\right)$ the cut on $\g$ whose lower cut set $\dta^L$ is the smallest initial segment containing $\{\nu_i(F)\}_{i\in I}$. For $i\in I$ and $0<k<D$, suppose that $F_{i,k}=0$. If we take a non-zero $a\in K[x]$, with $\deg(a)<m$, such that $
\nu_{i}(aQ_i^k)\in\delta^R$, then
\[
G:=F+aQ_i^k 
\]
is a limit key polynomial for $\mathfrak v$ too. However, the term $G_{i,k}Q_i^k=aQ_i^k$ can be seen as a superfluous monomial of $G$. A minimal limit key polynomial is a limit key polynomial having no superfluous terms in a cofinal family of $Q_i$-expansions.

If a family $\mathfrak v$ admits a limit key polynomial $F$, then it also admits a minimal limit key polynomial. Indeed, denote by $J$ the subset of all the elements $\ell$ in $\{0,\ldots,D\}$ for which there exists $i_\ell\in I$ such that
\[
\nu_j\left(F_{j,\ell}Q_j^\ell\right)\in \delta^R\quad\mbox{ for all }\ j\in I_{\geq i_\ell}.
\]

It is easy to check that this set $J$ and the indices $i_\ell$ do not depend on the choice of the limit key polynomial $F$.\e

\noindent{\bf Proposition.} 
{\it
Let  $B=\{0,\dots,D\}\setminus J$ and take $i\ge i_\ell$ for all $\ell\in J$.
Then, $F_{i,B}$
is a minimal limit key polynomial for $\mathfrak v$.}\e

\begin{proof}
For any $k\in I_{\geq i}$ we have
\[
\nu_k\left(F-F_{i,B}\right)=\nu_k\left(\sum_{\ell\in J}F_{i,\ell}Q_i^\ell\right)\geq\min_{\ell\in J}\left\{\nu_k\left(F_{i,\ell}Q_i^\ell\right)\right\}\in \delta^R.
\]
Since $\nu_k(F)\in \delta^L$, this implies that $\nu_k(F)=\nu_k(F_{i,
B})$. Consequently, $F_{i,B}$ is a limit key polynomial for $\mathfrak v$. 
Now, if we replace $F$ with $G:=F_{i,B}$, this argument shows that $G_{j,B}$ is a limit key polynomial for $\mathfrak{v}$ for all $j \in I_{\ge i}$. 

Finally, take $\ell\in B$. Since $\ell\notin J$, there exists a cofinal subset $I_\ell$ of $I_{\ge i}$ such that
$
\nu_j(G_{j,\ell}Q_j^\ell)\in \delta^L\mbox{ for every }j\in I_\ell.
$
Set $N=B\setminus\{\ell\}$. For all $j\in I_\ell$, we have 
\[
 G_{j,B}=G_{j,N}+G_{j,\ell}Q_j^\ell.
\]
For all $k\in I_{\ge j}$, the value  $\nu_k\left(G_{j,\ell}Q_j^\ell\right)=\nu_j\left(G_{j,\ell}Q_j^\ell\right)\in \dta^L$ remains constant, while $\nu_k(G_{j,B})$ is cofinal in $\dta^L$. Thus, $G_{j,N}$ is $\mathfrak v$-stable and is not a limit key polynomial. 
\end{proof}\e

Observe that this polynomial depends on the choice of $i$, hence it is not unique.

For all $i\in I$ and $\ell\in\{0, \ldots, D\}$, denote $\beta_{i,\ell}=\nu_i(F_{i,\ell})$.  
The behaviour of the values $\be_{i,0}$ is well understood. For $i$ large enough, we have $\be_{i,0}=\nu_i(F)$ (cf. Proposition \ref{Sq}), so that these values are cofinal in $\dta^L$. For the other indices outside the set $J$, a more rigid behaviour is expected. 

Consider the subset $\bct\sub B$ containing all $\ell\in B$ such that $\left\{\beta_{i,\ell}\right\}_{i\in I}$ is ultimately constant. 
Note that $D\in \bct$. Indeed, the coefficient $F_{i,D}$ is constant (independent of $i$); on the other hand $D\not\in J$ by the Proposition above and the minimality of $\deg(F)$ among all $\mathfrak v$-unstable polynomials. 

The condition $B=\{0\}\cup\bct$ implies that the minimal limit key polynomials satisfy a stronger property. Namely, if $F=F_{i,B}$ is a minimal limit key polynomial, then for all $\ell\in B$, the polynomial $F_{j,B\setminus\{\ell\}}$ is not a limit key polynomial for all   $j\in I_{\ge i}$, and not only for a cofinal family of $I$.

In the rank one case, it was shown in \cite{N2023} and \cite{NS2023} that, if the family $\mathfrak v$ is bounded (i.e. the values $\nu_i(F)$ are bounded in $\g$), then the set $B$ has a very good description and yields important information. For instance, if we denote by $p$ the \textbf{characteristic exponent of $v$}, then 
\begin{equation}\label{ppower}
B\setminus\{0\}\sub p^{\N}=\{p^l\mid l\in \N\}.
\end{equation}

The main idea is that for a rank-one,  bounded family $\mathfrak v$, one can use a suitable Taylor expansion in order to estimate the values $\{\beta_{i,\ell}\}_{i\in I}$ and deduce that these families are ultimately constant for all $\ell\in\{1,\dots,D\}$. In particular, $B=\{0\}\cup\bct$. This is obvious if the degree of $\mathfrak v$ is one (Proposition \ref{proposdegrefixed}), but the proof for higher degree is more subtle. 
The idea is that since the rank of $v$ is one, we can regard $\Gamma$ as a subset of $\R$. Hence, if the cut is bounded, then it admits a supremum. This supremum allows us to estimate the differences
\begin{equation}\label{equandiffere}
\epsilon(Q_j)-\epsilon(Q_i)\quad\mbox{ for }\,i,j\in I\,\mbox{ and }\,i<j
\end{equation}
as $i$ and $j$ grow, where $\epsilon$ is \textbf{Spivakovsky's level function} (see Section \ref{secEp}). This is the crucial tool to show that the families $\{\beta_{i,\ell}\}_{i\in I}$ are ultimately constant for all $\ell>0$ and derive the inclusion in (\ref{ppower}).

In this paper, we develop the theory of minimal limit key polynomials in any rank. We show that all the results of \cite{N2023, NS2023} are also true for \textbf{vertically bounded} families of valuations (see Sections \ref{secCuts} and \ref{secVB} for the definition). In the rank one case, vertically bounded is the same as bounded. In higher rank, the situation is way more chaotic. However, we use the theory of cuts on ordered groups to show that in the vertically bounded case, we can adapt the proofs using a suitable supremum. In this way, we can present an alternative for the estimation of the differences in \eqref{equandiffere}. This allows us to extend the ideas from \cite{N2023, NS2023} and show that $B=\{0\}\cup\bct$ and (\ref{ppower}) holds too.

The main goal for studying minimal limit key polynomials is to understand the \textbf{defect} of an extension of valuations. Since $B$ is independent of the choice of the polynomial $F$ it can be seen as an invariant of $\mathfrak v$. It is proven in \cite{N2023} that the characterization of extensions of defect and degree $p$ given by Kuhlmann and Rzepka can be easily deduced from the set $B$. Namely, the extension is \textbf{dependent} if and only if the corresponding set $B$ is equal to $\{0, \Josnei p\}$.

Denote by $d=d(\mathfrak v)$ the defect of $\mathfrak v$ (see Definition \ref{definofdefect}). In the vertically bounded case, we have $d=D=\max(B)$. If the family $\mathfrak v$ is \textbf{vertically unbounded} (i.e. not vertically bounded), then we present as well some properties of $B$. 

Let $\bunb\sub B$ be the subset of all indices $\ell\in B$ such that the values $\be_{i,\ell}$ are ultimately unbounded. We prove that $d=\min(B\setminus \bunb)$ (Theorem \ref{minB1}) and, under some mild assumption, $B=\bct\sqcup\bunb$ (Theorem \ref{B1=B0}). Finally, in Section \ref{subsecdeg1}, we prove unconditional results on the structure of the sets $B$ and $J$ for degree-one sets of key polynomials.

The structure of this paper is as follows. Sections \ref{secGr} and \ref{secEp} contain some background on graded algebras associated to valuations and Spivakovsky's level function. In Section \ref{secLKP}, we present a self-contained review of the main properties of limit key polynomials. In particular, we give elementary proofs of the stability results of Vaqui\'e and Herrera-Mahboub-Olalla-Spivakovsky. In Section \ref{secCuts}, we discuss the property of vertical boundedness for cuts in ordered abelian groups. Sections \ref{secVB} and \ref{secVU} are devoted to analyze properties of minimal limit key polynomials in the vertically bounded and unbounded cases, respectively.

\section{Graded algebra of a valuation on a polynomial ring}\label{secGr}
Let $(K,v)$ be a valued field with residue field $k=Kv$. Let $p$ be the characteristic exponent of $v$; that is, $p=1$ if $\chr(k)=0$ and $p=\chr(k)$ otherwise.

Let $\g$ be the divisible hull of the  value group $vK$.
Consider a fixed embedding  $\g\hk\La$ into a divisible ordered abelian group $\La$. 

Let $\ttt=\ttt(v,\La)$ be the tree of all $\La$-valued valuations on the polynomial ring $\kx$,
\[
 \nu\colon \kx\lra \La\cup\{\infty\},
\]
whose restriction to $K$ is $v$.
This set $\ttt$ has a partial ordering. For $\mu,\nu\in\ttt$, we define
\[
\mu\leq \nu\ \sii\  \mu(f)\leq \nu(f)\quad \mbox{for all }f\in K[x].
\]
We say that $\ttt$ is a \textbf{tree} because the intervals $
(-\infty,\nu]=\left\{\mu\in \ttt\mid \mu\le\nu\right\}$
are totally ordered for all $\nu\in\ttt$ \cite{VT}.
The \textbf{support} of any $\nu\in\ttt$ is the prime ideal 
\[
\mathfrak{p}_\nu:=\supp(\nu)=\nu^{-1}(\infty)\in\sp(\kx).
\]
Only the valuations with trivial support extend to a valuation on the field $K(x)$.

The value group of $\nu$ is the subgroup $\gn$ generated by $\nu\left(\kx\setminus\mathfrak{p}_\nu\right)$. The residue field $\kn$ is defined as the residue field of the valuation naturally induced by $\nu$ on the field  of fractions of  $\kx/\mathfrak{p}_\nu$.    

Any node $\nu\in\ttt$ is (exclusively) of one of the following types:
\begin{itemize}
	\item\textbf{Nontrivial support}: $\mathfrak{p}_\nu=fK[x]$ for some irreducible $f\in\kx$.
	\item\textbf{Value-transcendental}: $\mathfrak{p}_\nu=0$ and $\gn/\g$ is not a torsion group.
	\item\textbf{Residue-transcendental}: $\mathfrak{p}_\nu=0$ and  $\kn/k$  is transcendental.
	\item\textbf{Valuation-algebraic}: $\mathfrak{p}_\nu=0$,  $\gn/\g$ is a torsion group and $\kn/k$  is algebraic.
\end{itemize}

The valuation $\nu$ is \textbf{valuation-transcendental} if it is value-transcenden\-tal or residue-transcendental. These are precisely the inner (non-maximal) nodes of $\ttt$. 


Finally, a node is said to be \textbf{well-specified} if it is not valuation-algebraic. These are the ``\emph{bien sp\'ecifi\'ees}" valuations in Vaqui\'e's terminology \cite{Vaq3}. They receive this name because they can be constructed from very simple (degree-one) valuations after a finite number of ordinary or limit augmentations \cite{Vaq,MLV}.

The {\bf graded algebra} of $\nu\in\ttt$ is the integral domain $\ggn=\bigoplus_{\alpha \in \gn}\mathcal P_\alpha/\mathcal P_\alpha^+$, where
\[
\mathcal P_\alpha=\{f\in K[x]\mid \nu(f)\geq \al\}\supseteq\mathcal P_\alpha^+=\{f\in K[x]\mid \nu(f)> \al\}.
\]

For $f\in K[x]\setminus \supp(\nu)$, we denote its image in $\mathcal P_{\nu(f)}/\mathcal P_{\nu(f)}^+\sub \ggn$ by $\inu f$. 

For $f,g\in K[x]\setminus \supp(\nu)$ we say that $f$ and $g$ are \textbf{$\nu$-equivalent} if $\inu f=\inu g$; that is, $\nu(f-g)>\nu(f)$.
In this case, we write $f\snu g$.\e

\noindent{\bf Definition. }{\it
A monic $\phi\in K[x]$ is a \textbf{Mac Lane--Vaqui\'e (MLV) key polynomial}  for $\nu$ if $\inu\phi$ is a prime element in $\ggn$ and $\phi$ is $\nu$-minimal. The latter condition means that  $\deg \phi\leq \deg f$ whenever  $\inu\phi\mid \inu f$ in $\ggn$.}\e

We denote by $\kpn$ the set of all MLV key polynomials for $\nu$. They are necessarily irreducible in $\kx$.
The $\nu$-equivalence relation restricts to an equivalence relation on the set $\kpn$. For all $\phi\in\kpn$ we denote its class by
\[
[\phi]_\nu=\{\varphi\in\kpn\mid \inm\phi=\inm\varphi\}.
\]
All polynomials in $[\phi]_\nu$ have the same degree \cite[Proposition 6.6]{KP}. 


\begin{theorem}\cite[Theorem 4.4]{KP}\label{empty}
The set $\kpn$ is nonempty if and only if $\nu$  is valuation-transcendental. 
\end{theorem}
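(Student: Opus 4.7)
The plan is to prove both implications, exploiting the structure of the graded algebra $\ggn$ in each case.

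For the direction $\kpn\neq\emptyset\Longrightarrow\nu$ valuation-transcendental: assume $\phi\in\kpn$, so $\inu\phi$ is a nonzero prime element of $\ggn$, and rule out each of the three remaining types of node. If $\supp(\nu)=fK[x]$ for irreducible $f$, then $\nu$ factors through the field $\kx/\supp(\nu)$, so $\ggn$ is naturally a \emph{graded field}: every nonzero homogeneous element is a unit. Concretely, for $g\in\kx\setminus\supp(\nu)$ one uses Euclidean division modulo $f$ to produce $h$ with $gh\snu 1$. A graded field has no prime elements, contradicting primality of $\inu\phi$. If $\nu$ is valuation-algebraic, the same conclusion holds: since $\gn/\g$ is torsion and $\kn/k$ is algebraic, for every $g$ with $\inu g$ homogeneous of positive value there is $n\ge1$ such that $n\,\nu(g)\in\g$ and the residue of $(\inu g)^n$ in $\kn$ satisfies a polynomial relation over $k$; a careful analysis of the lowest-value term of this relation realises $(\inu g)^n$, and hence $\inu g$ itself, as a unit in $\ggn$. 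Again $\ggn$ is a graded field, contradicting the existence of $\phi$.

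For the direction $\nu$ valuation-transcendental $\Longrightarrow\kpn\neq\emptyset$: let $m$ be the smallest positive integer for which there exists a monic $\phi\in\kx$ of degree $m$ with $\inu\phi$ not a unit in $\ggn$. The existence of such $m$ comes from valuation-transcen\-dence: in the value-transcendental case take $\phi$ of minimal degree with $\nu(\phi)\notin\g$ (units of $\ggn$ lifted from $\kx$-polynomials of bounded degree have values in $\g$ by induction on degree), and in the residue-transcendental case take $\phi$ of minimal degree such that, after normalizing by a polynomial of smaller degree with the same value, its class in $\kn$ is transcendental over $k$. I would then check that such a $\phi$ is a key polynomial: $\nu$-minimality is immediate from the minimal choice of $m$, because any $f$ of degree $<m$ has $\inu f$ a unit, which cannot be divisible by the non-unit $\inu\phi$.

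The main obstacle is primality of $\inu\phi$. The approach is to exploit the $\phi$-expansion $f=\sum_i f_i\phi^i$ with $\deg f_i<m$ to prove the key identity
\[
\nu(f)=\min_i\nu\bigl(f_i\phi^i\bigr),
\]
which relies crucially on minimality of $m$: any two terms $f_i\phi^i$ and $f_j\phi^j$ with the same value would produce cancellation forcing a polynomial of degree $<m$ with non-unit $\inu$-image. With this identity in hand, divisibility by $\inu\phi$ in $\ggn$ becomes the statement that the $i=0$ term contributes strictly higher value than the remaining terms, and the subring generated by classes of polynomials of degree $<m$ forms an integral domain inside $\ggn$ over which $\inu\phi$ behaves as a transcendental (value-transcendental case) or as a polynomial in a transcendental residue (residue-transcendental case). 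Primality then follows from the fact that a transcendental element over a domain generates a prime ideal.
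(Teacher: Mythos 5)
The paper does not prove this result; it is quoted as \cite[Theorem 4.4]{KP}, so there is no internal argument to compare against, only the correctness of your sketch. Your outline does follow the natural route: rule out the nontrivial-support and valuation-algebraic types by showing that $\ggn$ is then a ``graded field'' (every nonzero homogeneous element is a unit, hence no primes), and in the converse direction pick a monic $\phi$ of minimal degree $m$ with $\inu\phi$ a non-unit, verify $\nu(f)=\min_i\nu(f_i\phi^i)$, and conclude $\ggn=\ggn^0[\inu\phi]$ with $\inu\phi$ transcendental, hence prime. Your reasoning for the min-identity is essentially right (cancellation among lowest-value terms forces $\inu\phi$ to divide $\inu f_{i_0}$ for the smallest contributing index $i_0$, contradicting minimality of $m$), and the nontrivial-support case is fine (Bezout in $\kx/(f)$ rather than ``Euclidean division,'' but the same idea). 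The valuation-algebraic case is the right idea but does need to be carried out: divisibility of $\g$ and torsion of $\gn/\g$ give $\gn\sub\g$, so $n\nu(g)\in vK$ for some $n\ge1$; normalize, lift the minimal polynomial over $k$ of the residue, and extract a polynomial inverse from the lowest-value term. That argument works, but it is not automatic and you have only gestured at it.

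The genuine gap is the \emph{existence} of a monic $\phi$ with $\inu\phi$ a non-unit when $\nu$ is valuation-transcendental. The parenthetical ``units of $\ggn$ lifted from $\kx$-polynomials of bounded degree have values in $\g$ by induction on degree'' is not a valid argument: if $\inu\phi$ is a unit, its inverse is $\inu g$ for some $g\in\kx$ with $\nu(g)=-\nu(\phi)$, but $g$ carries no degree bound, so there is no induction on degree to close. A correct version in the value-transcendental case is: take $\phi$ monic of minimal degree $n$ with $\nu(\phi)\notin\g$; for any $h$ with $\phi$-expansion $h=\sum h_i\phi^i$, the values $\nu(h_i\phi^i)=\nu(h_i)+i\nu(\phi)$ lie in distinct cosets of $\g$ (since $\nu(h_i)\in\g$ by minimality of $n$, and $\g$ is divisible inside the torsion-free $\La$), so $\nu(h)=\min_i\nu(h_i\phi^i)$ with the minimum at a unique index; an inverse $g$ of $\inu\phi$ would force $\nu(\phi g)=0\in\g$, yet $\phi g$ has zero constant term in its $\phi$-expansion, so $\nu(\phi g)\equiv i\nu(\phi)\pmod{\g}$ for some $i\ge1$, contradiction. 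Your recipe for the residue-transcendental case (``normalize by a polynomial of smaller degree with the same value so that the class in $\kn$ is transcendental'') likewise lacks the step that actually uses residue-transcendence to produce a non-unit $\inu\phi$, and this is exactly the part that needs to be written out. So the structure is sound, but both halves of the existence claim in the converse direction are asserted rather than proved.
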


If $\mu<\nu$, then we have a homomorphism of graded algebras $\ggm\to\ggn$, defined by \[\inv_\mu(f)\longmapsto \begin{cases}\inu(f),&\mbox{ if }\mu(f)=\nu(f),\\0, & \mbox{ if }\mu(f)<\nu(f).\end{cases}\]

\noindent{\bf Definition. }{\it
If $\mu< \nu$ in $\ttt$, then the \textbf{tangent direction} of $\mu$ determined by $\nu$ is the set  $\ty(\mu,\nu)$ of monic polynomials $\phi\in K[x]$ of minimal degree satisfying $\mu(\phi)<\nu(\phi)$.}\e

The following result follows from \cite[Theorem 1.15]{Vaq} and \cite[Corollary 2.5]{MLV}.

\begin{lemma}\label{tdef}
	Every $\phi\in \ty(\mu,\nu)$ is a MLV key polynomial for $\mu$ and $\ty(\mu,\nu)=[\phi]_\mu$.
	For all nonzero $f\in\kx$, we have $\mu(f)<\nu(f)$ if and only if $\inm \phi\mid \inm f$ in $\ggm$.
	
	Moreover, if $\mu<\nu<\eta$ in $\ttt$, then $\,\ty(\mu,\eta)=\ty(\mu,\nu)$.
\end{lemma}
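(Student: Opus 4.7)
The plan is to first establish the divisibility characterization ``$\mu(f)<\nu(f) \iff \inm\phi \mid \inm f$ in $\ggm$'', since the remaining claims flow from it cleanly. Fix $\phi\in\ty(\mu,\nu)$ and set $m=\deg\phi$. The minimality built into the definition of $\ty(\mu,\nu)$ gives immediately: for every nonzero $a\in\kx$ with $\deg a<m$, one has $\mu(a)=\nu(a)$, since otherwise $a/\op{lc}(a)$ would be a monic polynomial of degree smaller than $m$ with $\mu<\nu$. This forces $\phi$ to be $\mu$-minimal: if $\inm\phi$ divided $\inm g$ for some $g$ of degree $<m$, applying the graded homomorphism $\ggm\to\ggn$ (which kills $\inm\phi$) would yield $\inu g=0$, contradicting $\mu(g)=\nu(g)<\infty$.

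Using $\mu$-minimality, the next step is the standard $\phi$-expansion identity $\mu(f)=\min_i\mu(a_i\phi^i)$ for $f=\sum_i a_i\phi^i$ with $\deg a_i<m$. If equality failed, the set $S$ of minimizing indices would satisfy $|S|\ge 2$, and factoring $\phi^{\min S}$ from $\sum_{i\in S}a_i\phi^i$ would produce a relation $\inm\phi\mid\inm a_{\min S}$ in $\ggm$, contradicting $\mu$-minimality. With this formula the divisibility characterization drops out: the ``$\Leftarrow$'' direction is immediate from the graded map, while for ``$\Rightarrow$'' one computes, using $\mu(a_i)=\nu(a_i)$, that $\nu(a_i\phi^i)=\mu(a_i\phi^i)+i(\nu(\phi)-\mu(\phi))$; were $0$ in the $\mu$-minimizing set $S$, then $\nu(f)=\nu(a_0)=\mu(f)$ would follow, contradicting $\mu(f)<\nu(f)$, so $0\notin S$ and $\inm\phi\mid\inm f$. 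Primality of $\inm\phi$ in $\ggm$ is then automatic from multiplicativity of $\mu$ and $\nu$, so $\phi$ is a MLV key polynomial for $\mu$.

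To identify $\ty(\mu,\nu)=[\phi]_\mu$, take $\varphi\in\ty(\mu,\nu)$ and write $\varphi=\phi+(\varphi-\phi)$ with $\deg(\varphi-\phi)<m$; since $\mu(\varphi-\phi)=\nu(\varphi-\phi)$, a three-case analysis on $\mu(\varphi-\phi)$ versus $\mu(\phi)$ rules out all possibilities except $\mu(\varphi-\phi)>\mu(\phi)$, which gives $\inm\varphi=\inm\phi$. Conversely, any $\varphi\in[\phi]_\mu$ has $\deg\varphi=m$ (by \cite[Proposition 6.6]{KP}) and satisfies $\mu(\varphi-\phi)>\mu(\phi)=\mu(\varphi)$, whence $\nu(\varphi)\ge\min(\nu(\phi),\mu(\varphi-\phi))>\mu(\varphi)$, so $\varphi\in\ty(\mu,\nu)$.

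For the ``moreover'' part, $\phi\in\ty(\mu,\eta)$ is certainly a candidate since $\mu(\phi)<\nu(\phi)\le\eta(\phi)$; the real question is whether some monic $f$ of degree $<m$ might satisfy $\mu(f)<\eta(f)$. The first paragraph applied to $(\mu,\nu)$ forces $\mu(f)=\nu(f)<\eta(f)$, so such an $f$ would exhibit an element of $\ty(\nu,\eta)$ of degree $<m$. This is precisely where the cited results of Vaqui\'e and \mlv enter: once $\phi$ is an MLV key polynomial for $\mu$ with $\mu(\phi)<\nu(\phi)$, it remains a key polynomial of minimal degree for every $\nu'\ge\nu$ in $\ttt$, which excludes such an $f$. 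This degree monotonicity is the main obstacle in the argument. Once the minimal degree is seen to coincide, the equality $\ty(\mu,\eta)=[\phi]_\mu=\ty(\mu,\nu)$ is obtained by running the first three paragraphs for the pair $(\mu,\eta)$, with the same $\phi$ playing the role.
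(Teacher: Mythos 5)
Your first three paragraphs are essentially correct and, unlike the paper itself --- which proves nothing here and simply quotes \cite[Theorem 1.15]{Vaq} and \cite[Corollary 2.5]{MLV} --- they give a self-contained derivation of the first two assertions: the agreement of $\mu$ and $\nu$ in degree $<m$, the $\mu$-minimality of $\phi$ via the graded homomorphism $\ggm\to\ggn$, the expansion identity, the divisibility criterion, the primality of $\inm\phi$, and $\ty(\mu,\nu)=[\phi]_\mu$ all check out. (Two routine points are left implicit: that primality may be checked on homogeneous elements, and that $\supp(\mu)=0$ whenever $\mu<\nu$, so all the initial forms you use are defined.)

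The ``moreover'' part, however, rests on a false intermediate claim. It is not true that a polynomial $\phi\in\ty(\mu,\nu)$ ``remains a key polynomial of minimal degree for every $\nu'\ge\nu$''; it need not be a MLV key polynomial even for $\nu$ itself. For instance, take $K=\Q$ with the $p$-adic valuation $v$, let $\mu$ be the Gauss valuation and let $\nu$ be given by $\nu\left(\sum_i c_i(x-p)^i\right)=\min_i\{v(c_i)+2i\}$. Then $\mu<\nu$ and $\phi=x\in\ty(\mu,\nu)$, but $\nu(x-p)=2>1=\nu(p)$ forces $\inu x=\inu p$, a homogeneous unit in $\ggn$, so $x\notin\kpn$ (and the same happens for every $\nu'\ge\nu$). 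What your argument actually needs is the different, and true, statement that every MLV key polynomial for $\nu$ has degree at least $m$: since $\ty(\nu,\eta)\sub\kpn$ by your first paragraph applied to the pair $(\nu,\eta)$, this excludes a monic $f$ of degree $<m$ with $\nu(f)<\eta(f)$. Rather than deferring this to the cited references, note that it follows in two lines from your own ingredients: if such an $f$ existed, pick $\psi\in\ty(\nu,\eta)$, so $\deg\psi\le\deg f<m$ and $\psi\in\kpn$; in particular $\psi$ is $\nu$-minimal, so by (\ref{minimal}) every $g\in\kx$ with $\psi$-expansion $g=\sum_i g_i\psi^i$ satisfies $\nu(g)=\min_i\nu(g_i\psi^i)$; since $\psi$ and all the $g_i$ have degree $<m$, your first step gives $\mu(g)\ge\min_i\mu(g_i\psi^i)=\min_i\nu(g_i\psi^i)=\nu(g)$, hence $\mu=\nu$, contradicting $\mu<\nu$. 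With this substitution the closing step (running the earlier paragraphs for the pair $(\mu,\eta)$) goes through, and the whole proof becomes genuinely self-contained.
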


For every $m\in\N$, let $\kx_m$ be the set of polynomials of degree less than $m$.

Let $\phi\in\kpn$. For any $f\in K[x]$ consider the \textbf{$\phi$-expansion of $f$}:
\[
f=f_0+f_1\phi+\ldots+f_n\phi^n,\qquad f_0,\dots,f_n\in\kx_{\deg(\phi)}.
\]

Since $\phi$ is $\nu$-minimal, \cite[Proposition 1.3]{KP} shows that 
\begin{equation}\label{minimal}
\nu(f)=\min\left\{\nu(f_i\phi^i)\mid 0\leq i\leq n\right\}.
\end{equation}
Consider the set $S_{\nu,\phi}(f)=\{0\le i\le n\mid \nu(f)=\nu(f_i\phi^i)\}$. Clearly,
\begin{equation}\label{inuf}
\inu f=\sum\nolimits_{i\in S_{\nu,\phi}(f)}(\inu f_i)\pi^i,\quad \pi=\inu\phi.
\end{equation}

Let $\ggn^0\sub\ggn$ be the subalgebra generated by all homogeneous units.
If $\phi\in \kpn$ has minimal degree, then all these coefficients $\inu f_i$ are homogeneous units \cite[Proposition 3.5]{KP}; thus,  $\inu f\in\ggn^0[\pi]$.
The expression of $\inu f$ as a polynomial in $\ggn^0[\pi]$ is unique, as the following result shows (cf. \cite[Remark 16]{Dec} or \cite[Proposition 4.5]{N2021}). 

\begin{theorem}\label{g0gm}
Let $\phi$ be a MLV key polynomial of minimal degree for $\nu$. 	Then, the prime $\pi=\inu\phi$ is transcendental over $\ggn^0$ and
	$\ggn=\ggn^0[\pi]$.
\end{theorem}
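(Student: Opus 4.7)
The plan is to prove the two parts separately. For the equality $\ggn=\ggn^0[\pi]$: the containment $\ggn^0[\pi]\subseteq\ggn$ is trivial, so I would take any nonzero homogeneous $\inu f\in\ggn$, use its $\phi$-expansion $f=\sum_{i=0}^n f_i\phi^i$ with $f_i\in\kxphi$, and combine \eqref{inuf} with the remark preceding the theorem to obtain $\inu f=\sum_{i\in S_{\nu,\phi}(f)}(\inu f_i)\pi^i$. Every $\inu f_i$ in this sum is a homogeneous unit by the minimality of $\deg(\phi)$, hence lies in $\ggn^0$, and therefore $\inu f\in\ggn^0[\pi]$.

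For the transcendence of $\pi$ over $\ggn^0$, I would argue by contradiction. Suppose $\sum_{i=0}^N a_i\pi^i=0$ in $\ggn$ with $a_i\in\ggn^0$ not all zero. Because $\pi$ is homogeneous of degree $\nu(\phi)$ and the relation splits across graded pieces, I would extract a nontrivial homogeneous component and assume each $a_i$ is homogeneous with $\deg(a_i)+i\,\nu(\phi)=\alpha$ for some fixed $\alpha\in\gn$. The key reduction is to represent each nonzero $a_i$ as $\inu g_i$ for some $g_i\in\kxphi$ with $\nu(g_i)=\alpha-i\,\nu(\phi)$, and to set $g_i=0$ when $a_i=0$. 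This rests on the fact that the classes $\inu r$ for $r\in\kxphi\setminus\{0\}$ are homogeneous units by \cite[Proposition 3.5]{KP}, are closed under sums within each graded piece (since $\deg(r_1+r_2)<\deg(\phi)$), and are closed under products (the product of homogeneous units is again a homogeneous unit, and the primality of $\pi=\inu\phi$ together with the minimality of $\deg(\phi)$ rules out any nontrivial $\pi$-contribution in the outcome); hence the graded subring they span coincides with $\ggn^0$.

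With the $g_i$'s chosen, I would set $g=\sum_{i=0}^N g_i\phi^i$, which is already the $\phi$-expansion of $g$. Since not all $g_i$ vanish, $g\neq 0$ in $\kx$; and since every nonzero term $g_i\phi^i$ has $\nu$-value $\alpha$, \eqref{minimal} forces $\nu(g)=\alpha<\infty$, so $g\notin\supp(\nu)$ and $\inu g\neq 0$. On the other hand, \eqref{inuf} yields
\[
\inu g=\sum_{i\in S_{\nu,\phi}(g)}(\inu g_i)\pi^i=\sum_{i=0}^N a_i\pi^i=0,
\]
the desired contradiction. The delicate step, and the place where the minimality of $\deg(\phi)$ is essential, is the representation of homogeneous elements of $\ggn^0$ by polynomials in $\kxphi$; once that is granted, the $\phi$-expansion identity \eqref{minimal} dispatches the rest immediately.
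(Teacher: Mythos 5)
Your proof of the equality $\ggn=\ggn^0[\pi]$ is correct (it is essentially already contained in the paragraph preceding the theorem), and the closing step of your transcendence argument --- building $g=\sum_i g_i\phi^i$ from small-degree representatives and playing \eqref{minimal} against \eqref{inuf} --- is exactly the right move. The gap is in what you call the key reduction: you need every nonzero homogeneous element of $\ggn^0$ to be of the form $\inu g$ with $g\in\kxphi$, and your justification does not establish this. Concretely, closure of the set $U=\{\inu r\mid r\in\kxphi\setminus\{0\}\}$ under products is not settled by primality: writing $r_1r_2=s_0+s_1\phi$ with $\deg s_0,\deg s_1<\deg\phi$, the primality of $\pi$ only excludes the case $\inu(r_1r_2)=\inu s_1\,\pi$ (a unit divisible by the prime $\pi$); it says nothing about the mixed case $\nu(s_0)=\nu(s_1\phi)=\nu(r_1r_2)$, where $\inu(r_1r_2)=\inu s_0+\inu s_1\,\pi$. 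Ruling that case out means showing that $\pi$ does not divide the nonzero element $\inu(r_1r_2)-\inu s_0$, which is precisely the uniqueness/transcendence assertion you are in the middle of proving (equivalently, the Remark following Theorem \ref{g0gm}, which the paper deduces \emph{from} the theorem: a homogeneous unit $\inu f$ must have $S_{\nu,\phi}(f)=\{0\}$). Moreover, even granting product closure, the conclusion ``hence the graded subring they span coincides with $\ggn^0$'' silently uses the inclusion of $\ggn^0$ in the subring generated by $U$, i.e.\ that every homogeneous unit of $\ggn$ lies in that span; a priori a homogeneous unit $\inu f$ is only known to equal $\sum_{i\in S_{\nu,\phi}(f)}(\inu f_i)\pi^i$, an expression with possible $\pi$-terms, and discarding those terms again presupposes the theorem. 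So the argument is circular at its crucial point.

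Note that the paper does not prove Theorem \ref{g0gm} either; it cites \cite[Remark 16]{Dec} and \cite[Proposition 4.5]{N2021}, and the substance of those references is exactly the step you are missing. To repair your proof you would need an independent argument that for $f$ with $\inu f$ a unit (or at least for $f=r_1r_2$ a product of polynomials of degree less than $\deg\phi$) the $\phi$-expansion satisfies $\nu(f)<\nu(f_i\phi^i)$ for all $i\ge1$; this uses the $\nu$-minimality of $\phi$ and the characterization of divisibility by $\inu\phi$ in terms of $\phi$-expansions (as developed in \cite{KP}), not merely the formal statement that primes do not divide units.
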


\begin{lemma}\label{deglc}
The degree and leading coefficient  of $\inu f$ as a polynomial in $\inu \phi$ with coefficients in $\ggn^0$	 are independent of the choice of $\phi$ among all MLV key poynomials of minimal degree for $\nu$.
\end{lemma}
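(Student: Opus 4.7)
The plan is to compare two MLV key polynomials $\phi$ and $\phi'$ of minimal degree $m$ for $\nu$, and show that the primes $\pi=\inu\phi$ and $\pi'=\inu\phi'$ differ by an element of $\ggn^0$; once this is in hand, the lemma will follow from the binomial theorem together with the uniqueness part of Theorem \ref{g0gm}.

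Setting $g=\phi'-\phi$, I observe that $g\in\kx_m$ since both polynomials are monic of degree $m$, so by \cite[Proposition 3.5]{KP} either $g=0$ or $\inu g$ is a homogeneous unit, hence in $\ggn^0$. I then compare $\nu(g)$ with $\nu(\phi)$: if $\nu(g)<\nu(\phi)$, then $\nu(\phi')=\nu(g)$ and $\pi'=\inu g\in\ggn^0$, contradicting the transcendence of $\pi'$ over $\ggn^0$; if $\nu(g)=\nu(\phi)$ and $\pi+\inu g=0$ (so that $\nu(\phi')>\nu(\phi)$), then $\pi=-\inu g\in\ggn^0$, another contradiction. In the remaining cases $\nu(\phi')=\nu(\phi)$ and $\pi'=\pi+c$ with $c\in\ggn^0$, taking $c=\inu g$ if $\nu(g)=\nu(\phi)$ and $c=0$ otherwise, so $\pi$ and $\pi'$ have the same homogeneous degree $\nu(\phi)$.

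Next, writing the $\phi$-expansion $f=\sum f_i\phi^i$ with $f_i\in\kx_m$ and using (\ref{inuf}) together with \cite[Proposition 3.5]{KP}, I get
\[
\inu f=\sum_{i\in S}(\inu f_i)\pi^i,\qquad S=S_{\nu,\phi}(f),
\]
with every $\inu f_i\in\ggn^0$. Let $d=\max S$, so that the $\pi$-degree of $\inu f$ is $d$ and its leading coefficient is $\inu f_d$. Substituting $\pi=\pi'-c$ and expanding each $(\pi'-c)^i$ by the binomial theorem produces a representation of $\inu f$ as a polynomial in $\pi'$ with coefficients in $\ggn^0$; the top-degree term in $\pi'$ arises only from $i=d$ and equals $(\inu f_d)\,\pi'^d$. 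By the uniqueness part of Theorem \ref{g0gm}, this is the expression of $\inu f$ as a polynomial in $\pi'$ over $\ggn^0$, so both the degree $d$ and the leading coefficient $\inu f_d$ are independent of the choice between $\phi$ and $\phi'$.

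The main obstacle will be the case analysis that forces $\pi'-\pi\in\ggn^0$: the crucial input is that $\inu$-images of polynomials of degree below $m$ all lie in $\ggn^0$, so any discrepancy between $\pi$ and $\pi'$ is automatically pushed into $\ggn^0$, and the transcendence of both primes then leaves only the additive-constant relation $\pi'=\pi+c$. Once this is established, the rest of the argument is a formal manipulation.
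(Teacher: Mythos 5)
Your proof is correct and takes essentially the same route as the paper: both arguments reduce to showing $\pi' = \pi + c$ with $c\in\ggn^0$ and then observing that this shift cannot change the top degree or leading coefficient. The paper simply cites \cite[Proposition 6.3]{KP} for the fact that any other minimal-degree MLV key polynomial has the form $\phi+a$ with $a\in\kx_m$ and $\nu(a)\ge\nu(\phi)$, whereas you re-derive this (including ruling out the cancellation $\pi+\inu g=0$) directly from the transcendence of the primes over $\ggn^0$ supplied by Theorem~\ref{g0gm}, and you make explicit the binomial-expansion step that the paper compresses into ``the result follows immediately.''
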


\begin{proof}
Take $\phi\in\kpn$ of minimal degree in this set. By \cite[Proposition 6.3]{KP}, the other polynomials of minimal degree in $\kpn$ are obtained as
\[
\varphi=\phi+a,\quad a\in\kx_{\deg(\phi)},\  \nu(a)\ge \nu(\phi).
\]
If $\nu(a)>\nu(\phi)$, then $\inu\varphi=\inu\phi$. Otherwise, $\inu\varphi=\inu\phi+ \inu a$, with $\inu a\in\ggn^0$. The result follows immediately. 
\end{proof}\e

\noindent{\bf Definition. }{\it
These objects are denoted by $\dgn f$, $\lcn f$ and they are called the $\nu$-\textbf{degree}  and  $\nu$-\textbf{leading coefficient} of $f$, respectively.}\e

Thus, if $d=\max(S_{\nu,\phi}(f))$, then  $\dgn f=d$ and  $\lcn(f)=\inu f_d\in\ggn^0$. Also, let us emphasize the following consequence of Theorem \ref{g0gm}.\e

\noindent{\bf Remark. }{\it A nonzero  $\inu f\in\ggn$ is a homogeneous unit if and only if $\dgn f=0$. }


\section{Spivakovsky's level function}\label{secEp}

For all $s\in \N$, the $s$-th Hasse-Schmidt derivative $\partial_s$ on $\kx$ is defined so that the following Taylor formula holds for any $y$ indeterminate $y$:
$$
f(x+y)=\sum\nolimits_{0\le s}(\ps f)y^s \quad\mbox{for all } \,f\in\kx.
$$

Take $\nu\in\ttt$ and a nonconstant $f\in\kx\setminus\supp(\nu)$. The \textbf{level} of $f$ is defined as:
$$
\epn(f)=\max\left\{\dfrac{\nu(f)-\nu(\ps{f})}s\ \Big|\ s\in\N\right\}\in\La.
$$
Let  $I(f)=I_\nu(f)$ be the set of all $s\in\N$ for which this maximal value is attained.

\begin{lemma}\cite[Proposition 2.4]{NS2018}\label{powerp}
Every element in $I(f)$ is a power of the characteristic exponent $p$ of $v$.	
\end{lemma}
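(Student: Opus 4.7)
Plan: The strategy is a proof by contradiction exploiting the Hasse--Schmidt composition identity
\[
\partial_a\circ\partial_b \;=\; \binom{a+b}{a}\,\partial_{a+b},\qquad a,b\in\N,
\]
together with Lucas' theorem on the $p$-divisibility of binomials. Suppose $s\in I(f)$, so $\nu(\partial_s f) = \nu(f) - s\,\epn(f)$, and assume toward a contradiction that $s$ is not a power of $p$. The base-$p$ expansion of $s$ either has at least two distinct nonzero digits, or it consists of a single nonzero digit strictly greater than $1$; in both cases Lucas' theorem produces a decomposition $s = a + b$ with $0 < a,b < s$ and $\binom{s}{a}$ coprime to $p$. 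Since $p$ is the characteristic exponent of $v$, this gives $\nu\bigl(\binom{s}{a}\bigr) = 0$, and the composition identity translates, in valuations, into
\[
\nu(\partial_a\partial_b f) \;=\; \nu(\partial_s f) \;=\; \nu(f) - s\,\epn(f).
\]

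Next I would apply the definition of the level function in two successive layers. Applied to $f$ itself, $\nu(\partial_b f)\ge \nu(f) - b\,\epn(f)$; applied to the auxiliary polynomial $g := \partial_b f$, $\nu(\partial_a g)\ge \nu(g) - a\,\epn(g)$. Chaining these with the displayed equality forces $\epn(\partial_b f)\ge \epn(f)$. The plan is then to establish the reverse inequality $\epn(\partial_b f)\le \epn(f)$ by using Hasse--Schmidt in the opposite direction: for each $t\ge 1$, $\partial_t(\partial_b f) = \binom{t+b}{t}\partial_{t+b} f$, which, combined with Kummer's theorem controlling $\nu\bigl(\binom{t+b}{t}\bigr)$ via the carries of $t+b$ in base $p$ and the lower bound $\nu(\partial_{t+b}f)\ge \nu(f) - (t+b)\epn(f)$, bounds the ratio $\bigl(\nu(\partial_b f) - \nu(\partial_t\partial_b f)\bigr)/t$ by $\epn(f)$ for every $t$.

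Putting the two inequalities together yields $\epn(\partial_b f) = \epn(f)$ and, as a byproduct, $b\in I_\nu(f)$ with $\nu(\partial_b f) = \nu(f) - b\,\epn(f)$. The pair $(a,b)$ thus strips one nonzero digit from the base-$p$ expansion of $s$ while the remaining summand $b$ remains in $I_\nu(f)$. An induction on the number of nonzero base-$p$ digits of $s$ then collapses the expansion down to a single digit equal to $1$, forcing $s = p^l$ and producing the desired contradiction. The main obstacle is the reverse inequality $\epn(\partial_b f)\le\epn(f)$: the naive bound goes the wrong direction because $\nu(\partial_b f)$ may strictly exceed $\nu(f) - b\,\epn(f)$, so the argument must carefully balance the slack in the definition of $\epn(f)$ against the carry count in $t+b$ provided by Kummer, and it is precisely here that the characteristic exponent $p$ enters essentially and pins $s$ down to a power of $p$.
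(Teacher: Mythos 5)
Your first half is sound and is exactly the standard opening: if $s\in I(f)$ is not a power of $p$, Lucas gives $s=a+b$ with $a,b>0$ and $\nu\bigl(\binom{s}{a}\bigr)=0$, and the identity $\partial_a\partial_b f=\binom{s}{a}\partial_s f$ combined with the two defining inequalities for $\epn$ yields $\epn(\partial_b f)\ge\epn(f)$. The proposal breaks at the step you yourself flag: the reverse inequality $\epn(\partial_b f)\le\epn(f)$ is false for general $f$, and no Kummer-type bookkeeping can rescue it, because the lemma as stated for arbitrary nonconstant $f$ fails without a further hypothesis. Take $K=\Q$ with the $5$-adic valuation (so $p=5$), let $\nu$ be the monomial valuation $\nu\left(\sum_i c_ix^i\right)=\min_i\{v(c_i)+10\,i\}$, and $f=x^2-1$. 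Then $\nu(f)=0$, $\nu(\partial_1f)=\nu(2x)=10$, $\nu(\partial_2f)=0$, so $\epn(f)=0$ (consistently with Theorem \ref{nov}: both roots $\pm1$ lie at distance $0$) and $I(f)=\{2\}$, which is not a power of $5$; moreover $\epn(\partial_1f)=10>0=\epn(f)$, so your target inequality fails precisely for the pair $a=b=1$ produced by your own decomposition. The defect in the Kummer argument is the one you note: your bound on $\bigl(\nu(\partial_bf)-\nu(\partial_t\partial_bf)\bigr)/t$ collapses to $\epn(f)$ only when $\nu(\partial_bf)=\nu(f)-b\,\epn(f)$, i.e.\ when $b\in I(f)$ already, which is part of what you are trying to prove; in the example this slack is genuinely positive and the digit-stripping induction never gets started.

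What saves the result --- and what the proof in the cited reference [NS2018] uses; the present paper gives no proof, only the citation --- is a hypothesis on $f$: the statement is needed in this paper only for key polynomials $Q\in\qq$ (in the $\epn$-sense defined right after the lemma), and for a key polynomial the ``reverse inequality'' is free. Indeed, $b\ge1$ gives $\deg\partial_bQ<\deg Q$ (and $\partial_bQ\ne0$, since $\partial_bQ=0$ would force $\partial_sQ=0$ via the composition identity, contradicting $s\in I(Q)$), so the defining property of a key polynomial forces $\epn(\partial_bQ)<\epn(Q)$, which directly contradicts the inequality $\epn(\partial_bQ)\ge\epn(Q)$ you already established. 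Thus your first half is essentially the whole proof once the key-polynomial hypothesis is in place, and the induction on base-$p$ digits, the byproduct claim $b\in I_\nu(f)$, and the Kummer estimates are all unnecessary; without that hypothesis the statement itself is false, so no completion of your plan as written is possible.
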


We may extend this computation of $\epn$ to a function 
$$
\epn\colon\kx\lra \{-\infty\}\cup\La\cup\{\infty\},
$$
by agreeing that 
\[
\epn(a)=-\infty,\quad \forall a\in K, \quad\qquad \epn(f)=\infty, \ \mbox{ if }\ f\ne0 \ \mbox{ and }\ \nu(f)=\infty. 
\]


\begin{theorem}\label{nov}\cite[Proposition 3.1]{N2019}
Let $\om$ be an arbitrary extension  to $\kbx$ of some $\nu\in\ttt$. Then, for every nonconstant monic polynomial $f\in \kx$ we have 
$$
\epn(f)=\max\{\om(x-a)\mid a\in \op{Z}(f)\},
$$
where $\op{Z}(f)$ is the set of roots of $f$ in $\kb$. 
\end{theorem}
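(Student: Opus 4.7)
The plan is to work in $\bar{K}[x]$, where $\omega$ is defined, and factor $f = \prod_{i=1}^n (x-a_i)$ into linear factors with roots listed with multiplicity. Let $\rho_i = \omega(x-a_i)$, reordered so that $\rho_1\ge\rho_2\ge\cdots\ge\rho_n$. Since $\omega$ restricts to $\nu$ on $K[x]$, the quantities $\nu(\partial_s f)$ coincide with $\omega(\partial_s f)$ and can be computed from this factorization. The key identity to establish is
\[
\partial_s f(x) \;=\; \sum_{|T|=n-s}\;\prod_{i\in T}(x-a_i),
\]
which follows by comparing the coefficient of $y^s$ in the two expressions $f(x+y) = \prod_i((x-a_i)+y)$ and $f(x+y) = \sum_s(\partial_s f)\,y^s$.

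For the upper bound, I would apply the triangle inequality of $\omega$ to this sum of $\binom{n}{n-s}$ terms. Each term has valuation $\sum_{i\in T}\rho_i$, and since the $\rho_i$ are decreasing, the minimum over subsets $T$ of size $n-s$ is attained at $T=\{s+1,\dots,n\}$ with value $\sum_{i=s+1}^n \rho_i$. Thus $\omega(\partial_s f)\ge \sum_{i=s+1}^n\rho_i$, and combining with $\omega(f)=\sum_{i=1}^n\rho_i$ yields
\[
\frac{\nu(f)-\nu(\partial_s f)}{s}\;\le\;\frac{\rho_1+\cdots+\rho_s}{s}\;\le\;\rho_1
\]
for every $s\ge 1$. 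Taking the maximum over $s$ gives $\epsilon_\nu(f)\le\rho_1 = \max\{\omega(x-a)\mid a\in Z(f)\}$.

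For the matching lower bound, I would choose $s$ to be the largest index with $\rho_s=\rho_1$, i.e.\ the multiplicity of the maximal value among the $\rho_i$; so either $s=n$ or $\rho_{s+1}<\rho_1$. For this $s$, any subset $T$ of size $n-s$ different from $\{s+1,\dots,n\}$ must contain some index $\le s$ in place of some index $>s$, and hence $\sum_{i\in T}\rho_i$ strictly exceeds $\sum_{i=s+1}^n\rho_i$ (by at least $\rho_s-\rho_{s+1}>0$). The minimum is therefore uniquely attained among the terms of the sum, and the standard sharpening of the triangle inequality for valuations then forces the equality $\omega(\partial_s f)=\sum_{i=s+1}^n\rho_i$. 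This gives $(\nu(f)-\nu(\partial_s f))/s=\rho_1$, proving the theorem.

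The main obstacle is the uniqueness argument in the last step: one must single out the correct value of $s$ (the multiplicity of $\rho_1$) and verify that every other subset strictly increases the tropical sum, so that the ultrametric rule applies and the inequality becomes an equality. The rest is routine bookkeeping with the Taylor identity and the ordering of the $\rho_i$.
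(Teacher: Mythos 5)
Your proof is correct: the identity $\partial_s f=\sum_{|T|=n-s}\prod_{i\in T}(x-a_i)$ obtained from the splitting of $f$ in $\overline K[x]$, the ordering $\rho_1\ge\cdots\ge\rho_n$ of the root values, and the unique-minimum ultrametric step at $s$ equal to the multiplicity of $\rho_1$ (where every other index subset exceeds the minimal tropical sum by at least $\rho_1-\rho_{s+1}>0$, so $\omega(\partial_s f)=\rho_{s+1}+\cdots+\rho_n$ exactly) yield $\epsilon_\nu(f)=\rho_1$, and this is essentially the argument behind the cited result [N2019, Proposition 3.1], which the paper itself quotes without proof. The only point left implicit is the degenerate case $\nu(f)=\infty$, where some root satisfies $\omega(x-a)=\infty$ and both sides are $\infty$ by the paper's convention, so the statement holds trivially there.
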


As an immediate consequence of this result, we have
\begin{equation}\label{efg}
\epn(fg)=\max\{\epn(f),\epn(g)\}\quad\mbox{for all }f,g\in\kx\setminus K.	
\end{equation}
	
Consider the image set $E_\nu=\{\epn(f)\mid f\in \kx\}$ of the function $\epn$. The existence of $\max(E_\nu)$ characterizes the well-specified valuations.

\begin{theorem}\label{max}\cite[Theorem 3.4]{Vaq3}
 A valuation $\nu\in\ttt$ is well-specified if and only if the set $E_\nu$ contains a last element.   
\end{theorem}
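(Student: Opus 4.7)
The plan is to split the equivalence according to whether $\supp(\nu)$ is trivial, and then reduce the remaining content to an assertion about the function $a\mapsto\omega(x-a)$ on $\overline{K}$, where $\omega$ is a fixed extension of $\nu$ to $\overline{K}[x]$.

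First, I would dispose of the nontrivial-support case directly: if $\supp(\nu)=fK[x]\ne 0$, then $\nu(f)=\infty$, so by the conventions $\epsilon_\nu(f)=\infty\in E_\nu$, and this is trivially the maximum; moreover $\nu$ is well-specified. Conversely, $\max(E_\nu)=\infty$ forces some $g\in\kx$ with $\nu(g)=\infty$, whence $\supp(\nu)\ne 0$. This reduces the problem to the case of trivial support, where the claim becomes: $\nu$ is valuation-transcendental if and only if $E_\nu\subseteq\Lambda$ admits a largest element.

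For the forward implication, Theorem \ref{empty} produces $\phi\in\kpn$ of minimal degree; I fix a root $\alpha\in\overline{K}$ of $\phi$. By \eqref{efg} it suffices to bound $\epsilon_\nu(f)$ on monic irreducible $f$, and by Theorem \ref{nov}, $\epsilon_\nu(f)=\max_{a\in\op{Z}(f)}\omega(x-a)$. Hence the task reduces to verifying $\omega(x-a)\le\omega(x-\alpha)$ for every $a\in\overline{K}$, which identifies a root of a minimal MLV key polynomial as realizing $\sup_{a\in\overline{K}}\omega(x-a)$. The idea is that if some $a\in\overline{K}$ gave $\omega(x-a)>\omega(x-\alpha)$, then the minimal polynomial $h$ of $a$ over $K$ would yield, via the graded algebra $\ggn$, an element of $\kpn$ of degree at most $\deg(h)\le\deg(\phi)$ whose value-group or residue-field witnesses the valuation-transcendence at a strictly larger level, contradicting the minimality of $\deg(\phi)$ in $\kpn$.

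For the converse, I argue by contraposition: if $\nu$ is valuation-algebraic, then $\kpn=\emptyset$ by Theorem \ref{empty}, and the structural characterization of valuation-algebraic nodes in $\mathcal{T}$ is precisely that $\{\omega(x-a)\mid a\in\overline{K}\}$ admits no maximum. Given any nonconstant monic $f\in K[x]$ with $\epsilon_\nu(f)=\omega(x-a_0)$ for some $a_0\in\op{Z}(f)$, I choose $a\in\overline{K}$ with $\omega(x-a)>\omega(x-a_0)$; its minimal polynomial $g\in\kx$ over $K$ then satisfies $\epsilon_\nu(g)\ge\omega(x-a)>\epsilon_\nu(f)$ by Theorem \ref{nov}, so $E_\nu$ has no maximum. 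The main obstacle is establishing the twin characterizations of valuation-algebraic versus valuation-transcendental $\nu$ via the attainment of $\sup_{a\in\overline{K}}\omega(x-a)$: this is the part of Mac Lane--Vaqui\'e structure theory that underlies Theorem \ref{empty}, and it carries most of the proof's weight.
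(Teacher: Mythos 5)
The paper does not prove this statement; it is cited verbatim from Vaqui\'e \cite[Theorem 3.4]{Vaq3}, so there is no in-paper proof to compare your attempt against. Judged on its own terms, your reduction to the trivial-support case is clean, and recasting the problem via Theorem \ref{nov} as whether $\{\omega(x-a)\mid a\in\overline K\}$ attains a maximum is the right reformulation. But the forward direction as written has a genuine gap: you assert that $\omega(x-a)>\omega(x-\alpha)$ would produce a key polynomial ``of degree at most $\deg(h)\le\deg(\phi)$'' from $h=\irr_K(a)$, yet there is no reason at all that $\deg(\irr_K(a))\le\deg\phi$ — the element $a$ realizing a larger level could have arbitrarily large degree over $K$ — and nothing else in the sketch produces an actual contradiction with minimality of $\deg\phi$ in $\kpn$. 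A route that stays inside the paper's own toolbox and avoids this problem is to invoke Proposition \ref{emax} directly: for $\phi\in\kpn$ of minimal degree, $\inu\phi$ is prime, hence a non-unit, so $\epn(\phi)=\max(E_\nu)$, and the forward implication follows without any root-chasing.

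For the converse, you argue by contraposition from $\kpn=\emptyset$ (Theorem \ref{empty}), but you then appeal to an unproved ``structural characterization'' of valuation-algebraic nodes — that $\{\omega(x-a)\mid a\in\overline K\}$ has no maximum precisely in that case — and you explicitly acknowledge that this ``carries most of the proof's weight.'' That is exactly the content the paper outsources to \cite{Vaq3}: the fact that a valuation-algebraic $\omega$ is a stable limit of well-specified $\omega_{a_i,\delta_i}$'s whose ultrametric balls have empty intersection, which is what forces the $\delta_i$'s (hence the levels $\omega(x-a)$) to have no last element. Without proving this, the proposal restates the theorem rather than establishing it, so the converse direction is not actually closed.
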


Finally, let us mention which polynomials achieve a maximal $\epn$-value.   

\begin{proposition}\label{emax}\cite[Lemma 2.12]{Rig}
For arbitrary $\nu\in\ttt$ and nonconstant $f\in\kx$, we have $\epn(f)=\max(E_\nu)$ if and only if either $\nu(f)=\infty$, or $\inu f$ is not a unit in the graded algebra $\ggn$.  
\end{proposition}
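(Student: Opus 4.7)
The plan is to split into cases by the type of $\nu \in \ttt$. Suppose first that $\nu$ is not valuation-transcendental, so $\kpn = \emptyset$ by Theorem \ref{empty}. Then every nonzero homogeneous element of $\ggn$ is a unit: when $\supp(\nu) = (f_0)$ is nonzero, any $f \notin (f_0)$ admits a multiplicative inverse modulo $(f_0)$, yielding $g \in \kx$ with $\nu(fg-1) = \infty$; the valuation-algebraic case is handled analogously, using that $\nu$ is a stable limit of valuation-transcendental nodes. When $\nu$ is valuation-algebraic, $\max E_\nu$ does not exist (Theorem \ref{max}), so both sides of the biconditional fail identically. When $\supp(\nu) \neq 0$, $\max E_\nu = \infty = \epn(f_0)$, and the biconditional reduces to $\epn(f) = \infty \iff \nu(f) = \infty$, which is immediate.

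For the main case $\nu$ valuation-transcendental, Theorem \ref{empty} supplies a MLV key polynomial $\phi$ of minimal degree $m$. By Theorem \ref{g0gm}, $\ggn = \ggn^0[\pi]$ with $\pi = \inu\phi$ transcendental over $\ggn^0$, and by the paragraph around (\ref{inuf}), each nonzero $\inu(f_i)$ with $\deg f_i < m$ is a homogeneous unit in $\ggn^0$. Hence $\inu f = \sum_{i \in S_{\nu,\phi}(f)} \inu(f_i) \pi^i$ is a unit in $\ggn^0[\pi]$ if and only if $S_{\nu,\phi}(f) = \{0\}$, if and only if $\dgn f = 0$. Thus the non-unit condition on $\inu f$ is equivalent to $\dgn f \geq 1$.

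Extending $\nu$ to $\om$ on $\kb[x]$ and setting $\alpha = \epn(\phi)$, Theorem \ref{nov} provides a root $b$ of $\phi$ in $\kb$ with $\om(x-b) = \alpha$, and the $\nu$-minimality of $\phi$ forces $\alpha = \max E_\nu$ (otherwise a polynomial of smaller degree could realise a strictly larger $\epn$-value, contradicting the minimality of $m$ among key-polynomial degrees). Factoring $f = c\prod_j (x-a_j)$ in $\kb[x]$, formula (\ref{efg}) and Theorem \ref{nov} give $\epn(f) = \max_j \om(x-a_j)$, so $\epn(f) = \alpha$ if and only if some root $a_j$ attains $\om(x-a_j) = \alpha$. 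If such an $a_j$ exists, its $K$-minimal polynomial $\psi$ is itself a MLV key of minimal degree, divides $f$ in $\kx$, and $\inu f = \inu\psi \cdot \inu(f/\psi)$ is a non-unit. The converse direction, that $\dgn f \geq 1$ forces some root of $f$ to attain $\alpha$, is the main obstacle; the approach is to use Lemma \ref{deglc} to shift $\phi$ within the class of minimal-degree MLV keys so as to arrange $0 \notin S_{\nu,\phi}(f)$, giving $\pi \mid \inu f$, and then to run a descent on $\dgn f$ peeling off successive factors $\nu$-equivalent to $\phi$ until a root of $f$ at $\om$-distance $\alpha$ is exhibited.
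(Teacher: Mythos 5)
Your proposal does not establish either substantive implication, and the mechanisms you propose for them would fail. For the direction ``$\inu f$ not a unit $\Rightarrow\epn(f)=\max(E_\nu)$'' you only offer an approach: shift $\phi$ inside the class of minimal-degree MLV key polynomials so that $0\notin S_{\nu,\phi}(f)$, i.e.\ so that $\inu\phi$ divides $\inu f$, and then run a descent peeling off factors $\nu$-equivalent to $\phi$. This is impossible in general, because $\inu f$, viewed in $\ggn=\ggn^0[\pi]$, need not have any linear factor $\pi-c$ with $c\in\ggn^0$ homogeneous. Take $K=\Q$ with the $2$-adic valuation, $\nu$ the monomial valuation with $\nu(x)=1$, $\phi=x$, and $f=x^2+2x+4$. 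Here $\ggn^0=\F_2[u,u^{-1}]$ with $u=\inu(2)$, and $\inu f=\pi^2+u\pi+u^2=u^2(\sigma^2+\sigma+1)$ with $\sigma=u^{-1}\pi$, a prime of $\pi$-degree $2$ (as $\sigma^2+\sigma+1$ is irreducible over $\F_2$). The initial forms of the degree-one key polynomials are, up to homogeneous units, only $\pi$ and $\pi+u$, and neither divides $\inu f$ (the remainder is $u^2\ne0$ in characteristic $2$); moreover $f$ is irreducible over $\Q$, so there is no factor $\nu$-equivalent to $\phi$ to peel off. Yet $\dgn f=2$ and indeed $\epn(f)=1=\max(E_\nu)$, since both roots $-1\pm\sqrt{-3}$ have value $1$. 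So the heart of the proposition is left unproved, and the sketched route cannot be repaired.

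The direction you do write out also rests on a false step: for a root $a_j$ of $f$ with $\om(x-a_j)=\alpha$, the polynomial $\psi=\irr_K(a_j)$ need not be an MLV key polynomial, let alone one of minimal degree. With the same $\nu$, $\psi=x^2-12$ has the root $2\sqrt{3}$ with $\om(x-2\sqrt{3})=1=\max(E_\nu)$, but $\deg\psi=2>1$ and $\inu\psi=(\inu(x-2))^2$ is a square, hence not prime, so $\psi\notin\kpn$. What your argument actually needs is only that $\inu\psi$ is not a unit, but that is precisely the statement under proof applied to $\psi$, so as written the step is circular. Two smaller points: your parenthetical justification that $\epn(\phi)=\max(E_\nu)$ is not a proof (just cite Lemma \ref{maximal}, (ii)$\Rightarrow$(iii)), and the valuation-algebraic case is not ``analogous'' to the nontrivial-support case --- there you must show that every homogeneous element of $\ggn$ is a unit, which requires the stable-limit description together with Lemma \ref{tdef} and Theorem \ref{g0gm}, not an inverse modulo the support. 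Finally, note that the paper offers no internal proof to compare with: the statement is quoted from \cite[Lemma 2.12]{Rig}, and the arguments in the literature proceed via minimal pairs, i.e.\ Proposition \ref{down} applied to a suitable extension $\om_{b,\alpha}$ with $b$ a root of $\phi$, rather than via factorizations of $\inu f$ in the graded algebra; that is the input missing from your outline.
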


\begin{corollary}\label{preserve}
Let $\La\sub\La^c$ be a completion of $\La$ with respect to the order topology. Then, $\epb(\nu)=\sup(E_\nu) \in \La^c$ determines a function \,$\epb\colon \ttt\lra \La^c\cup\{\infty\}$ which strictly preserves the ordering. 	
\end{corollary}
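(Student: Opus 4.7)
The plan is first to check that $\overline{\epsilon}$ is well-defined. For any $\nu\in\ttt$, the image set $E_\nu\subseteq\Lambda$ is either bounded above in $\Lambda$ or not: in the first case $\sup(E_\nu)\in\Lambda^c$ exists by the defining property of the order completion, and in the second we set $\overline{\epsilon}(\nu)=\infty$. This yields the map $\overline{\epsilon}\colon\ttt\to\Lambda^c\cup\{\infty\}$.

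The substantive content is strict monotonicity: if $\mu<\nu$ in $\ttt$, I would show $\overline{\epsilon}(\mu)<\overline{\epsilon}(\nu)$. Since $\mu$ is not a maximal node of $\ttt$, it is valuation-transcendental, hence well-specified; by Theorem \ref{max}, $\max(E_\mu)$ exists and equals $\overline{\epsilon}(\mu)$. The plan is then to exhibit a polynomial whose $\nu$-level strictly exceeds this maximum. The natural candidate is any $\phi\in\ty(\mu,\nu)$; by Lemma \ref{tdef}, $\phi\in\kpm$, so $\inm\phi$ is prime and in particular not a unit. Proposition \ref{emax} then identifies $\epsilon_\mu(\phi)=\max(E_\mu)=\overline{\epsilon}(\mu)$, so it suffices to prove $\epsilon_\nu(\phi)>\epsilon_\mu(\phi)$.

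For the strict comparison I would fix $s\in I_\mu(\phi)$, so that $\epsilon_\mu(\phi)=(\mu(\phi)-\mu(\partial_s\phi))/s$. The key observation is that by minimality of $\deg(\phi)$ in the tangent direction, every polynomial of degree less than $\deg(\phi)$ has the same $\mu$- and $\nu$-value; since $s\geq 1$ (by Lemma \ref{powerp}, $s$ is a positive power of $p$), $\deg(\partial_s\phi)<\deg(\phi)$, and hence $\nu(\partial_s\phi)=\mu(\partial_s\phi)$. Combined with $\nu(\phi)>\mu(\phi)$ from the definition of the tangent direction, this gives
\[
\epsilon_\nu(\phi)\geq\frac{\nu(\phi)-\nu(\partial_s\phi)}{s}=\epsilon_\mu(\phi)+\frac{\nu(\phi)-\mu(\phi)}{s}>\epsilon_\mu(\phi),
\]
and therefore $\overline{\epsilon}(\nu)\geq\epsilon_\nu(\phi)>\overline{\epsilon}(\mu)$.

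The main obstacle I anticipate is precisely the identification of $\overline{\epsilon}(\mu)$ with $\epsilon_\mu(\phi)$: one must keep in mind the taxonomy of tree nodes to confirm that the smaller valuation $\mu$ is automatically well-specified (so that $\max(E_\mu)$ makes sense at all) and then apply Proposition \ref{emax} to the prime element $\inm\phi$. Once this identification is secured, the rest is a short and essentially formal manipulation of the Taylor-formula definition of the level function, exploiting the characteristic degree-minimality property of any polynomial lying in the tangent direction $\ty(\mu,\nu)$.
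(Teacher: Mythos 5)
Your argument is correct and is essentially the paper's: you take $\phi\in\ty(\mu,\nu)$, use Lemma \ref{tdef} and Proposition \ref{emax} (primality of $\inm\phi$, hence non-unit) to identify $\epb(\mu)=\max(E_\mu)=\epm(\phi)$, and conclude from $\epm(\phi)<\epn(\phi)\le\epb(\nu)$. The only deviation is that where the paper simply cites \cite[Lemma 2.7]{Rig} for the inequality $\epm(\phi)<\epn(\phi)$, you verify it directly, and correctly so: for $s\in I_\mu(\phi)$ one has $s\ge1$, so $\deg(\partial_s\phi)<\deg(\phi)$ and the minimality of $\deg(\phi)$ in the tangent direction forces $\nu(\partial_s\phi)=\mu(\partial_s\phi)$, yielding $\epn(\phi)\ge\epm(\phi)+\bigl(\nu(\phi)-\mu(\phi)\bigr)/s>\epm(\phi)$.
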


\begin{proof}
Suppose $\mu<\nu$ in $\ttt$. 
Let $\ty(\mu,\nu)=[\phi]_\mu$ for some $\phi\in\kpm$. It is easy to check that $\epm(\phi)<\epn(\phi)$ \cite[Lemma 2.7]{Rig}. Also, since $\inm \phi$ is a prime element in $\ggm$, Proposition \ref{emax} shows that $\epm(\phi)=\epb(\mu)$. Thus, $\epb(\mu)=\epm(\phi)<\epn(\phi)\le\epb(\nu)$. 
\end{proof}\e

\noindent{\bf Definition. }{\it 
Let $\nu\in \ttt$. A monic $Q\in\kx$ is said to be a \textbf{key polynomial} for $\nu$ if for every $f\in\kx$ we have}
\[
 \deg f<\deg Q\imp \epn(f)<\epn(Q).
\]

For every polynomial $Q\in\kx$ consider the truncated function $\nu_Q$ on the set $\kx$, defined as follows  on $Q$-expansions:
\[
f=\sum\nolimits_{i\ge0}f_i\, Q^i,\quad  f_i\in\kx_{\deg(Q)}\ \imp\ \nu_Q(f)=\min\{\nu(f_i\,  Q^i)\mid i\ge0\}.
\]

For every key polynomial $Q$, the truncated function $\nu_Q$ is a valuation such that $\nu_Q\le\nu$.
We say that $Q$ is a \textbf{maximal key polynomial} for $\nu$ if $\nu_Q=\nu$. Maximal key polynomials are characterized as follows.

\begin{lemma}\label{maximal}\cite[Corollary 2.22]{AFFGNR}
Let $\nu\in\ttt$ and let $Q\in\kx$ be a monic polynomial. The following conditions are equivalent.
\begin{enumerate}
\item[(i)] \ $Q$ is a maximal key polynomial for $\nu$.
\item[(ii)] \ Either $\supp(\nu)=Q\kx$, or $Q\in \kpn$ and has minimal degree.
\item[(iii)] \ $\epn(Q)=\max(E_\nu)$ and $Q$ has minimal degree among all polynomials with this property.  
\end{enumerate}
\end{lemma}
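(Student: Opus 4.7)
The plan is to establish the cycle $(ii)\Rightarrow(i)\Rightarrow(iii)\Rightarrow(ii)$. A useful preliminary remark is that, by Theorem \ref{empty}, the set $\kpn$ is empty whenever $\supp(\nu)\ne 0$, so the two alternatives in (ii) are mutually exclusive and correspond respectively to the cases of nontrivial and trivial support.

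For $(ii)\Rightarrow(i)$ I would handle the two subcases separately. In the support case $\supp(\nu)=Q\kx$ the value $\nu(Q)=\infty$ forces all higher-order terms in any $Q$-expansion $f=\sum f_iQ^i$ to lie in $\supp(\nu)$, so $\nu_Q(f)=\nu(f_0)=\nu(f)$; the key polynomial condition is immediate since $\epn(Q)=\infty$ while any $f$ of smaller degree has $\nu(f)<\infty$. In the MLV case, formula (\ref{minimal}) applied with $\phi=Q$ is exactly the statement $\nu_Q=\nu$, while Theorem \ref{g0gm} and the remark following it guarantee that any $f$ of degree less than $\deg Q$ has $\inu f\in\ggn^0$ a homogeneous unit, so Proposition \ref{emax} yields $\epn(f)<\max(E_\nu)=\epn(Q)$.

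For $(iii)\Rightarrow(ii)$ I would invoke Proposition \ref{emax} to split into $\nu(Q)=\infty$ or $\inu Q$ not a unit. In the first subcase, the minimality of $\deg Q$ combined with the primality of $\supp(\nu)$ forces $\supp(\nu)=Q\kx$. In the second subcase, Theorem \ref{max} shows that $\nu$ is well-specified, Theorem \ref{empty} yields some $\phi\in\kpn$ of minimal degree, and the minimality conditions on $Q$ and $\phi$, together with the fact that any polynomial of degree less than $\deg\phi$ has unit $\inu$-image in $\ggn=\ggn^0[\pi]$, force $\deg Q=\deg\phi$. Writing $Q=\phi+a$ with $\deg a<\deg\phi$ and $\nu(a)\ge\nu(\phi)$, the element $\inu Q$ is either $\pi$ or $\pi+u$ for a homogeneous unit $u\in\ggn^0$, both prime in the polynomial ring $\ggn^0[\pi]$, and $\nu$-minimality of $Q$ is inherited from that of $\phi$.

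The main obstacle is the implication $(i)\Rightarrow(iii)$, since $\epn$ is not additive and cannot be read off directly from a $Q$-expansion. The strategy is to bound $\nu(\ps f)$ for every $s\ge 1$ via Leibniz applied to the $Q$-expansion,
\[
\ps f=\sum_{i}\sum_{a+b=s}(\partial_a f_i)(\partial_b Q^i),
\]
and then use the key polynomial property $\epn(f_i)\le\epn(Q)$ together with a simple induction giving $\nu(\partial_b Q^i)\ge \nu(Q^i)-b\,\epn(Q)$ to obtain $\nu(\ps f)\ge\nu(f)-s\,\epn(Q)$; the crucial step that replaces $\min_i\nu(f_iQ^i)$ by $\nu(f)$ uses precisely the hypothesis $\nu_Q=\nu$. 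This yields $\epn(f)\le\epn(Q)$ for every $f$, so $\epn(Q)=\max(E_\nu)$, and the minimality of $\deg Q$ among polynomials attaining this maximum follows immediately from the key polynomial definition.
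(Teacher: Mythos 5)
The paper does not prove this lemma itself; it cites it directly as \cite[Corollary~2.22]{AFFGNR}. So there is no in-paper argument to compare against, and the only question is whether your proposal is sound — which, on checking, it is.

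Your cycle $(ii)\Rightarrow(i)\Rightarrow(iii)\Rightarrow(ii)$ goes through. The delicate implication $(i)\Rightarrow(iii)$ is handled correctly: the Leibniz rule for Hasse--Schmidt derivatives plus the elementary bounds $\nu(\partial_b Q^i)\ge i\,\nu(Q)-b\,\epn(Q)$ and $\nu(\partial_a f_i)\ge\nu(f_i)-a\,\epn(f_i)\ge\nu(f_i)-a\,\epn(Q)$ give
\[
\nu(\ps f)\ \ge\ \min_i\bigl(\nu(f_i Q^i)\bigr)-s\,\epn(Q)\ =\ \nuq(f)-s\,\epn(Q),
\]
and the hypothesis $\nuq=\nu$ upgrades this to $\nu(\ps f)\ge\nu(f)-s\,\epn(Q)$, hence $\epn(f)\le\epn(Q)$ for all nonconstant $f$; the minimality of $\deg Q$ is then immediate from the definition of key polynomial. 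One small gap in $(iii)\Rightarrow(ii)$: after writing $Q=\phi+a$ with $\deg a<\deg\phi$, you assert $\nu(a)\ge\nu(\phi)$ without argument. The justification is that if $\nu(a)<\nu(\phi)$ then $\inu Q=\inu a\in\ggn^0$ would be a homogeneous unit, contradicting Proposition~\ref{emax} applied to $\epn(Q)=\max(E_\nu)$. With that inequality in hand you could also finish more economically by quoting \cite[Proposition~6.3]{KP} (already used in the proof of Lemma~\ref{deglc}), which says the minimal-degree elements of $\kpn$ are exactly the $\phi+a$ with $\nu(a)\ge\nu(\phi)$, rather than verifying primality of $\pi+u$ in $\ggn^0[\pi]$ by hand — though that verification is also correct, since evaluation $\pi\mapsto -u$ identifies $\ggn^0[\pi]/(\pi+u)$ with the domain $\ggn^0$.
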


By Theorem \ref{max}, only the well-specified valuations admit maximal key polynomials.\e

\noindent{\bf Definition. }{\it 
Let $\nu\in\ttt$ be a well-specified valuation. We define the \textbf{degree} of $\nu$ as the degree of any maximal key polynomial for $\nu$. We denote it by $\deg(\nu)$. }



\subsection{Lifting valuations to $\kbx$}

From now on, we fix an extension $\vb$ of $v$ to $\kb$.

Consider the tree $\tb=\ttt(\vb,\La)$  of all $\La$-valued valuations on the polynomial ring $\kbx$, whose restriction to $\kb$ is $\vb$.
We have a natural restriction map:
\[
\resk\colon \tb\lra\ttt,\qquad \om\longmapsto \resk(\om)=\om_{\mid \kx}
\]

Whenever we say that a valuation $\om$ on $\kbx$ is an \textbf{extension}  of some $\nu\in\ttt$, we mean that $\om\in\tb$ and $\resk (\om)=\nu$; that is, $\om$ is a common extension of $\nu$ and $\vb$.    

It is well known that $\om$ is well-especified if and only if $\nu$ is well-especified \cite{Kuhl,Vaq3}. Also, it is obvious that $\supp(\nu)=\supp(\om)\cap \kx$.

The well-specified nodes in $\tb$ are of the form $\om=\om_{a,\dta}$ for some $a\in\kb$, $\dta\in\La\cup\{\infty\}$ and they act as follows
\[
\om\left(\sum\nolimits_{i\ge0}c_i(x-a)^i\right)=\min\{\vb(c_i)+i\dta\mid i\ge0\}.
\]

The following classical result relates some properties of valuations on $\kx$ and their extensions to $\kbx$ \cite{N2019,Rig}.

\begin{proposition}\label{down}
For some $a\in\kb$, $\dta\in\La\cup\{\infty\}$, let $f=\irr_K(a)$ and  $\nu=\resk(\om_{a,\dta})$.
\begin{enumerate}
\item[(i)] \ $\epn(f)=\epb(\nu)=\dta$.
\item[(ii)] \ If $g\in\kx$ satisfies $\epn(g)<\dta$, then $\nu(g)=\vb(g(a))$. 
\end{enumerate}
\end{proposition}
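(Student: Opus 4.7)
The plan is to prove both parts by invoking Theorem~\ref{nov} with the specific extension $\om = \om_{a,\dta}$ of $\nu$ to $\kbx$, which is valid since by definition $\resk(\om_{a,\dta}) = \nu$. The decisive computation is an immediate consequence of the definition of $\om_{a,\dta}$: for any $b \in \kb$, expanding $x - b = (a - b) + 1\cdot(x - a)$ in powers of $(x-a)$ yields
\[
\om(x - b) \;=\; \min\bigl(\vb(a - b),\, \dta\bigr) \;\le\; \dta,
\]
with equality at $b = a$ (where one uses $\vb(0)=\infty$). Everything else is orchestrated around this identity.

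For (i), Theorem~\ref{nov} gives $\epn(f) = \max\{\om(x-b)\mid b\in \op{Z}(f)\}$. Since $a\in\op{Z}(f)$ contributes $\om(x-a)=\dta$ while every other root contributes at most $\dta$, I obtain $\epn(f)=\dta$; in particular $\epb(\nu)=\sup(E_\nu)\ge\dta$. For the reverse inequality, applying Theorem~\ref{nov} to an arbitrary nonconstant $g\in\kx$ and using the same bound $\om(x-c)\le\dta$ for every $c\in\op{Z}(g)$ forces $\epn(g)\le\dta$. Hence $\epb(\nu)\le\dta$, completing (i).

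For (ii), the constant case is immediate (by convention $\epn$ of a constant is $-\infty$, and $\nu(g)=v(g)=\vb(g(a))$), so assume $g$ is nonconstant, and factor $g = \gamma\prod_c(x-c)$ over $\kb$ with $\gamma\in\kb$ the leading coefficient. The hypothesis $\epn(g)<\dta$ combined with Theorem~\ref{nov} ensures $\om(x-c)<\dta$ for every root $c$, which via the key formula is equivalent to $\vb(a-c)<\dta$ and therefore $\om(x-c)=\vb(a-c)$. Consequently,
\[
\nu(g) \;=\; \om(g) \;=\; \vb(\gamma) + \sum_c \vb(a - c) \;=\; \vb\bigl(g(a)\bigr).
\]
I do not foresee a genuine obstacle: the argument is a direct unwinding of definitions once one exploits the concrete choice $\om = \om_{a,\dta}$ in Theorem~\ref{nov}. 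The only mild subtlety worth flagging is that the factorization in (ii) uses that $\kb$ is algebraically closed so $g$ splits completely in $\kbx$, allowing the product formula for $\om(g)$ to reduce the computation to the bounds on $\om(x-c)$ established above.
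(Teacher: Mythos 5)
Your proof is correct and follows the natural (and essentially standard) route: reduce everything to the explicit formula $\om_{a,\dta}(x-b)=\min(\vb(a-b),\dta)$ obtained from the $(x-a)$-expansion, then invoke Theorem~\ref{nov} with the concrete lift $\om=\om_{a,\dta}$. The paper itself cites this proposition as classical from \cite{N2019,Rig} without reproving it, and the argument in those references is the same: for (i), $a\in\op{Z}(f)$ gives $\epn(f)=\dta$ and the bound $\om(x-c)\le\dta$ for all $c$ gives $E_\nu\le\dta$, so $\dta=\max(E_\nu)=\epb(\nu)$; for (ii), the hypothesis $\epn(g)<\dta$ forces $\om(x-c)=\vb(a-c)<\dta$ for every root $c$ of $g$, and then $\om(g)=\vb(\gamma)+\sum_c\vb(a-c)=\vb(g(a))$ after splitting $g$ over $\kb$. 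Two small points worth making explicit in a write-up: $\epn$ is unchanged by scaling, so applying the monic statement of Theorem~\ref{nov} to non-monic $g$ is harmless; and the hypothesis $\epn(g)<\dta$ automatically excludes $a$ from $\op{Z}(g)$, so $g(a)\ne0$ and the product-formula computation for $\vb(g(a))$ is legitimate (this also covers the $\dta=\infty$ case). With these observations the argument is complete.
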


The next result shows that it is quite easy to lift  valuations on $\kx$ to $\kbx$ ``from large to small", preserving the ordering.

\begin{lemma}\label{large}
Suppose that $\mu<\nu$ in $\ttt$ and $\om\in\tb$ is an extension of $\nu$. Then, there exists an extension $\eta$ of $\mu$ to $\kbx$ such that $\eta<\om$.	
\end{lemma}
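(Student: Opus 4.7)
My plan is to build $\eta$ as the well-specified valuation $\om_{a,\dta}$ on $\kbx$ for a carefully chosen root $a$ of a tangent-direction polynomial and the value $\dta:=\epb(\mu)$. Since $\mu<\nu$, $\mu$ is not a maximal node of $\ttt$, hence valuation-transcendental and well-specified; in particular $\epb(\mu)=\max(E_\mu)\in\La$ by Theorem~\ref{max}. Pick $\phi\in\ty(\mu,\nu)$; then $\phi\in\kpm$ by Lemma~\ref{tdef}, so Proposition~\ref{emax} gives $\epm(\phi)=\epb(\mu)$. Factor $\phi=\prod_{i=1}^{n}(x-a_i)$ in $\kbx$ and, by Theorem~\ref{nov}, choose $a=a_{i_0}$ with $\om(x-a)=\max_i\om(x-a_i)=\epn(\phi)$. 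The argument in the proof of Corollary~\ref{preserve} gives the strict inequality $\epm(\phi)<\epn(\phi)$, so $\dta<\om(x-a)$. I then take $\eta:=\om_{a,\dta}\in\tb$.

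Verifying $\eta<\om$ is immediate: for any $f=\sum_k c_k(x-a)^k\in\kbx$,
\[
\eta(f)=\min_k\bigl(\vb(c_k)+k\dta\bigr)\leq\min_k\bigl(\vb(c_k)+k\om(x-a)\bigr)\leq\om(f),
\]
and $\eta(x-a)=\dta<\om(x-a)$ gives strictness.

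The main obstacle is showing $\resk(\eta)=\mu$, and my strategy is to compare $\eta$ with a carefully chosen extension $\tilde\mu\in\tb$ of $\mu$. By Theorem~\ref{nov} applied to $\phi$, every extension of $\mu$ satisfies $\max_i\tilde\mu(x-a_i)=\epm(\phi)=\epb(\mu)$, so the maximum is attained at some root of $\phi$. Because $\phi$ is irreducible over $K$, $\gal(\kb/K)$ acts transitively on the distinct roots of $\phi$ and permutes the extensions of $\mu$ among themselves; I may therefore arrange for the maximum to be attained at the prescribed root, i.e.\ $\tilde\mu(x-a)=\epb(\mu)$. Since every $b\in\kb$ is a root of its minimal polynomial in $\kx$, Theorem~\ref{nov} combined with $\epb(\mu)=\sup E_\mu$ yields $\epb(\mu)=\sup_{b\in\kb}\tilde\mu(x-b)$, hence $\tilde\mu(x-b)\leq\epb(\mu)$ for every $b\in\kb$.

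A case analysis on $\tilde\mu\bigl((x-a)+(a-b)\bigr)$ (strict ultrametric when $\vb(a-b)\neq\epb(\mu)$; the upper bound just proved when $\vb(a-b)=\epb(\mu)$) then forces
\[
\tilde\mu(x-b)=\min\bigl(\vb(a-b),\epb(\mu)\bigr)=\eta(x-b)\qquad\text{for every }b\in\kb.
\]
Factoring any $f\in\kx$ as $f=c\prod_j(x-b_j)$ over $\kb$ and multiplying these local identities gives $\mu(f)=\tilde\mu(f)=\vb(c)+\sum_j\tilde\mu(x-b_j)=\vb(c)+\sum_j\eta(x-b_j)=\eta(f)$, so $\eta$ restricts to $\mu$. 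The hardest parts of the argument will be the Galois-theoretic realization of the maximum at the prescribed root $a$ and the local identification $\tilde\mu(x-b)=\eta(x-b)$; the remaining global verification is routine.
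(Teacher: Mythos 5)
Your construction of $\eta$ is genuinely different from the paper's: you take $\eta=\om_{a,\dta}$ where $a$ is a root of the tangent-direction polynomial $\phi\in\ty(\mu,\nu)$ chosen to maximize $\om(x-a_i)$, whereas the paper (in the well-specified case) writes $\om=\om_{a',\dta'}$ and simply lowers the level at the \emph{same} center $a'$. Your route has the advantage of not requiring the case distinction between $\nu$ well-specified and $\nu$ valuation-algebraic, since Theorem~\ref{nov} supplies the root $a$ regardless. The inequality $\eta<\om$ is verified correctly. The problem is in your verification that $\resk(\eta)=\mu$.

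The Galois step is a genuine gap. You want an extension $\tilde\mu\in\tb$ of $\mu$ (so $\tilde\mu|_\kb=\vb$, a fixed extension) with $\tilde\mu(x-a)=\epb(\mu)$ at your \emph{prescribed} root $a$. Starting from an arbitrary $\tilde\mu_0\in\tb$ extending $\mu$, Theorem~\ref{nov} locates the maximum at some root $a_{j_0}$; you then invoke transitivity of $\gal(\kb/K)$ to move $a_{j_0}$ to $a$. But $\sigma\in\gal(\kb/K)$ only sends $\tilde\mu_0$ to an element of $\tb$ if $\sigma$ preserves $\vb$, i.e.\ lies in the decomposition group of $\vb|v$ -- and that subgroup need \emph{not} act transitively on the roots of $\phi$: over the henselization $K^h$, $\phi$ can split into several irreducible factors, and no decomposition automorphism can carry a root of one factor to a root of another. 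So the existence of the desired $\tilde\mu$ is not established by this argument. (The subsequent local computation $\tilde\mu(x-b)=\min(\vb(a-b),\epb(\mu))$ and the product-over-roots globalization are fine once the Galois step is granted.)

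Fortunately the whole $\tilde\mu$ detour can be discarded. Once you have $\eta=\om_{a,\dta}\in\tb$ with $\eta<\om$, Proposition~\ref{down}(i) applied to $\eta$ (with $\irr_K(a)=\phi$) gives $\epb(\resk(\eta))=\dta=\epb(\mu)$. From $\eta<\om$ one gets $\resk(\eta)\le\nu$, and $\resk(\eta)=\nu$ is impossible since it would force $\epb(\nu)=\epb(\mu)$, contradicting Corollary~\ref{preserve}. So both $\mu$ and $\resk(\eta)$ lie strictly below $\nu$ in the tree $\ttt$, hence are comparable, and equality of their $\epb$-values together with the strict monotonicity of $\epb$ (Corollary~\ref{preserve}) forces $\mu=\resk(\eta)$. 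This is exactly the paper's closing step; substituting it for your claims 1--3 makes your proof correct and, in fact, uniform in $\nu$ without the well-specified/valuation-algebraic case split.
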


\begin{proof}
Suppose  $\nu$  well-specified, $\om=\om_{a,\dta}$ and  $f=\irr_K(a)$. By Proposition \ref{down}, we have $\dta=\epn(f)=\epb(\nu)$. Take $\eta=\om_{a,\ep}$ for $\ep=\epb(\mu)$. Clearly, $\eta<\om$, because $\dta<\ep$. By Proposition \ref{down}, $\epb(\resk(\eta))=\epb(\mu)$. Since $\ttt$ is a tree, from $\mu,\resk(\eta)<\nu$ we deduce that $\mu\le \resk(\eta)$ or $\mu\ge \resk(\eta)$. In any case, Corollary \ref{preserve} shows that $\mu=\resk(\eta)$.

Now, suppose $\nu$ valuation-algebraic. Then, $\om$ is the stable limit of a family $(\om_i)_{i\in A}$ of well-specified valuations in $\tb$ whose corresponding ultrametric balls have empty intersection \cite[Theorem 3.23]{Vaq3}. In particular, $\om_i<\om$ for all $i\in A$.

It is easy to check that $\nu$ is then the stable limit of the family $\nu_i=\resk(\om_i)$. Thus, there exists some $i\in A$ such that $\mu<\nu_i<\nu$. As shown above, there exists $\eta\in\tb$ such that $\resk(\eta)=\mu$ and $\eta<\om_i$. This ends the proof.      
\end{proof}\e

The following result follows immediately from Lemma \ref{large} and Theorem \ref{nov}.

\begin{corollary}\label{preserve2}
If $\mu\le\nu$ in $\ttt$, then $\epm(f)\le\epn(f)$ for all $f\in\kx$.
\end{corollary}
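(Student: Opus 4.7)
The plan is to derive the inequality directly from Theorem \ref{nov} by transferring the question to an algebraic closure, where the level becomes a maximum of valuations on linear factors and hence is visibly monotone under $\mu\le\nu$.

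First, I would dispose of the trivial cases. If $\mu=\nu$ there is nothing to prove, so assume $\mu<\nu$. If $f\in K$, then by convention $\epm(f)=\epn(f)=-\infty$. If $f=c\,g$ with $c\in K^*$ and $g\in\kx$ monic nonconstant, then since each Hasse--Schmidt derivative is $K$-linear we have $\ps(f)=c\,\ps(g)$, so
\[
\mu(f)-\mu(\ps f)=\mu(g)-\mu(\ps g)\quad\mbox{and}\quad\nu(f)-\nu(\ps f)=\nu(g)-\nu(\ps g),
\]
whence $\epm(f)=\epm(g)$ and $\epn(f)=\epn(g)$. Thus it suffices to treat $f$ nonconstant and monic.

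Next, I would choose a compatible pair of extensions to $\kbx$. Pick any extension $\om\in\tb$ of $\nu$; by Lemma \ref{large}, there exists $\eta\in\tb$ with $\resk(\eta)=\mu$ and $\eta<\om$. In particular $\eta(h)\le\om(h)$ for every $h\in\kbx$.

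Finally, I would apply Theorem \ref{nov} to both valuations. Since $f\in\kx$ is monic and nonconstant, it splits over $\kb$ with root set $Z(f)$, and
\[
\epm(f)=\max\bigl\{\eta(x-a)\mid a\in Z(f)\bigr\},\qquad \epn(f)=\max\bigl\{\om(x-a)\mid a\in Z(f)\bigr\}.
\]
Choosing $a_0\in Z(f)$ realizing the maximum on the left, the inequality $\eta\le\om$ gives
\[
\epm(f)=\eta(x-a_0)\le\om(x-a_0)\le\max\bigl\{\om(x-a)\mid a\in Z(f)\bigr\}=\epn(f),
\]
as desired. There is no real obstacle here; the only mild subtlety is that Theorem \ref{nov} is stated for monic nonconstant polynomials, which is exactly why the preliminary reduction step is needed.
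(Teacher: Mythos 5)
Your proof is correct and matches the paper's intended argument exactly: the paper simply states that the corollary "follows immediately from Lemma \ref{large} and Theorem \ref{nov}," and your write-up supplies precisely those details (lift $\nu$ to $\om$, use Lemma \ref{large} to find a compatible $\eta<\om$ lifting $\mu$, apply Theorem \ref{nov} to both). The preliminary reduction to monic nonconstant $f$ is a sensible bit of bookkeeping that the paper leaves implicit.
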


\begin{corollary}\label{stableEps}
Let $Q$ be a key polynomial for $\nu\in \ttt$. If $f\in\kx$ satisfies $\nuq(f)=\nu(f)$, then $\ep_{\nuq}(f)=\epn(f)$.
\end{corollary}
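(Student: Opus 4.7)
My plan is to bracket $\ep_{\nuq}(f)$ between $\epn(f)$ and itself by combining the monotonicity of $\bar\ep$ with a direct estimate using the hypothesis $\nuq(f)=\nu(f)$.

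First I would note the general inequality $\nuq\le\nu$ in $\ttt$ (recorded in the paper right after the definition of truncated valuations). Applying Corollary~\ref{preserve2} to $\mu=\nuq$ and $\nu=\nu$ would immediately yield
\[
\ep_{\nuq}(f)\le\epn(f).
\]

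For the reverse inequality, I would exploit the definition of $\epn$ as a maximum. Choose $s_0\in I_\nu(f)$, so that $\epn(f)=(\nu(f)-\nu(\ps[s_0] f))/s_0$. Since $\ep_{\nuq}(f)$ is itself a maximum over all $s\in\N$, the index $s_0$ provides the lower bound
\[
\ep_{\nuq}(f)\;\ge\;\frac{\nuq(f)-\nuq(\ps[s_0] f)}{s_0}.
\]
Now apply the two facts in sequence: the hypothesis $\nuq(f)=\nu(f)$ replaces the numerator's first term, and the general inequality $\nuq(\ps[s_0]f)\le\nu(\ps[s_0]f)$ (coming from $\nuq\le\nu$) makes the subtraction only larger. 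Concretely,
\[
\frac{\nuq(f)-\nuq(\ps[s_0] f)}{s_0}\;=\;\frac{\nu(f)-\nuq(\ps[s_0] f)}{s_0}\;\ge\;\frac{\nu(f)-\nu(\ps[s_0] f)}{s_0}\;=\;\epn(f).
\]
Combining this chain with the upper bound from the first step gives the desired equality.

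No step is a real obstacle; the only thing worth checking is that one is entitled to pick $s_0$ realising the maximum in the definition of $\epn(f)$, which is granted by the definition (and by Lemma~\ref{powerp}, which even identifies such $s_0$ as a power of $p$). The argument is essentially an observation that hypothesis $\nuq(f)=\nu(f)$ plus $\nuq\le\nu$ forces the numerator in the Hasse–Schmidt quotient for $\ep_{\nuq}$ to dominate the one for $\epn$ at a single well-chosen index.
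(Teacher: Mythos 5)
Your proof is correct and takes essentially the same route as the paper: both establish $\ep_{\nuq}(f)\ge\epn(f)$ by choosing $s\in I_\nu(f)$ and using $\nuq(f)=\nu(f)$ together with $\nuq(\partial_s f)\le\nu(\partial_s f)$ in the quotient, and both obtain the reverse inequality from Corollary~\ref{preserve2}.
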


\begin{proof}
Take any $s\in I(f)$. Since
\[
\dfrac{\nuq(f)-\nuq(\partial_s f)}s
\ge \dfrac{\nu(f)-\nu(\partial_s f)}s=\epn(f),
\]
we deduce that $\ep_{\nuq}(f)\ge \epn(f)$. The equality is a consequence of Corollary \ref{preserve2}.
\end{proof}

\begin{corollary}\label{unitEps}
Let $Q$ be a key polynomial for $\nu\in \ttt$. For all $f\in\kx$, we have
\[
\epn(f)<\epn(Q)\ \sii\ \inn_{\nuq} f\quad\mbox{is a unit in }\gg_{\nuq}.
\] 
\end{corollary}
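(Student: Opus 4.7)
The plan is to reduce the equivalence to Proposition \ref{emax} applied to the truncated valuation $\nuq$. The preliminary observation is that $Q$ is itself a maximal key polynomial for $\nuq$: indeed, the identity $(\nuq)_Q=\nuq$ follows at once from $\nuq(f_i)=\nu(f_i)$ whenever $\deg f_i<\deg Q$ and from $\nuq(Q)=\nu(Q)$, so that Lemma \ref{maximal}(i) applied to $\nuq$ gives the claim. Consequently, Lemma \ref{maximal}(iii) together with Corollary \ref{stableEps} yields
\[
\max(E_{\nuq})=\ep_{\nuq}(Q)=\epn(Q).
\]

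For the direction $\epn(f)<\epn(Q)\Rightarrow \inn_{\nuq}f$ is a unit, I first note that $\nu(f)$ must be finite (otherwise $\epn(f)=\infty$), so $\nuq(f)\le\nu(f)<\infty$. Corollary \ref{preserve2} gives $\ep_{\nuq}(f)\le\epn(f)<\epn(Q)=\max(E_{\nuq})$, and Proposition \ref{emax} applied to $\nuq$ then yields that $\inn_{\nuq}f$ is a unit in $\mathcal{G}_{\nuq}$. For the converse, suppose $\inn_{\nuq}f$ is a unit. By the characterization of homogeneous units at the end of Section \ref{secGr}, applied to $\nuq$ (for which $Q$ is a minimal-degree MLV key polynomial by Lemma \ref{maximal}(ii)), this is equivalent to $\deg_{\nuq}f=0$; unpacking this on the $Q$-expansion $f=\sum_i f_iQ^i$ says that the minimum $\nuq(f)=\min_i\nu(f_iQ^i)$ is attained uniquely at $i=0$, so that $\nu(f_0)<\nu(f_iQ^i)$ strictly for every $i\ge 1$. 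Since $\nu$ is a valuation, this forces $\nu(f)=\nu(f_0)=\nuq(f)$; Corollary \ref{stableEps} then gives $\epn(f)=\ep_{\nuq}(f)$, and Proposition \ref{emax} for $\nuq$ gives $\ep_{\nuq}(f)<\max(E_{\nuq})=\epn(Q)$, so combining the two yields $\epn(f)<\epn(Q)$.

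The only delicate step is the reduction $\max(E_{\nuq})=\epn(Q)$, which converts a unit condition for $\inn_{\nuq}f$ into an inequality on $\epn$ rather than merely on $\ep_{\nuq}$. Corollary \ref{preserve2} alone gives only $\ep_{\nuq}(f)\le\epn(f)$, so in the converse direction the extra information $\nuq(f)=\nu(f)$ extracted from the unit hypothesis, together with Corollary \ref{stableEps}, is exactly what bridges the two levels.
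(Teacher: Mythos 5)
Your argument is correct in substance and runs largely parallel to the paper's: both directions are reduced to Proposition \ref{emax} applied to the truncation $\nuq$, with Corollary \ref{preserve2} supplying $\ep_{\nuq}(f)\le\epn(f)$ and Corollary \ref{stableEps} transferring $\ep$-values between $\nuq$ and $\nu$; the forward implication coincides with the paper's. Where you diverge is the converse: the paper extracts the key equality $\nuq(f)=\nu(f)$ from the unit hypothesis via Lemma \ref{tdef} (a unit in $\gg_{\nuq}$ cannot be divisible by the prime attached to the tangent direction $\ty(\nuq,\nu)$; the case $\nuq=\nu$ is trivial), whereas you extract it from the characterization of homogeneous units by $\deg_{\nuq}f=0$ (the Remark after Theorem \ref{g0gm}), unpacked as $S_{\nuq,Q}(f)=\{0\}$ on the $Q$-expansion. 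Both routes deliver the same intermediate fact; yours is more hands-on but requires the graded-algebra structure $\gg_{\nuq}=\gg_{\nuq}^0[\inn_{\nuq}Q]$, while the paper's is shorter and leans on the tangent-direction dichotomy.

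Two points need patching. First, asserting that $Q$ is a maximal key polynomial for $\nuq$ requires, besides $(\nuq)_Q=\nuq$, that $Q$ is a key polynomial for $\nuq$ in the sense of the $\ep$-definition; you only verified the former. The missing half is easy: for $\deg g<\deg Q$ one has $\ep_{\nuq}(g)\le\epn(g)<\epn(Q)=\ep_{\nuq}(Q)$ by Corollaries \ref{preserve2} and \ref{stableEps}, but it should be said. Second, and more substantively, your converse silently discards the first alternative in Lemma \ref{maximal}(ii): if $\nu(Q)=\infty$, then $\supp(\nuq)=Q\kx$, the set $\op{KP}(\nuq)$ is empty by Theorem \ref{empty}, and $\deg_{\nuq}$ is not even defined, so the step ``$\inn_{\nuq}f$ is a unit $\Leftrightarrow\deg_{\nuq}f=0$'' has no meaning in that case. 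The statement still holds there and is quickly settled separately: one then has $\nuq=\nu$, $\epn(Q)=\max(E_\nu)=\infty$, and the equivalence is exactly Proposition \ref{emax} (constant $f$ being trivial). As written, however, your argument does not cover this case, so you should either add this observation or restrict the graded-algebra step to the case $\nu(Q)<\infty$.
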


\begin{proof}
If $\epn(f)<\epn(Q)$, then Corollary \ref{preserve2} shows that
\[
\ep_{\nu_Q}(f)\le \epn(f)<\epn(Q)=\ep_{\nu_Q}(Q).
\]
Thus, $\inn_{\nu_Q} f$ is a unit in $\gg_{\nu_Q}$ by Proposition \ref{emax}.

Conversely, if $\inn_{\nuq} f$ is a unit in $\gg_{\nuq}$, then $\nuq(f)=\nu(f)$ by Lemma \ref{tdef}. By Corollary \ref{stableEps} and Proposition \ref{emax}, we deduce $\epn(f)=\ep_{\nuq}(f)<\ep_{\nuq}(Q)=\epn(Q)$.
\end{proof}\e

From these results we derive a useful consequence.

\begin{lemma}\label{nuq}
Let $Q\in\kx$ be a key polynomial for some $\nu\in\ttt$. For any $f\in \kx$, suppose that $f=f_0+f_1Q+\cdots+f_nQ^n$, with $\epn(f_0),\dots,\epn(f_n)<\epn(Q)$. 
Then, $\nu_Q(f)=\min\{\nu\left(f_i Q^i\right)\mid 0\le i\le n\}$.
\end{lemma}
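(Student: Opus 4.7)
The plan is to transfer the calculation into the graded algebra $\gg_{\nuq}$, where Theorem \ref{g0gm} provides the decomposition $\gg_{\nuq}=\gg_{\nuq}^0[\pi]$ with $\pi=\inn_{\nuq}(Q)$ transcendental over $\gg_{\nuq}^0$. This structure will let me convert the hypothesis $\epn(f_i)<\epn(Q)$ into tight control of the initial forms $\inn_{\nuq}(f_i)$, so that no cancellation can occur among the dominant terms of $\sum_i f_iQ^i$.

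First I would apply Corollary \ref{unitEps}: the hypothesis $\epn(f_i)<\epn(Q)$ is equivalent to $\inn_{\nuq}(f_i)$ being a unit in $\gg_{\nuq}$. Because $\pi$ is transcendental over $\gg_{\nuq}^0$, units of $\gg_{\nuq}=\gg_{\nuq}^0[\pi]$ must lie in the coefficient ring; hence $\inn_{\nuq}(f_i)\in\gg_{\nuq}^0$ for every $i$.

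Next I would verify that $\nuq(f_i)=\nu(f_i)$, so that $\nuq(f_iQ^i)=\nu(f_iQ^i)$ for every $i$. The case $\nuq=\nu$ is immediate; if $\nuq<\nu$, pick $\phi\in\ty(\nuq,\nu)$ and apply Lemma \ref{tdef}: a strict inequality $\nuq(f_i)<\nu(f_i)$ would force $\inn_{\nuq}(\phi)\mid\inn_{\nuq}(f_i)$, contradicting the fact that $\inn_{\nuq}(f_i)$ is a unit.

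Finally, set $m=\min_i\{\nu(f_iQ^i)\}$ and $M_0=\{i\mid \nu(f_iQ^i)=m\}$. Since $\nuq$ is a valuation, $\nuq(f)\ge m$, with equality if and only if the sum
\[
\sum_{i\in M_0}\inn_{\nuq}(f_iQ^i)=\sum_{i\in M_0}\inn_{\nuq}(f_i)\,\pi^i
\]
is nonzero in $\gg_{\nuq}$. Each coefficient $\inn_{\nuq}(f_i)$ with $i\in M_0$ is a nonzero element of $\gg_{\nuq}^0$, so the right-hand side is a nonzero polynomial in the transcendental element $\pi$; therefore $\nuq(f)=m$.

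I expect the second step to be the main obstacle: upgrading the unit property of $\inn_{\nuq}(f_i)$ to the equality $\nuq(f_i)=\nu(f_i)$ requires correctly identifying the tangent direction $\ty(\nuq,\nu)$ so that Lemma \ref{tdef} applies. Everything else is a routine combination of the valuation inequality with the transcendence of $\pi$ over $\gg_{\nuq}^0$.
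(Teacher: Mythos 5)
Your approach is correct when $\nu(Q)<\infty$ and differs genuinely from the paper's. The paper divides each $f_i$ by $Q$ with remainder $f_i^0$, invokes an external result (\cite[Lemma 2.7]{AFFGNR}) to obtain $\nu(f_i)=\nuq(f_i)=\nuq(f_i^0)<\nuq(q_iQ)$, and thereby reduces to a canonical $Q$-expansion. You instead work entirely inside the graded algebra $\gg_{\nuq}=\gg_{\nuq}^0[\pi]$: Corollary~\ref{unitEps} makes each $\inn_{\nuq}(f_i)$ a unit, Theorem~\ref{g0gm} (together with the remark following it) forces it into $\gg_{\nuq}^0$, the tangent-direction criterion of Lemma~\ref{tdef} upgrades the unit property to $\nuq(f_i)=\nu(f_i)$, and transcendence of $\pi$ over the domain $\gg_{\nuq}^0$ rules out cancellation among the dominant terms $\inn_{\nuq}(f_i)\,\pi^i$. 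This is more self-contained within the paper's own toolkit, at the cost of invoking the heavier structural result Theorem~\ref{g0gm}.

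There is, however, a gap: the whole argument requires $\pi=\inn_{\nuq}Q$ and the decomposition $\gg_{\nuq}=\gg_{\nuq}^0[\pi]$, which presupposes that $Q$ is an MLV key polynomial of minimal degree for $\nuq$. This fails exactly when $\nu(Q)=\infty$: then $\supp(\nuq)=Q\kx$, so $\nuq$ is not valuation-transcendental, $\kp(\nuq)=\emptyset$ by Theorem~\ref{empty}, and $\pi$ is not even defined. The paper treats this as a separate, trivial case: here $\nuq=\nu$, one has $\nu\left(f_iQ^i\right)=\infty$ for all $i>0$, and $f-f_0$ is a multiple of $Q$, so both sides of the asserted equality reduce to $\nu(f_0)$. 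You should add this case to make the proof complete.
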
  

\begin{proof}
If $\nu(Q)=\infty$, then $\nu_Q=\nu$ and $\nu(f_i)<\infty$ for all $i$. Thus,
\[
\nu(f)=\nu(f_0)=\min\{\nu\left(f_i Q^i\right)\mid 0\le i\le n\}.
\]

If $\nu(Q)<\infty$, then $\inn_{\nu_Q} f_i$ is a unit in $\gg_{\nu_Q}$ for all $i$, by Corollary \ref{unitEps}. Let us write
\[
f_i=f_i^0+q_iQ,\quad \deg f_i^0<\deg Q, \quad \mbox{ for all }0\le i\le n.
\]
By \cite[Lemma 2.7]{AFFGNR},
\[
\nu(f_i)=\nuq(f_i)=\nuq(f_i^0)<\nuq(q_iQ), \quad \mbox{ for all }0\le i\le n. 
\]
Hence, $\nuq(f)=\nuq\left(\sum_{0\le i}f_i^0 Q^i\right)=\min_{0\le i}\{\nu(f_i^0Q^i)\}=\min_{0\le i}\{\nu(f_iQ^i)\}$.
\end{proof}

\section{Limit key polynomials}\label{secLKP}
Let $\nu\in\ttt$ be well-specified.
In the set $\Psi$ of all key polynomials for $\nu$, we consider the pre-ordering determined by the action of $\nu$:
\[
P\le Q \sii \nu(P)\le\nu(Q).
\]

For $m\in\N$, let $\Psi_m$ be  the set of all key polynomials for $\nu$ of degree $m$.
Suppose that $\Psi_m$ is nonempty and contains no maximal element; in particular, $m<\deg(\nu)$. In this case, $\{\nu_Q\}_{Q\in \Psi_m}$ is an increasing family of valuations. Let $\mi$ be the minimal natural number such that $\mi>m$ and $\Psi_{\mi}\ne\emptyset$.  The elements in $\Psi_{\mi}$ are the limit key polynomials for $\nu$ with respect to $\Psi_m$. 

\begin{remark}
Every increasing family of valuations $\{\nu_i\}_{i\in I}$ of constant degree and admitting no last element can be realized as a family of truncations as above. Indeed, just consider any augmentation $\nu$ for this family and $Q_i$'s as defined in the Introduction. Then for every $i\in I$ we have $\nu_i=\nu_{Q_i}$. 
\end{remark}
  
Let $\qq\sub\Psi_m$ be a well-ordered, strictly increasing family of key polynomials which is cofinal in $\Psi_m$. In this case, we will sometimes denote $\Psi_{m_\infty}$ by $\kpi(\qq)$. Let us fix the following notation, for all $P,Q\in\qq$.
\[
\ars{1.5}
\begin{array}{lll}
\gaq=\nu(Q)&\qquad\eq=\epn(Q)&\qquad a_{QP}=Q-P\\
\g_Q=\g_{\nuq}&\qquad \gg_Q=\gg_{\nuq}&\qquad \simq\,=\,\sim_{\nuq}\\
\inn_Q=\op{in}_{\nuq}&\qquad 
\dgq=\deg_{\nuq}&\qquad \lcq=\op{lc}_{\nuq}.
\end{array}
\]

Let $Q_{\op{min}}=\min(\qq)$ and $\gmin=\ga_{Q_{\op{min}}}$. By \cite[Lemma 4.11]{VT}, we may assume moreover that 
\[
\g_{Q_{\op{min}}}=\g_Q^0=\g_Q,\quad\mbox{ for all }Q\in\qq,
\]
where $\g_Q^0:=\left\{\nu(a)\mid a\in\kx,\ 0\le \deg(a)<\deg(Q)\right\}$.
In particular, all $\gaq$ belong to the divisible hull $\g$ of $vK$, and  all valuations $\nuq$ are residue-transcendental, even if $\nu$ is value-transcendental, or has nontrivial support.

By Lemma \ref{maximal}, every $Q\in\qq$ is a MLV key polynomial for $\nuq$ of minimal degree.
For $R>Q$ in $\qq$ we have $\nu(a_{QR})=\nu(Q)$, because $R=Q-a_{QR}$ and $\nu(Q)<\nu(R)$. Hence, $R\in\ty(\nuq,\nu_R)$, because
\[
\nuq(R)=\min\{\nuq(Q),\nuq(a_{QR})\}=\nuq(Q)=\nu(Q)<\nu(R),
\] 
and $\nuq(a)=\nu_R(a)=\nu(a)$ for all $a\in\kx_m$.
By Lemma \ref{tdef}, $R$ is a MLV key polynomial of $\nuq$ of minimal degree too. 

Take $Q<R<S$ in $\qq$. By Lemma \ref{tdef}, 
\[
[S]_{\nuq}=\ty(\nuq,\nu_S)=\ty(\nuq,\nu_R)=[R]_{\nuq}.
\] 
Thus, $\pbq:=\inq R=\inq S$ is independent of the choice of $R>Q$ in $\qq$. By Theorem \ref{g0gm}, the unit $u_Q:=\inq a_{QR}=\inq a_{QS}$ is independent of the choice of $R>Q$ too. If  $\piq:=\inq Q$, then we obtain altogether three homogeneous elements  of grade $\gaq$ in $\ggq$ related by:
\[
\pbq=\piq-u_Q.
\]
Under the canonical homomorphism $\ggq\to\gg_R$, the prime element $\pbq$ is mapped to zero and $\piq$, $u_Q$ are mapped to units. If we denote these images by the same symbols, then we have $\piq=u_Q$ in $\gg_R$.

\subsection{Stability properties of $\qq$}

\begin{lemma}\label{first}
	For all $Q<R$ in $\qq$ and all $s\in I(Q)$, we have
	\[
	\nu(\ps a_{QR})>\nu(\ps Q)=\nu(\ps R).
	\]
\end{lemma}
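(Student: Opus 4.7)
The plan is to write $\partial_s R = \partial_s Q - \partial_s a_{QR}$ and establish the strict inequality $\nu(\partial_s a_{QR}) > \nu(\partial_s Q)$ first. The desired equality $\nu(\partial_s Q) = \nu(\partial_s R)$ then follows from the ultrametric triangle inequality (strict case), since the valuation of a difference equals the smaller valuation whenever the two summands have distinct values.

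For the strict inequality, I would collect three ingredients already available. First, the text preceding the lemma observed that $\nu(a_{QR}) = \nu(Q)$, since $Q = R + a_{QR}$ and $\nu(R) > \nu(Q)$. Second, $a_{QR} = Q - R$ lies in $\kx_m$ because $Q, R$ are both monic of degree $m$, so $\deg(a_{QR}) < m = \deg(Q)$. Third, $Q$ is a key polynomial for $\nu$, so by definition any polynomial of degree $< \deg(Q)$ has strictly smaller $\epsilon_\nu$ than $\epsilon_\nu(Q) = \epsilon_Q$; hence $\epsilon_\nu(a_{QR}) < \epsilon_Q$.

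Now I would apply the defining inequality of $\epsilon_\nu$ to $a_{QR}$ at the index $s$:
\[
\nu(\partial_s a_{QR}) \ge \nu(a_{QR}) - s\, \epsilon_\nu(a_{QR}) = \nu(Q) - s\, \epsilon_\nu(a_{QR}).
\]
Since $s \in I(Q)$, the maximum defining $\epsilon_Q$ is attained at $s$, so $\nu(\partial_s Q) = \nu(Q) - s\,\epsilon_Q$. Combining these with $\epsilon_\nu(a_{QR}) < \epsilon_Q$ gives
\[
\nu(\partial_s a_{QR}) \ge \nu(Q) - s\,\epsilon_\nu(a_{QR}) > \nu(Q) - s\,\epsilon_Q = \nu(\partial_s Q),
\]
as required.

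There is essentially no obstacle; the lemma is a direct unpacking of the key polynomial property and the definition of $\epsilon_\nu$. The only degenerate subcase is when $a_{QR}$ happens to be a (necessarily nonzero) constant in $K$, in which case $\partial_s a_{QR} = 0$ and $\nu(\partial_s a_{QR}) = \infty$, so the strict inequality holds trivially; this is consistent with the convention $\epsilon_\nu(a) = -\infty$ for $a \in K$, so one uniform argument suffices.
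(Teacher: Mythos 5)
Your argument is correct and is essentially the paper's own proof: both use $\nu(a_{QR})=\nu(Q)$, the key polynomial property to get $\epn(a_{QR})<\eq$, the defining inequality of $\epn$ at the index $s\in I(Q)$ to conclude $\nu(\ps a_{QR})>\nu(\ps Q)$, and then the ultrametric inequality applied to $\ps R=\ps Q-\ps a_{QR}$. Your explicit remark about the degenerate case $a_{QR}\in K$ is a harmless addition that the paper leaves implicit via the convention $\epn(a)=-\infty$.
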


\begin{proof}
Write $h=a_{QR}$.
Since $\nu(h)=\gaq$, for all $s\in I(Q)$ we have
\[
\dfrac{\gaq-\nu(\ps h)}s\le\epn(h)<\eq=\dfrac{\gaq-\nu(\ps Q)}s.
\]
Hence, $\nu(\ps h)>\nu(\ps Q)$. Since $\ps R=\ps Q-\ps h$, the equality $\nu(\ps Q)=\nu(\ps R)$ follows too.
\end{proof}

\begin{lemma}\label{sstable}
	For all $Q<R$ in $\qq$, we have $\min(I(Q)) \ge \max(I(R)) $.
\end{lemma}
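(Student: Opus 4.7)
The strategy is to convert the inequality $\min I(Q)\ge \max I(R)$ into a single comparison in the ordered group $\La$. For arbitrary $s_Q\in I(Q)$ and $s_R\in I(R)$, I aim for the sandwich
\[
s_R(\ep_R-\eq)\;\le\;\ga_R-\gaq\;\le\;s_Q(\ep_R-\eq),
\]
from which cancellation of the positive element $\ep_R-\eq\in\La$ immediately yields $s_R\le s_Q$, and hence the lemma upon taking $s_R=\max I(R)$ and $s_Q=\min I(Q)$.

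Three ingredients feed this sandwich. First, the positivity $\ep_R-\eq>0$: since $Q<R$ in $\qq$ forces $\nuq<\nu_R$, and each of $Q,R$ is a maximal key polynomial for its truncation by Lemma \ref{maximal}(ii), Corollary \ref{stableEps} identifies $\eq=\ep_{\nuq}(Q)=\epb(\nuq)$ and $\ep_R=\ep_{\nu_R}(R)=\epb(\nu_R)$, and Corollary \ref{preserve} supplies the strict inequality $\epb(\nuq)<\epb(\nu_R)$. Second, I would establish the uniform bound $\nu(\ps R)\ge \gaq-s\eq$ for every $s\ge 1$: writing $R=Q-a_{QR}$, the estimate $\nu(\ps Q)\ge \gaq-s\eq$ is immediate from the definition of $\eq$, while the key polynomial property of $Q$ combined with $\deg a_{QR}<m$ forces $\epn(a_{QR})<\eq$ and hence the strict bound $\nu(\ps a_{QR})>\gaq-s\eq$. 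Plugging this bound in at $s=s_R$, and substituting $\nu(\partial_{s_R}R)=\ga_R-s_R\ep_R$, produces the left half of the sandwich. Third, Lemma \ref{first} applied to $s=s_Q\in I(Q)$ gives the equality $\nu(\partial_{s_Q}R)=\nu(\partial_{s_Q}Q)=\gaq-s_Q\eq$; the defining inequality $s_Q\ep_R\ge \ga_R-\nu(\partial_{s_Q}R)$ then rearranges into the right half of the sandwich.

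The only delicate bookkeeping is the strictness of two inequalities: $\epn(a_{QR})<\eq$ and $\ep_R-\eq>0$. The former is what makes $\ps a_{QR}$ negligible in the minimum computing $\nu(\ps R)$, so that the uniform lower bound is $\gaq-s\eq$ (rather than the weaker $\gaq-s\,\epn(a_{QR})$); the latter is the positive element one cancels from the sandwich to extract $s_R\le s_Q$ instead of a mere scaled inequality. No machinery beyond Lemma \ref{first} and the comparison results of Section \ref{secEp} is required.
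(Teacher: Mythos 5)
Your proof is correct, and it repackages the same underlying estimates as the paper's argument into a cleaner algebraic form. The paper proceeds by showing directly that if $s\in I(Q)$ and $t>s$, then
\[
\frac{\ga_R-\nu(\pt R)}{t}<\frac{\ga_R-\gaq}{s}+\eq=\frac{\ga_R-\nu(\ps R)}{s}\le\ep_R,
\]
so $t\notin I(R)$; you instead derive the sandwich $s_R(\ep_R-\eq)\le\ga_R-\gaq\le s_Q(\ep_R-\eq)$ and cancel. Both routes use precisely Lemma \ref{first} for the equality $\nu(\ps Q)=\nu(\ps R)$ on $I(Q)$, and the strict bound $\epn(a_{QR})<\eq$ (coming from $Q$ being a key polynomial and $\deg a_{QR}<m$) to control $\nu(\ps a_{QR})$. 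The genuine difference is that your cancellation step requires the strict positivity $\ep_R-\eq>0$; this is correct (and your justification via Corollaries \ref{stableEps} and \ref{preserve} works, though a more elementary route is to note that Lemma \ref{first} at $s\in I(Q)$ gives $\ep_R\ge\frac{\ga_R-\nu(\ps R)}{s}=\eq+\frac{\ga_R-\gaq}{s}>\eq$), whereas the paper's contradiction-style formulation never needs it explicitly. Your version makes the inequality $s_R\le s_Q$ appear as a single clean division by a positive element, which is a nice gain in transparency at the cost of one extra auxiliary fact.
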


\begin{proof}
Take integers $s<t$ with $s\in I(Q)$. 	
It suffices to show that $t\not\in I(R)$.

Indeed, consider the two inequalities:
\[
\dfrac{\ga_R-\nu(\pt h)}t=\dfrac{\ga_R-\gaq}t+\dfrac{\gaq-\nu(\pt h)}t\le\dfrac{\ga_R-\gaq}t+\epn(h)<\dfrac{\ga_R-\gaq}s+\eq,
\]
\[
\dfrac{\ga_R-\nu(\pt Q)}t=\dfrac{\ga_R-\gaq}t+\dfrac{\gaq-\nu(\pt Q)}t\le\dfrac{\ga_R-\gaq}t+\eq<\dfrac{\ga_R-\gaq}s+\eq,
\]
Since $\pt R=\pt Q-\pt h$, we deduce from both inequalities the following one:
\[
\dfrac{\ga_R-\nu(\pt R)}t<\dfrac{\ga_R-\gaq}s+\eq=\dfrac{\ga_R-\gaq}s+\dfrac{\gaq-\nu(\ps Q)}s=\dfrac{\ga_R-\nu(\ps R)}s\le \epn(R),
\]
where we used $\nu(\ps Q)=\nu(\ps R)$ by Lemma \ref{first}. This implies that $t\not\in I(R)$.
\end{proof}\e

\begin{corollary}\label{corsstable}
There exists $Q_0\in\qq$ such that $I(Q)=I(Q_0)=\{s\}$ for all $Q\ge Q_0$. Moreover, if we denote $\taq=\gaq-s\eq$, then we have $\taq=\tau_{Q_0}$ for all $Q\ge Q_0$.
\end{corollary}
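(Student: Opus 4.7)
The plan is to chain Lemma \ref{sstable} with itself to force a monotonicity on the integer $\min(I(Q))$, then observe that a non-increasing sequence of positive integers must stabilize, and finally extract the second statement from Lemma \ref{first}.

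First, I would note that for any $Q<R$ in $\qq$, Lemma \ref{sstable} gives $\min(I(R))\le \max(I(R))\le \min(I(Q))$. Hence $Q\mapsto \min(I(Q))$ is a non-increasing function on $\qq$ taking values in $\N_{\ge 1}$, so it eventually stabilizes: there exist $Q_1\in\qq$ and $s\in\N$ with $\min(I(Q))=s$ for all $Q\ge Q_1$.

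Next, I would pick $Q_0\in\qq$ strictly greater than $Q_1$ (which exists since $\qq$ has no last element). Then for every $Q\ge Q_0$, applying Lemma \ref{sstable} to the pair $Q_1<Q$ yields $\max(I(Q))\le \min(I(Q_1))=s$, while $\max(I(Q))\ge \min(I(Q))=s$; so $I(Q)=\{s\}$, as required.

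For the last assertion, since $s\in I(Q)$ we have $\eq = (\gaq-\nu(\ps Q))/s$, i.e.\ $\taq=\gaq-s\eq=\nu(\ps Q)$. For $Q_0\le Q<R$ in $\qq$, Lemma \ref{first} applied with this $s\in I(Q)$ gives $\nu(\ps Q)=\nu(\ps R)$, whence $\taq=\tau_R$. In particular $\taq=\tau_{Q_0}$ for all $Q\ge Q_0$. There is no real obstacle here: the only subtlety is making sure $Q_0$ itself (not just elements strictly above $Q_1$) satisfies $I(Q_0)=\{s\}$, which is handled cleanly by choosing $Q_0$ strictly larger than the first stabilization point $Q_1$.
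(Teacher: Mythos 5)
Your proof is correct and takes essentially the same route as the paper's: stabilization of $I(Q)$ from Lemma \ref{sstable}, then the identity $\taq=\nu(\ps Q)$ for the stable $s$, then stability of $\taq$ from Lemma \ref{first}. The paper merely states these steps without unpacking them; your version makes explicit the descent on $\min(I(Q))$ and the clean choice $Q_0>Q_1$, which the paper leaves implicit.
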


\begin{proof}
The stability of $I(Q)$ follows immediately from Lemma \ref{sstable}. Moreover, for the stable value $s$, we have $\taq=\gaq-s\eq= \nu(\ps Q)$ for all $Q\ge Q_0$. The stability of $\taq$ follows from Lemma \ref{first}. 
\end{proof}\e

\subsection{Stability properties of limit key polynomials}
For an arbitrary $f\in\kx$, let us denote the canonical $Q$-expansion of $f$ by:
\[
f=f_{Q,0}+f_{Q,1}Q+\cdots+f_{Q,n}Q^n,\qquad n=\lfloor \deg f/\deg Q\rfloor.
\]

For $R>Q$ in $\qq$, we mentioned above that $Q$ and $R$ are MLV key polynomials for $\nuq$ of minimal degree. Hence, as we saw in (\ref{minimal}), we can compute $\nuq(f)$ from the $R$-expansion of $f$: 
\[
\nuq(f)=\min\{\nuq(f_{R,i}R^i)\mid 0\le i\le n\}.
\]

Let $S_Q(f)=S_{\nuq,Q}(f)$ and $S_{Q,R}(f)=S_{\nuq,R}(f)$. By applying (\ref{inuf}) to the $Q$ and $R$-expansions, we get:
\begin{equation}\label{pipi}
\sum_{i\in S_Q(f)}(\inq f_{Q,i})\,\piq^i=\inq f=\sum_{j\in S_{Q,R}(f)}(\inq f_{R,j})\,\pbq^j.
\end{equation}

\begin{proposition}\label{ellstable}
For $Q\in\qq$, let $d=\dgq f$. For all  $R>Q$ in $\qq$ we have:
\begin{enumerate}
	\item[(i)] \ $\lcq(f)=\inq f_{Q,d}=\inq f_{R,d}$. In particular, $\nu(f_{Q,d})=\nu(f_{R,d})$.
	\item[(ii)] \ $d=\max(S_{Q,R}(f))\ge \deg_R f$.
\end{enumerate}
\end{proposition}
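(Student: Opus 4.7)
The plan is to exploit the transcendence of $\piq$ over $\ggq^0$ (Theorem \ref{g0gm}) by rewriting both sides of (\ref{pipi}) as polynomials in $\ggq^0[\piq]$ via the substitution $\pbq=\piq-u_Q$. The left-hand side of (\ref{pipi}) already displays $\inq f$ as a polynomial in $\piq$ of degree $d=\dgq f=\max(S_Q(f))$ with leading coefficient $\inq f_{Q,d}=\lcq(f)$. On the right-hand side, binomial expansion of $\pbq^j=(\piq-u_Q)^j$ produces a polynomial whose $\piq^{d'}$-coefficient, for $d':=\max(S_{Q,R}(f))$, comes only from the top index $j=d'$ and equals $\inq f_{R,d'}$. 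By transcendence, matching degrees and leading coefficients gives $d=d'$ and $\inq f_{Q,d}=\inq f_{R,d}$, which is (i) together with the first equality in (ii). The ``in particular'' follows because both classes lie in the same nonzero graded piece of $\ggq$, forcing $\nuq(f_{Q,d})=\nuq(f_{R,d})$; since $f_{Q,d}$ and $f_{R,d}$ have degree less than $m=\deg Q$, these $\nuq$-values coincide with the corresponding $\nu$-values.

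For the remaining inequality $\max(S_{Q,R}(f))\geq \deg_R f$, I would argue by contradiction, setting $e:=\deg_R f$ and assuming $e>d$. Since $R$ is a MLV key polynomial of minimal degree for both $\nuq$ and $\nu_R$, formula (\ref{minimal}) applied to the common $R$-expansion gives $\nuq(f)=\nu(f_{R,d})+d\gaq$ (because $d\in S_{Q,R}(f)$) and $\nu_R(f)=\nu(f_{R,e})+e\ga_R$ (because $e\in S_R(f)$), where the coefficients are valued via $\nu$ since they have degree less than $m$. The minimality of these expressions at $j=e$ in the first and $j=d$ in the second forces, respectively, $\nu(f_{R,d})-\nu(f_{R,e})<(e-d)\gaq$ (strict, because $e>\max S_{Q,R}(f)$) and $\nu(f_{R,d})-\nu(f_{R,e})\geq (e-d)\ga_R$. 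Dividing by the positive integer $e-d$ contradicts $\gaq<\ga_R$.

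The only real conceptual obstacle is keeping track of the fact that $\nuq$ and $\nu_R$ agree on all coefficients of the $R$-expansion (they have degree less than $m$) but disagree on $R$ itself, where the gap $\ga_R-\gaq>0$ is precisely the ingredient that powers both the leading-coefficient comparison in part (i) (through the unit $u_Q$ and the relation $\pbq=\piq-u_Q$) and the contradiction in part (ii).
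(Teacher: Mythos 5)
Your proof is correct and follows essentially the same route as the paper. For the equality $d=\max(S_{Q,R}(f))$ and statement (i), the paper cites Lemma \ref{deglc}, whereas you unpack its content by expanding $\pbq^j=(\piq-u_Q)^j$ and invoking the transcendence of $\piq$ over $\ggq^0$ directly; the underlying idea is identical. The argument for $d\ge\deg_R f$ via comparison of the $\nuq$- and $\nu_R$-valued Newton diagrams of the $R$-expansion, leading to a contradiction with $\gaq<\ga_R$, matches the paper's argument step for step.
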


\begin{proof}
The equality $d=\max(S_{Q,R}(f))$ and statement (i) follow from Lemma \ref{deglc}, because the degree and leading coefficient of the two expressions of $\inq f$ in (\ref{pipi}) as a polynomial in $\ggq^0[\piq]=\ggq^0[\pbq]$ must coincide. 

It remains only to show that $d\ge d':=\deg_R f$. Let $S_R=S_{\nu_R,R}(f)$. The simple fact that $d\in S_{Q,R}$ and $d'\in S_R$ implies:
\[
\nuq(f_{R,d}R^d)\le \nuq(f_{R,{d'}}R^{d'}),\qquad  
\nu(f_{R,d}R^d)\ge \nu(f_{R,{d'}}R^{d'}).
\] 
From these inequalities, we immediately deduce
\[
(d'-d)\nuq(R)\ge \nu(f_{R,d})- \nu(f_{R,d'})\ge (d'-d)\nu(R).
\]
Since $\nuq(R)<\nu(R)$, this inequality implies that $d'-d\le 0$.
\end{proof}\e

The following result is an immediate consequence of Proposition \ref{ellstable}.

\begin{corollary}\label{corellstable}
There exists $Q_0\in\qq$ such that $\dgq f=\deg_{Q_0} f$ for all $Q\ge Q_0$.

Moreover, for $d=\deg_{Q_0} f$ we have $\nu(f_{Q,d})=\nu(f_{Q_0,d})$ for all $Q\ge Q_0$.
\end{corollary}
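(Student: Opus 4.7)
The plan is to read off both statements directly from Proposition \ref{ellstable}. For the first assertion, I will use part (ii) of that proposition as a monotonicity statement: for any $Q<R$ in $\qq$, $\dgq f\ge \deg_R f$, so the function $Q\mapsto \dgq f$ is non-increasing along $\qq$. Since it takes values in $\N$, it must attain a minimum, and $\qq$ being well-ordered lets me define $Q_0$ as the least element of $\qq$ at which this minimum is reached. From $Q_0$ onward, the function is constant and equal to $d:=\deg_{Q_0} f$.

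For the second assertion, for each $Q\ge Q_0$, I apply Proposition \ref{ellstable}(i) with $Q_0$ in the role of ``$Q$" and $Q$ in the role of ``$R$". The hypothesis ``$d=\dg_{Q_0} f$" is exactly what step one established, so the proposition yields
\[
\inn_{Q_0}\!\left(f_{Q_0,d}\right)\ =\ \lc_{Q_0}(f)\ =\ \inn_{Q_0}\!\left(f_{Q,d}\right).
\]
In particular $\nu_{Q_0}(f_{Q_0,d})=\nu_{Q_0}(f_{Q,d})$.

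To finish, I just need to replace $\nu_{Q_0}$ by $\nu$ on these two coefficients. Both $f_{Q_0,d}$ and $f_{Q,d}$ lie in $\kx_m$, where $m$ is the common degree of the elements of $\qq$; and in the setup preceding the stability subsection it was noted that $\nu_P(a)=\nu(a)$ for every $a\in\kx_m$ and every $P\in\qq$. Applying this with $P=Q_0$ gives $\nu(f_{Q_0,d})=\nu(f_{Q,d})$, as required.

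I do not expect any real obstacle here: once Proposition \ref{ellstable} is in hand, the corollary is a purely formal consequence. The only minor point to be careful about is invoking the equality $\nu_{Q_0}=\nu$ on $\kx_m$ so that the identification of initial forms translates into an equality of $\nu$-values, but this is immediate from the standing assumption $\g_{Q_{\min}}=\g_Q$ together with the $\nu_Q$-minimality of $Q$.
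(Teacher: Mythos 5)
Your argument is correct and is exactly what the paper intends: the paper states the corollary as an immediate consequence of Proposition \ref{ellstable}, with part (ii) giving that $Q\mapsto\dgq f$ is non-increasing with values in $\N$ (hence ultimately constant) and part (i) giving the stability of $\nu(f_{Q,d})$, which in fact already contains the equality of $\nu$-values so your final translation from $\nu_{Q_0}$ to $\nu$ on $\kx_m$ (valid simply because the truncation $\nu_{Q_0}$ agrees with $\nu$ in degree $<\deg Q_0$) is a harmless extra step.
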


We shall denote these stable objects by $d(f)=\deg_{Q_0} f$, $\bed(f)=\nu(f_{Q_0,d})$.
If $f$ is $\qq$-stable, then it is obvious that $d(f)=0$, $\bed(f)=\nu(f)$.

\begin{definition}\label{definofdefect}
If $f$ is $\qq$-unstable, then we say that $d(f)$ is the \textbf{defect} of $f$ with respect to $\qq$ (or with respect to $\Psi_m$). 
\end{definition}

\begin{lemma}\label{def=p}
If $f$ is $\qq$-unstable of minimal degree, then $d(f)$ is a power of the characteristic exponent of $v$.
\end{lemma}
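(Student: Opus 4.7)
My plan is to fix $Q\in\qq$ large enough that all the stability data from Corollaries \ref{corsstable} and \ref{corellstable} are in force: $I(Q)=\{s\}$ with $s$ a power of $p$ (by Lemma \ref{powerp}), $\tau_Q=\tau$, $\dgq f=d$, and $\bed(f)=\nu(f_{Q,d})$ all constant for $Q\ge Q_0$. The strategy is to show by induction on $l\ge 0$ that either $d=p^l$, or every index $i\in S_Q(f)$ is divisible by $p^{l+1}$. Since $d\in S_Q(f)$ is finite, this dichotomy must terminate with $d=p^{l^\ast}$ for some $l^\ast\ge 0$, proving the claim.

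The inductive step rests on computing $\nuq(\partial_{p^l s}f)$ via the $Q$-expansion $f=\sum_i f_{Q,i}Q^i$ and the Leibniz rule for Hasse--Schmidt derivatives. Expanding
\[
\partial_{p^l s}(Q^i)=\sum_{\alpha_1+\cdots+\alpha_i=p^l s}\prod_k\partial_{\alpha_k}Q,
\]
the hypothesis $I(Q)=\{s\}$ forces the minimum value $i\gaq-p^l s\eq$ to be attained only by tuples with $p^l$ of the $\alpha_k$'s equal to $s$ and the rest equal to $0$, so
\[
\partial_{p^l s}(Q^i)=\binom{i}{p^l}Q^{i-p^l}(\partial_sQ)^{p^l}+(\text{higher }\nu\text{-value terms}).
\]
By Lucas's theorem, $v\!\left(\binom{i}{p^l}\right)=0$ iff the $p$-adic digit of $i$ at position $l$ is nonzero, which, under the inductive assumption $p^l\mid i$ for all $i\in S_Q(f)$, is equivalent to $p^{l+1}\nmid i$. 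Combining this with the sharp estimate $\nu(\partial_a f_{Q,i})>\nu(f_{Q,i})-a\eq$ for $a>0$ (valid because $\epn(f_{Q,i})<\eq$, since $\deg f_{Q,i}<\deg Q$ and $Q$ is a key polynomial for $\nu$), the only contributions to $\inq(\partial_{p^l s}f)$ at the candidate degree $\nuq(f)-p^l s\eq$ come from the $(a,b)=(0,p^l s)$ summands with $i\in S_Q(f)$ and $p^{l+1}\nmid i$.

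Because distinct $i$'s contribute terms of pairwise distinct $\piq$-degree $i-p^l$, the resulting residual polynomial in $\ggq^0[\piq]$ vanishes iff no such $i$ exists. When it is nonzero, $\nuq(\partial_{p^l s}f)=\bed+(d-p^l)\gaq+p^l\tau$; since $\partial_{p^l s}f$ is $\qq$-stable (as $\deg\partial_{p^l s}f<\deg f$), this value must be constant in $Q$, which, given that $\gaq$ is strictly increasing in $\qq$, forces $d=p^l$. When it vanishes, every $i\in S_Q(f)$ satisfies $p^{l+1}\mid i$, advancing the induction. Termination occurs at $l^\ast=v_p(d)$: there, $d$ itself contributes the top $\piq^{d-p^{l^\ast}}$ coefficient to the residual polynomial, making it necessarily nonzero and forcing $d=p^{l^\ast}$. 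The main obstacle lies in the non-cancellation analysis at each inductive level, handled uniformly via Lucas's theorem together with the distinct-$\piq$-degrees of the surviving summands; a subsidiary technical point is the strict estimate $\epn(f_{Q,i})<\eq$, crucial to ensure that Leibniz cross-terms contribute at strictly higher $\nu$-value than the leading $(0,p^l s)$-term.
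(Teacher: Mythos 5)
Your proof is correct, but it is genuinely different from the one in the paper. The paper proceeds by a high-level reduction: it packages $d(f)$ as the defect of the limit augmentation $\nu_Q\to\eta=[\qq;f,\nu(f)]$ in the sense of \cite{NN}, passes to the henselization $K^h$ where this defect is preserved by \cite[Theorem 1.1]{NN} and \cite{Rig}, and then invokes Ostrowski's lemma (\cite{Vaq2}) for henselian fields. Your argument, by contrast, is an elementary, self-contained computation entirely inside $\ggq$: you expand $\partial_{p^{l}s}f$ by the Leibniz rule over the $Q$-expansion, use $I(Q)=\{s\}$ to isolate the dominant term $\binom{i}{p^{l}}Q^{i-p^{l}}(\partial_s Q)^{p^{l}}$ in $\partial_{p^{l}s}(Q^i)$, use the strict estimate $\epn(f_{Q,i})<\eq$ (a consequence of $Q$ being a key polynomial, together with $\deg f_{Q,i}<m$) to suppress the Leibniz cross-terms, and conclude via the transcendence of $\piq$ over $\ggq^0$ (Theorem \ref{g0gm}) that the surviving contributions of distinct $\piq$-degree cannot cancel, so the residual polynomial is nonzero exactly when some $\binom{i}{p^{l}}$ is a $p$-unit, which Lucas's theorem controls; the $\qq$-stability of $\partial_{p^{l}s}f$ (it has degree $<\deg f$) and the strict monotonicity of $\gaq$ then force $d=p^{l}$. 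A small presentational simplification is available: by Proposition \ref{Sq} one has $S_Q(f)=\{0,d\}$ for $Q$ large, and $i=0$ never contributes, so your inductive dichotomy is really only about the single index $d$, namely ``either $d=p^{l}$ or $p^{l+1}\mid d$''. The trade-off is the usual one: the paper's proof is short and transparent in concept but outsources all the work to \cite{NN}, \cite{Rig}, \cite{Vaq2}; your proof is longer but reads off the $p$-power structure of $d$ directly from the Newton--polygon / residual--polynomial data that the rest of the paper manipulates anyway, and in fact gives the extra information that $p^{l}s\in I_{\nu_Q}(f)$ precisely when $p^{l+1}\nmid d$, which is not visible from the paper's argument.
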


\begin{proof}
Consider the limit augmentation $\eta=[\qq;f,\nu(f)]$. For an arbitrary $Q\in\qq$, the defect of the augmentation $\nuq\to \eta$ is defined in \cite{NN} as $d(\nuq\to\eta):=d(f)$.  

Let $(\kh,\vh)$ be a henselization of $(K,v)$ determined by the choice of an extension of $v$ to $\kb$. It is shown in \cite{Rig} that $\vh$ and $\nuq<\eta$ admit unique common extensions $\nuq^h<\eta^h$ to $\khx$. By \cite[Theorem 1.1]{NN}, $d(\nuq\to\eta)=d(\nuq^h\to\eta^h)$. Finally,  in the henselian case, the defect of any augmentation is a power of the characteristic exponent of $v$ by Ostrowski's lemma \cite{Vaq2}.  
\end{proof}

\begin{proposition}\label{Sq}
Suppose that $f\in\kx$ is $\qq$-unstable and has defect $d=d(f)$. Then, for all $Q\in\qq$ such that $\dgq f=d$ we have 
\[
\inq f=\lcq(f)\,\pbq^{d}\quad\mbox{and}\quad S_Q(f)=\{0,d\},
\] 
\end{proposition}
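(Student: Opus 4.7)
The plan is to prove the two assertions in sequence: first the algebraic identity $\inq f = \lcq(f)\pbq^d$, and then the description of $S_Q(f)$ by expanding $\pbq = \piq - u_Q$ and using a characteristic-$p$ argument.

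For the first identity, fix any $R > Q$ in $\qq$. Equation (\ref{pipi}) and Proposition \ref{ellstable} give
\[
\inq f = \sum_{j \in S_{Q,R}(f)}(\inq f_{R,j})\,\pbq^j, \qquad d = \max S_{Q,R}(f), \qquad \inq f_{R,d} = \lcq(f).
\]
Letting $j_0 = \min S_{Q,R}(f)$, it suffices to prove $j_0 = d$. Since $j_0 \in S_{Q,R}(f)$ and, by Corollary \ref{corellstable}, $\nuq(f) = \bed(f) + d\nu(Q)$, we have $\nu(f_{R,j_0}) = \bed(f) + (d-j_0)\nu(Q)$, so the $j_0$-term of the $R$-expansion of $f$ has $\nu_R$-value
\[
\nu(f_{R,j_0}) + j_0 \nu(R) = \bed(f) + d\nu(Q) + j_0\bigl(\nu(R)-\nu(Q)\bigr).
\]
On the other hand, Corollary \ref{corellstable} also yields $d = \deg_R f$ stably, so $d \in S_R(f)$ and $\nu_R(f) = \nu(f_{R,d}) + d\nu(R) = \bed(f) + d\nu(R)$. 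Since $\nu_R(f)$ is bounded above by the previous display,
\[
\bed(f) + d\nu(R) \le \bed(f) + d\nu(Q) + j_0(\nu(R)-\nu(Q)),
\]
which simplifies to $(d-j_0)(\nu(R)-\nu(Q)) \le 0$. Because $\nu(R) > \nu(Q)$, we obtain $d \le j_0$; combined with $j_0 \le \max S_{Q,R}(f) = d$, we conclude $j_0 = d$. Therefore $S_{Q,R}(f) = \{d\}$ and $\inq f = \lcq(f)\pbq^d$.

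For the second assertion, substituting $\pbq = \piq - u_Q$ into this identity and expanding gives
\[
\inq f = \lcq(f)(\piq - u_Q)^d = \lcq(f)\sum_{k=0}^d \binom{d}{k}(-u_Q)^{d-k}\piq^k.
\]
Comparing with $\inq f = \sum_{i \in S_Q(f)}(\inq f_{Q,i})\piq^i$ via the transcendence of $\piq$ over $\ggq^0$ (Theorem \ref{g0gm}), and noting that $\lcq(f)(-u_Q)^{d-k}$ is a nonzero element of $\ggq^0$, we conclude that $S_Q(f)$ consists precisely of those $k$ for which $\binom{d}{k}$ is nonzero in the residue field. By Lemma \ref{def=p} (applied in the minimal-degree setting of a limit key polynomial, into which the relevant case reduces), $d$ is a power of the characteristic exponent $p$. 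If $p = 1$, then $d = 1$ and $S_Q(f) = \{0,1\}$; if $p$ is prime and $d = p^l$, Lucas' theorem gives $\binom{p^l}{k} \equiv 0 \pmod p$ for $0 < k < p^l$, so $S_Q(f) = \{0, d\}$.

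The technical heart is the first stage: the crucial observation is that Corollary \ref{corellstable} simultaneously provides $\nuq(f) = \bed(f) + d\nu(Q)$ and $\nu_R(f) = \bed(f) + d\nu(R)$, thereby reducing the problem to a single inequality comparing the $j_0$-term's contribution to the actual value $\nu_R(f)$. The second stage is then a routine characteristic-$p$ manipulation contingent on $d$ being a power of the characteristic exponent.
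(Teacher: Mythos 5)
Your proof is correct and follows essentially the same path as the paper's. The paper presents the first assertion via a Newton polygon picture for $f$ with respect to the pair $\nu,R$: since $d=\max(S_{Q,R}(f))$ by Proposition~\ref{ellstable} and the line $L_R$ of slope $-\ga_R$ lies strictly below $L_Q$ of slope $-\gaq$ for abscissas below $d$, the point $(d,\nu(f_{R,d}))$ is the unique point on $L_Q$. Your chain of inequalities leading to $(d-j_0)(\nu(R)-\nu(Q))\le 0$ is exactly the algebraic translation of that picture, with the same inputs (Proposition~\ref{ellstable} and Corollary~\ref{corellstable}). The second assertion is handled identically in both: expand $\pbq=\piq-u_Q$ and use $d=p^l$ to get $(\piq-u_Q)^d=\piq^d-u_Q^d$; invoking Lucas is just a slightly heavier route to the same freshman's-dream identity. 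One small caution, which the paper's own proof shares: Lemma~\ref{def=p} gives $d=p^l$ only when $f$ has minimal degree among $\qq$-unstable polynomials, so the claim $S_Q(f)=\{0,d\}$ should really be read under that additional hypothesis (indeed for, say, $f=F^2$ with $F$ a limit key polynomial and $p$ odd, the middle coefficient survives); you flagged this correctly with your parenthetical.
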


\begin{proof}
Take any $R\in\qq$ such that $R>Q$. The first statement follows immediately from the consideration of the Newton polygon of $f$ with respect to the pair $\nu, R$. Indeed, in the space $\Q\times \La$, the cloud of points $(i,\nu(f_{R,i}))$ (painted in black) has the following shape:

\begin{center}
\setlength{\unitlength}{4mm}
\begin{picture}(20,14)
\put(13.4,6){$\bullet$}\put(11,2){$\circ$}\put(8.9,2.7){$\bullet$}\put(3.5,8.1){$\bullet$}\put(7.4,4.2){$\bullet$}
\put(6.2,8.35){$\bullet$}\put(1.75,11){$\bullet$}\put(-0.25,11.8){$\bullet$}
\put(-1,0){\line(1,0){20}}\put(0,-1){\line(0,1){14}}
\put(0,12.1){\line(1,-1){14}}\put(0,6){\line(3,-1){16}}
\multiput(9.15,-0.2)(0,.3){11}{\vrule height2pt}
\put(-2.7,12){\begin{footnotesize}$\nu_R(f)$\end{footnotesize}}
\put(-2.6,5.9){\begin{footnotesize}$\nuq(f)$\end{footnotesize}}
\put(8.9,-1.2){\begin{footnotesize}$d$\end{footnotesize}}
\put(-2.5,2.8){\begin{footnotesize}$ \bed(f)$\end{footnotesize}}
\put(14,-1.5){\begin{footnotesize}$L_R$\end{footnotesize}}
\put(16,1){\begin{footnotesize}$L_Q$\end{footnotesize}}
\put(-.6,-.8){\begin{footnotesize}$0$\end{footnotesize}}
\multiput(-.2,3)(.3,0){31}{\hbox to 2pt{\hrulefill }}
\end{picture}
\end{center}
\bs\bs

The lines $L_Q$, $L_R$ have slope $-\gaq$, $-\ga_R$, respectively. Since,
\[
\nuq(f)=\min_{0\le i\le n}\{\nu(f_{R,i})+i\gaq\},\qquad
\nu_R(f)=\min_{0\le i\le n}\{\nu(f_{R,i})+i\ga_R\},
\]
the abscissas of the points lying on the lines $L_Q$, $L_R$ belong to the sets $S_{Q,R}(f)$, $S_R(f)$, respectively, and all other points in the cloud lie strictly above both lines.

By Proposition \ref{ellstable}, $d=\max(S_{Q,R}(f))$, so that there cannot be points on the line $L_Q$ with abscissa larger than $d$ (like the point marked as a $\circ$). Hence, the point $(d,\nu(f_{R,d}))$ is the only point lying on $L_Q$. By (\ref{pipi}) and Proposition \ref{ellstable}, 
\[
\inq f=\inq f_{R,d}\,\pbq^d=\lcq(f)\,\pbq^d.
\]

Finally, since $\pbq=\piq-u_Q$ and $d$ is a power of the characteristic of $\ggq$, we have 
\[
\inq f=\lcq(f)(\piq-u_Q)^d=\lcq(f)(\piq^d-u_Q^d),
\]
so that $S_Q(f)=\{0,d\}$ by (\ref{pipi}).
\end{proof}\e


Any two limit key polynomials $f,g\in\kpi(\qq)$ satisfy $\inq f=\inq g$ for all sufficiently large $Q\in\qq$ \cite[Lemma 4.7]{VT}. In particular, $d(f)=d(g)$, so that the defect is an intrinsic quality of $\qq$. We may denote it by $d(\qq)$. From now on, we shall simply write $d=d(\qq)=d(f)$.

Denote by $\mi$ the common degree of all the polynomials in $\kpi(\qq)$.
We are interested in finding conditions ensuring that $\mi=d  m$. 

\begin{theorem}\label{hensel=}
	If $(K,v)$ is henselian, then $\deg f=\dgq f\deg Q$, for all $f\in\kpi(\qq)$ and all $Q\in\qq$.
\end{theorem}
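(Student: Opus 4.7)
The plan is to reduce the claim to Ostrowski's theorem, via the henselianity hypothesis.

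First, $f$ is irreducible in $K[x]$, independently of henselianity: if $f=gh$ with $\deg g,\deg h\ge 1$, then $\nuq(f)=\nuq(g)+\nuq(h)$ and $\nu(f)=\nu(g)+\nu(h)$ for every $Q\in\qq$, so if both $g$ and $h$ were $\qq$-stable, so would $f$, contradicting $\qq$-instability of $f\in\kpi(\qq)$. Hence some factor is $\qq$-unstable, and the minimality of $\deg f$ forces the other factor to be constant.

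Let $\alpha\in\kb$ be a root of $f$. Henselianity implies that $v$ extends uniquely to a valuation $v_\alpha$ on $K(\alpha)$, so $\deg f=[K(\alpha):K]$, and Ostrowski's theorem gives
\[
\deg f \;=\; e(v_\alpha/v)\cdot f(v_\alpha/v)\cdot d(v_\alpha/v),
\]
with $d(v_\alpha/v)$ a power of the characteristic exponent $p$. I would fix $Q\in\qq$ large enough so that $\dgq f=d$ (possible by Corollary \ref{corellstable}), and establish
\[
e(v_\alpha/v)\cdot f(v_\alpha/v)=m=\deg Q,\qquad d(v_\alpha/v)=d=\dgq f.
\]
The first identification goes through the extension $\omega=\omega_{\alpha,\eq}\in\tb$: by Proposition \ref{down} its restriction to $K[x]$ has $\epb$-value $\eq=\epb(\nuq)$, and uniqueness of extensions in the henselian setting forces $\resk(\omega)=\nuq$; the numerical invariants $[\g_{\nuq}^0:vK]\cdot[k_{\nuq}^0:Kv]$ of the residue-transcendental valuation $\nuq$ of degree $m$ then coincide with $e(v_\alpha/v)\cdot f(v_\alpha/v)$, since letting the augmentation value grow from $\eq$ to $\infty$ realizes $v_\alpha$ as the support-nontrivial valuation attached to $\alpha$. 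The second identification follows from Proposition \ref{Sq}, which exhibits $\dgq f$ as the defect of the limit augmentation $\nuq\to[\qq;f,\nu(f)]$, together with \cite[Theorem 1.1]{NN}, which matches this combinatorial defect with the classical defect $d(v_\alpha/v)$ in the henselian case.

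Combining, $\deg f=d\cdot m$. For an arbitrary $Q\in\qq$, the bound $\dgq f\le\lfloor\deg f/m\rfloor=d$ together with the monotonicity $\dgq f\ge d$ supplied by Proposition \ref{ellstable}(ii) forces $\dgq f=d$, whence $\deg f=\dgq f\cdot\deg Q$ as required. The main obstacle is the matching step, which requires careful bookkeeping to reconcile the Ostrowski invariants $e(v_\alpha/v)$, $f(v_\alpha/v)$, $d(v_\alpha/v)$ with the residue-transcendental structure of the truncations $\nuq$ and with the combinatorial defect read off from $Q$-expansions.
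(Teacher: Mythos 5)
Your reduction to Ostrowski's theorem breaks at the two matching identities, and this is not a matter of bookkeeping: both $e(v_\alpha/v)f(v_\alpha/v)=m$ and $d(v_\alpha/v)=\dgq f$ are false in general. The degree-zero invariants $[\g^0_{\nuq}:vK]\cdot[k^0_{\nuq}:Kv]$ measure the extension $K(a)/K$ for a root $a$ of $Q$ (indeed $\g^0_{\nuq}=\vb\bigl(K(a)^*\bigr)$ for an optimal root in the henselian case), so they equal $m=[K(a):K]$ only when $K(a)/K$ is defectless — and nothing guarantees this: the polynomials of $\qq$ may themselves be limit key polynomials of an earlier family. Concretely, if the Mac Lane--Vaqui\'e chain of $v_\alpha$ has two limit augmentations (the typical picture for a tower of two Artin--Schreier defect extensions of a henselian field, each of degree $p$ and defect $p$, with $m=p$, $\deg f=p^2$), then $e(v_\alpha/v)f(v_\alpha/v)=1<m$ and $d(v_\alpha/v)=p^2>\dgq f=p$: the field-theoretic defect $d(v_\alpha/v)$ accumulates the defect of \emph{all} levels of the chain, whereas $\dgq f$ is only the defect of the last limit step, and \cite[Theorem 1.1]{NN} merely says that this last-step defect is preserved under henselization — it does not identify it with $d(v_\alpha/v)$. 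What is true in general is only the product identity $e(v_\alpha/v)f(v_\alpha/v)\,d(v_\alpha/v)=m\,\dgq f$, which (given Ostrowski) is exactly the statement you are trying to prove; so your two separate identifications cannot serve as independent intermediate steps, and repairing the route would require the full multiplicative defect formula of \cite{NN} along the whole chain, a far heavier input. A secondary gap: $\resk(\om_{\alpha,\eq})=\nuq$ does not follow from uniqueness of extensions of $v$, since distinct valuations in $\ttt$ can share the same $\epb$-value; one needs a root-proximity argument (some root $\beta$ of $f$ satisfies $\om(x-\beta)>\eq$ for an extension $\om$ of $\nu$, hence lies at distance at least $\eq$ from an optimal root of $Q$), though this part is repairable.

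For comparison, the paper's proof bypasses Ostrowski entirely: since $f$ is $\qq$-unstable, $\nuq(f)<\nu(f)$, so $\inq f$ is not a unit in $\ggq$ by Lemma \ref{tdef}; henselianity enters only through \cite[Corollary 4.5]{NN}, which then shows that $f$ is $\nuq$-minimal, and \cite[Proposition 3.7]{KP} converts $\nuq$-minimality into the degree formula $\deg f=\dgq f\deg Q$ for every $Q\in\qq$ at once.
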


\begin{proof}
	Take $f\in\kpi(\qq)$ and $Q\in\qq$. If $\inq f$ were a unit in $\ggq$, then this would imply $\nuq(f)=\nu(f)$ by Lemma \ref{tdef}. Since $f$ is $\qq$-unstable, this cannot happen. 
	
	Since $\inq f$ is not a unit in $\ggq$, then $f$ is $\nuq$-minimal by \cite[Corollary 4.5]{NN}. By \cite[Proposition 3.7]{KP},  $\deg f=\dgq f\deg Q$. 
\end{proof}\e

In particular, in the henselian case, the relative degree $d=\dgq f$ is not only stable, but constant.
Theorem \ref{hensel=} was proved by Vaqui\'e in \cite{Vaq2} under the assumption that $Q$ was sufficiently large.
In the non-henselian case, there is still a condition ensuring the equality  $\mi=d m$ (cf. Theorem \ref{VB}).

\section{Cuts in ordered abelian groups}\label{secCuts}
All ordered sets considered in this paper are assumed to be totally ordered.

Let $I$ be an ordered set. We let $I\infty$ be the ordered set obtained by adding a (new) maximal element, which is formally denoted as $\infty$.

For $S,T\sub I$ and $i\in I$, the following expressions
$$i<S,\ \quad i>S,\ \quad i\le S,\ \quad i\ge S,\ \quad S<T,\ \quad S\le T$$
mean that the corresponding inequality holds for all $x\in S$ and all $y\in T$.

An \textbf{initial segment} of $I$ is a subset $S\sub I$ such that $I_{\le i}\sub S$ for all $i\in S$.

On the set $\inii$ of all initial segments of $I$ we consider the ordering determined by ascending inclusion. It has a minimal and a maximal element:
$$
\emptyset=\min(\inii),\qquad I=\max(\inii).
$$

A \textbf{cut} in $I$ is a pair $\dta=(\dta^L,\dta^R)$ of subsets of $I$ such that $$\dta^L< \dta^R\quad\mbox{ and }\quad \dta^L\cup \dta^R=I.$$ Clearly, $\dta^L$ is an initial segment of $I$.
If $\cuts(I)$ denotes the set of all cuts in $I$, then we have an isomorphism of ordered sets
$$
\inii\lra\cuts(I),\qquad S  \longmapsto (S,I\setminus S).
$$
In particular, $\cuts(I)$ admits a minimal element $(\emptyset,I)$ and a maximal element $(I,\emptyset)$, which are called the  \textbf{improper cuts}.

For all $S\sub I$, we denote by  $S^+$, $S^-$ the cuts determined by the initial segments
\[
\{i\in I\mid \exists x\in S: i\leq x\},\qquad \{i\in I\mid i<S\},
\]
respectively.
If $S=\{i\}$, then we will write $i^+=(I_{\le i},I_{>i})$ instead of $\{i\}^+$ and $i^-=(I_{<i},I_{\ge i})$ instead of $\{i\}^-$. These cuts are said to be 
\textbf{principal}. 


If $I\hk I'$ is an extension of ordered sets and $j \in I'$ satisfies
$\dta^L\leq j\leq \dta^R$, then we say that \textbf{$j$
	realizes} the cut $\dta$ in $I'$.

\subsection{Hahn's embedding theorem}\label{subsecHahn}
Let $\g$ be a divisible ordered abelian group.
For a subgroup $H\sub \g$, the quotient $\g/H$ inherits a structure of ordered group if and only if $H$ is \textbf{convex}; that is, 
$$
h\in H,\ h>0 \imp  \left[0,h\right]\sub H.
$$
In this case, we may define an ordering in $\g/H$ by:
$$
a+H<b+H \sii a+H\ne b+H \ \mbox{ and }\ a<b.
$$
The notation $a+H<b+H$ is compatible with its meaning as subsets of $\g$.

The convex subgroups of $\g$ are totally ordered by inclusion. The order-type of the set of proper convex subgroups is the $\textbf{rank}$ of $\g$. The convex subgroups of $\g/H$ are in 1-1 correspondence with the convex subgroups of $\g$ containing $H$.


For any $a\in\g$, the convex subgroup of $\g$ generated by $a$ (smallest convex subgroup containing $a$) is said to be \textbf{principal}. 

Let $I$ be the set of {\bf nonzero} convex principal subgroups of $\g$, ordered by {\bf descending} inclusion.

We identify $I\infty$ with a set of indices parametrizing all  principal convex subgroups of $\g$. For all $i\in I$ we let $H_i$ be the corresponding  principal convex subgroup. Also, we agree that $H_\infty=\{0\}$. According to our convention, for all $i,j\in I\infty$, we have
$$
i<j \sii H_i\supsetneq H_j.
$$


To each initial segment $S\sub I$,  we may associate the convex subgroup:
\[
H_S=\bigcup\nolimits_{i\in I\setminus S}H_i.
\]
The assignment $S\mapsto H_S$ parametrizes all convex subgroups of $\g$. The nonzero principal convex subgroups 
correspond to initial segments $S$ such that $I\setminus S$ has a minimal element. The trivial subgroup corresponds to $S=I$.

Denote by $\rlx$ the \textbf{Hahn product}; that is, $\rlx$ is the subgroup of the cartesian product $\R^I$ formed by the elements $x=\left(x_i\right)_{i\in I}$  whose support
\[
\supp(x)=\{i\in I\mid x_i\ne0\}\sub I
\]
is a well-ordered subset, with respect to the ordering induced by $I$. It makes sense to consider the lexicographical ordering on $\rlx$.

By Hahn's theorem, we may choose a (non-canonical) embedding $\g\hk\rlx,
$ of ordered groups such that the principal convex subgroups of $\rlx$ are parametrized by the set $I$ as follows: 
\[
H_i=\left\{(x_j)\in\rlx\mid x_j=0\ \mbox{ for all }j<i\right\},\quad i\in I.
\]

For any convex subgoup $H\sub \g$, let $H_\R$ be the convex subgroup of $\rlx$ generated by the image of $H$ under the embedding $\g\hk\rlx$. The assignment $H\mapsto H_\R$ is a bijection between the sets of convex subgroups of $\g$ and $\rlx$, and restricts to a bijection between the corresponding subsets of principal convex subgroups. We shall often abuse language and drop the subindex  $(\ )_\R$. For instance, we write
\[
H_S=\left\{(x_j)\in\rlx\mid x_j=0\ \mbox{ for all }j\in S\right\},
\] 
instead of $\left(H_S\right)_\R$.
Finally, each nonzero principal convex subgroup $H_i$ has a maximal proper convex subgroup:
\[
H_i^*:=\left\{(x_j)\in\rlx\mid x_j=0\ \mbox{ for all }j\le i\right\}\subsetneq H_i.
\]
This convex subgroup $H_i^*$ is not necessarily principal. Note that the quotient $H_i/H_i^*$  is isomorphic to $\R$, as an ordered group.
  
\subsection{Cuts in $\g$}
Let $\dta=(\dta^L,\dta^R)$ be a cut in $\g$. Cuts in $\g$ may be shifted by elements $a\in\g$ and multiplied by $-1$:  
\[
a+\dta=(a+\dta^L,a+\dta^R),\qquad -\dta=(-\dta^R,-\dta^L).
\]
Every cut has a (symmetric) \textbf{invariance group} defined as:
\[
H(\dta)=\left\{h\in\g\mid h+\dta^L=\dta^L\right\}=\left\{h\in\g\mid h+\dta^R=\dta^R\right\}.
\] 
This is a convex subgroup which is invariant under shifting or multiplication by $-1$:
\[
H(a+\dta)=H(\dta)=H(-\dta)\quad \mbox{ for all }a\in\g.
\]

For a positive $h\in\g$ we have 
\begin{equation}\label{practice}
h\in H(\dta)\ \sii\ h+\dta^L\sub \dta^L.
\end{equation}
Indeed, for any $x\in\dta^L$ we have $x=h+(x-h)$ and $x-h$ belongs to the initial segment $\dta^L$ because $x-h<x$. Thus, $\dta^L\sub h+\dta^L$. \e



\noindent{\bf Definition. }{\it A \textbf{ball cut} is either $\dta=\left(a+H\right)^+$, or $\dta=\left(a+H\right)^-$, for some $a\in\g$ and some convex subgroup $H$. Equivalently, the induced cut $\dta/H=\left(\dta^L/H,\dta^R/H\right)$ in $\g/H$ is principal.
		All other cuts are said to be  \textbf{non-ball cuts}}.\e 

Clearly, for a ball cut $\dta=\left(a+H\right)^\pm$ we have $H(\dta)=H$.


Let us exhibit examples of non-ball cuts. 
Take $\g=\Q$ and $\xi\in\R\setminus\Q$. Then, $\dta= \left(\Q_{<\xi},\Q_{>\xi}\right)$ is a non-ball cut.

Also, consider the direct sum $\g=\Q^{(\N)}$, and let $\xi\in\Q^\N\setminus\Q^{(\N)}$ be a vector with an infinite number of nonzero coordinates. Then,
$\dta=\left(\g_{<\xi},\g_{>\xi}\right)$
is non-ball too.

In both cases,  $H(\dta)=0$, so that the non-ball character is determined by the fact that they are non-principal cuts.\e



\noindent{\bf Definition. }{\it Let $\dta$ be a cut in $\g$ with invariance group $H=H(\dta)$. We say that $\dta$ admits a \textbf{vertical supremum} if the set of classes
	\[
	\dta^L+H=\left\{x+H\mid x \in\dta^L\right\}\sub \g/H
	\] 
contains no maximal element.}\e

\begin{lemma}\label{VS}
For all $\dta\in\cuts(\g)$, the set $\dta^L+H$ admits a supremum as a subset of $\rlx/H$. Moreover, the only cuts which do not admit a vertical supremum are those of the form $\dta=\left(a+H\right)^+$.
\end{lemma}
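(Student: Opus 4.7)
The plan is to handle both assertions together, reducing the first to the case of trivial invariance group. Setting $\overline{\g}:=\g/H$ and $\overline{\dta}:=\dta/H$ yields a cut on $\overline{\g}$ whose invariance group is trivial, and under the identification $\rlx/H\cong\rlxs$ coming from $H=H_S$, the subset $\dta^L+H$ corresponds to $\overline{\dta}^L\sub\overline{\g}\hk\rlxs$. So the first claim amounts to producing $\sup\overline{\dta}^L$ in $\rlxs$, and the second to recognizing when this supremum is attained in $\overline{\g}$.

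The second assertion is essentially tautological after this reduction: $\overline{\dta}^L$ admits a maximum $\overline{a}=a+H$ exactly when $\overline{\dta}=\overline{a}^+$, equivalently when $\dta=(a+H)^+$. Hence the cuts failing to admit a vertical supremum are precisely those of this form.

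For the first assertion, I would first check boundedness: discarding the trivial case $\dta=(\g,\emptyset)$, any $y\in\dta^R$ satisfies $\overline{x}\le\overline{y}$ in $\rlxs$ for every $x\in\dta^L$, because an inequality $x<y$ in the lex order on $\rlx$ automatically projects to $\overline{x}\le\overline{y}$ in $\rlxs$ (the smallest index $i_0$ where $x,y$ disagree either lies in $S$, in which case the strict inequality persists, or lies above $S$, in which case the restrictions to $S$ coincide, using that $S$ is an initial segment of $I$). I would then build $\sigma\in\rlxs$ by a coordinate-by-coordinate transfinite recursion: for each $i\in S$, once $\sigma|_{S_{<i}}$ has been fixed, set $\sigma_i\in\R$ equal to the supremum of the $i$-th coordinates of those elements of $\overline{\dta}^L$ whose truncations to $S_{<i}$ are suitably compatible with $\sigma|_{S_{<i}}$. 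The trivial-invariance-group condition on $\overline{\dta}$ keeps the relevant set nonempty at each stage, the boundedness step keeps each $\sigma_i$ finite, and the well-ordered support of $\sigma$ is inherited from the well-ordered supports of the witnesses $\overline{x}\in\overline{\dta}^L$ invoked along the way.

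The main obstacle is that $S$ need not be well-ordered, so the coordinate-by-coordinate construction of $\sigma$ has to be carried out as a transfinite recursion respecting the well-ordered-support structure of the Hahn product $\rlxs$; the trivial-invariance-group hypothesis on $\overline{\dta}$ is precisely what allows the recursion to continue past every stage. Once $\sigma$ is in hand, the verification that it is the least upper bound is straightforward: any alternative upper bound must already match $\sigma$ on each initial $S_{<i}$ before being permitted to drop at $i$, and so cannot be strictly smaller than $\sigma$.
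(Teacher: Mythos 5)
Your reduction to $\overline{\g}=\g/H$ and the identification $\rlx/H\cong\rlxs$ is sound, and your treatment of the second assertion is essentially correct (you only need to note that, $H$ being the invariance group, $a\in\dta^L$ forces $a+H\sub\dta^L$, so a maximal class in $\dta^L+H$ really does give $\dta=(a+H)^+$). The gap is in the first assertion, which is the entire content of the lemma. A ``coordinate-by-coordinate transfinite recursion'' over $S$ is not a construction: $S$ is an arbitrary initial segment of $I$ and need not be well-ordered, so there is no induction principle allowing you to assume $\sigma|_{S_{<i}}$ ``has been fixed'' before treating $i$. You flag this obstacle yourself, but then merely assert that the recursion can be carried out ``respecting the well-ordered-support structure''; that is exactly what has to be proved, and no mechanism is offered. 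Moreover, ``suitably compatible'' conceals the two points where the argument genuinely needs the hypotheses: at a level $i$ the supremum of the $i$-th coordinates of the compatible elements may fail to be attained, and at a limit level the set of elements of $\overline{\dta}^L$ agreeing with $\sigma$ on all of $S_{<i}$ may a priori be empty; in either case your recipe does not determine the later coordinates. One must prove that triviality of the invariance group excludes these degeneracies except at $\max(S)$: for instance, if the compatible set at level $i$ were empty (or the supremum unattained with $S_{>i}\neq\emptyset$), then every positive element of $\overline{\g}$ of value $\ge i$ (resp.\ $>i$) would lie in the invariance group of $\overline{\dta}$, a contradiction. Equivalently, what trivial invariance actually provides is the existence, for each $i\in S$, of elements $x\in\overline{\dta}^L$ such that every larger element of $\overline{\dta}^L$ differs from $x$ only in coordinates $\ge i$; these ``maximal up to level $i$'' elements are the non-circular substitute for your recursion, they are what makes ``first disagreement'' arguments and the well-ordered-support claim work (a single such witness bounds the support of $\sigma$ below $i$), and none of this is argued in your sketch -- you use the conclusion of the recursion to justify its continuation.

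For comparison, the paper does not construct the supremum at all: it settles the ball cuts $(a+H)^{\pm}$ by inspection and, for non-ball cuts, invokes the realization theorem of \cite{csme} (Section 4.1.1), which supplies $\xi\in\rlx$ with $\dta^L<\xi<\dta^R$; the class $\xi+H$ is then the supremum. So the quickest repair is to quote that result, as the paper does. If you want a self-contained proof along your lines, you are in effect reproving the cited realization statement, and the argument must be reorganized non-recursively as indicated above; your final remark about least-upper-boundedness is fine once $\sigma$ exists, but its existence is precisely what is at stake.
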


\begin{proof}
If $\dta=\left(a+H\right)^+$, then $\sup\left(\dta^L+H\right)=a+H $, which belongs to $\dta^L+H$.

If $\dta=\left(a+H\right)^-$, then $\sup\left(\dta^L+H\right)=a+H $, which does not belong to $\dta^L+H$.

If $\dta$  is a non-ball cut, then $\dta$ is realized by some $\xi\in\rlx$ \cite[Section 4.1.1]{csme}. Since $\dta^L$ has no maximal element in $\g$ ($\dta\ne a^+$) and $\dta^R$ has no minimal element in $\g$ ($\dta\ne a^-$), this realization must be strict: $\dta^L<\xi<\dta^R$. Thus, for $H=H(\dta)$ we have
\[
\dta^L<\xi+H<\dta^R.
\]
Hence, $\sup\left(\dta^L+H\right)=\xi+H $, and this class does not belong to $\dta^L+H$.
\end{proof}\e

The \textbf{natural valuation} on $\rlx$ is the mapping 
$$
\val\colon \rlx\lra I\infty,\qquad  \xi=(\xi_i)_{i\in I}\longmapsto \val(\xi)=\min\{i\in I\mid \xi_i\ne0\}.  
$$

This mapping satisfies the following two properties, for all $\xi,\eta\in\g$:\e

$\bullet$\quad $\val(\xi)=\infty \sii \xi=0$.

$\bullet$\quad $\val(\xi-\eta)\ge\min\{\val(\xi),\val(\eta)\}$, and equality holds if $\ \val(\xi)\ne\val(\eta)$. \e

The lexicographical ordering in $\rlx$ can be characterized as follows:
\[
\xi=(\xi_j)_{j\in I}>\eta=(\eta_j)_{j\in I}\ \sii \ \xi_i>\eta_i\ \mbox{ for }i=\min\{\val(\xi),\val(\eta)\}.
\]

\noindent{\bf Definition. }{\it We say that $\dta\in\cuts(\g)$ is \textbf{vertically bounded (VB)} if for all $x\in\dta^L$, $q\in \Q_{>1}$, we have $q(y-x)>\dta^L-x$ for some $y\in\dta^L$.}
	
\begin{proposition}\label{VS=VB}
For all $\dta\in\cuts(\g)$ the following conditions are equivalent.
\begin{enumerate}
\item[(a)] \ $\dta$ admits a vertical supremum. 
\item[(b)] \ $\dta$ is vertically bounded. 
\item[(c)] \ For some $x\in\dta^L$ and all $q\in \Q_{>1}$, we have  $q(y-x)>\dta^L-x$ for some $y\in\dta^L$.
\end{enumerate}
\end{proposition}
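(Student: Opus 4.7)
The plan is to reduce everything to the quotient $\g/H$ with $H=H(\dta)$, in which the induced cut $\bar\dta=\dta/H$ has trivial invariance group, and to split into the two cases given by Lemma~\ref{VS}: either $\bar\dta=\bar a^-$ for some $\bar a\in\g/H$, or $\bar\dta$ is non-ball. A crucial preliminary observation is that $\g/H$ is divisible, since $H$, being a convex subgroup of the divisible group $\g$, is itself divisible.

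For (a)$\Rightarrow$(b), fix $x\in\dta^L$ and $q\in\Q_{>1}$. Translating by $-x$ (which preserves $H(\dta)$ and the type of the cut), I may assume $x=0$, so I need $y\in\dta^L$ with $qy\in\dta^R$, equivalently $\bar y\in\bar\dta^L$ with $q\bar y\in\bar\dta^R$. When $\bar\dta=\bar a^-$, I set $\bar y=\tfrac{q+1}{2q}\bar a\in\g/H$; a direct computation using $q>1$ gives $0<\bar y<\bar a$ (so $\bar y\in\bar\dta^L$) and $q\bar y=\tfrac{q+1}{2}\bar a>\bar a$ (so $q\bar y\in\bar\dta^R$). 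In the non-ball case, Lemma~\ref{VS} provides a strict realisation $\bar\xi>0$ of $\bar\dta$ in the Hahn completion of $\g/H$. I choose any $\bar y_0\in\bar\dta^L$ with $\bar y_0>0$ (which exists because $\bar 0\in\bar\dta^L$ cannot be the maximum, as $\bar\dta\neq\bar 0^+$). Since $(q-1)\bar y_0>0$ is not in the trivial invariance group $H(\bar\dta)=\{0\}$, formula~(\ref{practice}) applied in $\g/H$ yields $\bar z\in\bar\dta^L$ with $\bar z+(q-1)\bar y_0\in\bar\dta^R$. Setting $\bar y=\max(\bar y_0,\bar z)\in\bar\dta^L$, the inequalities $\bar y\ge\bar z$ and $(q-1)\bar y\ge(q-1)\bar y_0$ give $q\bar y\ge\bar z+(q-1)\bar y_0\in\bar\dta^R$, so any lift $y$ of $\bar y$ solves the problem.

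The implication (b)$\Rightarrow$(c) is immediate: (c) merely asserts the existence of one $x\in\dta^L$ for which the condition in (b) holds. For (c)$\Rightarrow$(a), I argue by contraposition: assuming $\dta=(a+H)^+$ (so (a) fails by Lemma~\ref{VS}), I must rule out every candidate $x\in\dta^L$ as a witness of (c). The key point is that, in $\g/H$, every $y\in\dta^L$ satisfies $\bar y\le\bar a$, while $\dta^L-x$ contains the entire coset $(a-x)+H$; combining this with the divisibility of $H$ (which forces $q(y-x)\in H$ or $q(y-x)$ below $H$ whenever $y-x$ is in or below $H$), one sees that $q(y-x)$ cannot strictly dominate the unbounded set $\dta^L-x$ for an appropriate choice of $q>1$, contradicting (c).

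I expect the main obstacle to be the non-ball case of (a)$\Rightarrow$(b), which requires combining two ingredients produced by the triviality of the invariance group of $\bar\dta$: the scale element $\bar y_0$ fixing the size of the ``jump'' $(q-1)\bar y_0$, and the approximation $\bar z$ to $\bar\xi$ coming from (\ref{practice}). Taking the maximum of these two preserves membership in $\bar\dta^L$ while forcing the $q$-fold to escape into $\bar\dta^R$. The argument for (c)$\Rightarrow$(a) is conceptually simpler but requires care in distinguishing strict inequality in $\g$ from strict inequality in $\g/H$.
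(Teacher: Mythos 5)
Your proof of (a)$\Rightarrow$(b) is correct but takes a genuinely different route from the paper's. The paper fixes a representative $\xi\in\rlx$ of the vertical supremum and splits into cases according to whether some $y\in\dta^L$ with $y>x$ satisfies $\val(\xi-x)<\val(\xi-y)$; this involves a fairly delicate manipulation of Hahn coordinates of $\g$ itself. You instead work entirely in the quotient $\g/H$ (again a divisible ordered abelian group, though your justification via divisibility of $H$ is a detour: $\g/H$ is divisible simply because it is a quotient of the divisible group $\g$). There the induced cut has trivial invariance group, and you split according to the dichotomy of Lemma~\ref{VS} applied to $\g/H$: the ball case $\bar\dta=\bar a^-$, handled by the explicit element $\bar y=\tfrac{q+1}{2q}\bar a$, and the non-ball case, handled by combining a positive $\bar y_0\in\bar\dta^L$ with the approximating element $\bar z$ coming from~(\ref{practice}) and the triviality of the invariance group. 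This is cleaner and more modular; the Hahn embedding is only used through Lemma~\ref{VS}.

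Your sketch of (c)$\Rightarrow$(a), however, does not close, and in fact the implication (c)$\Rightarrow$(a) fails if (c) is read literally. Your heuristic only addresses the situation where $y-x$ lies in or below $H$; but when $x$ lies strictly below $a+H$ and $y\in a+H$ one has $y-x>H$, and this is precisely the case in which (c) can be satisfied even though $\dta=(a+H)^+$. Concretely, take $\g=\Q\times\Q$ with the lexicographic ordering, $H=\{0\}\times\Q$ and $\dta=H^+$, so that $\dta^L=\{(a,b)\mid a\le0\}$ and $H(\dta)=H$. Then $\dta$ admits no vertical supremum, yet $x=(-1,0)$ witnesses (c): for every $q\in\Q_{>1}$ the choice $y=(0,0)$ gives $q(y-x)=(q,0)$, which exceeds every element of $\dta^L-x=\{(c,d)\mid c\le1\}$ because $q>1$. (The paper's own argument for this step has a related slip: in the case $x<H$ it asserts $-qx<-x$, but $0\in H$ forces $x<0$ and $q>1$ gives $-qx>-x$, so the displayed chain of inequalities is reversed.) The equivalence does hold once (c) is strengthened to ``for all sufficiently large $x\in\dta^L$'' as in \cite{AFFGNR}; with that reading your quotient reduction closes, since for $\dta=(a+H)^+$ every sufficiently large $x\in\dta^L$ lies in $a+H$, which forces $\overline{y-x}\le\bar 0$ --- the only case your sketch treats.
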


\begin{proof}
Let $H=H(\dta)$. Suppose that $\dta\in\cuts(\g)$ admits a vertical supremum. By Lemma \ref{VS}, there exists $\xi=(\xi_i)_{i\in I}\in \rlx$ such that $\sup(\dta^L+H)=\xi+H$ in $\rlx/H$ and, moreover,  $\xi+H>x+H$ for all $x\in\dta^L$.  In particular, $\xi>\dta^L$.

Take any $x=(x_i)_{i\in I}\in\dta^L$ and a rational number $q>1$. Let $i=\val(\xi-x)\in I$.  

Suppose that the following condition  holds:
\begin{equation}\label{cond6}
\val(\xi-x)<\val(\xi-y)\quad\mbox{ for some }y\in\dta^L,\ y>x.
\end{equation}
In this case,
\[
\val(q(y-x))=\val(y-x)=\val((\xi-x)-(\xi-y))=\val(\xi-x)=i.
\]
Since $x_i<\xi_i=y_i$, we have $q(y_i-x_i)>y_i-x_i=\xi_i-x_i$. Thus, $q(y-x)>\xi-x>\dta^L-x$.

Now, suppose that  condition (\ref{cond6}) does not hold. Then,
\[
\val(\xi-x)=\val(\xi-y)=i\quad\mbox{ for all }y\in\dta^L,\ y>x.
\]
In this case, we claim that 
\[
H=H_i^*=\left\{h=(h_j)_{j\in I}\in\rlx\mid \val(h)>i\right\}.
\]
Indeed, if $h\notin H_i^*$, $h>0$, then $j:=\val(h)\le i$ and $h_j>0$. If $j<i$, then $x+h>\xi>\dta^L$. If $j=i$, then for a sufficiently large positive integer $n$ we have $x+nh>\xi>\dta^L$  as well. Hence, $h\notin H$, so that $H\sub H_i^*$. 

On the other hand, for all $y\in\dta^L$, $y>x$, we have $y+H_i^*<\xi+H_i^*$, because $y_i<\xi_i$. Hence, $y+H_i^*<\xi+H$, so that $y+H_i^*\sub \dta^L$. As shown in (\ref{practice}) this implies $H_i^*\sub H$. This ends the proof of the equality $H=H_i^*$.
Now, we claim that 
\[
\xi_i=\sup(Y),\qquad Y:=\{y_i\mid y\in\dta^L,\ y>x\}\subset \R. 
\] 
 Indeed, $\xi_i$ is an upper bound for $Y$. Now, suppose that $\xi_i>c$ for some $c\in\R$ which is an upper bound of $Y$ too. Then, the element $\eta=(\eta_\ell)_{\ell\in I}\in\rlx$ determined by
 \[
\eta_\ell =\xi_\ell \ \mbox{ for all }\ell<i;\quad \eta_i=c\quad \eta_\ell=0  \ \mbox{ for all }\ell>i,
 \] 
satisfies $\xi+H>\eta+H\ge y+H$ for all $y\in Y$, and this contradicts the hypothesis that $\xi+H$ was the supremum of all $y+H$ for $y\in Y$. This ends the proof $\xi_i=\sup(Y)$.

As a consequence, we may achieve $q(y_i-x_i)>\xi_i-x_i$ for a sufficiently large $y_i\in Y$. This clearly implies $q(y-x)>\xi-x>\dta^L-x$.

This ends the proof that (a)$\Rightarrow$(b). The implication (b)$\Rightarrow$(c) is obvious. Thus, we need only to show that (c)$\Rightarrow$(a).

Suppose that $\dta$ does not admit a vertical supremum. By Lemma \ref{VS}, $\dta=\left(a+H\right)^+$ for some $a\in \g$ and some convex subgroup $H$. Then, $H=H(\dta)$.
Shifting by the element $a$ preserves both conditions (a) and (c). Thus, we may assume that $\dta=H^+$, so  that $\dta^L$ is the minimal initial segment of $\g$ containing $H$. 

In order to show that (c) does not hold, it suffices to check that the following inequality is never satisfied:
\[
q(y-x)>H-x,\qquad x,y\in\dta^L, \ x<y, \ q\in\Q_{>1}.
\] 
Since the left-hand side grows with $y$, we may assume that $y\in H$. If $x<H$, then $-qx<-x$ and 
\[
q(y-x)>H-x>H-qx\ \imp\ qy>H,
\] 
which is impossible because $qy\in H$. Finally, if $x\in H$, then $q(y-x)>H-x=H$ is impossible too, because $q(y-x)\in H$.
\end{proof}\e

The notion of vertical boundedness was introduced in \cite{AFFGNR} by imposing that condition (c) holds for a sufficiently large $x\in\dta^L$. Thus, the original definition is apparently stronger than (c) and weaker than (b). By Proposition \ref{VS=VB}, all these conditions are equivalent.  

Also, for the ease of further applications, let us express vertical boundedness in terms of an inequality involving only elements in $\rlx$. 

\begin{corollary}\label{Bsup}
Let $\dta$ be a VB cut in $\g$ and let $H=H(\dta)$. Take any $\xi\in\rlx$ such that $\xi+H=\sup\{x+H\mid x\in\dta^L\}$ in $\rlx/H$. Then, for all $q\in\Q_{>1}$ and all $x\in\dta^L$, we have $q(y-x)>\xi-x$ for some $y\in\dta^L$.
\end{corollary}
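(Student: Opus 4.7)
The plan is to adapt the proof of the implication (a)$\Rightarrow$(b) of Proposition \ref{VS=VB}, which in fact establishes the sharper conclusion $q(y-x)>\xi-x$ (and not merely $q(y-x)>\dta^L-x$). My main task is therefore to verify that the argument carried out there, for the particular $\xi$ supplied by Lemma \ref{VS}, goes through for an arbitrary representative of the supremum class in $\rlx/H$.

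To this end, I would first record that, because $\dta$ is VB (so $\dta\ne(a+H)^+$), the supremum in $\rlx/H$ is strict: $\xi+H>x+H$ for every $x\in\dta^L$, hence $\xi-x>0$ in $\rlx$ and $\xi-x\notin H$. Writing $H=H_S$ for the initial segment $S\subset I$ corresponding to $H$, this forces $i:=\val(\xi-x)\in S$, and since $I\setminus S$ is final, $\val(h)>i$ for every nonzero $h\in H$. Consequently both $i$ and the coordinate $\xi_i-x_i=(\xi-x)_i$ depend only on the class $\xi+H$. I would then split into the two cases of the proof of Proposition \ref{VS=VB}. In Case 1 (some $y\in\dta^L$ with $y>x$ and $\val(\xi-y)>i$), one has $y_i=\xi_i$, so $q(y_i-x_i)>\xi_i-x_i$ and the lex comparison of $q(y-x)$ with $\xi-x$ at coordinate $i$ gives $q(y-x)>\xi-x$. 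In Case 2 one obtains $H=H_i^*$ and $\xi_i=\sup\{y_i\mid y\in\dta^L,\,y>x\}$; then density in $\R$ supplies $y\in\dta^L$ with $q(y_i-x_i)>\xi_i-x_i$, again yielding $q(y-x)>\xi-x$ by the lex comparison.

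The only (minor) obstacle is to ensure that nothing in the argument tacitly depends on the chosen representative $\xi$. This is handled by the invariance observation above: the entire conclusion is decided by the lex comparison at coordinate $i\in S$, and every element of $H=H_S$ vanishes at coordinates in $S$, so the argument is representative-independent. No computation beyond that of Proposition \ref{VS=VB} is required; Corollary \ref{Bsup} is the precise formulation of what its proof in fact establishes.
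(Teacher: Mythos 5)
Your proposal is correct and follows essentially the same route as the paper, whose proof of Corollary \ref{Bsup} simply observes that the stronger inequality $q(y-x)>\xi-x$ was already established along the proof of (a)$\Rightarrow$(b) in Proposition \ref{VS=VB}. Your extra verification that the conclusion is independent of the representative $\xi$ of the supremum class (since the lexicographic comparison is decided at a coordinate $i\in S$ where all elements of $H=H_S$ vanish) is a sound way of making explicit a point the paper leaves implicit, but it does not change the argument.
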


\begin{proof}
This was shown along the proof of (a)$\Rightarrow$(b) in Proposition \ref{VS=VB}.
\end{proof}

\section{Vertically bounded sets of key polynomials}\label{secVB}
Let us go back to our well-ordered, strictly increasing  family
$\qq$ of key polynomials for $\nu$ of degree $m$, admitting limit key polynomials  of degree $\mi>m$.

Let $\dta=\dta_\qq$ be the cut in $\g$ determined by taking $\dta^L$ to be the minimal initial segment of $\g$ containing the set $\{\nuq(F)\mid Q\in \qq\}$ for some $F\in \kpi(\qq)$.

Let $\ga=\gaqq$ be the cut in $\g$ determined by taking $\ga^L$ to be the minimal initial segment of $\g$ containing the set $\{\gaq\mid Q\in \qq\}$. \e

\nn{\bf Definition. }{\it We say that $\qq$ is vertically bounded if the cut $\gaqq$ is vertically bounded.}\e

If the valuation $v$ has rank one, then $\qq$ is VB if and only if the set $\{\gaq\mid Q\in\qq\}$ is bounded in $\g$.

\begin{theorem}\label{VB}\cite[Theorem 4.9]{AFFGNR}
	If $\qq$ is VB, then $\mi=d m$. In particular, $d=D$.
\end{theorem}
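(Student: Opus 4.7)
The plan is to establish the two inequalities $m_\infty \ge dm$ and $m_\infty \le dm$ separately; the first is an immediate consequence of Proposition \ref{Sq}, while the second is where the VB hypothesis does its work. For $m_\infty \ge dm$: take any $F \in \kpi(\qq)$; by Proposition \ref{Sq} applied at a sufficiently large $Q \in \qq$, $\dgq F = d$, so the coefficient $F_{Q,d}$ in the $Q$-expansion is nonzero, and since $\deg Q = m$ we obtain $\deg F \ge dm$. The assertion ``$d=D$'' then follows from $m_\infty=dm$ and the definition $D=\lfloor m_\infty/m\rfloor$.

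For $m_\infty\le dm$ I would argue by contradiction: suppose some $F\in\kpi(\qq)$ has $\deg F=m_\infty>dm$, and seek a $\qq$-unstable polynomial of strictly smaller degree. Fix $Q_0\in\qq$ so that for every $Q\ge Q_0$, Proposition \ref{Sq} and Corollary \ref{corellstable} apply: $\dgq F=d$, $S_Q(F)=\{0,d\}$, $\inq F=\lcq(F)\pbq^d$, and $\nu(F_{Q,d})$ is constant in $Q$. Writing the $Q$-expansion $F=\sum_{i=0}^n F_{Q,i}Q^i$ with $n\ge d$, a direct computation gives $\nu_{Q'}(Q)=\gaq$ for all $Q'\ge Q$ in $\qq$: from the $Q'$-expansion $Q=Q'+(Q-Q')$ with $\nu(Q-Q')=\gaq<\ga_{Q'}$, the lower value dominates. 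Consequently every $\nu_{Q'}(F_{Q,i}Q^i)=\nu(F_{Q,i})+i\gaq$ is \emph{constant} in $Q'$, while Proposition \ref{Sq} applied at $Q'$ yields $\nu_{Q'}(F)=\nu(F_{Q,d})+d\,\ga_{Q'}$, cofinal in $\dta^L$.

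With these formulas in hand, if $n>d$ then $F^\sharp:=F-F_{Q,n}Q^n$ has degree $\le nm-1<m_\infty$, and $F^\sharp$ is $\qq$-unstable provided $\nu_{Q'}(F_{Q,n}Q^n)>\nu_{Q'}(F)$ for every $Q'\ge Q$, i.e.\ $d\,\ga_{Q'}<n\gaq+\nu(F_{Q,n})-\nu(F_{Q,d})$. At $Q'=Q$ this holds (since $n\notin S_Q(F)$), and the VB hypothesis ensures persistence: because $\gaqq$ is vertically bounded, Corollary \ref{Bsup} allows us to replace $Q$ by a sufficiently high element of $\qq$ so that the multiplicative growth $d(\ga_{Q'}-\gaq)$ stays below the positive margin $(n-d)\gaq+\nu(F_{Q,n})-\nu(F_{Q,d})$ for \emph{all} $Q'\ge Q$. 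Iterating drives $n$ down to $d$, contradicting the minimality of $m_\infty$ unless this is already the case.

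To close the remaining gap from $m_\infty\le dm+(m-1)$ to $m_\infty\le dm$, I would exploit the identity $\pbq=\inq R$ for $R>Q$ in $\qq$: then $\inq(F_{Q,d}R^d)=\lcq(F)\pbq^d=\inq F$, so $\nuq(F-F_{Q,d}R^d)>\nuq(F)$ and the leading monomials of $F$ and $F_{Q,d}R^d$ cancel, giving $\deg(F-F_{Q,d}R^d)<m_\infty$; a second VB-based argument, again invoking Corollary \ref{Bsup} as $R$ ranges in $\qq$, shows that a suitable modification of $F-F_{Q,d}R^d$ is $\qq$-unstable, again contradicting the minimality of $m_\infty$. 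The main obstacle in both steps is the rigorous use of Corollary \ref{Bsup} to propagate linear inequalities in $\gaq,\ga_{Q'}$ uniformly over $Q'\in\qq$: this is precisely the tool that replaces, in arbitrary rank, the ordinary supremum argument used for rank-one families in \cite{N2023,NS2023}.
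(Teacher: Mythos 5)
The paper does not prove Theorem~\ref{VB}; it simply cites \cite[Theorem 4.9]{AFFGNR}, so there is no internal proof to compare against. Your first inequality $\mi\ge dm$ is fine, but the reduction you propose for $\mi\le dm$ fails at the decisive step. You fix $Q$, set $F^\sharp=F-F_{Q,n}Q^n$ (say for $n=D>d$), and require $\nu_{Q'}(F_{Q,n}Q^n)>\nu_{Q'}(F)$ for \emph{all} $Q'\ge Q$ in order for $F^\sharp$ to be $\qq$-unstable. But $\nu_{Q'}(F_{Q,n}Q^n)=\nu(F_{Q,n})+n\gaq$ is a \emph{constant} element of $\g$ (using $\nu_{Q'}(Q)=\gaq$ for $Q'>Q$), while $\nu_{Q'}(F)$ is strictly increasing and cofinal in $\dta^L$. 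Your condition can therefore only hold for all $Q'$ if this constant lies strictly above $\dta^L$. That is impossible for $n=D$: the Introduction shows $D\notin J$ (from minimality of $\mi$), so $\nu(F_{Q,D})+D\gaq\in\dta^L$ for $Q$ in a cofinal subfamily; since $\nu(F_{Q,D})$ is ultimately constant ($D\in\bct$) and $\gaq$ is increasing, $\nu(F_{Q,D})+D\gaq$ is increasing and hence stays in $\dta^L$ for \emph{all} sufficiently large $Q$. Thus however large you take $Q$, the constant is eventually surpassed by $\nu_{Q'}(F)$; at that point $\nu_{Q'}(F^\sharp)$ stabilizes at $\nu(F_{Q,n})+n\gaq$ and $F^\sharp$ is $\qq$-stable, not a limit key polynomial. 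Corollary~\ref{Bsup} cannot repair this: it bounds how quickly one can approach the vertical supremum from \emph{within} $\dta^L$, but gives no way for a fixed element of $\dta^L$ to dominate the cofinal family $\{\nu_{Q'}(F)\}$.

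The same defect kills your closing step: for $Q'>R$ we have $\nu_{Q'}(F_{Q,d}R^d)=\nu(F_{Q,d})+d\ga_R$, a constant in $\dta^L$, while $\nu_{Q'}(F)=\nu(F_{Q,d})+d\ga_{Q'}$ is eventually larger, so $\nu_{Q'}\bigl(F-F_{Q,d}R^d\bigr)$ stabilizes at $\nu(F_{Q,d})+d\ga_R$ and the polynomial is $\qq$-\emph{stable}. In short, the strategy of ``subtract a single monomial of fixed $\nu_{Q'}$-value'' can never produce a $\qq$-unstable polynomial. The argument actually needs a different mechanism: note that Theorem~\ref{hensel=} settles the henselian case cleanly via $\nuq$-minimality and \cite[Proposition 3.7]{KP}, and the substance of \cite[Theorem 4.9]{AFFGNR} is to exploit the VB hypothesis to transport this conclusion back from the henselization, not to manufacture low-degree unstable polynomials by truncating $Q$-expansions.
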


\begin{lemma}\cite[Section 7.3.3]{VT}\label{MinAug} Let $F\in\kpi(\qq)$. Then,
	\[
	\kpi(\qq)=\left\{F+a\mid a\in\kx,\ \deg a<\mi,\ \nu(a)>\dta^L\right\}.
	\]
\end{lemma}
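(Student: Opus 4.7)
The strategy is to prove both set inclusions directly; only one of them carries any real content, and that content is supplied almost entirely by a fact already invoked earlier in the excerpt.

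For the inclusion $(\supseteq)$, fix $a\in\kx$ with $\deg a<\mi$ and $\nu(a)>\dta^L$, and set $G=F+a$. Since $F$ is monic of degree $\mi$ and $\deg a<\mi$, $G$ is monic of the same degree $\mi$. By minimality of $\mi$ among the degrees of $\qq$-unstable polynomials, $a$ is $\qq$-stable, so there exists $Q_0\in\qq$ with $\nuq(a)=\nu(a)$ for all $Q\ge Q_0$. By definition of $\dta^L$ we have $\nuq(F)\in\dta^L$, so $\nuq(F)<\nu(a)=\nuq(a)$; the ultrametric inequality then gives $\nuq(G)=\nuq(F)$ for all $Q\ge Q_0$. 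Since $F$ is $\qq$-unstable, the family $\{\nuq(F)\}_Q$ is not eventually constant, and therefore neither is $\{\nuq(G)\}_Q$. Hence $G$ is $\qq$-unstable of degree $\mi$, i.e. $G\in\kpi(\qq)$.

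For the inclusion $(\subseteq)$, take $G\in\kpi(\qq)$ and set $a=G-F$. Since $F$ and $G$ are both monic of degree $\mi$, we have $\deg a<\mi$, so $a$ is $\qq$-stable and $\nuq(a)=\nu(a)$ for all $Q$ past some $Q_1\in\qq$. The key input is the fact recalled in the paragraph just above Theorem \ref{hensel=}: any two limit key polynomials satisfy $\inq F=\inq G$ for all sufficiently large $Q\in\qq$ (citing \cite[Lemma 4.7]{VT}). This is exactly the statement $\nuq(a)=\nuq(G-F)>\nuq(F)$ for all $Q$ beyond some threshold $Q_2$. Taking $Q\ge\max(Q_1,Q_2)$ (well-defined since $\qq$ is well-ordered), we obtain $\nu(a)>\nuq(F)$. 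Because $\{\nuq(F)\}_{Q\in\qq}$ is cofinal in $\dta^L$, for every $x\in\dta^L$ we can choose such a $Q$ with $\nuq(F)\ge x$, and hence $\nu(a)>x$. Therefore $\nu(a)>\dta^L$, as required.

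There is no real obstacle: the nontrivial ingredient is precisely the $\inq$-stability of any two limit key polynomials, which is already taken as known in the excerpt; everything else is an immediate application of the definitions of $\qq$-stability, of the cut $\dta$, and of $\kpi(\qq)$.
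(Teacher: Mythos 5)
Your proof is correct. The paper states this lemma only by citation to \cite[Section 7.3.3]{VT}, so there is no internal argument to compare against; your self-contained derivation fills that gap, and it does so using precisely the other \cite{VT} fact the paper already recalls just before Theorem~\ref{hensel=} (that any two polynomials in $\kpi(\qq)$ are $\nuq$-equivalent for $Q$ sufficiently large), so nothing here is circular within the paper's framework. The two inclusions and the cofinality argument at the end are all sound.

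One step is compressed and deserves a sentence. You assert that $\deg a<\mi$ makes $a$ $\qq$-stable \emph{with stable value equal to $\nu(a)$}. The definition of $\qq$-stability gives only that $\nuq(a)$ is eventually constant; identifying that constant with $\nu(a)$ requires a short extra argument. For instance: if the stable value $\al$ were strictly less than $\nu(a)$, take $Q$ past the stability threshold, so $\nuq(a)=\al<\nu(a)$. Then Lemma~\ref{tdef}, applied to the chain $\nuq<\nu_R\le\nu$ for any $R>Q$ in $\qq$, shows that the prime $\pbq=\inq R$ divides $\inq a$ in $\ggq$. Under the canonical homomorphism $\ggq\to\gg_R$ the element $\pbq$ maps to $0$, hence so does $\inq a$, which forces $\nuq(a)<\nu_R(a)$ and contradicts stability. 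This identification is standard and used freely throughout the paper, so it is not a genuine gap, but making it explicit would render your proof fully self-contained.
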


It is easy to deduce from this result that the sets $B,J\sub\{0,\dots,d\}$ defined in the Introduction do not depend on the choice of $F$ in $\kpi(\qq)$.

From now on, we assume that $\qq$ is vertically bounded. 

In this section, we show that the results of \cite{NS2023,N2023}, analyzing limit key polynomials of  rank-one valuations, extend to VB  sets $\qq$ of key polynomials for a valuation $\nu$ of arbitrary rank.

\subsection{Comparison of $Q$-expansions}

For all $Q\in\qq$, consider the canonical $Q$-expansion of $F$:
\[
F=F_{Q,d}Q^d+\cdots+F_{Q,1}Q+F_{Q,0},
\]  
where $F_{Q,\ell}\in\kx_m$ for all $0\le\ell\le d$.

The ``comparison of $Q$-expansions" problem seeks for the detection of a regular behaviour for the coefficients $F_{Q,\ell}$ for a fixed index $0<\ell\le D$ and variable $Q\in\qq$.

Denote $\be_{Q,\ell}=\nu(F_{Q,\ell})$ for all $Q$ and all $\ell$. For a fixed $\ell$, we say that the values $\be_{Q,\ell}$ are \textbf{ultimately constant} if there exists $Q_0\in\qq$ large enough so that
\[
\be_{Q,\ell}=\be_{Q_0,\ell}\quad\mbox{ for all }\,Q\ge Q_0.
\] 

We recall that $\bct\sub B$ is the set of all ultimately constant indices in the set $B$.

Let us fix some data associated to $\qq$.\e

$\bullet$ \ We denote by $H=H(\gaqq)$ the invariance group of the cut $\gaqq$ in $\g$.\e

$\bullet$ \ 
We fix $b\in\rlx$ any element in the ``vertical supremum class":
\[
 b+H=\sup\{\gaq+H\mid Q\in\qq\}\quad\mbox{  in }\rlx/H,
\]
whose existence is guaranteed by Proposition \ref{VS=VB}.\e

$\bullet$ \ 
We fix $F\in \kpi(\qq)$. By Theorem \ref{VB}, $\deg(F)=dm$. 
Since $F$ is monic, this implies $F_{Q,d}=1$ for all $Q\in\qq$, $\dgq F=d$ and $\bed(F)=0$. 

In particular, $\nuq(F)=d\gaq$ for all $Q\in\qq$ large enough, so that $\dta_\qq=d\gaqq$.\e

We start by reformulating the condition of vertical boundedness as an inequality involving the values $\{\ep_Q\mid Q\in\qq\}$.

\begin{lemma}\label{basic0}
Let $\epmin=\epn(Q_{\op{min}})$. If $\qq$ is VB, then there exists $R\in \qq$, $R>Q$,  such that  $\ep_R-\epmin>d(b-\ga_R)$. 
\end{lemma}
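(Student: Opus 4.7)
The plan is to reformulate the desired inequality so that the vertical boundedness condition (Corollary \ref{Bsup}) delivers it directly, with a carefully engineered rational multiplier $q>1$.

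First I would invoke Corollary \ref{corsstable} to fix $Q_0\in\qq$ such that $I(Q)=\{s\}$ and $\tau_Q=\tau$ are both constant for all $Q\ge Q_0$. Consequently, $\ep_R=(\ga_R-\tau)/s$ for every $R\ge Q_0$. Next, from Corollary \ref{preserve} applied to $\nu_{Q_{\min}}<\nu_R$, together with $\ep_{\nu_R}(R)=\epn(R)$ supplied by Corollary \ref{stableEps}, the level $\ep_Q=\epn(Q)$ is strictly increasing in $Q\in\qq$. In particular, for any $Q>Q_{\min}$ with $Q\ge Q_0$ we have $\epmin<\ep_Q$, equivalently $\tau+s\epmin<\ga_Q$.

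Substituting $\ep_R=(\ga_R-\tau)/s$, the target inequality $\ep_R-\epmin>d(b-\ga_R)$ is equivalent to
\[
(1+sd)\,\ga_R>\tau+s\epmin+sd\,b.
\]
I would then apply Corollary \ref{Bsup} with the specific choice $q=(sd+1)/sd\in\Q_{>1}$, which is well-defined since $s$ and $d$ are positive powers of the characteristic exponent $p$ by Lemmas \ref{powerp} and \ref{def=p}. Setting $x=\ga_Q$, the corollary yields $y\in\dta^L$ with $q(y-\ga_Q)>b-\ga_Q$; since $\dta^L$ is the minimal initial segment containing $\{\ga_Q\mid Q\in\qq\}$, there is $R\in\qq$ with $\ga_R\ge y$, whence $q(\ga_R-\ga_Q)>b-\ga_Q>0$ and therefore $R>Q$. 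Multiplying this inequality by $(1+sd)/q=sd$ gives $(1+sd)(\ga_R-\ga_Q)>sd(b-\ga_Q)$, which rearranges to $(1+sd)\ga_R>\ga_Q+sd\,b$. Combined with $\ga_Q>\tau+s\epmin$ this yields $(1+sd)\ga_R>\tau+s\epmin+sd\,b$, exactly as needed.

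The main obstacle is pinning down the correct $q$. Naive choices (say $q$ large or $q$ close to $1$) leave residual terms in $b$ that cannot be absorbed, since $b\in\rlx$ can lie far outside $\g$ and no element of $\g$ can compensate it. The precise value $q=(sd+1)/sd$ is engineered so that $(1+sd)/q=sd$ exactly; this makes the $b$-contribution cancel and reduces the whole problem to the elementary bound $\ga_Q>\tau+s\epmin$ already established via monotonicity of the level function.
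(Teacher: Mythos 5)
Your proposal is correct and follows essentially the same route as the paper. You reduce the target inequality to the conclusion of Corollary~\ref{Bsup} with the same multiplier $q=(1+sd)/sd$, applied at $x=\ga_Q$ to the cut $\gaqq$; the only presentational difference is that the paper first bounds $\epmin$ by $\ep_Q$ (so only the difference $\ep_R-\ep_Q=(\ga_R-\ga_Q)/s$ appears), whereas you introduce the constant $\taq=\tau$ and carry $\epmin$ through the computation, using $\ga_Q>\tau+s\epmin$ at the end --- the same fact in a different dressing. One small notational slip: the set containing the values $\ga_Q$ cofinally is $\gaqq^L$, not $\dta^L$ (the latter is the minimal initial segment containing $\{\nuq(F)\}=\{d\ga_Q\}$ for large $Q$); since $b$ is the vertical supremum of $\gaqq$, Corollary~\ref{Bsup} must be invoked for the cut $\gaqq$, which is clearly what you intend.
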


\begin{proof}
Take $Q\in\qq$ large enough to stabilize $d$ and the set $I(Q)=\{s\}$. For any $R>Q$ in $\qq$, Corollary \ref{corsstable} shows that  
\begin{equation}\label{epga}
\ep_R-\ep_Q=\left(\ga_R-\gaq\right)/s.
\end{equation}
Since $\ep_R-\ep_Q\le \ep_R-\epmin$, it suffices to show the existence of $R>Q$ such that
\[
\ga_R-\gaq>sd(b-\ga_R).
\]
By writing $b-\ga_R=\left(b-\ga_Q\right)+\left(\gaq-\ga_R\right)$, this inequality  is equivalent to
\[
\frac{1+sd}{sd}\left(\ga_R-\gaq\right)>b-\gaq.
\]
Thus, the existence of $R$ is guaranteed by  vertical boundedness, as considered in Corollary \ref{Bsup}.
\end{proof}\e

From now on, we fix $P\in\qq$ large enough so that 
\begin{equation}\label{cond1}
\deg_P F=d,\quad I(P)=\{s\},\quad \mbox{and}\quad \ep_P-\epmin>d(b-\ga_P).
\end{equation}
The last inequality is guaranteed by Lemma \ref{basic0}.
Let us quote \cite[Lemma 2.1]{NS2023}.

\begin{lemma}\label{basic00}
Let $Q$ be a key polynomial for $\nu$. Suppose that $t=r+qQ$ in $\kx$ and $\ep:=\max\{\epn(t),\epn(r)\}<\ep_Q$. Then, $\nuq(qQ)-(\ep_Q-\ep)\ge \nu(t)=\nu(r)$.
\end{lemma}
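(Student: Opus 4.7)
First, I would establish $\nu(t)=\nu(r)$. Since $\epn(t),\epn(r)<\eq$, Corollary \ref{unitEps} implies that both $\inn_{\nuq}(t)$ and $\inn_{\nuq}(r)$ are units in $\gg_{\nuq}$; by Lemma \ref{tdef} this yields $\nuq(t)=\nu(t)$ and $\nuq(r)=\nu(r)$. Writing the canonical $Q$-expansions $t=\sum_i t_iQ^i$ and $r=\sum_i r_iQ^i$ with $\deg t_i,\deg r_i<\deg Q$, the unit property of the initial forms forces $\nu(t)=\nu(t_0)$ and $\nu(r)=\nu(r_0)$, with $\nu(t_iQ^i)>\nu(t_0)$ and $\nu(r_iQ^i)>\nu(r_0)$ strictly for every $i\ge 1$. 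The relation $qQ=t-r$ has zero constant term in its $Q$-expansion, so $t_0=r_0$, and hence $\nu(t)=\nu(r)$.

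For the quantitative inequality, the plan is to pass to the truncated valuation $\nuq$ and exploit its level function. By Corollary \ref{stableEps}, $\epsilon_{\nuq}(t)=\epn(t)$ and $\epsilon_{\nuq}(r)=\epn(r)$, so both are at most $\ep$. Therefore, for every $s\ge 1$,
\[
\nuq(\ps t)\ge\nuq(t)-s\,\epsilon_{\nuq}(t)\ge\nu(t)-s\ep,\qquad \nuq(\ps r)\ge\nu(t)-s\ep,
\]
and from $\ps(qQ)=\ps t-\ps r$ and the ultrametric inequality for $\nuq$ we obtain $\nuq(\ps(qQ))\ge\nu(t)-s\ep$.

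The main obstacle is the structural identification $\epsilon_{\nuq}(qQ)=\eq$, which is what converts this bound into one on $\nuq(qQ)$. To establish it, I would argue as follows: since $Q$ is a maximal key polynomial for $\nuq$, Lemma \ref{maximal}(iii) together with Corollary \ref{stableEps} give $\epb(\nuq)=\epsilon_{\nuq}(Q)=\eq$; moreover, $\inn_{\nuq}(qQ)=\inn_{\nuq}(q)\cdot\inn_{\nuq}(Q)$ is divisible by the prime $\inn_{\nuq}(Q)$, hence is not a unit, so Proposition \ref{emax} yields $\epsilon_{\nuq}(qQ)=\epb(\nuq)=\eq$. Now choose $s\in I_{\nuq}(qQ)$; by Lemma \ref{powerp}, $s$ is a power of the characteristic exponent $p$, and therefore $s\ge 1$. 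By definition of $I_{\nuq}(qQ)$, $\nuq(\ps(qQ))=\nuq(qQ)-s\eq$. Combining with the bound from the previous paragraph,
\[
\nuq(qQ)-s\eq\ge\nu(t)-s\ep\ \Longrightarrow\ \nuq(qQ)\ge\nu(t)+s(\eq-\ep)\ge\nu(t)+(\eq-\ep),
\]
as required.
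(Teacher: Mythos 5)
Your proof is correct. Note first that the paper does not prove Lemma~\ref{basic00} itself: it is quoted verbatim from \cite[Lemma 2.1]{NS2023}, so there is no in-paper argument to compare against. Your argument, however, is a clean and self-contained derivation using only tools already developed in the present paper, which is a worthwhile addition.

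A few comments on the structure. Your first paragraph can be shortened: once you know that $\inn_{\nuq}(t)$ and $\inn_{\nuq}(r)$ are units (Corollary~\ref{unitEps}), the equalities $\nuq(t)=\nu(t)=\nu(t_0)$ and $\nuq(r)=\nu(r)=\nu(r_0)$ follow, and since the canonical $Q$-expansion of $qQ$ has zero constant term, $t_0=r_0$ forces $\nu(t)=\nu(r)$; you have it right, just observe that you are really using that $\dgq(t)=0$ (equivalently $S_Q(t)=\{0\}$) and likewise for $r$, rather than any separate strict inequality argument. For the quantitative half, the crucial structural input is exactly the identification $\ep_{\nuq}(qQ)=\max(E_{\nuq})=\ep_Q$, which you obtain by combining Lemma~\ref{maximal}(iii), Corollary~\ref{stableEps}, and Proposition~\ref{emax} with the observation that $\inn_{\nuq}(qQ)=\inn_{\nuq}(q)\,\inn_{\nuq}(Q)$ is divisible by the prime $\inn_{\nuq}(Q)$ and hence not a unit. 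That is the correct bridge. Your appeal to Lemma~\ref{powerp} to get $s\ge 1$ is redundant (the level function only ranges over $s\ge 1$ by definition), but harmless; it does point out that one in fact gets the stronger bound $\nuq(qQ)\ge\nu(t)+s(\ep_Q-\ep)$ with $s$ a power of $p$, which is the sharper form implicitly used later in the paper (e.g.\ via \eqref{epga}). Finally, the degenerate cases $q=0$ and $\nu(Q)=\infty$ both give $\nuq(qQ)=\infty$, so they are trivially covered; you may wish to note this explicitly.
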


 Along this section, we shall apply Lemma \ref{basic0} to polynomials $t$, $r$ which are a product of polynomials in $\kx_m$. The equality in (\ref{efg}) shows that $\epn(t),\epn(r)<\epmin$. Hence,   
Lemma \ref{basic0} and the assumption (\ref{cond1}) imply that:
\begin{equation}\label{basic}
\nuq(qQ)-d(b-\ga_P)> \nu(t)=\nu(r)\quad\mbox{ for all }\,Q>P.
\end{equation}

For  $Q>P$, let us denote $h_Q=P-Q\in\kx_m$. Note that $\inq h_Q=\inq P$.

For any $g\in\kx$, its $P$-expansion can be reinterpreted as the evaluation at $P$ of a certain polynomial $\ell\in\kx_m[X]$:
\[
\ell=\sum\nolimits_{0\le j}g_{P,j}X^j\ \imp\ g=\ell(P).
\]
As a previous step for the comparison between $P$-expansions and $Q$-expansions of the same polynomial, Novacoski and Spivakovsky compared in \cite[Proposition 2.4]{NS2023} the $\nu$-values of $g=\ell(P)$ and the polynomial $\ell(h_Q)$. 
We reproduce the proof of this result, because we  need a slightly stronger statement. 

\begin{proposition}\label{NS2.4}
Let $g=\ell(P)\in\kx$ for some $\ell\in\kx_m[X]$ with $\deg_X \ell\le d$. Then, for all $Q>P$ we have $\nuq(\ell(h_Q))\ge\nuq(g)$ and equality holds if and only if $\,0\in S_Q(g)$. In this case, $\inq \ell(h_Q)$ is a unit in $\ggq$.
\end{proposition}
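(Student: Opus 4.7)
The key observation is that $g=\ell(P)=\sum_{j=0}^D g_j P^j$, with $g_j\in\kx_m$, is precisely the $P$-expansion of $g$, and that $P>Q$ in $\qq$ makes $P$ an MLV key polynomial for $\nuq$ of minimal degree (established in the discussion involving Lemma \ref{tdef}). Applying the minimality formula (\ref{minimal}) to $\nuq$ yields
\[
\nuq(g)\;=\;\min_j\,\nuq(g_j P^j)\;=\;\min_j\,\left(\nu(g_j)+j\gaq\right),
\]
using $\nuq(P)=\gaq$ (its $Q$-expansion is $P=Q+h_Q$ with both terms of $\nuq$-value $\gaq$) and $\nuq(g_j)=\nu(g_j)$ for $\deg g_j<m$. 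Since $h_Q\in\kx_m$ satisfies $\nuq(h_Q)=\nu(h_Q)=\gaq$ (because $\nu(a_{QP})=\nu(Q)$ for $P>Q$), ultrametricity gives
\[
\nuq(\ell(h_Q))\;=\;\nuq\!\Big(\sum_j g_j h_Q^j\Big)\;\geq\;\min_j\,\nuq(g_j h_Q^j)\;=\;\min_j\,(\nu(g_j)+j\gaq)\;=\;\nuq(g),
\]
which establishes the inequality.

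For the equality case, set $T=\{j:\nu(g_j)+j\gaq=\nuq(g)\}$. The ultrametric bound above is sharp precisely when $\sum_{j\in T}\inq(g_j h_Q^j)\neq 0$ in $\ggq$, in which case this sum equals $\inq(\ell(h_Q))$. Now $\inq(h_Q)=\inq(-(Q-P))=-u_Q$, so $\inq(g_j h_Q^j)=(-u_Q)^j\inq(g_j)$. Similarly, $\inq(P)=\pbq=\piq-u_Q$, and the same set $T$ governs the computation of $\inq(g)$ from the $P$-expansion:
\[
\inq(g)\;=\;\sum_{j\in T}\inq(g_j)\,\pbq^{\,j}\;=\;\sum_{j\in T}\inq(g_j)(\piq-u_Q)^j.
\]
Expanding as a polynomial in $\piq$ (transcendental over $\ggq^0$ by Theorem \ref{g0gm}), the constant coefficient is $\sum_{j\in T}(-u_Q)^j\inq(g_j)$. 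On the other hand, the $Q$-expansion of $g$ gives $\inq(g)=\sum_{k\in S_Q(g)}\inq(g_{Q,k})\piq^{\,k}$, whose constant coefficient is $\inq(g_{Q,0})$ if $0\in S_Q(g)$ and $0$ otherwise. Uniqueness of the polynomial representation in $\piq$ forces these two constant coefficients to coincide. Hence $\sum_{j\in T}(-u_Q)^j\inq(g_j)\neq 0$ iff $0\in S_Q(g)$; equivalently, $\nuq(\ell(h_Q))=\nuq(g)$ iff $0\in S_Q(g)$, and in that case $\inq(\ell(h_Q))=\inq(g_{Q,0})$.

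For the unit claim, when equality holds the identity $\inq(\ell(h_Q))=\sum_{j\in T}(-u_Q)^j\inq(g_j)$ shows that $\inq(\ell(h_Q))$ lies in $\ggq^0$. As a polynomial in $\piq$ it has degree $0$, so the Remark following Lemma \ref{deglc} yields that $\inq(\ell(h_Q))$ is a homogeneous unit in $\ggq$.

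\textbf{Main obstacle.} The subtlety lies in identifying the constant coefficient of $\inq(g)$ through two different presentations—the $P$-expansion reinterpreted via $\pbq=\piq-u_Q$ and the $Q$-expansion—and matching them via the uniqueness afforded by Theorem \ref{g0gm}. Notably, vertical boundedness of $\qq$, the guiding hypothesis of the section, does not intervene in this particular proposition; it becomes essential only in the consequences drawn from it.
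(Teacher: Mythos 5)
Your argument rests on a reversed reading of the ordering: throughout you treat $P$ as the \emph{larger} of the two key polynomials, whereas the statement (and the setup of Section \ref{secVB}, where $P$ is fixed subject to (\ref{cond1})) has $Q>P$ in $\qq$, hence $\ga_P<\gaq$. With the correct orientation, $h_Q=P-Q$ has $\nu(h_Q)=\ga_P$ (not $\gaq$), $\nuq(P)=\ga_P$, and $\inq P=\inq h_Q$ is a homogeneous \emph{unit} in $\ggq$; in particular $P$ is not a MLV key polynomial for $\nuq$, $\inq h_Q$ is not $-u_Q$, $\inq P$ is not $\pbq$, and the minimality formula (\ref{minimal}) cannot be applied to $P$-expansions to compute $\nuq$ (for $g=Q=P-h_Q$ it would give $\ga_P$ instead of $\gaq$). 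Consequently your central chain collapses: the ultrametric estimate only yields $\nuq(\ell(h_Q))\ge\min_j\left(\nu(g_j)+j\ga_P\right)=\nu_P(g)$, which can be strictly smaller than $\nuq(g)$, and the matching of constant coefficients in $\ggq^0[\piq]$ relies on the same invalid identities. What you have actually proved is the much easier statement with the roles of the two polynomials interchanged, which is not Proposition \ref{NS2.4} and would be useless for the later comparison of $Q$-expansions as $Q$ grows with $P$ fixed.

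This also shows why your closing remark is mistaken: vertical boundedness does intervene in this proposition. The whole difficulty is to bridge the gap between $\nu_P(g)$ and $\nuq(g)$, and the paper does this by dividing each $a_jh_Q^j$ by $Q$ and proving $\nuq(q_jQ)>\nuq(g)$ via inequality (\ref{basic}); that inequality comes from Lemma \ref{basic00} combined with the choice of $P$ in (\ref{cond1}), i.e.\ with Lemma \ref{basic0}, whose proof uses the vertical supremum representative $b$ and hence the VB hypothesis (Proposition \ref{ellstable} is also needed, to pass from $\nu(a_t)$ to $\nu(b_t)$). The one sound point of contact with the paper's proof is the idea of comparing $0$-th coefficients: the paper observes that $g=\ell(h_Q)+QA$, so $g$ and $\ell(h_Q)$ have the same $0$-th coefficient $b_0$ in their canonical $Q$-expansions, and then the VB-based estimates on the higher coefficients give equality precisely when $\nu(b_0)=\nuq(g)$, i.e.\ when $0\in S_Q(g)$, with $\inq\ell(h_Q)=\inq b_0$ a unit. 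Those estimates are exactly what is missing from your argument.
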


\begin{proof}
Denote $h=h_Q$. Consider the canonical $P$ and $Q$-expansions of $g$:
\[
g=a_0+a_1P+\cdots+a_nP^n=b_0+b_1Q+\cdots +b_nQ^n.
\]
Let $t=\deg_P g=\max(S_P(g))$. By Proposition \ref{ellstable}, $t=\max(S_{P,Q}(g))$ and $\inn_P a_t=\inn_P b_t$. On the other hand, the canonical $Q$-expansions of $g$ and $\ell(h)$ have the same $0$-th coefficient. Indeed,
\[
g=\sum_{j=0}^na_j(Q+h)^j=\sum_{j=0}^na_jh^j+QA=\ell(h)+QA,
\]
for some $A\in\kx$. Let us write the canonical $Q$-expansion of $\ell(h)$ as follows:
\[
\ell(h)=a_0+a_1h+\cdots+a_nh^n=b_0+c_1Q+\cdots +c_nQ^n.
\]

Now, consider the divisions with remainder:
\[
a_jh^j=a_{j,0}+q_jQ,\quad\deg a_{j,0}<m,\quad  1\le j\le n.
\]
For all $j>0$, the inequality in (\ref{basic}) shows that
\[
\begin{array}{rl}
\nuq(q_jQ)>&\!\nu(a_jh^j)+d(b-\ga_P)\ge\nu(a_th^t)+t(b-\ga_P)\\
=&\!\nu(b_t)+tb>\nu(b_t)+t\gaq=\nu(b_tQ^t)\ge \nuq(g).
\end{array}
\]
These inequalities imply that $\nu(c_kQ^k)>\nuq(g)$ for all $k>0$. Since $\nu(b_0)\ge \nuq(g)$, we deduce that $\nuq(\ell(h_Q))\ge\nuq(g)$ and equality holds if and only if
$\nu(b_0)=\nuq(g)$.

In this case, we have $\nu(c_kQ^k)>\nu(b_0)$ for all $k>0$, so that  $\inq \ell(h)=\inq b_0$ is a unit in $\ggq$.
\end{proof}

\subsection{Universal $Q$-expansions of limit key polynomials}
Consider the polynomial $L\in\kx_m[X]$ determined by the canonical $P$-expansion of  $f$:
\[
L(X)=F_{P,0}+F_{P,1}X+\cdots +F_{P,d-1}X^{d-1}+X^d,\qquad L(P)=F.
\] 
We consider the Hasse-Schmidt derivatives $\parj L$ in the ring $K(x)[X]$. The Taylor formula (cf. Section \ref{secEp}) applied to $x=h_Q$, $y=Q$ determines a universal $Q$-expansion of $F$, for all $Q\in\qq$:
\[
F=L(h_Q+Q)=\sum_{j=0}^d\parj L(h_Q)\,Q^j.
\] 

Of course, these are non-canonical $Q$-expansions. However, for $Q>P$ sufficiently large, Proposition \ref{NS2.4} leads to a comparison with the canonical $Q$-expansion. 

Let us denote $A_j=\parj L(h_Q)$ for all $0\le j\le d$, so that
\[
F=A_0+A_1Q+\cdots+A_dQ^d,\quad A_d=1.
\]

The comparison of $A_j$ with $F_{Q,j}$ is feasible only when $Q$ is sufficiently large. Since $\deg\parj L(P)<\deg F$, the polynomials $\parj L(P)$ are $\qq$-stable. Let us take $Q>P$ sufficiently large so  that $\dgq \parj L(P)=0$ for all $0<j<d$. Let us denote
\[
\be_j=\nu\left(\parj L(P)\right)\quad \mbox{for all }\,0< j< d. 
\]
Since $S_Q(\parj L(P))=\{0\}$, Proposition \ref{NS2.4} shows that for all $0< j< d$ we have
\begin{equation}\label{cond2}
\nuq\left(A_j\right)=\nuq\left(\parj L(P)\right)=\be_j\quad \mbox{and}\quad \inq A_j \mbox{ is a unit in }\ggq.
\end{equation}

\emph{From now on, we assume that $Q>P$ is sufficiently large 
to ensure that (\ref{cond2}) holds for all $\,0<j<d$.}\e


Completing with zero coefficients, consider the canonical $Q$-expansions:	
\[
A_j=A_{j,0}+A_{j,1}Q+\cdots+A_{j,d}Q^d,\qquad 0\le j\le d.
\]

\begin{lemma}\cite[Lemma 4.2]{NS2023}\label{NS4.2}
For all $0\le j\le d$, we have $\nuq(A_j-A_{j,0})+j\gaq >db$.
\end{lemma}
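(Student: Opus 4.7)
The plan is to compute $A_j=\parj L(h_Q)$ via the Hasse--Schmidt expansion of $L$, identify the constant coefficient $A_{j,0}$ of its canonical $Q$-expansion by Euclidean division, and then bound each piece of $A_j-A_{j,0}$ using the inequality (\ref{basic}).

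Concretely, expanding $\parj L(X)=\sum_{i=0}^{d-j}\binom{i+j}{j}F_{P,i+j}X^i$, I set $\ell_i=\binom{i+j}{j}F_{P,i+j}\in\kx_m$, so $A_j=\ell_0+\sum_{i=1}^{d-j}\ell_ih_Q^i$. For each $i\ge1$ I perform the Euclidean division $\ell_ih_Q^i=\ell_{i,0}+q_iQ$ with $\ell_{i,0}\in\kx_m$; summing,
$$
A_j=\Bigl(\ell_0+\sum_{i\ge1}\ell_{i,0}\Bigr)+Q\sum_{i\ge1}q_i.
$$
Since $\ell_0+\sum_{i\ge1}\ell_{i,0}\in\kx_m$, uniqueness of the canonical $Q$-expansion forces $A_{j,0}=\ell_0+\sum_{i\ge1}\ell_{i,0}$ and $A_j-A_{j,0}=Q\sum_{i\ge1}q_i$, so $\nuq(A_j-A_{j,0})\ge\min_{i\ge1}\nuq(q_iQ)$.

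Now I apply (\ref{basic}) to each pair $(\ell_ih_Q^i,\ell_{i,0})$; its hypothesis on levels is met because $\ell_i,h_Q\in\kx_m$, hence by (\ref{efg}) both $\epn(\ell_ih_Q^i)$ and $\epn(\ell_{i,0})$ are less than $\epmin$. This yields
$$
\nuq(q_iQ)>\nu(\ell_ih_Q^i)+d(b-\ga_P)=\nu(\ell_i)+i\ga_P+d(b-\ga_P),
$$
where I use $\nu(h_Q)=\ga_P$ (since $h_Q=P-Q$ with $\ga_P<\gaq$). Because $\nu(\ell_i)\ge\nu(F_{P,i+j})$ (binomial coefficients have nonnegative $\nu$) and Proposition \ref{Sq} gives $\nu_P(F)=d\ga_P$, I obtain
$$
\nu(\ell_i)+i\ga_P\ge\nu(F_{P,i+j}P^{i+j})-j\ga_P\ge(d-j)\ga_P.
$$
Combining, for $0\le j<d$,
$$
\nuq(A_j-A_{j,0})+j\gaq>(d-j)\ga_P+d(b-\ga_P)+j\gaq=db+j(\gaq-\ga_P)\ge db,
$$
which settles these cases. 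The case $j=d$ is immediate: $A_d=\partial_d L(h_Q)=1$ because $L$ is monic of degree $d$, so $A_d-A_{d,0}=0$ and $\nuq(A_d-A_{d,0})=\infty$.

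The key conceptual ingredient is the factor $d(b-\ga_P)$ furnished by (\ref{basic}), which is precisely what vertical boundedness buys (through Lemma \ref{basic0} and the choice of $P$ made in (\ref{cond1})); combined with the crude value estimate $\nu_P(F)=d\ga_P$ from Proposition \ref{Sq}, it fills exactly the gap $d(b-\ga_P)=db-d\ga_P$ needed to cross from $d\ga_P$ to $db$. The only bookkeeping subtlety is to check that $(\ell_0+\sum\ell_{i,0})+Q\sum q_i$ really is the canonical $Q$-expansion as far as the constant term, which reduces to the evident degree bound $\deg(\ell_0+\sum\ell_{i,0})<m$.
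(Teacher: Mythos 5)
Your proof is correct and is essentially the paper's own argument: the same Hasse--Schmidt expansion of $\parj L$ evaluated at $h_Q$, the same Euclidean divisions of each term $\ell_i h_Q^i$ by $Q$, and the same combination of the inequality (\ref{basic}) with the estimate $\nu(\ell_i)+i\ga_P\ge \nu_P(F)-j\ga_P=(d-j)\ga_P$ coming from $\nu(\binom{i+j}{j})\ge 0$ and $\nu_P(F)=d\ga_P$. You merely make explicit two points the paper leaves implicit (the identification of $A_{j,0}$ with the sum of remainders, and the trivial case $j=d$), so there is nothing to add.
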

\begin{proof}
Denote $h=h_Q$ and let $L=\sum_{k=0}^dz_kX^k$. Then,
\[
\parj L=\sum_{k=0}^da_kX^{k-j},\qquad A_j=\sum_{k=0}^da_kh^{k-j},\qquad\mbox{where }a_k=\comb{k}{j}z_k. 
\] 
Therefore, 
\[
\nu(a_kh^{k-j})\ge \nu(z_kh^{k-j})=\nu(z_kP^k)-j\ga_P\ge \nu_P(F)-j\ga_P=(d-j)\ga_P.
\]
Now, we can proceed as in the proof of Proposition \ref{NS2.4}. Consider the divisions with remainder:
\[
a_kh^{k-j}=a_{k,0}+q_kQ,\quad\deg a_{k,0}<m,\quad  1\le k\le d.
\]
For all $k>0$, the inequality in (\ref{basic}) shows that
\[
\nuq(q_kQ)>\nu(a_kh^{k-j})+d(b-\ga_P)\ge db-j\ga_P\ge db-j\gaq.
\]
This proves the lemma.
\end{proof}\e

Let $S\sub I$ be the initial segment of $I$ associated to $H$. Recall that
\[
H=H_S=\left\{x\in\rlx\mid \val(x)> S\right\}.
\]
Consider the group homomorphism determined by truncation by $S$:
\[
\rlx \lra  \rlxs,\qquad x=(x_i)_{i\in I}\longmapsto x_S=(x_i)_{i\in S}.
\]
The kernel of this homomorphism is precisely the convex subgroup $H_S=H$. Thus, it induces an isomorphism of ordered groups between $\rlx/H$ and $\rlxs$, determined by $x+H\mapsto x_S$.  

\emph{From now on, we shall identify $\rlx/H$ with $\rlxs$ without further mention.}

\begin{proposition}\label{aibi}
Consider an arbitrary index $0<j< d$.
\begin{enumerate}
	\item [(a)]$\left(\be_j\right)_S\ge(d-j)b_S$.
	\item[(b)] If $\left(\be_j\right)_S=(d-j)b_S$, then $\nu(F_{Q,j})=\be_j$, $\inq F_{Q,j}=\inq A_j$ and $\nu(F_{Q,j}Q^j)\in\dta^L$.
	\item [(c)] If $\left(\be_j\right)_S>(d-j)b_S$, then $\nu(F_{Q,i})>(d-j)b$. Moreover,
\[
\nu(F_{Q,j}Q^j)>\dta^L\ \sii\ \be_j+j\gaq>\dta^L.
\]	
\end{enumerate}	
\end{proposition}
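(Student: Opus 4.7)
The plan is to compare the canonical $Q$-expansion of $F$ with the universal $Q$-expansion $F=\sum_{j=0}^{d}A_jQ^j$ coming from Taylor's formula, controlling the discrepancy via Lemma \ref{NS4.2}. As a preliminary, since $\inq A_j$ is a unit in $\ggq=\ggq^0[\piq]$ by (\ref{cond2}) and Theorem \ref{g0gm}, and units in the polynomial ring $\ggq^0[\piq]$ must be constants, we have $S_Q(A_j)=\{0\}$, $\inq A_j=\inq A_{j,0}$, and $\nu(A_{j,0})=\be_j$. Collecting coefficients in the universal $Q$-expansion identifies $F_{Q,j}=A_{j,0}+A_{j-1,1}+\cdots+A_{0,j}$, so $F_{Q,j}-A_{j,0}$ is the $j$-th canonical $Q$-coefficient of $\sum_\ell (A_\ell-A_{\ell,0})Q^\ell$, and Lemma \ref{NS4.2} yields $\nu(F_{Q,j}-A_{j,0})>db-j\gaq$.

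For (a), Proposition \ref{Sq} gives $S_Q(F)=\{0,d\}$, so $\nu(F_{Q,j})>(d-j)\gaq$ for $0<j<d$. Writing $A_{j,0}=F_{Q,j}-(F_{Q,j}-A_{j,0})$ and using $db-j\gaq>(d-j)\gaq$ (which follows from $b>\gaq$), I obtain $\be_j=\nu(A_{j,0})>(d-j)\gaq$, hence $(\be_j)_S\ge(d-j)(\gaq)_S$ in $\rlxs$. Since $\be_j$ is independent of $Q$ while this holds for every $Q$ in a cofinal subset of $\qq$, passing to the supremum and using $\sup_Q(\gaq)_S=b_S$ gives $(\be_j)_S\ge(d-j)b_S$.

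For (b), vertical boundedness excludes cuts of the form $(a+H)^+$ by Lemma \ref{VS}, which forces $b_S>(\gaq)_S$ strictly. Under the hypothesis $(\be_j)_S=(d-j)b_S$ this gives $(\nu(A_{j,0}))_S=(d-j)b_S<db_S-j(\gaq)_S\le(\nu(F_{Q,j}-A_{j,0}))_S$; since $S$ is an initial segment of $I$, a strict inequality on $S$-truncations lifts to one in $\rlx$, so $\nu(F_{Q,j})=\nu(A_{j,0})=\be_j$ and $\inq F_{Q,j}=\inq A_{j,0}=\inq A_j$. The comparison $(\be_j+j\gaq)_S<db_S$ then lets me pick $R\in\qq$ with $(d\ga_R)_S>(\be_j+j\gaq)_S$, whence $d\ga_R>\be_j+j\gaq$ in $\rlx$ and $\nu(F_{Q,j}Q^j)\in\dta^L$. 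For (c), $(\be_j)_S>(d-j)b_S$ lifts to $\be_j>(d-j)b$ in $\rlx$, and combined with $\nu(F_{Q,j}-A_{j,0})>db-j\gaq>(d-j)b$ this yields $\nu(F_{Q,j})>(d-j)b$. The equivalence reduces to a case split on the comparison between $\be_j+j\gaq$ and $\nu((F_{Q,j}-A_{j,0})Q^j)$: in the strictly-smaller case $\nu(F_{Q,j}Q^j)=\be_j+j\gaq$ and the equivalence is tautological, while in the remaining cases both $\be_j+j\gaq$ and $\nu(F_{Q,j}Q^j)$ exceed $db$ and therefore lie strictly above $\dta^L$.

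The main technical obstacle throughout is the bookkeeping between strict inequalities in $\rlx$ and those in $\rlxs=\rlx/H$: a strict inequality on $S$-truncations lifts to $\rlx$ because $S$ is an initial segment of $I$, but a non-strict one does not. The essential input driving every strict step is the strict inequality $b>\gaq$ supplied by vertical boundedness through Lemma \ref{VS} and Corollary \ref{Bsup}.
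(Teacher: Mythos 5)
Your argument is correct, and for parts (b) and (c) it is essentially the paper's proof: the same identification $F_{Q,j}=A_{j,0}+\sum_{0<k\le j}A_{j-k,k}$, the same error estimate coming from Lemma \ref{NS4.2}, and the same strict inequality $b_S>(\gaq)_S$ extracted from vertical boundedness via Lemma \ref{VS}. The only genuine divergence is in part (a). The paper does not use that decomposition there at all: since each $\inq A_j$ is a unit, Corollary \ref{unitEps} and Lemma \ref{nuq} give $\nuq(F)=\min\{\be_j+j\,\gaq\mid 0\le j\le d\}$ directly from the universal expansion, and comparing with $\inq F=\lcq(F)\,\pbq^{d}$ (Proposition \ref{Sq}) through the uniqueness of polynomial expressions in $\piq$ (Theorem \ref{g0gm}) yields $\be_j+j\gaq>d\gaq$ for $0<j<d$ with no reference to $b$. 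You instead read $\nu(F_{Q,j})>(d-j)\gaq$ off the canonical expansion from $S_Q(F)=\{0,d\}$ and transfer it to $\be_j=\nu(A_{j,0})$ via the Lemma \ref{NS4.2} bound, which forces you to invoke $b>\gaq$ (hence vertical boundedness, through Lemma \ref{VS} and Corollary \ref{Bsup}) already at this stage; this costs nothing here, since the whole section assumes $\qq$ is VB, but the paper's route exhibits the inequality $\be_j+j\gaq>d\gaq$ as a purely graded-algebra fact independent of the vertical supremum. A small merit of your write-up: the observation that a unit of $\ggq=\ggq^0[\piq]$ must lie in $\ggq^0$, so that $S_Q(A_j)=\{0\}$ and $\nu(A_{j,0})=\be_j$, cleanly justifies an equality the paper only asserts.
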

	
\begin{proof}
Since $\inq A_j$ is a unit for all $0\le j\le d$,  Corollary \ref{unitEps} shows that $\epn(A_j)<\ep_Q$  for all $0\le j\le d$. By Lemma \ref{nuq}, we have
\[
\nuq(F)=\min\{\be_j+j\,\gaq\mid 0\le j\le d\},
\]
and if $\mathcal{S}=\{0\le j\le d\mid \be_j+j\,\gaq=\nuq(F)\}$, then 
$
\inq F=\sum_{j \in \mathcal{S}}(\inq A_j) \piq^j$. 
By Proposition \ref{Sq} and Theorem \ref{g0gm}, we have $\inq F=\inq A_0+\piq^d$. Hence, 
\[
\be_j+j\,\gaq >\nuq(F)=d\gaq\quad\mbox{for all }0<j<d.
\]
Since this holds for all sufficiently large $Q$, and $b_S=\sup\{(\gaq)_S\mid Q \in \qq\}$ in $\rlxs$
, we deduce that $\left(\be_j\right)_S\ge(d-j)b_S$. This proves (a).\e
 
Since $F=A_0+A_1Q+\cdots+A_dQ^d$, we have:
\begin{equation}\label{set}
F_{Q,j}=A_{0,j}+\sum_{0<k\le j}A_{j-k,k},\qquad \nu(A_{j,0})=\be_j.
\end{equation}
By Lemma \ref{NS4.2}, $\nu(A_{j-k,k}Q^k)> db-(j-k)\gaq$ for all $0<k\le j$. 
Therefore, 
\[
\nu(A_{j-k,k})> db-\gaq\quad \mbox{ for all }0<k\le j.
\]

Suppose  $\left(\be_j\right)_S=(d-j)b_S$. Since $b_S>(\ga_R)_S$ for all $R\in\qq$, we have
\[
\left(db-j\gaq\right)_S=(d-j)b_S+\left(j(b-\gaq)\right)_S>(d-j)b_S=(\be_j)_S.
\]
Hence, $db-j\gaq>\be_j$ and we deduce that $\nu(F_{Q,j})=\be_j$ and $\inq  F_{Q,j}=\inq A_{j,0}=\inq A_j$. 
Moreover, since $\left(\be_j+j\gaq\right)_S<db_S=\sup\{d(\ga_R)_S\mid R\in\qq\}$,  we must have  $\left(\be_j+j\gaq\right)_S<d(\ga_R)_S$ for some $R$. Hence, $\be_j+j\gaq<d\ga_R\in\dta^L$.
This proves (b).\e

Now, suppose  $\left(\be_j\right)_S>(d-j)b_S$. Then, $\be_j>(d-j)b$ and $\nu(A_{j-k,k})> db-j\gaq>(d-j)b$. From the equality (\ref{set}) we deduce that $\nu(F_{Q,j})>(d-j)b$. 

Finally, since $\nu(A_{j-k,k})+j\gaq> db>\dta^L$, the last statement of (c) follows immediately from (\ref{set}) too.
\end{proof}\e

In this way, we obtain sharp information about the canonical $Q$-expansions of $F$, for $Q$ sufficiently large. The Newton polygon $N_{\nuq,Q}(F)$ has the following shape. 

\begin{center}
	\setlength{\unitlength}{4mm}
\begin{picture}(22,14.5)
\put(13.35,5.35){$\bullet$}\put(13.35,8){$\bullet$}
\put(9.35,5.6){$\bullet$}
\put(3.55,9){$\bullet$}
\put(6.8,10.3){$\bullet$}
\put(6.8,12){$\bullet$}
\put(1.75,3.9){$\bullet$}\put(17.8,0.75){$\bullet$}
\put(1,1){\line(1,0){24}}
\put(1.75,7.95){\line(1,0){0.6}}
\put(2,0){\line(0,1){14}}
\put(18,1){\line(-3,2){16}}\put(18,1.05){\line(-3,2){16}}
\put(18,1){\line(-2,1){16}}\put(18,1.05){\line(-2,1){16}}
\put(18,1){\line(-5,1){16}}
\multiput(2,7.6)(0.25,-0.05){46}{$\cdot$}
\multiput(7,.9)(0,.3){38}{\vrule height2pt}
\multiput(3.8,.9)(0,.3){29}{\vrule height2pt}
\multiput(9.55,.9)(0,.3){17}{\vrule height2pt}
\multiput(13.6,.9)(0,.3){25}{\vrule height2pt}
\put(.8,9.8){\begin{footnotesize}$\mbox{\huge$\{$}$                \end{footnotesize}}
\put(-2,10.1){\begin{footnotesize}$db+H$\end{footnotesize}}
\put(-.8,4){\begin{footnotesize}$\nuq(F)$\end{footnotesize}}
\put(-2.6,7.8){\begin{footnotesize}$\nu(F_{Q,n}Q^n)$\end{footnotesize}}
\put(17.9,0){\begin{footnotesize}$d$\end{footnotesize}}
\put(3.6,0){\begin{footnotesize}$j$\end{footnotesize}}
\put(6.7,0){\begin{footnotesize}$k$\end{footnotesize}}
\put(9.5,0){\begin{footnotesize}$\ell$\end{footnotesize}}
\put(13.5,0){\begin{footnotesize}$n$\end{footnotesize}}
\put(1.3,0){\begin{footnotesize}$0$\end{footnotesize}}
\put(14.2,5.5){\begin{footnotesize}$\be_{Q,n}$\end{footnotesize}}
\put(7.5,12){\begin{footnotesize}$\be_{Q,k}$\end{footnotesize}}
\put(4.2,8.9){\begin{footnotesize}$\be_j$\end{footnotesize}}
\put(9.9,5.3){\begin{footnotesize}$\bel$\end{footnotesize}}
\put(7.5,10.3){\begin{footnotesize}$\be_k$\end{footnotesize}}
\put(14.2,7){\begin{footnotesize}$\be_n$\end{footnotesize}}
	\end{picture}
\end{center}\bs

Except for the point $(0,\nuq(F))$, the rest of the points $(j,\be_{Q,j})$ lie on or above the ``limit region" determined by all lines joining $(d,0)$ with all points in the set $db+H$. The line joining $(0,\nuq(F))$ with $(d,0)$ has slope $-\gaq$. \e

\noindent{\bf Definition. }{\it Define the \textbf{limit index set} of $f$ as }
\[
B'(F)=\{j\in(0,d)\mid (\be_j)_S=(d-j)b_S\}.
\]	

In other words, $B'(F)$ collects all the indices for which the points $(j,\be_j)$ fall in the limit region.

\begin{corollary}\label{welldefined}
	For all $F,G\in\kpi(\qq)$, we have $B'(F)=B'(G)$.
	
	Moreover, $\inq F_{Q,j}=\inq G_{Q,j}$ for all $j\in B'(F)$, if $Q$ is large enough.
\end{corollary}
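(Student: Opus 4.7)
The plan is to apply Lemma~\ref{MinAug} and write $G=F+a$ with $a\in\kx$, $\deg a<\mi=dm$ and $\nu(a)>\dta^L$. Because $\deg a<\mi$, the polynomial $a$ is $\qq$-stable. Moreover, for $Q\in\qq$ sufficiently large, $F$ and $G$ are monic limit key polynomials of degree $dm$, so by Proposition~\ref{Sq} both satisfy $\inq F=\pbq^d=\inq G$; the ultrametric inequality therefore gives the strict bound $\nuq(a)>d\gaq$. Combined with the $\qq$-stability of $a$ and the absence of a maximum in $\qq$, this forces the stable value $\ga^*=\nuq(a)$ to exceed every $d\gaq$, hence $\nuq(a)>\dta^L$ for $Q$ large.

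For the ``moreover'' clause, I would fix $Q$ also large enough so that (\ref{cond1})--(\ref{cond2}) apply to both $F$ and $G$, denote by $\be_j$ the quantities attached to $F$, and expand $a=\sum_{j=0}^{d-1} a_{Q,j} Q^j$ canonically. Since $Q$ is an MLV key polynomial of minimal degree for $\nuq$, formula (\ref{minimal}) gives
\[
\min_{j}\bigl\{\nu(a_{Q,j})+j\gaq\bigr\}=\nuq(a)>\dta^L,
\]
so $\nu(a_{Q,j})+j\gaq>\dta^L$ for every $j$. For $j\in B'(F)$, Proposition~\ref{aibi}(b) yields $\nu(F_{Q,j})=\be_j$ and $\be_j+j\gaq=\nu(F_{Q,j}Q^j)\in\dta^L$, so $\nu(a_{Q,j})>\be_j$. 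From $G_{Q,j}=F_{Q,j}+a_{Q,j}$ the ultrametric inequality produces $\inq G_{Q,j}=\inq F_{Q,j}$ and $\nu(G_{Q,j})=\be_j$.

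To close the set equality $B'(F)=B'(G)$, the key step is a mild sharpening of Proposition~\ref{aibi}(c): whenever $(\be_j)_S>(d-j)b_S$, in fact $(\nu(F_{Q,j}))_S>(d-j)b_S$, not merely $\nu(F_{Q,j})>(d-j)b$ in $\rlx$. Indeed, the proof of (c) shows that each summand in (\ref{set}) already has $S$-component strictly above $(d-j)b_S$: the term $A_{j,0}$ by assumption, and the terms $A_{j-k,k}$ because $(\nu(A_{j-k,k}))_S\ge(db-j\gaq)_S=db_S-j(\gaq)_S>(d-j)b_S$ (using $b_S>(\gaq)_S$). Since the $S$-projection commutes with the lex minimum, the ultrametric inequality transfers this strict inequality to $\nu(F_{Q,j})$. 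Applied to $G$: if $j\in B'(F)\setminus B'(G)$, one obtains $(\nu(G_{Q,j}))_S>(d-j)b_S$, contradicting $\nu(G_{Q,j})=\be_j$ with $(\be_j)_S=(d-j)b_S$ from the previous paragraph. Hence $B'(F)\subseteq B'(G)$, and symmetry closes the proof.

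I expect the principal obstacle to be exactly this sharpening of Proposition~\ref{aibi}(c): one must track carefully that strict inequalities in the lex order on $\rlx$ descend to strict inequalities of $S$-projections in $\rlxs$, and that such strictness survives the sums and minima arising in (\ref{set}). This is elementary but requires care; it is also precisely what allows the rank-one argument of \cite{NS2023} to generalize to arbitrary rank. Everything else reduces to standard applications of Lemma~\ref{MinAug}, the $\qq$-stability of polynomials of degree below $\mi$, and the MLV-minimality of $Q$.
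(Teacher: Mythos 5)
Your proof is correct and follows the same overall strategy as the paper: write $G=F+a$ via Lemma~\ref{MinAug}, use the $\qq$-stability of $a$ to get $\nu(a_{Q,j})+j\gaq>\dta^L$ for $Q$ large, and compare canonical $Q$-expansions coefficient by coefficient; the ``moreover'' clause then drops out of the ultrametric inequality exactly as in the paper. The difference is in how you close the set equality $B'(F)=B'(G)$. The paper, after establishing that $\nu(G_{Q,j})$ is ultimately constant with $\nu(G_{Q,j}Q^j)\in\dta^L$, simply asserts ``Thus, $j$ belongs to $B'(G)$''; this step really rests on the dichotomy of Proposition~\ref{aibi} combined with the fact --- essentially Lemma~\ref{cond3}, which the paper proves only \emph{after} this corollary --- that $j\notin B'$ eventually forces $\nu(F_{Q,j}Q^j)>\dta^L$. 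Your route instead isolates and proves a sharpened version of Proposition~\ref{aibi}(c), namely $j\notin B'\Rightarrow(\nu(F_{Q,j}))_S>(d-j)b_S$ rather than merely $\nu(F_{Q,j})>(d-j)b$. You justify it correctly: every summand in~(\ref{set}) already satisfies the strict $S$-inequality --- $\nu(A_{j,0})=\be_j$ by hypothesis, and $\nu(A_{j-k,k})>db-j\gaq$ gives $(\nu(A_{j-k,k}))_S\ge db_S-j(\gaq)_S>(d-j)b_S$ since $(\gaq)_S<b_S$ --- and the truncation $\rlx\to\rlxs$ commutes with lex-minima. This is a valid, self-contained finish that avoids the forward reference to Lemma~\ref{cond3}, and the observation that the strict lex inequality survives the $S$-projection is precisely the content you needed. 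One small redundancy: your opening paragraph re-derives $\nu(a)>\dta^L$ from Proposition~\ref{Sq}, but this is already part of the statement of Lemma~\ref{MinAug} and could be dropped.
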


\begin{proof}
	Since $\deg(F-G)<\mi$, this polynomial is $\qq$-stable. Take $Q_0\in\qq$ such that $\nuq(F-G)=\nu(F-G)$ for all $Q\ge Q_0$. 
	
	Suppose that $j\in B'(F)$. Take $Q_1\ge Q_0$ in $\qq$ such that $\be_{Q,j}=\be_j$ for all $Q\ge Q_1$.
	By Lemma \ref{MinAug}, we have 
	\[
	\nu((F_{Q,j}-G_{Q,j})Q^j)>\dta^L,\qquad \nu((F_{R,j}-G_{R,j})R^j)> \dta^L.
	\]
	Since $\nu(F_{Q,j})=\nu(F_{R,j})$ and $\nu(F_{Q,j}Q^j)\in\dta^L$, $\nu(F_{R,j}R^j)\in\dta^L$, we deduce that 
	\[
	\nu(G_{Q,j})=\nu(F_{Q,j})=\nu(F_{R,j})=\nu(G_{R,j})\quad\mbox{ and }\nu(G_{R,j}R^j)\in\dta^L. 
	\]
	Thus, $j$ belongs to $B'(G)$. By the symmetry of the argument, $B'(F)=B'(G)$.

Finally, note that $\inq F_{Q,j}=\inq G_{Q,j}$ follows from the above arguments too.
\end{proof}\e

Let us denote the limit index set simply by $B'=B'(F)$.

As the picture above shows, there might still be points $(n,\nu(F_{Q,n}))$ lying above the limit region, such that $\nu(F_{Q,n}Q^n)<db+H$. However, since the values $\be_1,\dots,\be_{d-1}$ are constant, we may avoid this phenomenon by taking $Q$ sufficiently large.

\begin{lemma}\label{cond3}
	There exists $Q_0\in\qq$ such that $\be_j+j\gaq >\dta^L$ for all $j\not\in B'$ and all $Q\ge Q_0$.
\end{lemma}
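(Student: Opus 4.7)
The plan is to apply the truncation-by-$S$ homomorphism $(\cdot)_S \colon \rlx \to \rlxs$ and reduce the claim $\be_j + j\gaq > \dta^L$ to a strict positivity statement in $\rlxs$, exploiting the supremum property of $b$ to make the relevant error terms negligible compared to the strictly positive quantity $\epsilon_j := (\be_j)_S - (d-j)b_S \in \rlxs$, which for each $j \in \{1,\dots,d-1\}\setminus B'$ is positive by Proposition \ref{aibi}(a) together with the definition of $B'$.

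First I would record the following consequence of the identity $b + H = \sup\{\gaq + H : Q \in \qq\}$ in $\rlx/H \cong \rlxs$: for every strictly positive $\eta \in \rlxs$ there exists $Q \in \qq$ with $(b - \gaq)_S < \eta$, since otherwise $b_S - \eta$ would be an upper bound of $\{(\gaq)_S\}$ strictly less than $b_S$, contradicting the supremum. Moreover $(b - \gaq)_S$ is weakly decreasing in $Q$, because the quotient $\rlx \to \rlxs$ is order-preserving and $\gaq$ grows in $\rlx$. Applying this with $\eta = \epsilon_j / j$ (legitimate since $\rlxs$ is a $\Q$-vector space) for each of the finitely many $j \in \{1,\dots,d-1\}\setminus B'$ and choosing $Q_0 \in \qq$ to majorize the finitely many $Q_j$'s thus obtained, I would secure the bound $j(b - \gaq)_S < \epsilon_j$ in $\rlxs$ for all $Q \geq Q_0$ and all such $j$ simultaneously.

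Then, given $Q \geq Q_0$, $R \in \qq$, and $j \notin B'$, I would expand
\[
\be_j + j\gaq - d\ga_R = \bigl(\be_j - (d-j)b\bigr) + d(b - \ga_R) - j(b - \gaq)
\]
and apply $(\cdot)_S$ to obtain
\[
(\be_j + j\gaq - d\ga_R)_S = \epsilon_j + d(b - \ga_R)_S - j(b - \gaq)_S.
\]
Since $(b - \ga_R)_S \geq 0$ (as $b_S$ upper-bounds $\{(\ga_R)_S\}$) and $j(b - \gaq)_S < \epsilon_j$, this $S$-truncation is strictly positive in $\rlxs$; because a strictly positive $S$-truncation forces strict positivity in $\rlx$ (the first nonzero coordinate of the whole element lies in $S$ and has positive real value), we conclude $\be_j + j\gaq > d\ga_R$ in $\g$, and ranging over $R$ yields $\be_j + j\gaq > \dta^L$. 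The main delicate point is the supremum argument in the second paragraph: $\rlxs$ is lex-ordered and not order-complete, so the approximation $(b - \gaq)_S < \eta$ must be justified through the Dedekind characterization of the least upper bound rather than by any limit argument in the Hahn product.
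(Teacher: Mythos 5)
Your proof is correct and follows essentially the same approach as the paper: both exploit the supremum property of $b_S$ in $\rlxs$ (from vertical boundedness) to find $Q_0$ making the $S$-truncation of $\be_j + j\gaq - d\ga_R$ strictly positive, then pull back to a strict inequality in $\g$, and use finiteness of $(0,d)\setminus B'$ to pick a common $Q_0$. The paper's version is a bit more terse (showing $\be_j + j\gaq > db > \dta^L$ directly from $(db-\be_j)_S < j(\gaq)_S$ rather than ranging over $R$), but the decomposition and the supremum argument are the same.
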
 

\begin{proof}
	For $j\not\in B'$, we have $\left(db-\be_j\right)_S<jb_S$. Hence, there exists $Q_0\in\qq$ such that $\left(db-\be_j\right)_S<j\left(\gaq\right)_S$ for all $Q\ge Q_0$. This implies $\be_j+j\gaq>db>\dta^L$ for all $Q\ge Q_0$.
	Since $(0,d)\setminus B'$ is a finite set, we may find a common $Q_0$ such that this holds for all $j\not\in B'$ and all $Q\ge Q_0$ simultaneously.
\end{proof}\e

As a consequence of  Proposition \ref{aibi} and Lemma \ref{cond3}, we see that $B'=\bct$ and $\bct\cup J=\{1,\dots,d\}$. In particular, the set $B'$ did not depend on the choice of the key polynomial $P$, from which the polynomial $L\in\kx_m[X]$ was constructed. 

We can summarize our achievement as follows.

\begin{theorem}\label{mainVB}
If $\qq$ is VB, then $B=\{0\}\cup \bct$.
\end{theorem}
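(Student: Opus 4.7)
The plan is to view the theorem as a bookkeeping assembly of the preceding analytic work. Three ingredients are already in place: Theorem \ref{VB} provides $D=d$; Proposition \ref{aibi} classifies the behaviour of the coefficients $F_{Q,j}$ for $0<j<d$ according to whether $j\in B'$ or $j\notin B'$; and Lemma \ref{cond3} ensures that in the latter regime the monomial $F_{Q,j}Q^j$ is eventually pushed into $\dta^R$. So the task reduces to combining these to identify $B$ and $\bct$ on the nose.

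First I would handle the interior indices $j\in(0,d)$. For $j\in B'$, Proposition \ref{aibi}(b) says that $\nu(F_{Q,j})=\be_j$ for $Q$ large enough, so $\{\be_{Q,j}\}_{Q\in\qq}$ is ultimately constant; the same clause gives $\nu(F_{Q,j}Q^j)\in\dta^L$, whence $j\notin J$ and therefore $j\in B$. Thus $B'\sub B\cap\bct$. For $j\in(0,d)\setminus B'$, Proposition \ref{aibi}(c) states that $\nu(F_{Q,j}Q^j)\in\dta^R$ exactly when $\be_j+j\gaq>\dta^L$, and Lemma \ref{cond3} guarantees the latter for all sufficiently large $Q$. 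Hence $j\in J$, so $j\notin B$. Thus the interior part of $B$ is precisely $B'$, and it sits inside $\bct$.

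Next I would dispose of the endpoints. For $j=d$: since $F$ is monic of degree $dm$ and $\deg(Q)=m$, we have $F_{Q,d}=1$ for every $Q\in\qq$, so $\be_{Q,d}=0$ is constant and $\nu(F_{Q,d}Q^d)=d\gaq\in\dta^L$, giving $d\in B\cap\bct$. For $j=0$: Proposition \ref{Sq} yields $S_Q(F)=\{0,d\}$ and hence $\be_{Q,0}=\nu_Q(F)=d\gaq$ for all $Q$ large enough, which is cofinal in $\dta^L$; this puts $0\in B$ but leaves $0\notin \bct$. Combining everything, $B=\{0\}\cup B'\cup\{d\}$ while $\bct$ contains $B'\cup\{d\}$ and nothing else from $B$; therefore $B=\{0\}\cup\bct$.

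I do not anticipate a significant obstacle: the theorem is essentially a summary of the work already completed, and the real content (vertical boundedness implies the stability estimate in Proposition \ref{aibi}) has been dispatched earlier in the section. The one small point requiring care is to choose a single $Q\in\qq$ large enough to simultaneously realize the finitely many stability conditions needed—ultimate constancy of $\be_j$ for each $j\in B'$, the inequality of Lemma \ref{cond3} for each $j\notin B'$, and the stabilization of $S_Q(F)$—but this is straightforward because only finitely many indices are involved.
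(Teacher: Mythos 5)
Your proof is correct and follows essentially the same route as the paper, which also combines Theorem \ref{VB} ($D=d$), Proposition \ref{aibi} and Lemma \ref{cond3} to conclude that $B'=\bct$ and $\bct\cup J=\{1,\dots,d\}$, with the endpoint observations ($F_{Q,d}=1$ and $\be_{Q,0}=\nuq(F)$ cofinal in $\dta^L$ via Proposition \ref{Sq}) handled exactly as you do. Your explicit treatment of the simultaneous choice of a large $Q$ for the finitely many indices is a minor elaboration of what the paper leaves implicit.
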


Finally, as an immediate consequence of \cite[Lemma 4.1]{NS2023} aplied to the quotient group $\rlx/H$, we derive the following result.

\begin{corollary}\label{powerp2}
Every index in $\bct$ is a power of the exponent characteristic of $v$.
\end{corollary}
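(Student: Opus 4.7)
The strategy is to carry out the reduction suggested by the authors, namely to quotient by the invariance group $H = H(\gaqq)$ and then invoke NS2023 Lemma 4.1. The effect of the quotient is to turn our vertically bounded cut $\gaqq$ into a cut with a genuine upper bound: by our standing choice of $b \in \rlx$ with $b+H = \sup\{\gaq+H \mid Q\in\qq\}$ (Proposition \ref{VS=VB}), the image $b_S \in \rlxs \cong \rlx/H$ bounds the family $\{(\gaq)_S \mid Q\in\qq\}$ from above. This is the analog of the rank-one bounded situation handled in NS2023, with $\rlxs$ playing the role that $\R$ plays there.

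Given this reduction, the characterization of the limit index set from Proposition \ref{aibi}(b), stabilized over all $F \in \kpi(\qq)$ by Corollary \ref{welldefined}, namely
\[
\bct = \{j \in (0,d) \mid (\be_j)_S = (d-j)b_S\},
\]
identifies $\bct$ as precisely the set of indices picked out by NS2023 Lemma 4.1, applied to the universal polynomial $L \in \kx_m[X]$ with $L(P)=F$ in the quotient setting. The values $\be_j = \nu(\parj L(P))$ attached to the Hasse--Schmidt derivatives, the linear condition $(\be_j)_S = (d-j)b_S$, and the supremum $b_S$ are exactly the data the cited lemma consumes. Its output is that this set is contained in $p^\N$, which is the content of the corollary.

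The principal thing to verify is that the proof of NS2023 Lemma 4.1 only uses the existence of the supremum together with Lemma \ref{powerp} (i.e., $I(f) \sub p^\N$ for every $f$), and does not rely on the rank-one hypothesis in any essential way. Since Lemma \ref{powerp} holds in arbitrary rank, and the Taylor-expansion estimate at the core of NS2023 is insensitive to the ordered-group structure beyond having an upper bound, the transfer from $\R$ to $\rlxs$ is routine. This is the one step that might warrant a careful re-reading of NS2023, but it should present no genuine obstacle to concluding $\bct \sub p^\N$.
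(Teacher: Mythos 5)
Your proposal is correct and follows essentially the same route as the paper, which derives the corollary in a single sentence as ``an immediate consequence of [NS2023, Lemma 4.1] applied to the quotient group $\rlx/H$.'' You spell out the reduction more explicitly (identifying $\bct$ with $B'=\{j\mid (\be_j)_S=(d-j)b_S\}$ via Proposition~\ref{aibi} and Corollary~\ref{welldefined}, and locating the supremum $b_S$ in $\rlxs$), and you correctly flag that the only thing to check is that the argument of NS2023 Lemma~4.1 uses no more than the existence of the supremum plus the fact that $I(f)\subset p^\N$ (Lemma~\ref{powerp}), both of which survive passage to the quotient.
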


From Theorem \ref{mainVB} and the Proposition in the Introduction, we obtain the folowing result, which generalizes \cite[Proposition 9.2]{hos}, \cite[Theorem 1.1]{NS2023} and \cite[Theorem 1.2]{Nov14}.

\begin{theorem}\label{main}
Let $\qq$ be a VB set of key polynomials for $\nu$. Let $F\in\kpi(\qq)$ be a limit key polynomial. Then, for $Q$ sufficiently large, the polynomial
\[
G=F_{Q,0}+\sum\nolimits_{j\in\bct}F_{Q,j}Q^j
\] 
is a minimal limit key polynomial for $\nu$.
\end{theorem}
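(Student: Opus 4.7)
The plan is to deduce Theorem~\ref{main} as an immediate consequence of Theorem~\ref{mainVB} and the Proposition from the Introduction, with essentially no further work. Recall that $B=\{0,\dots,D\}\setminus J$, where $J$ is the subset of indices $\ell$ for which there exists $i_\ell\in I$ such that $\nu_j(F_{j,\ell}Q_j^\ell)\in\dta^R$ for all $j\ge i_\ell$. By Theorem~\ref{VB} we have $d=D$, and by Theorem~\ref{mainVB} we have $B=\{0\}\cup\bct$. Note also that $d\in\bct$ because $F_{Q,d}=1$ for all $Q\in\qq$, so its $\nu$-value is the constant $0$; equivalently, $d\notin J$ by the minimality of $\deg(F)$.

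First I would choose $Q\in\qq$ large enough to serve as the index $i$ in the Proposition of the Introduction. Concretely, $Q$ must dominate each threshold $Q_{i_\ell}$ for $\ell\in J$, so that $\nu_R(F_{R,\ell}R^\ell)\in\dta^R$ holds for all $R\ge Q$ and all $\ell\in J$. Since $J\subseteq\{1,\dots,d\}$ is finite and $\qq$ is well-ordered, such a $Q$ exists. If one also wants the stability hypotheses used throughout Section~\ref{secVB} (for instance $\dgq F=d$, stability of $\be_{Q,j}$ for $j\in\bct$, and the inequality \eqref{cond1}), one simply takes $Q$ above all of those thresholds as well.

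Next I would invoke the Proposition of the Introduction with $i=Q$: it yields that $F_{Q,B}$ is a minimal limit key polynomial for $\mathfrak v$. Substituting $B=\{0\}\cup\bct$ gives
\[
F_{Q,B}=F_{Q,0}+\sum_{j\in\bct}F_{Q,j}\,Q^j=G,
\]
which is exactly the polynomial in the statement. This closes the argument.

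The only non-routine step here is Theorem~\ref{mainVB}, which is the identification $B=\{0\}\cup\bct$; its proof uses the universal $Q$-expansion via the Taylor formula (Proposition~\ref{NS2.4}, Lemma~\ref{NS4.2}), the key inequality $\ep_P-\epmin>d(b-\ga_P)$ from the vertically bounded hypothesis (Lemma~\ref{basic0}), and the Newton-polygon analysis in Proposition~\ref{aibi} together with Lemma~\ref{cond3}. Once Theorem~\ref{mainVB} is available, Theorem~\ref{main} is a formal corollary and no new obstacle arises.
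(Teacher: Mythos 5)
Your proposal is correct and follows exactly the paper's route: the paper obtains Theorem \ref{main} precisely by combining Theorem \ref{mainVB} (giving $B=\{0\}\cup\bct$, with $D=d$ from Theorem \ref{VB}) with the Proposition of the Introduction, choosing $Q$ beyond the finitely many thresholds $i_\ell$, $\ell\in J$, so that $F_{Q,B}=G$. No gap; your observation that $d\in\bct$ and the handling of the stability thresholds match the paper's setup.
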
 


\section{The vertically unbounded case}\label{secVU}
We keep with the notation of the previous section.

We say that $\qq$ is \textbf{vertically unbounded (VU)}, when it is not vertically bounded. In this case, the cut $\dta$ is equal to $\left(a+H\right)^+$ for some $a\in\g$ and some convex subgroup $H\ne\{0\}$.
Obviously, $H$ is the invariance group of $\dta$. Note that $H$ is the invariance group too, of the  cuts $n\gaqq$ for all $n\in\Z$. 

In the VU case, we cannot guarantee that $\deg(F)=md$ anymore. Actually, the relative degree $D$ of $F$ with respect to $Q$ will be usually strictly larger than $d$.

For all $\ell\in \bct$, we denote by $\bel$ the constant value of $\be_{Q,\ell}$  for $Q$ large enough.

Take $Q_0\in\qq$ large enough, so that the following conditions hold  for all $Q\ge Q_0$.
\ars{1.2}\e

\begin{tabular}{l}
$\bullet\quad S_Q(F)=\{0,d\}$ and $I(Q)=\{s_\infty\}$.\\
$\bullet\quad \be_{Q,\ell}=\bel$ for all $\ell\in \bct$. \\
$\bullet\quad \be_{Q,\ell}+\ell\gaq>\dta^L$ for all $\ell\in J$. \\
$\bullet\quad \nu_{Q_0}(F)+H$ is  a final segment of $\dta^L$.   
\end{tabular}\ars1

\subsection{Minimal index in $\bct$.}

\begin{definition}
 For all $\ell\in[0,D]\cap\Z$, let $\bbl$ be the minimal initial segment of $\g$ containing all $\be\in\g$ such that $\be+\ell\gaq\in \dta^L$ for some $Q\in\qq$.

 Also, let $\bunb\sub B$ be the subset of all $\ell\in B$ such that for all $\be\in\bbl$ we have  
 \[
 \be_{Q,\ell}\ge\be \quad\mbox{ for all \,$Q$ in a final segment of $\qq$}. 
 \]
\end{definition}

\begin{lemma}\label{dD}
For the indices $0,d,D$ we have $0\in \bunb$ and $d,D\in \bct$. 
\end{lemma}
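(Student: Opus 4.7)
My plan is to handle the three indices $D$, $d$, $0$ separately, in increasing order of subtlety. In each case I need to verify both membership in $B$ (i.e.\ the index is not in $J$) and the additional property ($\bct$ or $\bunb$). All three arguments will lean on the stable data supplied by the standing assumptions on $Q_0$, particularly $S_Q(F)=\{0,d\}$ and $F$ monic of degree $Dm$; no vertical (un)boundedness is really needed beyond what has already been absorbed into the setup.

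\textbf{Index $D$.} Since $F$ is monic of degree $Dm$, the top coefficient of any $Q$-expansion is forced: $F_{Q,D}=1$ for every $Q\in\qq$. Hence $\be_{Q,D}=0$ is trivially constant, and $D\in\bct$. Membership $D\in B$ was already noted in the Introduction (it follows from $\deg(F)$ being minimal among $\mathfrak v$-unstable polynomials).

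\textbf{Index $d$.} I would apply Corollary \ref{corellstable} directly to $f=F$: it asserts the existence of $Q_0\in\qq$ after which $\dgq F=d$ and the value $\nu(F_{Q,d})$ stabilizes to $\bed(F)$. This value is exactly $\be_{Q,d}$, so $d\in\bct$. To check $d\in B$, use the standing assumption $S_Q(F)=\{0,d\}$: the index $d$ attains the minimum in the $Q$-expansion, so $\nu(F_{Q,d}Q^d)=\nuq(F)\in\dta^L$, which by definition of $J$ prevents $d$ from lying in $J$.

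\textbf{Index $0$.} Here the same identity $S_Q(F)=\{0,d\}$ is the key input: it yields $\be_{Q,0}=\nu(F_{Q,0})=\nuq(F)$ for every $Q\ge Q_0$. This already implies $\be_{Q,0}\in\dta^L$, so $0\notin J$ and $0\in B$. For the $\bunb$ condition I need to identify $\bbl$ at $\ell=0$: since the summand $\ell\gaq$ vanishes, $\bb_0$ collapses to the minimal initial segment of $\g$ containing $\dta^L$ itself, i.e.\ $\bb_0=\dta^L$. Now, by definition $\dta^L$ is the minimal initial segment containing $\{\nuq(F)\}_{Q\in\qq}$, so the family $\{\nuq(F)\}$ is cofinal in $\dta^L$. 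Fix $\be\in\bb_0=\dta^L$: cofinality produces $Q_1\in\qq$ with $\nu_{Q_1}(F)\ge\be$, and the monotonicity $\nuq(F)\le\nu_R(F)$ for $Q\le R$ promotes this to $\be_{Q,0}\ge\be$ on the whole final segment $Q\ge Q_1$. Hence $0\in\bunb$.

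The only point that requires care is the identification $\bb_0=\dta^L$ and the strict/non-strict behavior at the top of $\dta^L$; both cases (existence or non-existence of $\max\dta^L$) are handled uniformly by the monotonicity of $\{\nuq(F)\}$. There is no genuine obstacle: the lemma is a direct reading of the definitions of $B$, $\bct$, $\bunb$, $\bb_0$ against the Newton-polygon data $S_Q(F)=\{0,d\}$ and the stability provided by Corollary \ref{corellstable}.
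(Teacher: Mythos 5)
Your argument is correct and follows essentially the same route as the paper's proof: $0\in\bunb$ from $\be_{Q,0}=\nuq(F)$ (i.e.\ $S_Q(F)=\{0,d\}$, Proposition \ref{Sq}), the identification $\bb_0=\dta^L$, and the cofinality of the values $\nuq(F)$ in $\dta^L$; $d\in\bct$ from Proposition \ref{ellstable} (Corollary \ref{corellstable}) together with $\nu(F_{Q,d}Q^d)=\nuq(F)\in\dta^L$, which rules out $d\in J$; and $D\in\bct$ from the constancy of the top coefficient plus the Introduction's argument that $D\notin J$. The one caveat is your justification of the $D$-step: in the vertically unbounded setting of this lemma the paper never asserts $\deg(F)=Dm$ (that degree formula is only established in the VB case, Theorem \ref{VB}), so the equality $F_{Q,D}=1$ is not available as stated; what is actually needed, and what the paper's proof invokes via the Introduction, is only that the top coefficient $F_{Q,D}$ (hence $\be_{Q,D}$) is ultimately constant, so you should either justify $m\mid\deg(F)$ in this setting or argue the ultimate constancy of $\be_{Q,D}$ directly rather than deducing it from a degree formula.
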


\begin{proof}
For all $Q\ge Q_0$, we have $\be_{Q,0}=\nuq(F)$. Since $\bb_0=\dta^L$, we have $0\in \bunb$ by the definition of $\dta$.
The fact that $D\in \bct$ was proved in the Introduction. 
Finally, $d\in \bct$ follows from Proposition \ref{ellstable}
and the definition of $\dta$. 
\end{proof}\e


\begin{lemma}\label{valueINdelta}
Take  $\be\in\bbl$ for some $\ell\in[0,D]\cap\Z$. Then, $\be+H\sub \bbl$. In particular,
$\be+\ell\gar\in\dta^L$ for all $R\in\qq$. 
\end{lemma}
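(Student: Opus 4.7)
The proof splits into two parts. For the first claim $\be+H\sub\bbl$, observe that the defining set $T_\ell:=\{\be'\in\g:\exists Q\in\qq,\ \be'+\ell\gaq\in\dta^L\}$ is itself an initial segment of $\g$: if $\be'\le\be''\in T_\ell$ via a witness $Q$, then $\be'+\ell\gaq\le\be''+\ell\gaq\in\dta^L$, so $\be'\in T_\ell$. Hence $\bbl=T_\ell$. Given $\be\in\bbl$ with witness $Q$ and any $h\in H=H(\dta)$, we compute $(\be+h)+\ell\gaq=(\be+\ell\gaq)+h\in\dta^L+H=\dta^L$, where the last equality is the defining property of the invariance group. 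Hence $\be+h\in\bbl$, proving the first claim.

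For the second (``in particular'') claim, the key point is that in the vertically unbounded regime, after fixing $Q_0$ as in the bullet list preceding the lemma, all values $\gar$ for $R$ in the cofinal tail $\{R\ge Q_0\}$ of $\qq$ lie in a single $H$-coset. Indeed, by the first bullet $\nu_Q(F)=d\gaq$ for all $Q\ge Q_0$, so the fourth bullet---``$\nu_{Q_0}(F)+H$ is a final segment of $\dta^L$''---translates into: $d\ga_{Q_0}+H$ is the top coset $a+H$ of $\dta^L=\{x\in\g:x+H\le a+H\}$. Dividing by $d$ (the convex subgroup $H$ of the divisible group $\g$ is itself divisible), and using the identification $\dta=d\gaqq$ which gives $a=d a_0$, we obtain $\ga_{Q_0}+H=a_0+H$. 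Then for any $R\ge Q_0$, the relations $\nu_R(F)=d\gar\in\dta^L$ and $d\gar\ge d\ga_{Q_0}$ force $d\gar\in d\ga_{Q_0}+H$, whence $\gar-\ga_{Q_0}\in H$.

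Combining, for any two $R,Q$ in the cofinal tail $\{R\ge Q_0\}$, $\gar-\gaq\in H$, so $\ell(\gar-\gaq)\in H$. Choosing the witness $Q$ of $\be\in\bbl$ in this tail, we conclude
\[
\be+\ell\gar=(\be+\ell\gaq)+\ell(\gar-\gaq)\in\dta^L+H=\dta^L.
\]
The delicate point, and the main obstacle, is the choice of witness $Q\ge Q_0$: without this, $\be$ could a priori be certified by a $Q$ with $\gaq$ in a strictly lower $H$-coset, and then the desired conclusion might fail for $R$ in the top coset. This is handled by the standard convention of replacing $\qq$ by its cofinal subfamily $\{R\ge Q_0\}$, which affects neither $\dta$, nor $\bbl$, nor the family of limit key polynomials.
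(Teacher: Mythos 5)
Your treatment of the first assertion, and the computation $\be+\ell\gar=(\be+\ell\gaq)+\ell(\gar-\gaq)\in\dta^L+H=\dta^L$ for the second, follow the same route as the paper: the paper's proof silently uses $\gar-\gaq\in H$ for all $Q,R\in\qq$, while you derive this coset fact from the fourth bullet condition on $Q_0$. Your derivation is essentially correct, up to a small slip: in the VU case $\deg F$ may exceed $dm$, so $F_{Q,d}$ need not be $1$ and $\nuq(F)=\bed+d\gaq$ rather than $d\gaq$ (use $S_Q(F)=\{0,d\}$ together with $d\in\bct$ and the stability $\be_{Q,d}=\bed$ from the second bullet). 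Since $\bed$ is constant, the conclusion survives: $d(\gar-\ga_{Q_0})=\nu_R(F)-\nu_{Q_0}(F)\in H$ for $R\ge Q_0$, hence $\gar-\ga_{Q_0}\in H$ by convexity of $H$, so all $\gaq$ with $Q\ge Q_0$ lie in one $H$-coset.

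The step that does not close is exactly the ``delicate point'' you flag, and your way of disposing of it is circular. You assert that replacing $\qq$ by the tail $\{Q\ge Q_0\}$ changes neither $\dta$ nor $\bbl$. For $\dta$ this is clear (cofinality), but $\bbl$ is defined by an existential quantifier over $\qq$, and its invariance under passing to the tail is precisely the nontrivial content of the ``in particular'' assertion: if some $\be$ were certified only by a witness $Q$ with $\gaq$ in a strictly lower $H$-coset than the eventual coset of the $\gar$, then $\be$ would belong to $\bbl$ computed over all of $\qq$ but not over the tail, and $\be+\ell\gar$ would escape $\dta^L$ for $R$ in the top coset --- the very situation to be excluded. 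So tail-invariance of $\bbl$ cannot be cited as a ``standard convention''; it must either be proved or built into the setup by reading the definition of $\bbl$ and the quantifier ``for all $R\in\qq$'' with $Q,R\ge Q_0$ (which is how the paper's one-line proof, with its unqualified use of $\gar-\gaq\in H$, is to be understood). Under that reading your computation, together with the trivial case of $R$ below the witness (monotonicity of the initial segment $\dta^L$), is exactly the paper's proof; without it, your last step is a gap, not a remark.
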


\begin{proof}
Take any $Q\in\qq$ such that $\be+\ell\gaq\in \dta^L$. Then,
\[
 \be+H+\ell\gaq\sub \dta^L+H=\dta^L.
\]
Also, for all $R\in\qq$, we have
\[
\be+\ell \gar=\be+\ell\gaq+\ell(\gar-\gaq)\in\be+\ell\gaq+H\subset \dta^L.
\]
This ends the proof. 
\end{proof}

\begin{definition}
 We say that $\be\in\bbl$ is \textbf{sharp} if $\be+H$ is a final segment of $\bbl$.
\end{definition}

\begin{lemma}\label{wholecut}
Suppose that $\be\in\bbl$ is sharp. Then, for all $k\in[0,D]\cap\Z$ and  all $Q\in\qq$, the element $\be+(\ell-k)\gaq\in\bb_k$ is sharp. 
\end{lemma}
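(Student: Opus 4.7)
The plan is to verify the two conditions that together say $\beta+(\ell-k)\gamma_Q+H$ is a final segment of $\mathcal{B}_k$: first that this coset is contained in $\mathcal{B}_k$, and second that every element of $\mathcal{B}_k$ lies below some member of the coset. Both will follow by transferring the statements between $\mathcal{B}_\ell$ and $\mathcal{B}_k$ via the additive shift by $(\ell-k)\gamma_Q$, invoking Lemma \ref{valueINdelta} together with sharpness of $\beta$ in $\mathcal{B}_\ell$.

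First I would check that $\beta+(\ell-k)\gamma_Q$ itself lies in $\mathcal{B}_k$. Since $\beta\in\mathcal{B}_\ell$, Lemma \ref{valueINdelta} gives $\beta+\ell\gamma_Q\in\delta^L$; rewriting as $\bigl(\beta+(\ell-k)\gamma_Q\bigr)+k\gamma_Q\in\delta^L$ places $\beta+(\ell-k)\gamma_Q$ in $\mathcal{B}_k$ by the definition of the latter. Applying the same computation to $\beta+h$ for any $h\in H$ (which lies in $\mathcal{B}_\ell$ by Lemma \ref{valueINdelta} again) shows $\beta+(\ell-k)\gamma_Q+h\in\mathcal{B}_k$, so the whole coset $\beta+(\ell-k)\gamma_Q+H$ is contained in $\mathcal{B}_k$.

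For the remaining condition, take an arbitrary $\beta'\in\mathcal{B}_k$ and set $\beta''=\beta'-(\ell-k)\gamma_Q$. Then $\beta''+\ell\gamma_Q=\beta'+k\gamma_Q$, and Lemma \ref{valueINdelta} applied to $\beta'\in\mathcal{B}_k$ gives $\beta'+k\gamma_Q\in\delta^L$, so $\beta''\in\mathcal{B}_\ell$. Sharpness of $\beta$ in $\mathcal{B}_\ell$ then yields some $h\in H$ with $\beta''\le\beta+h$, and translating back by $(\ell-k)\gamma_Q$ gives $\beta'\le\beta+(\ell-k)\gamma_Q+h$. Combined with the previous paragraph, this is exactly the statement that $\beta+(\ell-k)\gamma_Q+H$ is a final segment of $\mathcal{B}_k$.

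There is no real obstacle; the proof is essentially the remark that the defining conditions of $\mathcal{B}_\ell$ and $\mathcal{B}_k$ differ only by the shift $(\ell-k)\gamma_Q$, once Lemma \ref{valueINdelta} is used to decouple the condition ``$\gamma+\ell\gamma_R\in\delta^L$ for some $R$'' from the choice of $R$. The only point one must be careful about is that this lemma allows the same $Q$ to serve as witness on both sides, so that the two shifts by $\pm(\ell-k)\gamma_Q$ match up cleanly.
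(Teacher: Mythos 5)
Your proposal is correct and follows essentially the same route as the paper: shift by $(\ell-k)\gaq$, use Lemma \ref{valueINdelta} to decouple from the choice of witness $Q$, and then invoke sharpness of $\be$ in $\bbl$. The only difference is that you explicitly verify $\be+(\ell-k)\gaq\in\bb_k$ and that the whole coset lies in $\bb_k$, whereas the paper leaves this (easy) membership step implicit and only writes out the domination argument.
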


\begin{proof}
Take $k\in[0,D]\cap\Z$ and  $Q\in\qq$. Suppose that $\be+(\ell-k)\gaq<\al$ for some $\al\in\bb_k$. Let $\be'=\al-(\ell-k)\gaq>\be$. Obviously, $\be'$ belongs to $\bbl$ because $\be'+\ell\gaq=\al+k\gaq\in\dta^L$. Since $\be$ is sharp, there exists $h\in H$ such that $\be'<\be+h$. In particular, $\al<\be+(\ell-k)\gaq+h$. This proves that $\be+(\ell-k)\gaq$ is sharp.
\end{proof}\e

\begin{lemma}\label{allsharp}
Take $\ell\in[0,D]\cap\Z$ and $Q\ge Q_0$. If $\be_{Q,\ell}\in \bbl$, then $\be_{Q,\ell}$ is sharp.
\end{lemma}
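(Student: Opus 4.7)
The plan is to identify the coset $\be_{Q,\ell}+H$ explicitly and match it with the top coset of $\bbl$ produced by Lemma~\ref{wholecut}. For the seed case $\ell=0$: for $Q\ge Q_0$, Proposition~\ref{Sq} gives $S_Q(F)=\{0,d\}$, so $\be_{Q,0}=\nuq(F)$. The fourth standing assumption says $\nu_{Q_0}(F)+H$ is a final segment of $\dta^L=\bb_0$, hence is its top coset. Since $\nuq(F)\ge\nu_{Q_0}(F)$ and $\nuq(F)\in\dta^L$, finality forces $\nuq(F)\in\nu_{Q_0}(F)+H$, so $\be_{Q,0}+H$ itself is the top coset of $\bb_0$, making $\be_{Q,0}$ sharp. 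Lemma~\ref{wholecut} applied to this seed (with the target index $k=\ell$) then yields that $\be_{Q,0}-\ell\gaq\in\bbl$ is sharp.

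Next, I would show that $\be_{Q,\ell}+H=(\be_{Q,0}-\ell\gaq)+H$ whenever $\be_{Q,\ell}\in\bbl$. The cases $\ell\in\{0,d\}$ follow directly from the identity $\be_{Q,0}=\be_{Q,d}+d\gaq$ supplied by $S_Q(F)=\{0,d\}$. For $\ell\notin\{0,d\}$, Proposition~\ref{Sq} forces $\be_{Q,\ell}+\ell\gaq>\nuq(F)=\be_{Q,0}$, giving the lower bound $\be_{Q,\ell}-(\be_{Q,0}-\ell\gaq)>0$. Combining $\be_{Q,\ell}\in\bbl$ with Lemma~\ref{valueINdelta} yields $\be_{Q,\ell}+\ell\gaq\in\dta^L$; since $\be_{Q,0}+H$ is the top coset of $\dta^L$, there exists $h\in H$ with $\be_{Q,\ell}+\ell\gaq\le\be_{Q,0}+h$, i.e., $\be_{Q,\ell}-(\be_{Q,0}-\ell\gaq)\le h$. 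The sandwich $0<\be_{Q,\ell}-(\be_{Q,0}-\ell\gaq)\le h\in H$ combined with convexity of $H$ places the difference in $H$, proving the coset equality. Together with the previous step, $\be_{Q,\ell}+H$ equals the top coset of $\bbl$, so $\be_{Q,\ell}$ is sharp.

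The main obstacle is upgrading the hypothesis $\be_{Q,\ell}\in\bbl$---which a priori only guarantees $\be_{Q,\ell}+\ell\ga_R\in\dta^L$ for some $R\in\qq$---to $\be_{Q,\ell}+\ell\gaq\in\dta^L$ at the specific index $R=Q$. This is precisely what the ``for all $R\in\qq$'' clause of Lemma~\ref{valueINdelta} provides. Once that inequality is in hand, the rest is a routine convexity argument using that $H$ is a convex subgroup of $\g$.
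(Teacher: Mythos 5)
Your proof is correct and follows essentially the same route as the paper's: the seed fact that $\nuq(F)=\be_{Q,0}$ is sharp in $\bb_0$ (from the standing assumption on $Q_0$), Proposition \ref{Sq} together with Lemma \ref{valueINdelta} to place and compare $\be_{Q,\ell}+\ell\gaq$ inside $\dta^L$, and Lemma \ref{wholecut} to transfer sharpness between the segments $\bb_0$ and $\bbl$. The only difference is cosmetic: the paper first shows $\be_{Q,\ell}+\ell\gaq$ is sharp in $\bb_0$ and then applies Lemma \ref{wholecut}, whereas you apply Lemma \ref{wholecut} to the seed and then verify the coset equality $\be_{Q,\ell}+H=(\be_{Q,0}-\ell\gaq)+H$ by convexity — the same argument in a different order.
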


\begin{proof}
By our assumptions on $Q_0$, the value $\nuq(F)\in\bb_0$ is sharp.
Suppose that $\be_{Q,\ell}\in \bbl$.
By Proposition \ref{Sq}, $\nuq(F)\le  \be:=\be_{Q,\ell}+\ell\gaq\in\dta^L$, so that $\be\in\bb_0$ is sharp too. By Lemma \ref{wholecut}, $\be_{Q,\ell}$ is sharp in $\bbl$.
\end{proof}

\begin{proposition}\label{largeslope}
Let $\al\in\bbl$, $\be\in\bb_k$, for some  $0\le k<\ell\le D$. Let $-\la$ be  the slope of the segment joining $(\ell,\al)$ and $(k,\be)$.  If $\al$ is sharp, then  $\gar>\la$ for some $R\in\qq$.
\end{proposition}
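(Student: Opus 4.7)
The plan is to argue by contradiction: assume that $\gar \le \la$ for every $R \in \qq$. Multiplying by $\ell - k > 0$ rewrites this as $\be + (k-\ell)\gar \ge \al$. Since $\be \in \bb_k$, Lemma~\ref{valueINdelta} gives $\be + k\gar \in \dta^L$ for all $R$, and the identity $\be + k\gar = (\be + (k-\ell)\gar) + \ell\gar$ then exhibits $\be + (k-\ell)\gar$ as an element of $\bbl$.

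Next I invoke sharpness: since $\al + H$ is a final segment of $\bbl$, any element of $\bbl$ which is $\ge \al$ must lie in the coset $\al + H$. Applied to $\be + (k-\ell)\gar$, this yields
\[
\be - \al - (\ell-k)\gar \in H \qquad \text{for every } R \in \qq.
\]
Passing to the divisible torsion-free quotient $\g/H$, the displayed relation reads $(\ell-k)\overline{\gar} = \overline{\be - \al}$, so $\overline{\gar} = \overline{\la}$ is a single coset, independent of $R$.

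Now I use that $\qq$ is vertically unbounded. Since $\gaqq = (c+H)^+$ for some $c \in \g$, we have $\gaqq^L = \{x \in \g : \overline{x} \le \overline{c}\}$. Because $\{\gar\}_R$ generates this initial segment and no $\gar$ with $\overline{\gar} < \overline{c}$ can be $\ge c + h$ for $h \in H$, the family $\{\gar - c : \overline{\gar} = \overline{c}\}$ must be cofinal in $H$ from above. Combining with the previous paragraph forces $\overline{\la} = \overline{c}$, so $\la - c \in H$. Since the nonzero convex subgroup $H$ has no maximum, there exists $h \in H$ with $h > \la - c$, and by cofinality some $R$ satisfies $\gar - c \ge h > \la - c$, giving $\gar > \la$ and contradicting our assumption.

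The main obstacle is the bookkeeping between two distinct ``levels'' of data: the sharpness of $\al$ is visible only after quotienting by $H$ (it identifies the maximal coset of $\bbl/H$), whereas vertical unboundedness is a cofinality statement that takes place strictly inside the single coset $c + H$. The whole argument hinges on the identity $\overline{\gar} = \overline{\la}$, which converts the assumed bound in $\g$ into an $H$-internal bound that the cofinality within $H$ forbids.
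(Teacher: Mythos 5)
Your proof is correct, and it follows a genuinely different route from the one in the paper. The paper's argument is more direct: assuming $\gar<\la$ for all $R$, it shows outright that $\be\notin\bb_k$. It invokes Lemma \ref{wholecut} to see that $\al+\ell\gaq+H$ is a final segment of $\dta^L=\bb_0$, rewrites $\be=\al+(\ell-k)\la$, and reduces the claim $\be+k\gaq>\dta^L$ to the inequality of sets $(\ell-k)\la>(\ell-k)\gaq+H$, which drops out of $\la>\gaqq^L$ because $H=H\bigl((\ell-k)\gaqq\bigr)$. Your argument instead works inside the quotient $\g/H$: from the slope hypothesis plus sharpness you derive the coset identity $\overline{\gar}=\overline{\la}$ for all $R$, pin the coset down to $\overline{c}$ using the explicit form $\gaqq=(c+H)^+$ in the VU case, and then exploit that $\{\gar-c\}$ is cofinal in $H$ to produce an $R$ with $\gar>\la$. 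The paper's route is shorter and stays at the level of cut arithmetic; it never needs to identify the common coset $\overline{c}$ or to appeal to cofinality inside $H$. Your route makes the geometry explicit --- the slope $\la$ is forced to sit in the limiting coset $c+H$ and is then overtaken inside $H$ --- at the cost of a few extra deductions (passing to $\g/H$ and verifying torsion-freeness). Both are sound; one small caveat worth keeping in mind is that your appeal to Lemma \ref{valueINdelta} applied to $\be\in\bb_k$ yields $\be+k\gar\in\dta^L$ for \emph{all} $R$, which is exactly what you need to conclude $\be+(k-\ell)\gar\in\bbl$; you use this correctly, but you should state explicitly that this step is what relies on Lemma \ref{valueINdelta}.
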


\begin{proof}
Suppose that $\la>\gar$ for all $R\in\qq$. Let us show that this implies $\be\not \in\bb_k$, whi\Josnei ch contradicts our assumptions.

Take any $Q\in \qq$. Since $\al$ is sharp, Lemma \ref{wholecut} shows that $\al+\ell\gaq+H$ is a final segment of $\dta^L$. Thus, our aim is to show that
$\be+k\gaq>\dta^L$, or equivalently,
\[
\be+k\gaq>\al+\ell\gaq+H.
\]
Since $-\la=(\be-\al)/(k-\ell)$, we have $\be=\al+(\ell-k)\la$. Thus, our aim is equivalent to
$$
(\ell-k)\la>(\ell-k)\gaq+H. 
$$
This follows from $\la>\gaqq^L$, since $H$ is the invariance group of the cut $(\ell-k)\gaqq$
\end{proof}\e

\begin{theorem}\label{minB1}
$d=\min(B\setminus\bunb)=\min(\bct)$.
\end{theorem}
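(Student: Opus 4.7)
The plan is to prove $\min(\bct)=d=\min(B\setminus\bunb)$ by showing $d$ belongs to each of $\bct$ and $B\setminus\bunb$, and then ruling out smaller $\ell$. The inclusion $d\in\bct$ is part of Lemma \ref{dD}. To conclude $d\in B\setminus\bunb$ I will show $d\notin\bunb$: since $\beta_d+d\gaq=\nuq(F)\in\dta^L$, Lemma \ref{allsharp} makes $\beta_d\in\bb_d$ sharp, so $\beta_d+H$ is the top coset of $\bb_d$ modulo $H$. As $H\ne\{0\}$ in the VU case, any positive $h\in H$ produces $\beta_d+h\in\bb_d$ with $\beta_{Q,d}=\beta_d<\beta_d+h$, which violates the defining condition of $\bunb$.

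For $\min(\bct)\ge d$, the case $\ell=0$ is immediate since $\beta_{Q,0}=\nuq(F)$ is cofinal in $\dta^L$. For $0<\ell<d$, I will argue by contradiction: suppose $\ell\in\bct$ with constant value $\beta_\ell$. Then $\ell\in B$ forces $\beta_\ell\in\bbl$, and Proposition \ref{Sq} gives $\beta_\ell+\ell\gaq>\nuq(F)=\beta_d+d\gaq$ for $Q$ large, whence $\lambda:=(\beta_\ell-\beta_d)/(d-\ell)>\gaq$ for all large $Q$, and thus for every $Q\in\qq$ by monotonicity. Proposition \ref{largeslope}, applied to $\al=\beta_d\in\bb_d$ (sharp by Lemma \ref{allsharp}), $\be=\beta_\ell\in\bbl$, and the slope $-\lambda$ of the segment joining $(d,\beta_d)$ to $(\ell,\beta_\ell)$, will produce some $R\in\qq$ with $\gar>\lambda$, directly contradicting the previous inequality.

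For $\min(B\setminus\bunb)\ge d$, the case $\ell=0$ is Lemma \ref{dD}, and it remains to verify that every $\ell\in B$ with $0<\ell<d$ lies in $\bunb$. The key preparatory step is to identify $\bbl$: write $v_\infty$ for the common class $\gaq+H$ when $Q\ge Q_0$ (equivalently, $v_\infty=(a-\beta_d)/d+H$), and set $c_\ell:=a-\ell v_\infty+H$. Lemma \ref{valueINdelta} applied to an $R\ge Q_0$ (for which $\gar\in v_\infty$) forces $\be+H\le c_\ell$ whenever $\be\in\bbl$, and the converse inclusion is immediate, so $\bbl=\{\be\in\g:\be+H\le c_\ell\}$. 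Given $\be\in\bbl$, Proposition \ref{Sq} supplies $\beta_{Q,\ell}>\beta_d+(d-\ell)\gaq$ for $Q$ large, so $\beta_{Q,\ell}+H\ge c_\ell$. If $\be+H<c_\ell$, this already gives $\beta_{Q,\ell}>\be$ automatically. If $\be+H=c_\ell$, I will use that the $\gaq$'s are cofinal in the coset $v_\infty$ (a direct consequence of $\gaqq=(v_\infty)^+$ together with the cofinality of $\{\gaq\}_{Q\in\qq}$ in $\gaqq^L$); hence $\beta_d+(d-\ell)\gaq$ is cofinal in $c_\ell$ and eventually exceeds $\be$, and combined with the strict inequality of Proposition \ref{Sq} this yields $\beta_{Q,\ell}>\be$ for $Q$ in a final segment, so $\ell\in\bunb$.

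The main obstacle will be the third paragraph, specifically the reduction of $\bbl$ to $\{\be:\be+H\le c_\ell\}$. A naive union-over-$Q$ reading of the definition gives a much larger initial segment, bounded by $a-\ell\gmin+H$, in which case the stabilized value $\beta_{Q,\ell}+H=c_\ell$ could not dominate the upper part of $\bbl$ and the conclusion $\ell\in\bunb$ would fail. The collapse to the smaller $c_\ell$ relies essentially on Lemma \ref{valueINdelta}, which turns ``some $Q$'' in the definition of $\bbl$ into ``all $R$'' and lets one pass to $R$ with $\gar+H=v_\infty$. Once this is granted, the split into the two coset-subcases together with Proposition \ref{Sq} and the cofinality of $\gaq$ in $v_\infty$ finishes the argument.
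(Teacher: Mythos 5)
Your proof is correct, and for the central half of the statement it takes a genuinely different route from the paper. The paper handles everything with a single contradiction: if $k\in B\setminus\bunb$ and $k<d$, the witnessing $\be\in\bb_k$ with $\be_{S,k}\le\be$ on a cofinal family gives, via Proposition \ref{Sq}, a slope $\la>\gas$ for cofinally many $S$, hence $\la>\gaq$ for all $Q$, contradicting Proposition \ref{largeslope} (with $\bed\in\bb_d$ sharp by Lemma \ref{allsharp}); the equality $\min(\bct)=d$ then follows from the inclusion $\bct\sub B\setminus\bunb$, which is exactly your first paragraph. Your second paragraph reproduces this slope argument, but only for $\bct$; in the paper it is run once, directly for $B\setminus\bunb$, so your separate treatment of $\bct$ is essentially redundant once your third paragraph is in place (together with the disjointness $\bct\cap\bunb=\emptyset$, obtained by applying your first-paragraph argument to any constant index, not just $d$). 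The real divergence is the third paragraph: instead of invoking Proposition \ref{largeslope}, you compute $\bbl$ explicitly in the VU case -- via Lemma \ref{valueINdelta} and the eventually constant coset $\gaq+H$ -- as the union of the $H$-cosets $\le a-\ell\gaq+H$ ($Q\ge Q_0$), and then Proposition \ref{Sq} together with the cofinality of $\{\gaq\}$ in that coset (equivalently, cofinality of $\{\nuq(F)\}$ in the top coset $a+H$ of $\dta^L$) shows that $\be_{Q,\ell}$ eventually dominates every element of $\bbl$, so every $\ell\in B$ with $0<\ell<d$ lies in $\bunb$. Your route buys an exact description of $\bbl$ and a direct, non-contradiction proof; the paper's route is shorter and needs only the sharpness of $\bed$, not the full structure of $\bbl$. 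The ``main obstacle'' you flag is resolved at the same level of rigor as the paper's own arguments: Lemma \ref{valueINdelta} (and Lemma \ref{wholecut}) tacitly use $\gar-\gaq\in H$, i.e., that $\qq$ has been replaced by its final segment beyond $Q_0$, and your appeal to that lemma is no stronger than the paper's. Both proofs ultimately rest on the same two facts: $S_Q(F)=\{0,d\}$ from Proposition \ref{Sq}, and the sharpness of $\nuq(F)$ in $\dta^L$ built into the choice of $Q_0$.
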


\begin{proof}
Take any $k\in \Josnei{B\setminus\bunb}$; that is, for some $\be\in \bb_k$ we have
$\be_{S,k}\le\be$ for all $S$ in a cofinal family $\mathcal{S}$ of $\qq$. Let $-\la$ be the slope of the segment joining $(d,\bed)$ with $(k,\be)$. Let us see that the condition $k<d$ leads to a contradiction. Indeed, by Proposition \ref{Sq}, $\la>\gas$ for all $S\in \mathcal{S}$, because the point $(k,\be_{S,k})$ lies strictly above the segment of slope $-\gas$, joining $(d,\bed)$ with $(0,\nu_S(F))$.   
\begin{center}
\setlength{\unitlength}{4mm}
\begin{picture}(12,9.5)
\put(8.9,1.7){$\bullet$}\put(3.4,7.3){$\bullet$}
\put(3.4,6){$\bullet$}\put(-0.25,4.8){$\bullet$}
\put(-1,0){\line(1,0){13}}\put(0,-1){\line(0,1){10}}
\put(0,5.05){\line(3,-1){9}}\put(3.6,7.6){\line(1,-1){5.6}}
\multiput(9.15,-0.1)(0,.3){8}{\vrule height2pt}
\multiput(3.6,-0.1)(0,.3){26}{\vrule height2pt}
\put(-2.7,5){\begin{footnotesize}$\nu_S(F)$\end{footnotesize}}
\put(3.5,-1){\begin{footnotesize}$k$\end{footnotesize}}
\put(8.9,-1){\begin{footnotesize}$d$\end{footnotesize}}
\put(-1.5,1.8){\begin{footnotesize}$ \bed$\end{footnotesize}}
\put(4.2,7.5){\begin{footnotesize}$\be$\end{footnotesize}}
\put(1.7,6.1){\begin{footnotesize}$\be_{S,k}$\end{footnotesize}}
\put(6.7,5){\begin{footnotesize}$\mbox{slope }-\la$\end{footnotesize}}
\put(-.6,-1){\begin{footnotesize}$0$\end{footnotesize}}
\multiput(-.2,2)(.3,0){31}{\hbox to 2pt{\hrulefill }}
\end{picture}
\end{center}\bs\e
Hence, $\la>\gaq$ for all $Q\in\qq$. This contradicts Proposition \ref{largeslope}, because $\bed\in\bb_d$ is sharp, by Lemma \ref{allsharp}.
Since $\bct\sub \left(B\setminus\bunb\right)$, we have $d=\min(\bct)$ as well. 
\end{proof}\e

Let us mention another consequence of Proposition \ref{largeslope}. 
By Lemma \ref{dD} and Theorem
 \ref{minB1}, we have
\[
\bct=\left\{d=\ell_0<\ell_1<\cdots<\ell_t=D\right\}.
\]
In \cite[Lemma 3.11]{KuhlAT}, Kuhlmann uses a strong version of a clasical result by Kaplansky. Namely,  there exists a permutation $\sg\in S_t$ 
and $Q_1\ge Q_0$ such that  
\begin{equation}\label{kaplansky}
\be_{\ell_{\sg(1)}}+\ell_{\sg(1)}\gaq<\cdots <\be_{\ell_{\sg(t)}}+\ell_{\sg(t)}\gaq\quad \mbox{ for all }\ Q\ge Q_1.
\end{equation}

\begin{corollary}\label{sg=1}
In the VU case, the inequalities of (\ref{kaplansky}) can occur only for $\sg=1$.

In particular, for every pair $\ell_i<\ell_j$ in $\bct$, the slope $-\la$  of the segment joining $(\ell_i,\be_{\ell_i})$ with $(\ell_j,\be_{\ell_j})$ satisfes $\la<\ga_{Q_1}$. 
\end{corollary}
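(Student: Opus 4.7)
The plan is to argue by contradiction on $\sg$. An inversion in $\sg$ will produce a pair of indices in $\bct$ whose associated slope is too steep to be compatible with Proposition \ref{largeslope}, and then the second assertion will drop out of (\ref{kaplansky}) applied with $\sg=1$.

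Concretely, I would suppose $\sg\ne 1$ and choose $a<b$ in $\{1,\dots,t\}$ with $\sg(a)>\sg(b)$; set $i=\sg(b)$ and $j=\sg(a)$, so that $\ell_i<\ell_j$. The ordering (\ref{kaplansky}) then reads
\[
\be_{\ell_j}+\ell_j\gaq\;<\;\be_{\ell_i}+\ell_i\gaq\quad\text{for all }Q\ge Q_1.
\]
Dividing by $\ell_j-\ell_i>0$ shows that the slope $-\la$ joining $(\ell_i,\be_{\ell_i})$ and $(\ell_j,\be_{\ell_j})$ satisfies $\la>\gaq$ for every $Q\ge Q_1$. Since $\gaq$ is strictly increasing with $Q$, this upgrades to $\la>\gar$ for every $R\in\qq$.

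To get a contradiction I would apply Proposition \ref{largeslope} with $\al=\be_{\ell_j}$, $\be=\be_{\ell_i}$, $\ell=\ell_j$, $k=\ell_i$. The main thing to check is that $\al$ is sharp. Since $\ell_j\in\bct\sub B$ and $\be_{Q,\ell_j}=\be_{\ell_j}$ is constant for $Q\ge Q_0$, the very definition of $B$ (a cofinal family with $\nuq(F_{Q,\ell_j}Q^{\ell_j})\in\dta^L$) gives some $Q\in\qq$ with $\be_{\ell_j}+\ell_j\gaq\in\dta^L$; hence $\be_{\ell_j}\in\bb_{\ell_j}$, and Lemma \ref{allsharp} shows it is sharp. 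The same argument places $\be_{\ell_i}$ in $\bb_{\ell_i}$. Proposition \ref{largeslope} then furnishes $R\in\qq$ with $\gar>\la$, contradicting the step above. This forces $\sg=1$.

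For the second claim, take any $\ell_i<\ell_j$ in $\bct$. With $\sg=1$, (\ref{kaplansky}) gives $\be_{\ell_i}+\ell_i\gaq<\be_{\ell_j}+\ell_j\gaq$ for all $Q\ge Q_1$; the same slope manipulation now inverts the inequality, producing $\la<\gaq$ for every $Q\ge Q_1$, and in particular $\la<\ga_{Q_1}$. The only subtle point is the verification of sharpness of $\be_{\ell_j}$, which rests on combining $\ell_j\in B$ with the stabilization of $\be_{Q,\ell_j}$; everything else is bookkeeping around (\ref{kaplansky}) and the two-line slope computation.
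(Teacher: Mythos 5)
Your proposal is correct and follows essentially the same route as the paper: assume an inversion in $\sg$, translate the reversed inequality of (\ref{kaplansky}) into $\la>\gaq$ for all $Q\ge Q_1$ (hence all $Q\in\qq$), and contradict Proposition \ref{largeslope}, whose sharpness hypothesis you rightly verify via $\ell_j\in B$ together with the stabilization of $\be_{Q,\ell_j}$ and Lemma \ref{allsharp} (a point the paper leaves implicit). The derivation of $\la<\ga_{Q_1}$ from the case $\sg=1$ is likewise the paper's argument.
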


\begin{proof}
 Suppose $\ell_i<\ell_j$ and  $\be_{\ell_i}+\ell_i\gaq>\be_{\ell_j}+\ell_j\gaq$, for all $Q\ge Q_1$. Then,
 \[
  \la=(\be_{\ell_i}-\be_{\ell_j})/(\ell_j-\ell_i)>\gaq\quad \mbox{ for all }\ Q\ge Q_1.
 \]
This contradicts Proposition \ref{largeslope}. In particular, $\la<\ga_{Q_1}$.
\end{proof}\e

\subsection{Comparison of $Q$-expansions}\label{subsecComp}
In the VU case, the comparison of $Q$-expan\-sions of $F$ is much more sophisticated than in the VB case. We shall be able to describe the set $B$ only in some particular cases.

\begin{lemma}\label{hhh}
Suppose that $\si=1$. Let $a=h_1\cdots h_n$ for some $h_1,\dots,h_n\in \kx_m$.   
For some $R\ge Q_0$, let $[a]_i$ be the $i$-th coefficient of the $R$-expansion of $a$. Then, \[\nu\left([a]_i\right)>\nu(a)-i\gmin\quad \mbox{ for all }\,i>0.\] 
\end{lemma}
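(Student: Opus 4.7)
I would argue by induction on $n$, the number of factors. The base case $n=1$ is vacuous: $a=h_1\in\kx_m$, so its $R$-expansion is simply $a=[a]_0=h_1$ and $[a]_i=0$ for all $i>0$. For the inductive step, I set $b=h_1\cdots h_{n-1}$, so $a=bh_n$, and I consider the $R$-expansion $b=\sum_{i\ge0}[b]_iR^i$. Multiplying by $h_n$ yields $a=\sum_{i\ge0}[b]_ih_n R^i$, but the factors $[b]_ih_n$ have degree possibly $\ge m$, so this is not yet the $R$-expansion of $a$.

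The reduction step is to apply Lemma \ref{basic00} to each product $[b]_ih_n$, which has degree $<2m$ and therefore admits an $R$-expansion of the form $[b]_ih_n=c_{i,0}+c_{i,1}R$ with $c_{i,0},c_{i,1}\in\kx_m$. Since $[b]_i$, $h_n$ and $c_{i,0}$ all lie in $\kx_m$ and $Q_{\min}$ is a key polynomial for $\nu$, formula (\ref{efg}) gives $\epn([b]_ih_n),\epn(c_{i,0})<\epmin\le\ep_R$, so Lemma \ref{basic00} applies to the decomposition $t=[b]_ih_n$, $r=c_{i,0}$, $q=c_{i,1}$. I deduce first that $\nu(c_{i,0})=\nu([b]_ih_n)$ (the $r$-term carries the full value of $t$), and second that $\nu(c_{i,1})+\ga_R\ge \nu([b]_ih_n)+\ep_R-\ep$ for some $\ep<\epmin$. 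Using the hypothesis $s_\infty=1$ via Corollary \ref{corsstable} (which gives $\ep_Q=\ga_Q-\tau_\infty$ for all $Q\ge Q_0$, hence $\ep_R-\epmin=\ga_R-\gmin$ after ensuring $Q_{\min}=Q_0$ by passing to a cofinal subfamily), this strengthens to $\nu(c_{i,1})>\nu([b]_ih_n)-\gmin$.

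Collecting coefficients, $[a]_k=c_{k,0}+c_{k-1,1}$ (with the convention $c_{-1,1}=0$), which is a sum of two elements of $\kx_m$ and hence genuinely lies in $\kx_m$. The induction hypothesis gives $\nu([b]_k)>\nu(b)-k\gmin$ for $k\ge1$, and the same Lemma \ref{basic00} argument applied directly to $b$ gives $\nu([b]_0)=\nu(b)$ as a complementary base fact. Then for $k\ge1$ I obtain $\nu(c_{k,0})=\nu([b]_k)+\nu(h_n)>\nu(a)-k\gmin$, while $\nu(c_{k-1,1})>\nu([b]_{k-1}h_n)-\gmin$ evaluates to $>\nu(a)-\gmin$ when $k=1$ (from $\nu([b]_0)=\nu(b)$) and to $>\nu(a)-k\gmin$ when $k\ge2$ (from the IH). The ultrametric inequality $\nu([a]_k)\ge\min(\nu(c_{k,0}),\nu(c_{k-1,1}))$ finishes the induction.

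The main obstacle I expect is the quantitative step $\ep_R-\epmin\ge\ga_R-\gmin$: this is exactly what converts Lemma \ref{basic00}'s generic bound ``costing $\ga_R-\ep_R+\epmin$'' into the clean bound ``costing $\gmin$'' that appears in the conclusion. The hypothesis $s_\infty=1$ is precisely what unlocks it, because it forces $\ep_Q-\ga_Q$ to be the constant $-\tau_\infty$ for all sufficiently large $Q$. The minor bookkeeping issue is ensuring this relationship holds down at $Q_{\min}$ as well; one handles it by first replacing $\qq$ by its cofinal subfamily $\qq_{\ge Q_0}$, which does not alter the limit key polynomial structure (and in particular does not change the quantities $\nu(a)$, $[a]_i$, or the validity of the claim for all $R\ge Q_0$).
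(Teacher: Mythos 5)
Your proof is correct and follows essentially the same route as the paper's: induction on $n$, with the base case $n=1$ trivial, the case $n=2$ handled by Lemma \ref{basic00} combined with the identity $\ep_R-\epmin=\gar-\gmin$ coming from $\si=1$ and (\ref{epga}), and the inductive step obtained by expanding $h_1\cdots h_{n-1}$ in $R$, multiplying by $h_n$, regrouping, and invoking the $n=2$ case on each product $e_ih_n$. One small remark on your parenthetical fix: passing to the cofinal subfamily $\qq_{\ge Q_0}$ replaces $\gmin$ by $\ga_{Q_0}\ge\gmin$, so what you actually prove is the lower bound $\nu([a]_i)>\nu(a)-i\ga_{Q_0}$, which is formally weaker than the stated $\nu(a)-i\gmin$. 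This is harmless for the intended applications in (\ref{estfori}) and Lemma \ref{estimations}, where only positivity of $\gaq-\gmin$ matters, and it reflects an imprecision that is already present in the paper's own use of (\ref{epga}) at $Q=Q_{\min}$; but if you want the literal statement, you would need to know that $1\in I(Q_{\min})$ (so that Lemma \ref{first} identifies $\nu(\partial_1 Q_{\min})$ with $\tau_\infty$), rather than just replacing the family.
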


\begin{proof}
Let us use induction on $n$. For $n=1$, the statement is empty. For $n=2$, it follows from Lemma \ref{basic00}:
\[
a=[a]_0+[a_1]_1R\ \imp  \ \nu\left([a]_0\right)=\nu(a),\quad \nu\left([a]_1R\right)>\nu(a)+\ep_R-\epmin.
\]
Since $\si=1$, equation (\ref{epga}) shows that $\ep_R-\epmin=\gar-\gmin$. Thus,
\[
\nu\left([a]_1\right)>\nu(a)-\gar+\ep_R-\epmin=\nu(a)-\gmin.
\]

Take $n>2$ and assume that the statement holds for $n-1$. Then,
\[
h_1\cdots h_{n-1}=e_0+e_1R+\cdots +e_{n-2}R^{n-2}, 
\]
where the coefficients $e_0,\dots,e_{n-2}\in\kx_m$ satisfy 
\begin{equation}\label{n-1}
\nu(e_i)>\nu(h_1\cdots h_{n-1})-i\gmin \quad \mbox{ for all }\ i>0. 
\end{equation}
 From the equality $a=e_0h_n+e_1h_nR+\cdots +e_{n-2}h_nR^{n-2}$,
we deduce 
\[
 [a]_1=[e_0h_n]_1+[e_1h_n]_0,\dots, [a]_{n-2}=[e_{n-3}h_n]_1+[e_{n-2}h_n]_0,\ [a]_{n-1}=[e_{n-2}h_n]_1.
\]
The statement follows easily from (\ref{n-1}) and the case $n=2$.
\end{proof}\e

Consider $R>Q\ge Q_1$ in $\qq$ and let the corresponding expansions of $F$ be: 
\[
F=a_0+a_1Q+\cdots+a_DQ^D=b_0+b_1R+\cdots+b_DR^D.
\]

Write $Q=R+h$ for some $h\in K$. Recall that $\nu(h)=\nu(Q)=\gaq$. With the notation of Lemma \ref{hhh}, we have
\begin{equation}\label{bk}
 b_k=\sum_{k\le j}A_{k,j},\qquad A_{k,j}=\sum_{i=0}^k\comb{j}{k-i}\left[a_j h^{j-k+i}\right]_i.
\end{equation}

By Lemma \ref{hhh}, for all $i>0$ we have
\begin{equation}\label{estfori}
\nu\left(\left[a_j h^{j-k+i}\right]_i\right)> \be_{Q,j}+(j-k)\gaq+i\left(\gaq-\gmin\right).
\end{equation}

The following result is an immediate consequence of (\ref{bk}) and (\ref{estfori}).

\begin{lemma}\label{estimations}
If $\si=1$ and $0<k\le\ell\le D$, then $\nu\left(A_{k,\ell}\right)\ge \be_{Q,\ell}+(\ell-k)\gaq$ and equality holds if and only if $p\nmid\comb{\ell}{k}$. In particular, $\nu\left(A_{k,k}\right)=\nu(a_k)$.
\end{lemma}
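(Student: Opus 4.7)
The plan is to unpack the explicit formula
\[
A_{k,\ell}=\sum_{i=0}^{k}\binom{\ell}{k-i}\bigl[a_\ell h^{\ell-k+i}\bigr]_i
\]
from (\ref{bk}) and estimate each summand separately, singling out the $i=0$ summand as the one controlling the lower bound, with $\binom{\ell}{k}$ deciding whether that bound is attained.

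For $i>0$, I would invoke the inequality (\ref{estfori}) already derived just above the lemma,
\[
\nu\bigl([a_\ell h^{\ell-k+i}]_i\bigr)>\be_{Q,\ell}+(\ell-k)\gaq+i(\gaq-\gmin),
\]
and observe that since $Q\ge Q_0>Q_{\op{min}}$ and $\qq$ is strictly increasing, $\gaq-\gmin>0$, so every such summand has $\nu$-value strictly exceeding $\be_{Q,\ell}+(\ell-k)\gaq$. For the $i=0$ summand, I would establish the sharp equality $\nu([a_\ell h^{\ell-k}]_0)=\nu(a_\ell h^{\ell-k})=\be_{Q,\ell}+(\ell-k)\gaq$ as follows: applying Lemma \ref{hhh} to the $R$-expansion of the product $a_\ell h^{\ell-k}$ (viewed as a product of $\ell-k+1$ factors all lying in $\kx_m$), every coefficient $[a_\ell h^{\ell-k}]_j$ with $j>0$ satisfies $\nu([a_\ell h^{\ell-k}]_j R^j)>\nu(a_\ell h^{\ell-k})+j(\gar-\gmin)>\nu(a_\ell h^{\ell-k})$. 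By the ultrametric inequality, the constant term is forced to realise the value $\nu(a_\ell h^{\ell-k})$; otherwise every term of the $R$-expansion of $a_\ell h^{\ell-k}$ would have $\nu$-value strictly greater than $\nu(a_\ell h^{\ell-k})$, contradicting their sum being $a_\ell h^{\ell-k}$.

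Combining these two estimates yields $\nu(A_{k,\ell})\ge \be_{Q,\ell}+(\ell-k)\gaq$, and equality is dictated solely by whether the $i=0$ summand $\binom{\ell}{k}[a_\ell h^{\ell-k}]_0$ meets the bound; this happens precisely when $\nu(\binom{\ell}{k})=0$, i.e., when $p\nmid\binom{\ell}{k}$. The ``in particular'' clause is the special case $k=\ell$, where $\binom{k}{k}=1$ automatically forces equality and the identity degenerates to $\nu(A_{k,k})=\be_{Q,k}=\nu(a_k)$. The only delicate bookkeeping is pinning down the sharp equality for $[a_\ell h^{\ell-k}]_0$: Lemma \ref{hhh} by itself supplies only strict inequalities for positive-indexed coefficients, so one has to couple it with the ultrametric inequality to pin down the zeroth coefficient exactly; beyond that the argument is a routine assembly of estimates already developed in the excerpt.
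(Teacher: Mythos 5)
Your proposal is correct and follows exactly the route the paper intends: Lemma \ref{estimations} is stated there as an immediate consequence of (\ref{bk}) and (\ref{estfori}), and your write-up just fills in those details, with the $i>0$ summands killed by (\ref{estfori}) and the $i=0$ summand $\binom{\ell}{k}\left[a_\ell h^{\ell-k}\right]_0$ deciding equality via $v\bigl(\binom{\ell}{k}\bigr)$. Your ultrametric derivation of $\nu\left(\left[a_\ell h^{\ell-k}\right]_0\right)=\nu\left(a_\ell h^{\ell-k}\right)$ is a correct (if slightly informally phrased) substitute for the paper's appeal to Lemma \ref{basic00}, as used in the proof of Lemma \ref{hhh}.
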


\begin{theorem}\label{B1=B0}
 If $\qq$ is VU and $\si=1$, then $B=\bct\sqcup \bunb$.
\end{theorem}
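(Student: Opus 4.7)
The plan is to establish the disjointness $\bct\cap\bunb=\emptyset$ and the inclusion $B\sub\bct\cup\bunb$, since the opposite inclusions $\bct,\bunb\sub B$ are immediate from the definitions. For disjointness, suppose $\ell\in\bct$ has eventually constant value $\bel$. Since $\ell\in B$, the value $\bel$ lies in $\bbl$; the invariance group of $\dta$ is $H$, and this propagates to $\bbl$, so $\bel+h\in\bbl$ for every $h\in H$. If also $\ell\in\bunb$, then applying the defining condition to $\be:=\bel+h$ with $h>0$ in $H$ would force $\bel=\be_{Q,\ell}\ge\bel+h$ eventually, a contradiction since $H\ne\{0\}$.

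For the main inclusion, fix $\ell\in B\setminus\bunb$ and aim to show $\ell\in\bct$. Choose $\be^*\in\bbl$ witnessing the failure of the $\bunb$ condition, so that $\be_{Q,\ell}<\be^*$ for a cofinal family $\Sigma\sub\qq$. For each $Q\in\Sigma$ with $Q\ge Q_0$, the value $\be_{Q,\ell}$ must lie in $\bbl$ (otherwise $\be_{Q,\ell}$ would exceed every element of $\bbl\ni\be^*$), so Lemma \ref{allsharp} forces $\be_{Q,\ell}$ to be sharp. All such values therefore inhabit the common top coset $c+H$ of $\bbl$, and the bound by $\be^*=c+h^*$ gives a uniform ceiling $h^*\in H$ for the $H$-coordinate $h_Q$ in $\be_{Q,\ell}=c+h_Q$. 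The engine of the proof is Lemma \ref{estimations}: the goal is to show that for such $Q$ and any $R>Q$ the $R$-expansion coefficient $b_\ell=A_{\ell,\ell}+\sum_{j>\ell}A_{\ell,j}$ has a unique minimal summand at $j=\ell$, so that $\be_{R,\ell}=\nu(A_{\ell,\ell})=\be_{Q,\ell}$; iterating this identity then forces $\be_{\cdot,\ell}$ to be ultimately constant, placing $\ell\in\bct$.

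The required strict inequality $\be_{Q,\ell}<\be_{Q,j}+(j-\ell)\gaq$ for each $j>\ell$ I would verify case by case according to whether $j\in J$, $j\in\bunb$, or $j\in\bct$. For $j\in J$ the eventual membership $\be_{Q,j}+j\gaq\in\dta^R$ immediately forces $\be_{Q,j}+(j-\ell)\gaq$ above $\bbl\ni\be^*$. For $j\in\bunb$ the inclusion $\bbl\sub\bb_j+(j-\ell)\gaq$ (valid for $Q\ge Q_{\op{min}}$ and $j>\ell$) translates the unboundedness of $\be_{Q,j}$ in $\bb_j$ into $\be_{Q,j}+(j-\ell)\gaq$ eventually exceeding $\be^*$. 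For $j\in\bct$ with stable value $\be_j$, one exploits the sharpness of $\be_j$ in $\bb_j$ (Lemma \ref{allsharp}) together with the hypothesis $\si=1$, which via \eqref{epga} gives $\ep_R-\ep_Q=\ga_R-\gaq$ and hence tight control on the relevant cosets modulo $H$; a coset comparison then yields the strict inequality.

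The main obstacle is precisely the case $j\in\bct$, since both $\be_{Q,\ell}$ and $\be_j+(j-\ell)\gaq$ naturally inhabit $\bbl$, and distinguishing them requires a delicate coset analysis modulo $H$ that is only available because $\si=1$. A secondary subtlety is to ensure that once $\be_{R,\ell}=\be_{Q,\ell}$ holds for $R>Q$ with $Q\in\Sigma$, the identity extends to every $S>R$ in $\qq$, not just to $S\in\Sigma$; this I would handle by reapplying Lemma \ref{estimations} with $R$ in place of $Q$, using that the above case analysis still applies at $R$ because $\be_{R,\ell}=\be_{Q,\ell}$ remains sharp and bounded by $\be^*$.
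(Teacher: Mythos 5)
Your overall strategy tracks the paper's closely: both rely on Lemma \ref{estimations} to reduce the $Q$-vs.-$R$ expansion comparison for the coefficient at index $\ell$ (resp.\ $k$) to the inequality $\be_{Q,j}+(j-\ell)\gaq > \be$ for all $j>\ell$, and both dispose of these inequalities by a three-way case split $j\in J$, $j\in\bunb$, $j\in\bct$. You also correctly supply the disjointness $\bct\cap\bunb=\emptyset$, which the paper writes as $\sqcup$ without comment; your argument via Lemma \ref{valueINdelta} and $H\ne\{0\}$ is sound.

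However, there is a genuine gap in your argument for the inclusion $B\sub\bct\cup\bunb$: you ``fix $\ell\in B\setminus\bunb$'' and then split the indices $j>\ell$ into the three classes $J$, $\bunb$, $\bct$. But a priori these are not the only possibilities --- the index $j$ could itself lie in $B\setminus(\bct\cup\bunb)$, which is exactly the set you are trying to show is empty. The paper avoids this circularity by arguing by contradiction and choosing $k$ to be the \emph{maximal} element of $B\setminus(\bct\sqcup\bunb)$; then every $j>k$ is forced into one of the three classes. Equivalently one could run a downward induction starting at $D\in\bct$ (Lemma \ref{dD}). As written your proof simply assumes the trichotomy and does not justify it, so the case analysis is not exhaustive and the conclusion does not follow. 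The fix is small but it is precisely the mechanism that makes the proof work. A minor secondary point: you place the use of $\si=1$ inside the $j\in\bct$ case via \eqref{epga}, whereas in the paper's argument the hypothesis $\si=1$ is consumed earlier, in Lemma \ref{estimations} (through Lemma \ref{hhh}), to control the coefficients $A_{k,j}$; the $j\in\bct$ case itself is handled purely by the sharpness transport of Lemmas \ref{allsharp} and \ref{wholecut} and the non-triviality of $H$, with no further appeal to $\si=1$.
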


\begin{proof}
Suppose $\bct\sqcup \bunb\subsetneq B$ and take $k\in B$ maximal such that $k\not\in \bct\sqcup \bunb$. 
Then, for some $\be\in\bb_k$ we have $\be_{S,k}\le\be$ for all $S$ in some cofinal family $\mathcal{S}$ of $\qq$. 

Take $Q\in\mathcal{S}$ such that $Q\ge Q_1$. Let us estimate the values $\nu\left(A_{k,j}\right)$ for all $j \ge k$.

By Lemma \ref{estimations},  $\nu\left(A_{k,k}\right)=\nu(a_k)$. Now, we claim that there exists $Q\in\mathcal{S}$ large enough, such that $\nu\left(A_{k,j}\right)>\be$ for all $j>k$.
By (\ref{bk}), these inequalities imply $\nu(b_k)=\nu(a_k)$ for all $R>Q$. Therefore, $k\in \bct$, contradicting our assumption.

Let us fix an index $j>k$. By (\ref{bk}) and (\ref{estfori}), in order to prove that  $\nu\left(A_{k,j}\right)>\be$, it suffices to show that
\begin{equation}\label{aimj}
\be_{Q,j}+(j-k)\gaq>\be\quad \mbox{ for all \,$Q$ large enough}.
\end{equation}

If $j\in J$, then $\be_{Q,j}+j\gaq>\dta^L$ for all $Q$ large enough. Since $\be+k\gaq\in\dta^L$, the inequality in (\ref{aimj}) holds. 

Suppose $j\in \bunb$. Since $\be+(k-j)\gmin\in\bb_j$, we have
\[
\be_{Q,j}>\be+(k-j)\gmin\quad \mbox{ for all \,$Q$ large enough}.
\]
For any such $Q$, (\ref{aimj}) holds. Indeed, since $j>k$, we have
\[
 \be_{Q,j}+(j-k)\gaq>\be+(k-j)\gmin+(j-k)\gaq=\be+(j-k)(\gaq-\gmin)>\be.
\]

Finally, suppose $j\in \bct$. By  Lemma \ref{allsharp}, $\be_j+(j-k)\ga_{Q_1}$ is sharp in $\bb_k$. Hence, $\be<\be_j+(j-k)\ga_{Q_1}+h$, for some $h\in H$. Now, any $Q\in\mathcal{S}$ such that $(j-k)(\gaq-\ga_{Q_1})>h$ satisfies (\ref{aimj}). 
This ends the proof of the theorem, because there exists $Q\in\mathcal{S}$ satisfying (\ref{aimj}) simultaneously for the finite number of indices $j>k$. 
\end{proof}\e

Let us end this section with a remark about the shape of the Newton polygons $N_{\nuq,Q}(F)$, displayed in Figure \ref{figNPult}.

\begin{figure}
\caption{Newton polygon $N_{\nuq,Q}(F)$. The slope of the left-most side is $-\gaq$. All other slopes are of the form $-\la$, with $\la<\gaq$.}\label{figNPult}
\begin{center}
\setlength{\unitlength}{4mm}
\begin{picture}(20,14)
\put(15,9.6){$\bullet$}\put(13,9){$\bullet$}\put(12,3.8){$\bullet$}\put(2.1,4.7){$\bullet$}\put(4,7){$\bullet$}\put(7.4,1.45){$\bullet$}
\put(6,0.8){$\bullet$}\put(1,10){$\bullet$}\put(-0.25,11.8){$\bullet$}
\put(-1,3){\line(1,0){20}}\put(0,0){\line(0,1){13.5}}
\put(0,12.1){\line(1,-3){2.4}}\put(2.25,5){\line(1,-1){4}}
\put(6.2,1){\line(2,1){6}}
\put(12.25,4){\line(1,2){3}}
\multiput(2.4,3)(0,.3){6}{\vrule height2pt}
\multiput(15.25,3)(0,.25){27}{\vrule height2pt}
\put(-2.7,12){\begin{footnotesize}$\nuq(F)$\end{footnotesize}}
\put(2.1,2){\begin{footnotesize}$d$\end{footnotesize}}
\put(14.8,2){\begin{footnotesize}$D$\end{footnotesize}}
\put(-1.5,4.8){\begin{footnotesize}$ \bed$\end{footnotesize}}
\put(15.8,10){\begin{footnotesize}$ \be_D$\end{footnotesize}}
\put(-.6,2){\begin{footnotesize}$0$\end{footnotesize}}
\multiput(-.2,5)(.3,0){8}{\hbox to 2pt{\hrulefill }}
\end{picture}
\end{center}
\end{figure}
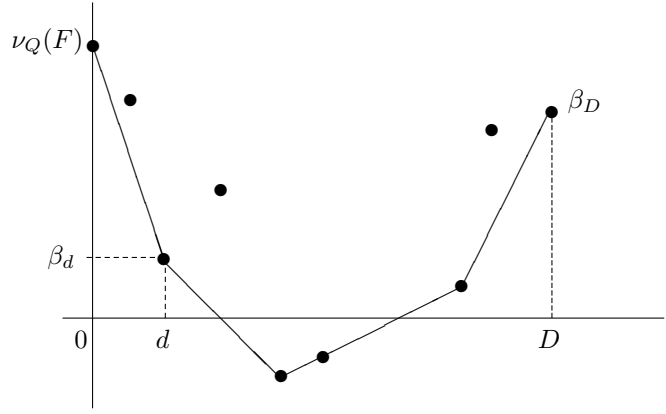
\bs

\begin{proposition}\label{NPconstant}
 Let $F\in\kpi(\qq)$. If $\si=1$, then the  partial Newton polygon $N_{\nuq,Q}(F)\cap [d,D]$ and the set
 \[
N_{d,D}:=\left\{k\in[d,D]\cap\N\mid (k,\be_{Q,k})\in N_{\nuq,Q}(F)\right\}
 \]
are both ultimately constant. In particular, $N_{d,D}\subseteq \bct$.
\end{proposition}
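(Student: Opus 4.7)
The plan is to identify the partial Newton polygon $N_{\nu_Q,Q}(F)\cap[d,D]$, for $Q$ sufficiently large, with the lower convex hull $N_\infty$ of the fixed finite set of points $\{(\ell,\be_\ell)\mid \ell\in\bct\cap[d,D]\}$, which is manifestly independent of $Q$. Since $d,D\in\bct$ by Lemma \ref{dD}, the two polygons share the endpoints $(d,\bar\be_d)$ and $(D,\be_D)$. The lower convex hull of $\{(k,\be_{Q,k})\mid d\le k\le D\}$ will agree with $N_\infty$ as soon as, for every $k\in[d,D]\setminus\bct$, the point $(k,\be_{Q,k})$ lies strictly above $N_\infty$; in that case $N_{d,D}$ will consist exactly of those $\ell\in\bct$ whose point $(\ell,\be_\ell)$ actually lies on $N_\infty$, which immediately gives $N_{d,D}\subseteq\bct$.

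Write $v(k)$ for the value of $N_\infty$ at position $k\in[d,D]$, so that $v(k)=\alpha\be_{\ell_1}+(1-\alpha)\be_{\ell_2}$ for some $\ell_1\le k\le\ell_2$ in $\bct$ and some $\alpha\in[0,1]\cap\Q$. The preliminary observation is that $v(k)\in\bb_k$: the element $v(k)+k\gaq$ is the same convex combination of $\be_{\ell_1}+\ell_1\gaq$ and $\be_{\ell_2}+\ell_2\gaq$, both of which belong to $\dta^L$ since $\ell_1,\ell_2\in\bct\subseteq B$; being an initial segment of $\g$, $\dta^L$ is closed under passing below the maximum of its elements, so $v(k)+k\gaq\in\dta^L$ and hence $v(k)\in\bb_k$. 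By Theorem \ref{B1=B0}, the set $[d,D]\setminus\bct$ decomposes as $(J\cap[d,D])\sqcup(\bunb\cap[d,D])$, which I would handle separately.

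For $k\in J\cap[d,D]$, the choice of $Q_0$ gives $\be_{Q,k}+k\gaq>\dta^L$, whereas $v(k)+k\gaq\in\dta^L$; this forces $\be_{Q,k}>v(k)$. For $k\in\bunb\cap[d,D]$, Lemma \ref{valueINdelta} gives $v(k)+H\subseteq\bb_k$, and since $H\ne\{0\}$ in the VU case we may pick $h>0$ in $H$, obtaining $\be:=v(k)+h\in\bb_k$ with $\be>v(k)$; the definition of $\bunb$ then yields $\be_{Q,k}\ge\be>v(k)$ for $Q$ in some final segment of $\qq$, and since $[d,D]\setminus\bct$ is finite all these inequalities can be arranged simultaneously by enlarging $Q_0$. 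The main obstacle is securing the strict inequality $\be_{Q,k}>v(k)$ rather than the weak one $\be_{Q,k}\ge v(k)$ that comes directly from the definition of $\bunb$; this is precisely where the nontriviality of the invariance group $H$ in the VU setting enters crucially.
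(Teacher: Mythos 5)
Your argument is correct, but it takes a genuinely different route from the paper's. The paper proves the proposition by induction on the sides of $N_{\nuq,Q}(F)$ from right to left: starting from $D\in\bct$ (Lemma \ref{dD}), it applies the expansion-comparison estimates of Lemma \ref{estimations} (the bounds on the $A_{k,j}$, which is where $\si=1$ enters) to show that any side whose right vertex has an ultimately constant index is itself stable when $Q$ is replaced by $R>Q$, together with the set of lattice points lying on it; in particular its left vertex again lies in $\bct$ and the induction proceeds until it reaches $d$. That argument does not invoke Theorem \ref{B1=B0}; it rediscovers the stability it needs. You instead take the classification $B=\bct\sqcup\bunb$ of Theorem \ref{B1=B0} as input and show directly that, for $Q$ large, every point with index $k\in[d,D]\setminus\bct$ lies strictly above the $Q$-independent lower hull $N_\infty$ of $\left\{(\ell,\bel)\mid \ell\in\bct\cap[d,D]\right\}$: for $k\in J$ by playing the standing condition $\be_{Q,k}+k\gaq>\dta^L$ against $v(k)+k\gaq\in\dta^L$, and for $k\in\bunb$ by shifting $v(k)$ by a positive $h\in H$ inside $\bb_k$ (Lemma \ref{valueINdelta}), which is exactly the right way to upgrade the weak inequality in the definition of $\bunb$ to a strict one; the use of $H\ne\{0\}$ (VU) is harmless since in the VB case $d=D$ and the statement is trivial. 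Your route is shorter and softer, and it yields as a bonus an explicit identification of the limit polygon with the hull of the stable $\bct$-points; the paper's route is independent of Theorem \ref{B1=B0} and gives finer information (stability of each individual side, and of the points on it, from an explicit $Q$ onward). Two small points you should make explicit: the identification of $N_{\nuq,Q}(F)\cap[d,D]$ with the lower hull of the points of abscissa in $[d,D]$ uses that $(d,\bed)$ is a vertex of the full polygon, which follows from the standing condition $S_Q(F)=\{0,d\}$ (Proposition \ref{Sq}); and when writing $v(k)$ as a convex combination you need $k=\alpha\ell_1+(1-\alpha)\ell_2$ with the same $\alpha$, and the membership $\be_{\ell_i}+\ell_i\gaq\in\dta^L$ for the particular $Q$ at hand, which follows from $\ell_i\in B$ together with Lemma \ref{valueINdelta}.
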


\begin{proof}
Take any $Q\ge Q_1$ and denote $N=N_{\nuq,Q}(F)$.
If $d=D$, then the statement (stability of $\be_{Q,D}$) follows from Lemma \ref{ellstable}.
Suppose $d<D$. 

Consider any side $S$ of $N$ whose end vertices $(\ell,\al)$, $(L,\be)$ satisfy $\ell< L\le D$ and $L\in \bct$. In particular, $\be=\be_L$ is the stable value of all $\be_{R,L}$ for $R\ge Q$.  
Let $-\la=(\be-\al)/(L-\ell)$ be the slope of this side. By the lower convexity of $N$, we have $-\gaq<-\la$.

Take any index $k$ such that $\ell\le k<L$. By the lower convexity of $N$, we  have 
\[
\be_{Q,j}+(j-k)\gaq>\be_{Q,j}+(j-k)\la\ge\be+(L-k)\la \quad \mbox{ for all }k\le j\le D,
\]
and equality holds if and only if $j=k$ and the point $(k,\be_{Q,k})$ lies on $N$.

\begin{center}
\setlength{\unitlength}{4mm}
\begin{picture}(20,11)
\put(13,7){$\bullet$}\put(6,3.5){$\bullet$}
\put(8.6,4.85){$\times$}\put(8.77,6.5){$\bullet$}
\put(16.3,10){$\bullet$}
\put(-1,1){\line(1,0){21}}\put(0,0){\line(0,1){10.5}}
\put(6.2,3.7){\line(2,1){7}}
\multiput(13.5,7.35)(0.3,.15){16}{.}
\multiput(9,1)(0,.3){20}{\vrule height2pt}
\multiput(6.25,1)(0,.3){10}{\vrule height2pt}
\multiput(13.25,1)(0,.25){26}{\vrule height2pt}
\multiput(16.5,1)(0,.25){37}{\vrule height2pt}
\put(16.3,0){\begin{footnotesize}$j$\end{footnotesize}}
\put(12.8,0){\begin{footnotesize}$L$\end{footnotesize}}
\put(6,0){\begin{footnotesize}$\ell$\end{footnotesize}}
\put(8.9,0){\begin{footnotesize}$k$\end{footnotesize}}
\put(5.2,3.6){\begin{footnotesize}$ \al$\end{footnotesize}}
\put(12.3,7.4){\begin{footnotesize}$ \be$\end{footnotesize}}
\put(6.7,6.7){\begin{footnotesize}$\be_{Q,k}$\end{footnotesize}}
\put(14.3,10){\begin{footnotesize}$\be_{Q,j}$\end{footnotesize}}
\put(-5.6,4.9){\begin{footnotesize}$ \be+(L-k)\la$\end{footnotesize}}
\put(-.6,0){\begin{footnotesize}$0$\end{footnotesize}}
\multiput(-.1,5.15)(.3,0){31}{\hbox to 2pt{\hrulefill }}
\end{picture}
\end{center}\bs

By Lemma \ref{estimations}, we have $\nu(A_{k,k})=\nu(a_k)=\be_{Q,k}$ and $\nu(A_{k,j})>\be+(L-k)\la$. Hence, for all $R>Q$ we have $\be_{R,k}\ge \be+(L-k)\la$, and equality holds if and only  if the point $(k,\be_{Q,k})$ lies on $N$.
As a consequence, the side $S$ is stable for all $R\ge Q$ and the points lying on $S$ are stable too. In particular, $\ell\in\bbl$.

A recurrent application of this argument proves the proposition.
\end{proof}

\subsection{The case $v(p)\in H$}

In this section, we assume that $v(p)$ belongs to $H$; that is, $\chr(K)=0$ and $v(\Q)\sub H$. Our aim is to prove the following result.

\begin{theorem}\label{Jempty}
 If $\qq$ is VU, $\si=1$ and $v(p)\in H$, then $J=\emptyset$.
\end{theorem}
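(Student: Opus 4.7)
The plan is to argue by contradiction. Suppose $\ell \in J$. By Lemma \ref{dD}, $0, d, D \in B$, so $\ell \neq 0, d, D$. I treat in detail the case $\ell < d$; the case $d < \ell < D$ is analogous, with $D$ (or some other element of $\bct$ larger than $\ell$) playing the role of $d$.

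Fix $R > Q$ in $\qq$ both sufficiently large and use (\ref{bk}) to expand $b_\ell = F_{R,\ell} = \sum_{j=\ell}^D A_{\ell,j}$. The first step isolates the term $A_{\ell, d}$. Proposition \ref{Sq} gives $\nu_Q(F) = \be_{Q,0} = \be_d + d\gaq$ for $Q$ large, and since $\nu_Q(F) + H = da_0 + H$ in the VU case (where $\dta = (da_0 + H)^+$), this forces $\be_d \in H$. Lemma \ref{hhh} (whose hypothesis is $\si = 1$) together with estimate (\ref{estfori}) shows that the $i = 0$ contribution to $A_{\ell,d}$ is dominant, yielding
\[
\nu(A_{\ell, d}) = v\!\left(\binom{d}{\ell}\right) + \be_d + (d-\ell)\gaq.
\]
Since $\chr(K) = 0$ and $v(p) \in H$, the valuation $v\!\left(\binom{j}{k}\right)$ lies in $H$ for all $j, k$. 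Combined with $\be_d \in H$ and $\gaq, \ga_R \in a_0 + H$ for $Q, R$ large, this gives
\[
\nu(A_{\ell, d}) + \ell \ga_R \in da_0 + H \subseteq \dta^L.
\]

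To conclude $\nu(b_\ell) + \ell \ga_R \in \dta^L$ (which contradicts $\ell \in J$), I must rule out cancellation with the other summands $A_{\ell, j}$ for $j \neq d$. Using Theorem \ref{B1=B0}, I split $\{j : \ell \le j \le D\} = J \sqcup B$. For $j \in J$, the congruence $\nu(A_{\ell, j}) + \ell \ga_R \equiv \be_{Q,j} + j \gaq \pmod H$ puts the term strictly in $\dta^R$ modulo $H$, so these contribute only above the class $da_0 + H$. For $j \in B$ with $\be_{Q,j} + j \gaq$ strictly below $da_0 + H$ in $\g/H$, the corresponding $A_{\ell, j}$ has strictly smaller valuation than $A_{\ell, d}$, so it only brings $\nu(b_\ell) + \ell \ga_R$ further into $\dta^L$.

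The main obstacle lies with the ``boundary'' subset of $j \in B \setminus \{d\}$ for which $\be_{Q,j} + j \gaq \equiv da_0 \pmod H$: here $\nu(A_{\ell, j})$ lies in the same $H$-class as $\nu(A_{\ell, d})$, and exact cancellation against $A_{\ell, d}$ could in principle push the valuation of $b_\ell$ above $da_0 + H$. To handle this, the plan is to analyze the images of the relevant terms in the graded piece of $\ggq$ at grade $(d-\ell)\gaq$, viewed modulo the convex subgroup $H$; Theorem \ref{g0gm} provides uniqueness of the expression of these images as polynomials in $\pbq$, and combining this with the sharpness of $\be_d$ in $\bb_d$ given by Lemma \ref{allsharp} and the monic character of $F_{Q, D}$, one expects to exhibit a surviving nonzero contribution in the class $da_0 + H$. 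This would force $\nu(b_\ell) + \ell \ga_R \in da_0 + H \subseteq \dta^L$, yielding the contradiction with $\ell \in J$ and completing the proof.
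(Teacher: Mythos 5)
Your proposal correctly identifies the shape of the problem (expand $b_\ell=\sum_{j\ge\ell}A_{\ell,j}$, note that the term coming from the nearest index of $\bct$ above $\ell$ lands, modulo $H$, in the final-segment class of $\dta^L$, and then worry about cancellation), but the step you yourself flag as ``the main obstacle'' is precisely where the whole difficulty of the theorem lives, and your plan for it does not work. First, the tool you propose is inadequate: the competing terms $A_{\ell,j}$ are all coefficients of the \emph{same} power $R^\ell$, and their initial forms are homogeneous units of $\ggq$ (degree zero in $\pbq$), so Theorem \ref{g0gm} (transcendence of $\pbq$ over $\ggq^0$) gives no obstruction whatsoever to their sum having strictly larger value; uniqueness of $\pbq$-expansions separates different powers of $\pbq$, not different units in the same grade. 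Second, and more importantly, the ``expected surviving nonzero contribution'' simply need not exist: exact cancellation of the leading terms for \emph{all} large $Q$ is a genuine phenomenon --- it is exactly case (c) of the paper's Lemma \ref{B0B2}, and it is how indices enter $\bunb$. The theorem still holds in that case, but by a different mechanism: after cancellation the value of $b_k$ becomes $\nu(n_k)+\bel+(\ell-k)\ga_R$, which stays in $\dta^L$ because the discrepancy $\nu(n_k)$ lies in $H$, the invariance group of $\dta$ (this is where $v(p)\in H$ is really used), and the index is then placed in $\bunb$ rather than $\bct$. Detecting and exploiting this requires the paper's descending induction with the recursively defined integers $n_j$ and the carried-along estimate (\ref{claim}), which records the exact leading relation $a_j\sim -n_ja_\ell h^{\ell-j}$ so that iterated cancellations at lower indices can be computed; a one-shot argument isolating a single term $A_{\ell,d}$ cannot reach this conclusion.

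Two smaller points. Your claim that $\bed\in H$ is unjustified: from $S_Q(F)=\{0,d\}$ one only gets that $\bed+d\gaq$ lies in the stable class $\nu_{Q_0}(F)+H$, i.e.\ $\dta=(\bed+da_0+H)^+$ in your notation, not $(da_0+H)^+$; this is repairable, since all you need is $\nu(A_{\ell,d})+\ell\ga_R\equiv\nu_Q(F)\pmod H$. Also, by Proposition \ref{Sq} no term $\be_{Q,j}+j\gaq$ with $0<j<d$ or $j>d$ can lie strictly below $\nu_Q(F)$, so your ``strictly below'' case is vacuous and essentially every index in $B\setminus\{0,d\}$ with bounded values sits in the boundary case --- which is why the cancellation analysis cannot be avoided.
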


The proof of Theorem \ref{Jempty} requires some previous considerations.
For any pair of indices $k<j$, denote
\begin{equation}\label{alkj}
\al_{k,j}=\nu(\comb{j}{k})\in H.  
\end{equation}

\begin{lemma}\label{Akl}
Suppose that $\si=1$,  $v(p)\in H$ and $0<k\le j\le D$.
For any given $\al\in H$  we have
\begin{equation}\label{wanted}
 \nu\left(A_{k,j}-\comb{j}{k}a_j h^{j-k}\right)>\al+\be_{Q,j}+(j-k)\gaq\quad\mbox{ for  all \,$Q$  large enough}. 
\end{equation}

In particular, $\nu\left(A_{k,j}\right)=\al_{k,j}+\be_{Q,j}+(j-k)\gaq$, for  all \,$Q$  large enough. 
\end{lemma}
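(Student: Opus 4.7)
The plan is to decompose $A_{k,j} - \binom{j}{k}a_j h^{j-k}$ into the ``missing'' $i=0$ piece and the $i \geq 1$ contributions, and to estimate each group carefully. From (\ref{bk}),
\[
A_{k,j} - \binom{j}{k}a_j h^{j-k} = \binom{j}{k}\bigl([a_j h^{j-k}]_0 - a_j h^{j-k}\bigr) + \sum_{i=1}^{k}\binom{j}{k-i}\,[a_j h^{j-k+i}]_i.
\]
The first summand equals $-\binom{j}{k}\sum_{l\geq 1}[a_j h^{j-k}]_l R^l$, and I would estimate it via Lemma \ref{hhh} applied to the product $a_j h^{j-k}$, obtaining $\nu\bigl([a_j h^{j-k}]_l R^l\bigr) > \be_{Q,j}+(j-k)\gaq + l(\gar - \gmin)$ for $l\geq 1$. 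For the $i\geq 1$ summands, the estimate (\ref{estfori}) directly gives $\nu\bigl([a_j h^{j-k+i}]_i\bigr) > \be_{Q,j}+(j-k)\gaq + i(\gaq-\gmin)$. Thus both kinds of error terms pick up an extra positive summand of the form $(\gaq-\gmin)$ (or larger) compared with the main term $\binom{j}{k} a_j h^{j-k}$.

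Next, I would invoke the hypothesis $v(p)\in H$ to control the binomial coefficients. Since $v(n)\in H$ for every nonzero integer $n$, each $\al_{k-i,j}=\nu\bigl(\binom{j}{k-i}\bigr)$ lies in $H$; setting $M=\min_{0\leq i\leq k}\al_{k-i,j}\in H$ and using $\gar\geq\gaq$, the previous estimates combine to
\[
\nu\Bigl(A_{k,j}-\binom{j}{k}a_j h^{j-k}\Bigr) > M + \be_{Q,j}+(j-k)\gaq+(\gaq-\gmin).
\]
To deduce (\ref{wanted}) for a prescribed $\al\in H$, I need $\gaq - \gmin > \al - M$. The key observation is that $\al - M \in H$ and the invariance group of $\gaqq$ is exactly $H$, so $\gmin + (\al - M)\in\gaqq^L$ (because $\gmin\in\gaqq^L$ and $\gaqq^L$ is $H$-invariant). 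By construction, $\{\gar : R\in\qq\}$ is cofinal in $\gaqq^L$, hence for all $Q$ sufficiently large we have $\gaq > \gmin + (\al - M)$, which is exactly what is needed.

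Finally, the ``in particular'' statement follows by applying (\ref{wanted}) with the specific choice $\al=\al_{k,j}$: the correction term then has valuation strictly exceeding $\nu\bigl(\binom{j}{k} a_j h^{j-k}\bigr)=\al_{k,j}+\be_{Q,j}+(j-k)\gaq$, and the strict form of the ultrametric inequality yields $\nu(A_{k,j})=\al_{k,j}+\be_{Q,j}+(j-k)\gaq$ for $Q$ large.

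The main obstacle is the cofinality step: verifying that in the VU setting one can push $\gaq - \gmin$ past any element of $H$. This is the precise point at which both the nontriviality of $H$ (guaranteed in the VU case) and its identification as the invariance group of $\gaqq$ play a role; the bookkeeping showing that all binomial valuations $\al_{k-i,j}$ lie in $H$ (which uses $v(p)\in H$) is routine by comparison, but indispensable for the inequality (\ref{wanted}) to hold uniformly for every $\al\in H$.
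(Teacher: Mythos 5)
Your proof is correct and follows essentially the same route as the paper: decompose $A_{k,j}-\binom{j}{k}a_j h^{j-k}$ into the $i\ge 1$ terms (estimated via \eqref{estfori}) plus the $Q$-versus-$R$ remainder $\binom{j}{k}\bigl([a_jh^{j-k}]_0 - a_jh^{j-k}\bigr)$, and push both below $-\al$ by choosing $Q$ large. Where the paper invokes Lemma~\ref{basic00} together with \eqref{epga} for the remainder, you invoke Lemma~\ref{hhh}; since Lemma~\ref{hhh} is proved by iterating Lemma~\ref{basic00}, this is equivalent and gives the same gain $\gar-\gmin$. The cofinality argument (that $\gaq-\gmin$ eventually exceeds any prescribed element of $H$, because $H$ is the invariance group of the cut $\gaqq$ and the values $\gaq$ are cofinal in $\gaqq^L$) is exactly the step the paper leaves implicit, and you justify it correctly.

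Two small points. First, your auxiliary quantity $M=\min_{0\le i\le k}\al_{k-i,j}$ is always $0$: the term $i=k$ gives $\al_{0,j}=\nu\bigl(\binom{j}{0}\bigr)=0$, and all $\al_{k-i,j}\ge 0$ since the residue characteristic exponent is $p$. So introducing $M$ does no harm but buys nothing; one only needs $\nu\bigl(\binom{j}{k-i}\bigr)\ge 0$. Second, and related, your final remark that $v(p)\in H$ is ``indispensable for \eqref{wanted} to hold uniformly for every $\al\in H$'' is an overstatement: the inequality \eqref{wanted} holds for all $\al\in H$ without any hypothesis on $v(p)$, precisely because $M=0$. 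The hypothesis $v(p)\in H$ is used only to deduce the ``in particular'' conclusion, where one must know $\al_{k,j}=\nu\bigl(\binom{j}{k}\bigr)\in H$ so that it is a legitimate choice of $\al$ (or, as in the paper, so that one can take $\al>\al_{k,j}$ inside $H$). Your choice $\al=\al_{k,j}$ works because \eqref{wanted} is a strict inequality, just as the paper's $\al>\al_{k,j}$ does.
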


\begin{proof}
Since $\al\in H$, we have $\gaq-\gmin>\al$ for $Q$ large enough. Thus, the following inequality is a consequence of (\ref{estfori}):
\[
 \nu\left(A_{k,j}-\comb{j}{k}\left[a_j h^{j-k}\right]_0\right)>\al+\be_{Q,j}+(j-k)\gaq.
\]

On the other hand, by  Lemma \ref{basic00} and  (\ref{epga}), we have
\[
\nu\left(a_j h^{j-k}-\left[a_j h^{j-k}\right]_0\right)>\nu\left(a_j h^{j-k}\right)+\gar-\gmin>\nu\left(a_j h^{j-k}\right)+\al.
\]
This proves (\ref{wanted}). For the second conclusion, it suffices to take $\al>\al_{k,j}$.
\end{proof}\e

Theorem \ref{Jempty} follows from a recurrent application of the following result.

\begin{lemma}\label{B0B2}
Suppose $\si=1$  and  $v(p)\in H$. Take $0<k<D$ such that $(k,D]\cap\Z\sub B$. 
Then, $k\in B$. 
Moreover,  for $\ell=\min((k,D]\cap \bct)$, consider the natural numbers $n_\ell,n_{\ell-1},\dots,n_k$ defined recursively by $n_\ell=1$ and 
\[
n_j=\comb{\ell}{j}-\comb{\ell-1}{j}n_{\ell-1}-\cdots-\comb{j+1}{j}n_{j+1}. 
\]
Let $\al:=\max\{\nu(n_j)\mid k\le j\le \ell\}$. If $k\in \bunb$, then  for all $Q$ large enough we have
\begin{equation}\label{claim}
\nu\left(a_k+n_ka_\ell h^{\ell-k}\right)>\al+\bel+(\ell-k)\gaq\quad\mbox{   for all \,$R>Q$ large enough}.  
\end{equation}
In particular, $\nu(a_k)=\nu(n_k)+\bel+(\ell-k)\gaq$.
 \end{lemma}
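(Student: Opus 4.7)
The plan is a direct computational proof organised around one algebraic identity and a single pass of valuation estimates. First I would establish a ``dual'' substitution formula: starting from $F=\sum_j a_jQ^j=\sum_j b_jR^j$ and $R=Q-h$ (with $h=Q-R\in K$, $\nu(h)=\gaq$), expand $(Q-h)^j$ binomially. Since every $b_j$ lies in $\kx_m$ and $h\in K$, no carrying is needed and one reads off the canonical $Q$-expansion
\[
 a_i=\sum_{j=i}^{D}(-1)^{j-i}\comb{j}{i}\,h^{j-i}\,b_j,\qquad i=0,\dots,D.
\]
This is the ``reverse'' of the formula in Subsection~\ref{subsecComp} and is much simpler, because $h\in K$ obviates any Hasse--Schmidt carrying.

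Next I would solve the recursion by downward induction on $j$ (using Vandermonde's identity $\comb{i}{j}\comb{\ell}{i}=\comb{\ell}{j}\comb{\ell-j}{i-j}$ together with $\sum_{s}(-1)^{s}\comb{n}{s}=0$), obtaining the closed form $n_j=(-1)^{\ell-j+1}\comb{\ell}{j}$ for $k\le j<\ell$. Each $n_j$ is then an integer, and $\nu(n_j)\in H$ by the assumption $v(p)\in H$. Feeding the substitution formula into $a_k$ and $a_\ell$ and exploiting the symmetry $\comb{\ell}{k}\comb{j}{\ell}=\comb{j}{k}\comb{j-k}{\ell-k}$ yields
\begin{align*}
 a_k+n_k\,a_\ell\,h^{\ell-k}= b_k &+\sum_{j=k+1}^{\ell-1}(-1)^{j-k}\comb{j}{k}h^{j-k}b_j \\
 &+\sum_{j=\ell+1}^{D}(-1)^{j-k}\comb{j}{k}\!\left[1-\comb{j-k}{\ell-k}\right]h^{j-k}b_j,
\end{align*}
in which the $j=\ell$ contribution vanishes by the choice of $n_k$.

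The computational core is the term-by-term estimate showing that, for $R>Q$ both chosen large enough, each summand has $\nu$-value strictly greater than $\al+\bel+(\ell-k)\gaq$. One first observes that this target lies in $\bb_k$, because its shift by $k\gaq$ equals $\al+\bel+\ell\gaq\in H+\delta^L=\delta^L$ (by Lemma~\ref{valueINdelta}, $\ell\in\bct$, and $\al\in H$). By minimality of $\ell$ in $(k,D]\cap\bct$ together with the hypothesis $(k,D]\cap\Z\subset B$ and Theorem~\ref{B1=B0}, every index $j\in(k,\ell)$ lies in $\bunb$, so its contribution is killed by enlarging $R$; under the hypothesis $k\in\bunb$ the same applies to $b_k$. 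For $j>\ell$ with $j\in\bunb$ the argument is identical. The delicate case is $j>\ell$ with $j\in\bct$, where Corollary~\ref{sg=1} supplies a slope $-\la$ with $\la<\ga_{Q_1}$ between $(\ell,\bel)$ and $(j,\be_j)$, so $\be_j+(j-\ell)\gaq>\bel+(j-\ell)(\gaq-\la)$, and the gain $(j-\ell)(\gaq-\la)$ absorbs the $H$-correction $\al-\nu(\comb{j}{k}[1-\comb{j-k}{\ell-k}])$ for $Q$ pushed sufficiently far into $\qq$, via Proposition~\ref{largeslope} applied to the sharp element $\bel\in\bb_\ell$.

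Once (\ref{claim}) is established, the strict triangle inequality applied to $a_k+n_k a_\ell h^{\ell-k}$ and its summand $n_k a_\ell h^{\ell-k}$ of $\nu$-value $\nu(n_k)+\bel+(\ell-k)\gaq\le\al+\bel+(\ell-k)\gaq$ forces $\nu(a_k)=\nu(n_k)+\bel+(\ell-k)\gaq$; multiplying by $\gaq^k$, $\nu(a_kQ^k)=\nu(n_k)+\bel+\ell\gaq\in\delta^L$ for all $Q$ large, and therefore $k\in B$. For the unconditional statement $k\in B$, one handles $k\in\bct$ trivially and reduces the remaining possibility $k\in J$ to a contradiction paralleling the proof of Theorem~\ref{minB1}, which forces $k\in\bunb$ and the above argument to apply. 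The main obstacle is precisely the delicate $j>\ell$, $j\in\bct$ sub-case above: it is the only place where one cannot eliminate a term by enlarging $R$, and one has to squeeze from Corollary~\ref{sg=1} and Proposition~\ref{largeslope} an inequality of the form $(j-\ell)(\gaq-\la)>\al-\nu(\text{integer})$ holding in a cofinal family of $Q\in\qq$.
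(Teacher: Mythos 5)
Your closed form $n_j=(-1)^{\ell-j+1}\comb{\ell}{j}$ for $k\le j<\ell$ is correct, and in the degree-one case the resulting single identity for $a_k+n_ka_\ell h^{\ell-k}$ in terms of the $b_j$'s would indeed replace the paper's induction on $\ell-k$ by one pass of estimates. But there is a genuine gap: your ``dual substitution formula'' $a_i=\sum_{j\ge i}(-1)^{j-i}\comb{j}{i}h^{j-i}b_j$ rests on the assertion that $h=Q-R\in K$, so that ``no carrying is needed''. In the setting of this lemma, $\deg\qq=m$ is arbitrary (only $\si=1$ and $v(p)\in H$ are assumed; the lemma feeds Theorem \ref{Jempty}, which is stated for general $m$), so $h=Q-R$ is merely a polynomial in $\kx_m$, constant only when $m=1$. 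For $m>1$ the products $h^{j-i}b_j$ have degree $\ge m$, the displayed sums are not the canonical $Q$-expansion coefficients, and one must control the redistribution terms coming from division by $Q$ (resp.\ $R$). This is precisely what the paper's Lemma \ref{hhh}, the identity (\ref{epga}) exploiting $\si=1$, and Lemma \ref{Akl} exploiting $v(p)\in H$ are designed to do (the paper works with the expansion of $b_k$ in terms of the $a_j$'s and the correction terms $[a_jh^{j-k+i}]_i$); your proposal dismisses this entirely, so as written it only proves the lemma when $\deg\qq=1$. All of your subsequent term-by-term estimates (which are otherwise in the right spirit: membership of the target in $\bb_j$, killing the $\bunb$-indices by enlarging $R$, and handling $j>\ell$, $j\in\bct$ via Corollary \ref{sg=1}, Lemma \ref{allsharp} and the cofinality of $\gaq-\ga_{Q_1}$ in $H$, exactly as in the paper's inequality (\ref{ell<j})) inherit this defect, because they estimate an expression that no longer equals $a_k+n_ka_\ell h^{\ell-k}$.

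A secondary weakness is the unconditional claim $k\in B$. Your argument via (\ref{claim}) only covers $k\in\bunb$ (where $k\in B$ holds by definition anyway), and for the remaining possibility $k\in J$ you only gesture at ``a contradiction paralleling the proof of Theorem \ref{minB1}'', which is not the relevant mechanism. The paper obtains $k\in B$ directly: after reducing $b_k$ modulo terms of value greater than $\al+\bel+(\ell-k)\gaq$, it compares $\nu(a_k)$ with $\nu(n_ka_\ell h^{\ell-k})$ and shows that each of the three possible outcomes forces either $k\in\bct$ or $k\in\bunb$ (with the values $\be_{R,k}+k\ga_R$ landing in $\dta^L$), so $k\notin J$ in every case. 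Your identity could be used to run an analogous case analysis (if $k\in J$ then $\nu(b_k)$ exceeds the target by sharpness of $\bel+\ell\gaq$ and $\al\in H$, forcing $\nu(a_kQ^k)\in\dta^L$ cofinally, a contradiction), but this has to be carried out explicitly; as it stands, that step is missing.
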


\begin{proof}
We claim that for every $\ell<j\le D$, we have
\begin{equation}\label{ell<j}
\be_{Q,j}+j\gaq> \al+\bel+\ell\gaq\quad\mbox{ for all $Q$ large enough}.
\end{equation}

Indeed, let us first deal with the case $j\in \bunb$. Since $\bel\in\bbl$, the element $\be:=\bel+(\ell-j)\gmin$ belongs to $\bb_j$. By Lemma \ref{valueINdelta}, $\be+\al$ belongs to $\bb_j$ too. Since $j\in \bunb$, we have $\be_{Q,j}> \al+\bel+(\ell-j)\gmin$ for all $Q$ large enough.
For any such $Q$, we have
\begin{align*}
\be_{Q,j}+j\gaq\,&>\ \al+\bel+(\ell-j)\gmin+j\gaq\\
&=\ \al+\bel+\ell\gaq+(j-\ell)(\gaq-\gmin)>\  \al+\bel+\ell\gaq.
\end{align*}

Now, suppose $j\in \bct$, $j>\ell$. Corollary \ref{sg=1} shows that
\[
 \bel+\ell\gaq< \be_j+j\gaq\quad\mbox{ for all }\,Q\ge Q_1.
\]
The elements in both sides of the inequality belong to $\bb_0$ and they are sharp by Lemma \ref{allsharp}. Hence, their difference belongs to $H$. Since  
$\al$ belongs to $H$ too, we may find $Q$ large enough so that
\[
(\be_j+j\ga_{Q_1})-(\bel+\ell\ga_{Q_1})+(j-\ell)(\gaq-\ga_{Q_1})> \al.
\]
This ends the proof of (\ref{ell<j}).

As a consequence of (\ref{ell<j}) and Lemma \ref{Akl}, we may find $Q_2\in\qq$ large enough so that for all $Q\ge Q_2$
we have simultaneously:
\begin{equation}\label{always}\ars{1.4}
\begin{array}{ll}
\bullet \ &\nu\left(A_{k,j}\right)=\al_{k,j}+\be_{Q,_j}+(j-k)\gaq \ \mbox{ for all }\,j\ge k.\\
\bullet \ &\nu\left(a_j h^{j-k}-\left[a_j h^{j-k}\right]_0\right)>\al+\be_{Q,_j}+(j-k)\gaq \ \mbox{ for all }\,j\ge k.\\
\bullet \  &\nu\left(A_{k,j}-\comb{j}{k}a_j h^{j-k}\right)>\al+\be_{Q,j}+(j-k)\gaq \ \mbox{ for all }\,j\ge k.\\ 
\bullet \ &\nu\left(A_{k,j}\right)>\al+\bel+(\ell-k)\gaq>\nu\left(A_{k,\ell}\right)\ \mbox{ for all }\,j>\ell.
\end{array}
\end{equation}

Let us now argue by induction on $\ell-k>0$. Suppose $k=\ell-1$. 
In order to analyze the $\nu$-value of \,$b_k=A_{k,k}+A_{k,\ell}+\sum_{\ell< j}A_{k,j}$, we must compare $\nu(a_k)=\nu\left(A_{k,k}\right)$ with  $\nu\left(A_{k,\ell}\right)$. We distinguish three situations.\e

(a) \ $\nu(a_k)<\nu\left(A_{k,\ell}\right)$ \ for some $Q\ge Q_2$,

(b) \ $\nu(a_k)>\nu\left(A_{k,\ell}\right)$ \ for some $Q\ge Q_2$.

(c) \ $\nu(a_k)=\nu\left(A_{k,\ell}\right)$ \ for all $Q\ge Q_2$.\e

In case (a), $\nu(b_k)=\nu(a_k)$  for all $R>Q$. Hence, the values $\be_{R,k}$ are ultimately constant and
 $k\in \bct$ with $\be_k=\nu(a_k)$.

In case (b), $\nu(b_k)=\nu\left(A_{k,\ell}\right)$ for all $R>Q$. Hence, the values $\be_{R,k}$ are ultimately constant and 
 $k\in \bct$ with $\be_k=\nu\left(A_{k,\ell}\right)$.

Let us now discuss case (c). The equality in (c), applied to $R$, shows that 
\[
\be_{R,k}=\nu(b_k)=\alkl+\bel+(\ell-k)\gar\quad\mbox{ for all }\,R>Q.                                                                         
\]
Let us show that $k\in \bunb$. Let us fix some $Q\ge Q_2$. By Lemma \ref{allsharp}, $\be_{Q,k}$ is sharp in $\bb_k$. Hence, for any $\be\in\bb_k$ there exists $h\in H$ such that $\be=\be_{Q,k}+h$. Take $Q'>Q$ such that $(\ell-k)(\ga_{Q'}-\gaq)>h$. Then,  for all $R\ge Q'$ we have
\[
\be <\be_{Q,k}+(\ell-k)(\gar-\gaq)=\alkl+\bel+(\ell-k)\gar=\be_{R,k}.
\]
This proves that $k\in \bunb$. In particular, for any $Q\ge Q_2$ we have
\[
\nu(b_k)=\be_{R,k}>\al+\bel+(\ell-k)\gaq\quad\mbox{  for all \,$R$ large enough}. 
\]

Moreover,  $n_k=n_{\ell-1}=\comb{\ell}{k}$ and the inequalities in (\ref{always}) show that
\[
 \nu(b_k-a_k-n_ka_\ell h)>\al+\bel+(\ell-k)\gaq.
\]
This proves (\ref{claim}) in the case $k=\ell-1$.

Now, suppose $k<\ell-1$ and (\ref{claim}) holds for all $j$, $k<j<\ell$.
Let us denote
\[
\rho:=\al+\bel+(\ell-k)\gaq.
\]
Also, for any polynomials $f,g\in\kx$ let us denote
\[
 f\sim g\quad \sii\quad \nu(f-g)>\rho.
\]
The inequalities in (\ref{always}) show that
\begin{align*}
 b_k\sim A_{k,k}+\cdots +A_{k,\ell}&\  \sim\ \left[a_k\right]_0+\cdots+ \comb{j}{k}\left[a_jh^{j-k}\right]_0+\cdots+\comb{\ell}{k}\left[a_\ell h^{\ell-k}\right]_0\\&\ \sim\ a_k+\cdots+ \comb{j}{k}a_jh^{j-k}+\cdots+\comb{\ell}{k}a_\ell h^{\ell-k}.
\end{align*}

Now, for $k<j<\ell$, the inequality in (\ref{claim}) shows that $ a_j\sim -n_ja_\ell h^{\ell-j}$. Hence
\[
\comb{j}{k}a_jh^{j-k}\sim -\comb{j}{k}n_ja_\ell h^{\ell-k}.
\]
We deduce that
\begin{equation}\label{bksim}
b_k\sim  a_k-\cdots- \comb{j}{k}n_ja_\ell h^{\ell-k}-\cdots+\comb{\ell}{k}a_\ell h^{\ell-k}=a_k+n_ka_\ell h^{\ell-k}. 
\end{equation}
Let us reproduce the arguments used in the case $k=\ell-1$. We distinguish three different cases:
\e

(a) \ $\nu(a_k)<\nu\left(n_ka_\ell h^{\ell-k}\right)$ \ for some $Q\ge Q_2$, leading to $k\in \bct$ with $\be_k=\nu(a_k)$.

(b) \ $\nu(a_k)>\nu\left(n_ka_\ell h^{\ell-k}\right)$ \ for some $Q\ge Q_2$, leading to $k\in \bct$ with $\be_k=\nu(n_k)+\bel+(\ell-k)\gaq$.

(c) \ $\nu(a_k)=\nu\left(n_ka_\ell h^{\ell-k}\right)$ \ for all $Q\ge Q_2$.\e

The equality in (c), applied to $R$, shows that 
\[
\be_{R,k}=\nu(b_k)=\nu(n_k)+\bel+(\ell-k)\gar\quad\mbox{ for all }\,R>Q.                                                                         
\]
The arguments above  show that $k\in \bunb$.  In particular, for any $Q\ge Q_2$ we have
\[
\nu(b_k)>\al+\bel+(\ell-k)\gaq\quad\mbox{  for all \,$R$ large enough}. 
\]
Hence, $b_k\sim 0$ and the inequality in (\ref{claim}) follows from (\ref{bksim}).
\end{proof}

\begin{corollary}\label{pinH}
Suppose that $\qq$ is VU, $\si=1$, $v(p)\in H$ and $\bunb=\{0\}$. Then, $d=1$ and $\bct=(1,D]\cap\Z$. 
\end{corollary}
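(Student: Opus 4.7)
The plan is to derive the corollary as an essentially immediate consequence of the three main structural results already established: Theorem \ref{Jempty}, Theorem \ref{B1=B0}, and Theorem \ref{minB1}. No delicate Newton polygon argument or estimate of $\nu$-values is needed at this stage; all the heavy lifting has already been absorbed into those three theorems.

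First, I would invoke Theorem \ref{Jempty}: the hypotheses of that theorem (namely $\qq$ VU, $\si=1$, and $v(p)\in H$) are precisely the standing assumptions of the corollary, so we conclude $J=\emptyset$. Since $B$ is defined in the Introduction as $\{0,\dots,D\}\setminus J$, this immediately gives
\[
B=\{0,1,\dots,D\}.
\]
Next, Theorem \ref{B1=B0} applies (its hypotheses are $\qq$ VU and $\si=1$), yielding the disjoint decomposition $B=\bct\sqcup\bunb$. Using the hypothesis $\bunb=\{0\}$, we subtract to obtain
\[
\bct=B\setminus\bunb=\{1,2,\dots,D\},
\]
which is the set $[1,D]\cap\Z$ claimed in the statement. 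Finally, Theorem \ref{minB1} identifies $d=\min(\bct)$, and since the smallest element of $\{1,\dots,D\}$ is $1$, we conclude $d=1$.

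There is no real obstacle here: the corollary is a clean packaging of what has been proved. The only point worth double-checking is internal consistency, namely that $d=1\in\bct$ (which is guaranteed by Lemma \ref{dD}, stating $d\in\bct$ always), so the two conclusions $d=1$ and $1\in\bct$ are compatible. All other potentially subtle ingredients, in particular the use of $v(p)\in H$ to force $J=\emptyset$ via the recursive binomial argument of Lemma \ref{B0B2}, have already been carried out in the proof of Theorem \ref{Jempty}, so nothing new is required.
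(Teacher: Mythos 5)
Your proof is correct and follows the same line as the paper, explicitly chaining Theorems \ref{Jempty}, \ref{B1=B0} and \ref{minB1}; the paper's own proof cites only Theorems \ref{B1=B0} and \ref{minB1}, leaving the use of Theorem \ref{Jempty} (to identify $B=\{0,\dots,D\}$) implicit, so your version is if anything slightly more careful. One small point: your conclusion $\bct=\{1,\dots,D\}=[1,D]\cap\Z$ is what the argument actually yields, and is the only reading consistent with $d=1=\min(\bct)$ from Theorem \ref{minB1}; the interval notation $(1,D]\cap\Z$ in the statement is evidently a typographical slip, as confirmed by the formulation $[1,D]\cap\N$ in Theorem \ref{vpsmall}.
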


\begin{proof}
The equality  $\bct=(1,D]\cap\Z$ follows immediately from Theorem \ref{B1=B0}. The equality $d=1$ follows then from Theorem
 \ref{minB1}.   
\end{proof}

\subsection{The unbounded case}
This is the VU case in which $H=\g$.

For an arbitrary polynomial $g\in\kx$, let $\mlt(g)$ be the least positive integer $s$ such that $\ps g\ne0$. That is, 
$\mlt(g)$ is the largest  exponent $s$ such that $g$ is a polynomial in $x^{p^s}$. 

\begin{theorem}\cite[Theorem 4.11]{AFFGNR}\label{U}
Let $\qq$ be unbounded. Then, $\kpi(\qq)=\{F\}$ is a one-element set, and $\mlt(F)=ds_\infty$.
\end{theorem}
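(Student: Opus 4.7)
The plan is to handle the two assertions separately, tied together by the observation that in the unbounded case $\supp(\nu)=F\kx$.

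For $\kpi(\qq)=\{F\}$: Unboundedness means $H=\g$, so $\{\gaq\}_{Q\in\qq}$ (and hence $\{\nuq(F)=d\gaq\}_{Q\in\qq}$) is cofinal in $\g$. Combined with $\nuq(F)\le\nu(F)$, this forces $\nu(F)=\infty$, so $F\in\supp(\nu)$. A limit key polynomial is irreducible (any proper factor has smaller degree, hence is $\qq$-stable, so the product would be $\qq$-stable---a contradiction), so $\supp(\nu)=F\kx$. By Lemma \ref{MinAug} any other element of $\kpi(\qq)$ is $F+a$ with $\deg a<\mi$ and $\nu(a)>\dta^L=\g$; the latter forces $\nu(a)=\infty$, so $F\mid a$, and the degree bound yields $a=0$.

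For $\mlt(F)=d\si$: Since $\deg\partial_sF<\mi=\deg F$ for every $s\ge 1$, the condition $\partial_sF\in\supp(\nu)=F\kx$ is equivalent to $\partial_sF=0$, so $\mlt(F)=\min\{s\ge 1:\partial_sF\ne 0\}$; in positive characteristic this equals the largest power $p^k$ with $F\in K[x^{p^k}]$, and it suffices to show $F\in K[x^{d\si}]\setminus K[x^{pd\si}]$ (the char-zero case is trivial since $d=\si=1$). The key inputs are $\mlt(Q)=\si$ for $Q$ large (Corollary \ref{corsstable}) and $d$ a power of $p$ (Lemma \ref{def=p}): writing $Q=f(x^\si)$, Frobenius yields $Q^d=g(x^{d\si})\in K[x^{d\si}]$, so $\partial_s(Q^d)=0$ for $0<s<d\si$ and $\partial_{d\si}(Q^d)=(\partial_1 g)(x^{d\si})\ne 0$ (else $f\in K[y^p]$, contradicting $\mlt(Q)=\si$). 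The lower bound $\mlt(F)\ge d\si$ then follows by expanding $\partial_sF=\sum_{j\in B}\sum_{a+b=s}(\partial_aF_{Q,j})\cdot\partial_b(Q^j)$ via Leibniz, using $\partial_b(Q^j)=0$ for $\si\nmid b$ to restrict the sum, and invoking Theorems \ref{minB1} and \ref{B1=B0} together with cofinality of $\{\gaq\}$ in $\g$ to force $\nu(\partial_sF)$ to be arbitrarily large, hence $=\infty$. The upper bound $\mlt(F)\le d\si$ follows from a $\nu$-comparison showing the dominant contribution to $\partial_{d\si}F$ is the nonzero term $F_{Q,d}\cdot(\partial_1 g)(x^{d\si})$.

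The main obstacle is the uniform Hasse-Schmidt estimate for the lower bound: the standard inequality $\nu(\partial_s h)\ge\nu(h)-s\,\epn(h)$ is useless for $h=F$ because $\epn(F)=\infty$, so one must apply it to the bounded-degree pieces $\partial_aF_{Q,j}$ and $\partial_b(Q^j)$, where $\epn$ is finite, and then check that the resulting bound on each Leibniz term improves without bound as $Q$ grows through $\qq$. This should work because, for $j\in\bct$ the value $\be_j+j\gaq$ grows cofinally in $\g$ with $\gaq$, and for $j\in\bunb$ the stable values $\be_{Q,j}$ themselves are cofinal; the precise exponent $d\si$ will emerge from the Lucas-type vanishing of $\binom{j}{k}$ factors that appears once the Leibniz sum is restricted to $\si\mid b$ and expanded via the Frobenius identity $Q^d=g(x^{d\si})$.
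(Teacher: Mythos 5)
The paper does not actually prove Theorem~\ref{U}: it is cited from~\cite{AFFGNR}, and the only in-text remark is that the one-element-set part ``follows immediately from Lemma~\ref{MinAug}.'' Your proposal therefore supplies an argument the paper omits, so it has to stand on its own merits, and there are two concrete problems.

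For the uniqueness of $F$, your detour through $\nu(F)=\infty$ and $\supp(\nu)=F\kx$ is not justified. Cofinality of $\{\nuq(F)\}$ in $\g$ together with $\nuq(F)\le\nu(F)$ only forces $\nu(F)>\g$, not $\nu(F)=\infty$: the valuations in $\ttt=\ttt(v,\La)$ are $\La$-valued, and $\La$ may strictly contain $\g$, so $\nu$ could perfectly well be value-transcendental with $\nu(F)\in\La\setminus\g$ finite. The direct route intended by the paper avoids this entirely: in the unbounded case the cut $\dta_\qq$ is the improper cut $(\g,\emptyset)$, and any $a\ne0$ with $\deg a<\mi$ is $\qq$-stable, hence $\nu(a)=\nuq(a)\in\g$ for $Q$ large, contradicting the requirement $\nu(a)>\dta^L=\g$ from Lemma~\ref{MinAug}. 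No appeal to $\supp(\nu)$ is needed, and your later use of $\epn(F)=\infty$ (for the Hasse--Schmidt discussion) inherits this same unestablished assumption.

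For $\mlt(F)=d\si$, the argument rests on the claim ``$\mlt(Q)=\si$ for $Q$ large,'' attributed to Corollary~\ref{corsstable}. That corollary stabilizes the set $I(Q)$ where Spivakovsky's level is attained; it says nothing about $\mlt(Q)$, the least $s$ with $\partial_s Q\ne0$. The two satisfy $\mlt(Q)\le\si$ in general (the $s<\mlt(Q)$ give value $-\infty$ so cannot be in $I(Q)$), but equality is a nontrivial assertion. Without $\mlt(Q)=\si$ you cannot write $Q=f(x^\si)$, the Frobenius factorization $Q^d=g(x^{d\si})$ fails, and the entire Leibniz computation of $\partial_s F$ collapses. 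Beyond that, the lower-bound estimate is only sketched: you identify the ``uniform Hasse--Schmidt estimate'' as the main obstacle and say it ``should work,'' but you do not carry it out, nor do you address the fact that $\partial_a F_{Q,j}$ must be controlled uniformly over a cofinal family of $Q$'s while their degrees and values vary. As a proof this remains a plan, not an argument, and its central lemma is unproven.
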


The fact that $\kpi(\qq)$ is a one-element set follows immediately from Lemma \ref{MinAug}. The following result is an immediate consequence of $\mlt(F)=1$.

\begin{corollary}\label{separable}
If $F$ is a separable polynomial, then $d=\si=1$.	
\end{corollary}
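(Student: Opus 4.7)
The strategy is to read off the conclusion directly from Theorem~\ref{U}, which in the unbounded case gives the identity $\mlt(F)=d\, s_\infty$. The key observation is that a separable polynomial $F$ satisfies $\mlt(F)=1$. Indeed, $F$ is irreducible in $\kx$ (as a limit key polynomial), so separability of $F$ is equivalent to $F'\ne 0$: the condition $\gcd(F,F')=1$ together with $\deg F'<\deg F$ forces $F'$ to be nonzero. From the Taylor expansion defining the Hasse--Schmidt derivatives in Section~\ref{secEp}, one has $\partial_1 F=F'$, so the defining property of $\mlt(F)$ (the least positive $s$ with $\partial_s F\ne 0$) immediately gives $\mlt(F)=1$.

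Substituting $\mlt(F)=1$ into the identity $\mlt(F)=d\, s_\infty$ provided by Theorem~\ref{U}, and using that $d$ and $s_\infty$ are positive integers, the only possibility is $d=s_\infty=1$.

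There is essentially no obstacle here: the corollary is a one-line consequence of Theorem~\ref{U}. The only point worth making explicit is the passage from ``$F$ separable'' to ``$\partial_1 F\ne 0$'', which uses irreducibility of $F$ in an essential way (without it, separability would not be equivalent to $F'\ne 0$).
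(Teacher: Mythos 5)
Your argument is exactly the paper's: separability gives $\partial_1 F=F'\ne 0$, hence $\mlt(F)=1$, and Theorem~\ref{U} then forces $d=s_\infty=1$ since both are positive integers. The proposal is correct and matches the paper's (one-line) proof, with your remark on irreducibility being a harmless extra precaution.
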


\subsection{The degree-one case}\label{subsecdeg1}
In this section, we assume that $m=\deg\qq=1$.

\begin{lemma}\label{proposdegrefixed}
If $\deg\qq=1$, then $\{\be_{Q,\ell}\}_{Q\in\mathcal Q}$ is ultimately constant for all $\ell>0$.
\end{lemma}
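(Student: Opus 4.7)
The key observation is that, since $\deg Q=1$, every $Q\in\qq$ has the form $Q=x-a_Q$ with $a_Q\in K$, and the canonical $Q$-expansion of any polynomial is literally its Hasse--Schmidt Taylor expansion at $a_Q$. In particular $F_{Q,\ell}=\partial_\ell F(a_Q)\in K$, so that $\be_{Q,\ell}=v(g_\ell(a_Q))$, where $g_\ell:=\partial_\ell F$. Fix $\ell>0$; the statement is vacuous if $g_\ell=0$, so assume $g_\ell\ne 0$. Since $\deg g_\ell<\deg F$ and $F$ has smallest degree among $\qq$-unstable polynomials, $g_\ell$ is $\qq$-stable, so there is a constant $\beta$ with $\nu_Q(g_\ell)=\beta$ for $Q$ large.

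Next I factor $g_\ell=c\prod_{i=1}^{r}(x-b_i)$ over $\kb$, with $c\in K$ and $b_i\in\kb$, and fix an extension $\vb$ of $v$ to $\kb$. The truncated valuation $\nu_Q=\om_{a_Q,\ga_Q}$ extends to $\kb[x]$ by the same formula, which on linear factors reads $\om_{a_Q,\ga_Q}(x-b)=\min(\vb(a_Q-b),\ga_Q)$. Applying this to the factorisation above yields
\[
\nu_Q(g_\ell)=v(c)+\sum_{i=1}^{r}\min(\vb(a_Q-b_i),\ga_Q),\qquad v(g_\ell(a_Q))=v(c)+\sum_{i=1}^{r}\vb(a_Q-b_i).
\]

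The family $\{a_Q\}_{Q\in\qq}$ is pseudo-Cauchy: for $Q<R$ in $\qq$ we have $v(a_R-a_Q)=\nu(Q-R)=\ga_Q$, as recalled in Section \ref{secLKP}. A short strong-triangle analysis of $a_R-b_i=(a_R-a_Q)-(b_i-a_Q)$ then yields the standard dichotomy for each root: either (a) $\vb(a_Q-b_i)$ is ultimately constant and strictly smaller than $\ga_Q$, or (b) $\vb(a_Q-b_i)=\ga_Q$ for all $Q$ large (so $b_i$ is a pseudo-limit of $\{a_Q\}$). In case (a) the corresponding summand of $\nu_Q(g_\ell)$ stabilises; in case (b) it equals $\ga_Q$, which is strictly increasing along $\qq$. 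Since $\nu_Q(g_\ell)=\beta$ is eventually constant, no root can be of type (b). Hence every $\vb(a_Q-b_i)$ is ultimately constant, and so is $v(g_\ell(a_Q))=\be_{Q,\ell}$.

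The only point demanding genuine care is this pseudo-Cauchy dichotomy, specifically that $\vb(a_Q-b_i)>\ga_Q$ cannot persist: if it ever holds at some $Q$, then for any $R>Q$ the strong triangle inequality forces $\vb(a_R-b_i)=\vb(a_R-a_Q)=\ga_Q<\ga_R$, which immediately drops us into case (a). The remaining steps are routine algebra.
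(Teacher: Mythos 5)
Your proof is correct, and its first step coincides with the paper's: since $\deg Q=1$, the canonical $Q$-expansion of $F$ is its Taylor expansion at $a_Q$, so $\be_{Q,\ell}=\nu\left(\partial_\ell F(a_Q)\right)$. Where you diverge is in how you establish that these values stabilize. The paper simply invokes \cite[Corollary 3.4]{NS2018}: every polynomial of degree smaller than $\mi$ is \emph{fixed} by the pseudo-convergent sequence $\{a_Q\}_{Q\in\qq}$, and this ends the proof. You instead reprove the needed special case of that result from scratch: using only the $\qq$-stability of $g_\ell=\partial_\ell F$ (which has degree $<\deg F$), you identify $\nuq$ with the restriction to $\kx$ of the monomial valuation $\om_{a_Q,\gaq}$ on $\kb[x]$, factor $g_\ell$ over $\kb$, and run the classical Kaplansky dichotomy on the values $\vb(a_Q-b_i)$: a root that is a pseudo-limit of $\{a_Q\}$ would contribute the strictly increasing term $\gaq$ to $\nuq(g_\ell)$, contradicting the eventual constancy of $\nuq(g_\ell)$; hence all $\vb(a_Q-b_i)$, and therefore $v(g_\ell(a_Q))$, are ultimately constant. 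Your handling of the awkward case $\vb(a_Q-b_i)>\gaq$ is the right one, so the dichotomy is airtight. In short, your argument is self-contained where the paper relies on an external citation; it costs some routine strong-triangle bookkeeping, but it makes explicit, in degree one, how truncation-stability of the derivatives forces them to be fixed by the pseudo-convergent sequence, which is exactly the content of the cited result.
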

\begin{proof}
For each $Q\in \mathcal Q$ write $Q=x-a_Q$. Then $\underline{a}=\{a_Q\}_{Q\in \mathcal Q}$ is a pseudo-convergent sequence in $K$. By \cite[Corollary 3.4]{NS2018} every polynomial of degree smaller than $m_\infty$ is fixed by $\underline a$. In particular, for every $\ell\in \{1,\ldots,D\}$ the value of $\partial_\ell F$ is fixed by $\underline a$.
Since
\[
F=F(a_Q)+\partial F(a_Q)Q+\ldots+\partial_DF(a_Q)Q^D
\]
we deduce that $\beta_{Q,\ell}=\nu (\partial_\ell F(a_Q))$. Hence, $\{\beta_{Q,l}\}_{Q\in \mathcal Q}$ is ultimately constant.
\end{proof}\e


\begin{corollary}
If $\deg(\mathcal Q)=1$, then $B=\{0\}\cup\bct$.
\end{corollary}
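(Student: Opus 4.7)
The plan is to deduce this corollary directly from Lemma~\ref{proposdegrefixed}. Since $\bct\subseteq B$ holds by definition, the corollary reduces to checking two things: that $0\in B$, and that every $\ell\in B$ with $\ell\ge 1$ lies in $\bct$. The second inclusion is where Lemma~\ref{proposdegrefixed} does the work; the first is a formal consequence of how the cut $\delta$ is defined.

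First I would verify that $0\in B$. By Proposition~\ref{Sq}, for any $Q\in\qq$ large enough we have $\beta_{Q,0}=\nu_Q(F_{Q,0})=\nu_Q(F)$, and the family $\{\nu_Q(F)\}_{Q\in\qq}$ is cofinal in $\delta^L$ by the very definition of $\delta$. In particular $\nu_Q(F_{Q,0}Q^0)=\nu_Q(F)\in\delta^L$ for all such $Q$, so these values never eventually enter $\delta^R$. Hence $0\notin J$, i.e.\ $0\in B$.

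Next, I would take any $\ell\in B$ with $\ell\ge 1$. Lemma~\ref{proposdegrefixed} asserts that $\{\beta_{Q,\ell}\}_{Q\in\qq}$ is ultimately constant, which is precisely the defining condition for membership in $\bct$. This yields $B\cap(0,D]\subseteq\bct$, and combined with $0\in B$ we conclude $B=\{0\}\cup\bct$.

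There is no significant obstacle here: the corollary is essentially a one-line consequence of Lemma~\ref{proposdegrefixed}, the only subtlety being the bookkeeping of the index $0$. Note that $\beta_{Q,0}=\nu_Q(F)$ is \emph{not} ultimately constant for a $\qq$-unstable $F$, so $0\notin\bct$, yet $0\in B$ as shown above; hence the union $B=\{0\}\cup\bct$ is in fact disjoint.
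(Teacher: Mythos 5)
Your proof is correct and matches the paper's (implicit) argument: the paper states this corollary without a written proof because it is meant to follow immediately from Lemma~\ref{proposdegrefixed} together with the observation, already made in the Introduction after the Proposition, that $\be_{i,0}=\nu_i(F)$ for $i$ large (Proposition~\ref{Sq}), whence the values $\be_{i,0}$ are cofinal in $\delta^L$ and $0\notin J$. You have simply spelled out these two steps; the extra remark that $0\notin\bct$, so the union is disjoint, is a correct and harmless addendum.
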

 
In the degree-one case,  we have $\ep_Q=\gaq$ and $I(Q)=\{1\}$ for all $Q\in\qq$. Hence,  $\si=1$ and the following result is an immediate consequence of Corollary \ref{pinH}.

\begin{theorem}\label{vpsmall}
 Suppose that $\qq$ is VU, has degree one and $v(p)\in H$. Then, $d=1$ and $\bct=[1,D]\cap\N$.
\end{theorem}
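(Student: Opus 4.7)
The plan is to deduce the theorem directly from Corollary \ref{pinH}. The paper already observes that in the degree-one case we have $\ep_Q=\gaq$ and $I(Q)=\{1\}$ for all $Q\in\qq$, so the stable index $\si$ equals $1$, and the remaining hypotheses of Corollary \ref{pinH} are verified if and only if we can show $\bunb=\{0\}$.

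First I would use Lemma \ref{proposdegrefixed}, which asserts that when $\deg\qq=1$, the family $\{\be_{Q,\ell}\}_{Q\in\qq}$ is ultimately constant for every $\ell>0$. In particular, for every $\ell\in B$ with $\ell>0$, the values $\be_{Q,\ell}$ cannot be ultimately unbounded, so $\ell\in\bct$ and $\ell\notin\bunb$. Combining this with the Corollary just before the theorem (stating $B=\{0\}\cup\bct$ in the degree-one case), we obtain $\bunb\sub\{0\}$. On the other hand, Lemma \ref{dD} guarantees $0\in\bunb$. Hence $\bunb=\{0\}$.

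With $\si=1$, $v(p)\in H$, $\bunb=\{0\}$, and the VU hypothesis at hand, Corollary \ref{pinH} applies and yields $d=1$ together with $\bct=(1,D]\cap\Z$. Finally, Lemma \ref{dD} shows that $d\in\bct$; since we just proved $d=1$, this gives $1\in\bct$, so
\[
\bct=\{1\}\cup\left((1,D]\cap\Z\right)=[1,D]\cap\N,
\]
completing the proof.

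The argument is essentially a bookkeeping assembly of previously established results; there is no substantial obstacle, since the heavy lifting has already been carried out in Lemma \ref{B0B2} and Corollary \ref{pinH}. The only point requiring even minor attention is the verification that the degree-one hypothesis forces $\bunb=\{0\}$, which is an immediate consequence of Lemma \ref{proposdegrefixed}.
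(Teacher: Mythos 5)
Your argument is correct and follows essentially the same route as the paper: the paper likewise observes that degree one forces $\si=1$ and then declares the theorem an immediate consequence of Corollary \ref{pinH}, with the remaining hypothesis $\bunb=\{0\}$ (supplied exactly as you do, by Lemma \ref{proposdegrefixed} together with $0\in\bunb$ from Lemma \ref{dD}) left implicit. The only caveat is that the conclusion $\bct=(1,D]\cap\Z$ in Corollary \ref{pinH} must be read as $[1,D]\cap\Z$ (as written it would contradict $d=\min(\bct)=1$ from Theorem \ref{minB1}), so your final step of adjoining $1$ to $\bct$ via $d\in\bct$ is in effect a correction of that statement rather than a consistent supplement to it; with that reading your assembly is exactly the intended proof.
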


In the case $v(p)>H$, we are able to obtain more information on the structure of the set $B$ because the comparison of $Q$-expansions of $F$ is much more simple when  $m=1$. Suppose that $R>Q$ and let the corresponding expansions of $F$ be: 
\[
F=a_0+a_1Q+\cdots+a_DQ^D=b_0+b_1R+\cdots+b_DR^D.
\]
Take $Q$ large enough to stabilize the values $\bel=\be_{Q,\ell}$ for all $1\le\ell\le D$.
Write $Q=R+h$ for some $h\in K$. Then,
\[
 b_k=\sum_{k\le \ell}A_{k,\ell},\qquad A_{k,\ell}=\comb{\ell}{k}a_\ell h^{\ell-k}.
\]
With the notation of (\ref{alkj}), we have  $\nu\left(A_{k,\ell}\right)=\alkl+\bel+(\ell-k)\gaq$.

Let us recall a classical criterion to decide when $p$ divides a binomial number.

\begin{lemma}
 For arbitrary $\ell,k\in\N$, consider their $p$-adic expansions:
 \[
\ell=r_0+r_1p+\cdots+r_np^n,\qquad k=s_0+s_1p+\cdots+s_np^n,
 \]
completing with zero coefficients the shorter one, if necessary. Then,
\[
p\mid \comb{\ell}{k}\ \sii\ \mbox{ there exists $0\le i\le n$ such that }\ r_i<s_i. 
\]
\end{lemma}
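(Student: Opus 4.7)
The plan is to prove the classical \emph{Lucas congruence}:
\[
\binom{\ell}{k}\equiv \prod_{i=0}^n\binom{r_i}{s_i}\pmod{p},
\]
from which the stated equivalence follows immediately. Indeed, since $0\le r_i,s_i<p$, each factor $\binom{r_i}{s_i}$ is an integer in $[0,p-1]$ or zero, hence nonzero mod $p$ exactly when $s_i\le r_i$. Thus the product is $\equiv 0\pmod p$ if and only if $\binom{r_i}{s_i}=0$ for some $i$, which is precisely the condition $r_i<s_i$.

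The key step will be the ``Freshman's dream'' identity in $\F_p[X]$: from $(1+X)^p\equiv 1+X^p\pmod p$, an obvious induction on $i$ yields $(1+X)^{p^i}\equiv 1+X^{p^i}\pmod p$ for all $i\ge 0$. Using the $p$-adic expansion of $\ell$, I would then compute in $\F_p[X]$:
\[
(1+X)^\ell=\prod_{i=0}^n(1+X)^{r_ip^i}\equiv \prod_{i=0}^n\bigl(1+X^{p^i}\bigr)^{r_i}
=\prod_{i=0}^n\sum_{t_i=0}^{r_i}\binom{r_i}{t_i}X^{t_ip^i}.
\]
Expanding the product, the coefficient of $X^m$ on the right, for any $m=t_0+t_1p+\cdots+t_np^n$ with $0\le t_i\le r_i<p$, equals $\prod_{i=0}^n\binom{r_i}{t_i}$; and because the $t_i$ lie in $\{0,\dots,p-1\}$, this is the unique base-$p$ representation of $m$, so distinct tuples $(t_i)$ contribute to distinct monomials. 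Setting $m=k$ and comparing with the coefficient of $X^k$ in $(1+X)^\ell$, which is $\binom{\ell}{k}$, gives the congruence (with the convention $\binom{r_i}{s_i}=0$ whenever $s_i>r_i$, which corresponds to the coefficient of $X^k$ being $0$ in the product).

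There is really no main obstacle here: the only ingredient beyond elementary manipulations is the Frobenius identity $(1+X)^p\equiv 1+X^p\pmod p$, which is standard. The proof is essentially Lucas' theorem, and the statement of the lemma is exactly its usual corollary.
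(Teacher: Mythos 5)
Your argument is correct, and it is worth noting that the paper itself offers no proof of this lemma: it is quoted as a ``classical criterion'' (the standard corollary of Lucas' theorem), so your write-up supplies exactly the argument the authors implicitly appeal to. The route via the Frobenius identity $(1+X)^p\equiv 1+X^p \pmod p$ in $\F_p[X]$, the uniqueness of base-$p$ representations to separate the monomials $X^{t_0+t_1p+\cdots+t_np^n}$, and comparison of the coefficient of $X^k$ is the standard derivation of the congruence $\comb{\ell}{k}\equiv\prod_i\comb{r_i}{s_i}\pmod p$, and the stated equivalence follows as you say. One small inaccuracy in your justification of the final step: it is not true that $\comb{r_i}{s_i}$ lies in $[0,p-1]$ (for $p=5$ one has $\comb{4}{2}=6$). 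The correct reason that $\comb{r_i}{s_i}$ is nonzero mod $p$ whenever $s_i\le r_i$ is that $\comb{r_i}{s_i}\,s_i!\,(r_i-s_i)!=r_i!$ and $p\nmid r_i!$ since $r_i<p$; combined with $\comb{r_i}{s_i}=0$ when $s_i>r_i$, this gives that the product vanishes mod $p$ exactly when $r_i<s_i$ for some $i$, which is what the lemma asserts.
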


\begin{lemma}\label{previous}
If $v(p)>H$, then $p\mid \comb{\ell}{k}$, for all $k\in J$ and  all $\ell\in B$, $\ell>k$. 
\end{lemma}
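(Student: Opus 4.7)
My plan is to argue by contradiction. Suppose that there exist $k\in J$ and $\ell\in B$ with $\ell>k$ and $p\nmid\binom{\ell}{k}$. Since $v(p)>H$ dominates the $v$-value of every positive-integer multiple of $p$, the coprimeness assumption forces $\alpha_{k,\ell}=\nu\!\left(\binom{\ell}{k}\right)=0$. I will produce $R>Q$ in $\qq$ with $\nu(b_k)+k\gar\le a+H$ (equivalently $k\in B$), contradicting the hypothesis $\nu(b_k)+k\gar=\beta_k+k\gar>a+H$ coming from $k\in J$.

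I start by choosing $Q\in\qq$ large enough so that every $\beta_j$ is at its stable value and $\beta_k+k\gaq\in\dta^R$. Using the cofinality of the witnessing family for $\ell\in B$, I pick $R>Q$ in $\qq$ with $\beta_\ell+\ell\gar\in\dta^L$. Setting $h=Q-R$ (so $\nu(h)=\gaq$) and applying the Taylor-type expansion
\[
b_k=F_{R,k}=\sum_{j\ge k}\binom{j}{k}a_jh^{j-k}=:\sum_{j\ge k}A_{k,j},
\]
the singled-out summand satisfies
\[
\nu(A_{k,\ell})+k\gar=\beta_\ell+\ell\gaq+k(\gar-\gaq)\le\beta_\ell+\ell\gar\le a+H,
\]
so $A_{k,\ell}$ is ``small''. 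I then sort the remaining summands: (a) $j=k$ and $j\in J$ with $j>k$ give ``big'' summands with $\nu(A_{k,j})+k\gar>a+H$, via the membership of $j$ in $J$; (b) indices $j\in B$ with $p\mid\binom{j}{k}$ also give big summands, via the bound $\alpha_{k,j}\ge v(p)>H$ combined with the sharpness of $\beta_j+j\gaq$ in $\dta^L$ supplied by Lemma~\ref{allsharp}; (c) the remaining indices $j\in B$ with $p\nmid\binom{j}{k}$ produce further small summands.

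Writing $b_k=S_{\mathrm{big}}+S_{\mathrm{small}}$, ultrametricity gives $\nu(S_{\mathrm{big}})+k\gar>a+H$, so the desired contradiction reduces to the claim $\nu(S_{\mathrm{small}})+k\gar\le a+H$, which would force $\nu(b_k)=\nu(S_{\mathrm{small}})$ and hence $\beta_k+k\gar\le a+H$. The main obstacle is ruling out cancellation among the small summands: if several indices $j\in B$ with $p\nmid\binom{j}{k}$ contribute terms with coincident class in $\g/H$, their sum could in principle have $\nu$-value pushed above $a+H-k\gar$. I plan to handle this by varying $R$ through the cofinal subfamily witnessing $\ell\in B$ and exploiting that $\inq h\in\ggq$ is residually transcendental over $\ggq^0$ (Theorem~\ref{g0gm}): a persistent cancellation at the graded level would translate to a polynomial identity in $\inq h$ with fixed coefficients, whose $(\ell-k)$-th coefficient is $\binom{\ell}{k}\inq a_\ell$ — a nonzero homogeneous unit of $\ggq$ — and this is impossible. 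Once the non-cancellation is in place, the inequality $\nu(S_{\mathrm{small}})+k\gar\le a+H$ follows from the minimum of the small summand values and yields the contradiction, completing the proof.
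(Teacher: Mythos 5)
Your overall strategy agrees with the paper's: decompose $b_k=F_{R,k}=\sum_{j\ge k}A_{k,j}$ with $A_{k,j}=\binom{j}{k}a_jh^{j-k}$, show that the ``big'' summands (those $j\in J$, or $j\in B$ with $p\mid\binom{j}{k}$) have large $\nu$-value thanks to $v(p)>H$ and sharpness, and conclude that $\nu(b_k)+k\gar$ lands in $\dta^L$, contradicting $k\in J$. However, there is a genuine gap at the cancellation step, and the tool you invoke to close it is wrong. You propose to rule out persistent cancellation among the small summands by appealing to transcendence of $\inq h$ over $\ggq^0$ via Theorem \ref{g0gm}. But we are in the degree-one case: $h=Q-R$ has degree strictly less than $m=1$, so $h\in K$ and $\inq h$ is a homogeneous unit \emph{inside} $\ggq^0$, not transcendental over it. Theorem \ref{g0gm} asserts transcendence of $\inq Q$, not of $\inq h$. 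So the polynomial-identity argument you sketch does not apply, and the cancellation worry is left unaddressed.

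There are two clean ways to fill the gap. The paper's route is to take $\ell$ \emph{minimal} among indices in $B$ with $\ell>k$ and $p\nmid\binom{\ell}{k}$. Then every other summand $A_{k,j}$ with $k\le j\le D$ has strictly larger $\nu$-value: for $j\in J$ via $\be_j+j\gaq>\dta^L$; for $j\in B$, $j>\ell$ via Corollary \ref{sg=1}; and for $j\in B$, $k<j<\ell$ via the minimality of $\ell$ (which forces $p\mid\binom{j}{k}$) combined with $v(p)>H$ and Lemmas \ref{wholecut}, \ref{allsharp}. This makes $A_{k,\ell}$ the unique minimum, so $\nu(b_k)=\nu(A_{k,\ell})$ and there is nothing to cancel. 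Alternatively, your arbitrary-$\ell$ framework can be salvaged without any transcendence by noticing that Corollary \ref{sg=1} already gives that the $\nu$-values $\nu(A_{k,j})=\be_j+(j-k)\gaq$ for $j\in\bct$ with $p\nmid\binom{j}{k}$ are pairwise distinct (indeed strictly increasing in $j$), so the minimum among the small summands is achieved at a single index and no cancellation can occur. Either fix works; the paper's minimality trick is the cleaner of the two because it delivers a unique dominating term from the outset.
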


\begin{proof}
 Consider $Q$ large enough to stabilize the values $\bel=\be_{Q,\ell}$ for all $\ell>0$. Take $k\in J$ and suppose there exists some $\ell\in B$, $\ell>k$, such that $p\nmid \comb{\ell}{k}$. Take $\ell$ minimal satisfying these properties. Since $\alkl=0$, we have $\nu\left(A_{k,\ell}\right)=\bel+(\ell-k)\gaq$ and we claim that
\begin{equation}\label{claim7}
\nu(b_k)=\nu\left(A_{k,\ell}\right)\quad\mbox{ for all }\,R>Q.                                                                                                                                                                \end{equation}
Since $\bel+\ell\gaq\in\dta^L$ (because $\ell\in B$), and $\gar-\gaq\in H$, this implies 
\[
\nu\left(b_kR^k\right)=\bel+(\ell-k)\gaq +k\gar=\bel+\ell\gaq+k(\gar-\gaq)\in\dta^L.                                                                                                                                                                                                                                                                                                                                                                                                                                                                     \]
Since this holds for all $R>Q$, it contradicts the assumption $k\in J$. 

In order to prove (\ref{claim7}), it suffices to show 
that 
\begin{equation}\label{middle}
\be_j+j\gaq>\bel+\ell\gaq, 
\end{equation}
for all $k\le j\le D$, $j\ne\ell$.
Indeed, this inequality  will imply
\[
\nu\left(A_{k,j}\right)\ge\be_j+(j-k)\gaq>\bel+(\ell-k)\gaq=\nu\left(A_{k,\ell}\right).
\]

If $j\in J$, then $\be_j+j\gaq>\dta^L$. Hence, (\ref{middle}) follows from the fact that  $\bel+\ell\gaq\in\dta^L$.

If $j\in B$ and $j>\ell$, then (\ref{kaplansky}) and Corollary \ref{sg=1} imply (\ref{middle}). 

Finally, suppose $j\in B$, $j<\ell$. Then, $p\mid \comb{j}{k}$ and $\al_{k,j}>H$. By Lemmas \ref{wholecut} and \ref{allsharp},
$\be_j+j\gaq+H$ is a final segment of $\dta^L$.
 Hence, $\al_{k,j}+\be_j+j\gaq>\dta^L$, so that
\[
\nu\left(A_{k,j}\right)=\al_{k,j}+\be_j+(j-k)\gaq>\bel+(\ell-k)\gaq=\nu\left(A_{k,\ell}\right)
\]

This ends the proof of (\ref{claim7}). 
\end{proof}\e

\begin{lemma}\label{mainshort}
Suppose that $\qq$  is VU, has $\deg(\qq)=1$ and $v(p)>H$. 
 If $p^re\in B$, with $p\nmid e$, then $p^r\in B$.
\end{lemma}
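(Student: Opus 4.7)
The plan is to argue by contradiction, combining Lemma \ref{previous} with a standard digit-based computation of a binomial coefficient modulo $p$.

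First I would dispose of the trivial case $e=1$, where $p^re=p^r$ and there is nothing to prove; so I may assume $e\ge 2$. Then $p^r<p^re\le D$, so $p^r$ lies in $\{0,1,\dots,D\}=B\sqcup J$, and either $p^r\in B$ (which is the desired conclusion) or $p^r\in J$.

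Suppose for a contradiction that $p^r\in J$. Applying Lemma \ref{previous} with $k=p^r$ and $\ell=p^re>k$, both under the standing hypotheses $v(p)>H$ and $\deg(\qq)=1$, I would obtain $p\mid\comb{p^re}{p^r}$. The remaining task is then to verify the opposite divisibility. Writing the base-$p$ expansion $e=e_0+e_1p+\cdots+e_sp^s$ with $e_0\ne 0$ (since $p\nmid e$), the base-$p$ digits of $p^re$ vanish in positions $0,\dots,r-1$ and equal $e_{i-r}$ in positions $r,\dots,r+s$, while the only nonzero digit of $p^r$ is a $1$ in position $r$. Lucas' theorem then gives
\[
\comb{p^re}{p^r}\equiv\comb{e_0}{1}\prod_{i\ne r}\comb{(p^re)_i}{0}=e_0\not\equiv 0\pmod p,
\]
contradicting the conclusion of Lemma \ref{previous}. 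Hence $p^r\in B$.

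There is no real obstacle here: once Lemma \ref{previous} is established, the whole statement reduces to the classical combinatorial fact that $p\nmid\comb{p^re}{p^r}$ whenever $p\nmid e$, which follows at once from Lucas' theorem (or equivalently from Kummer's theorem, as no carry occurs when one adds $p^r$ and $p^r(e-1)$ in base $p$).
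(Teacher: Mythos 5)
Your proposal is correct and follows essentially the same route as the paper: the trivial case $e=1$, then for $e>1$ using the contrapositive of Lemma~\ref{previous} together with the fact that $p\nmid\binom{p^re}{p^r}$ (which the paper justifies via the digit criterion stated just before Lemma~\ref{previous}, equivalent to your Lucas/Kummer argument). The only difference is that you spell out the binomial computation in more detail than the paper, which simply records the divisibility fact as immediate.
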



\begin{proof}
If $e=1$, we have $p^r\in B$ already. If $e>1$, then $p^r<\ell$ and $p\nmid\comb{\ell}{p^r}$.
Hence, $p^r$ belongs to $B$  by Lemma \ref{previous}. 
\end{proof}\e

We obtain the following relevant consequence.

\begin{theorem}\label{mainm=1}
Suppose that $\qq$ is VU, has degree one and $v(p)>H$. Then,  $\ord_p(\ell)\ge\ord_p(d)$
for all $\ell\in B$. 
\end{theorem}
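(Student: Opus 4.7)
The plan is to combine Lemma \ref{mainshort} with the degree-one special features and with the characterization of $d$ as the minimum of $\bct$. I may assume $p\ge 2$, since for $p=1$ the statement is vacuous. The case $\ell=0$ is trivial under the convention $\ord_p(0)=\infty$, so I restrict attention to $\ell\in B$ with $\ell>0$, and write $\ell=p^re$ with $\gcd(e,p)=1$, so that $r=\ord_p(\ell)$. Applying Lemma \ref{mainshort} I obtain immediately that $p^r\in B$.

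The next step is to locate $p^r$ more precisely inside the decomposition $B=\bct\sqcup\bunb$ provided by Theorem \ref{B1=B0}, which is applicable because $\deg\qq=1$ forces $\si=1$ and we are in the VU case. In the degree-one setting, Lemma \ref{proposdegrefixed} guarantees that $\{\be_{Q,k}\}_{Q\in\qq}$ is ultimately constant for every $k>0$; in particular no positive index can belong to $\bunb$, so $\bunb\cap\Z_{>0}=\emptyset$. Since $p^r\ge 1>0$, this forces $p^r\in\bct$.

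A direct size comparison then finishes the argument. By Theorem \ref{minB1}, $d=\min(\bct)$, hence $p^r\ge d$ in $\g$. By Lemma \ref{def=p}, $d$ itself is a power of the characteristic exponent, say $d=p^s$ with $s=\ord_p(d)$. Since $p\ge 2$, the inequality $p^r\ge p^s$ yields $r\ge s$, i.e.\ $\ord_p(\ell)\ge\ord_p(d)$, as claimed.

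The substantive content of the theorem has already been packed into Lemma \ref{mainshort}, which is where the hypothesis $v(p)>H$ enters, through Lemma \ref{previous} and the finely controlled comparison of $Q$-expansions available in the degree-one case. The present statement is essentially a reassembly step, so the only thing to check carefully is that the $p$-power $p^r$ produced by Lemma \ref{mainshort} is genuinely an element of $\bct$ rather than a ``useless'' index sitting in $\bunb$; this is what the degree-one hypothesis buys us via Lemma \ref{proposdegrefixed}.
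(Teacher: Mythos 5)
Your proposal is correct and follows essentially the same route as the paper: Lemma \ref{mainshort} places $p^r$ in $B$, and Theorem \ref{minB1} (via $d=\min(\bct)$, with Lemma \ref{def=p} giving $d$ as a $p$-power) forces $p^r\ge d$, hence $r\ge\ord_p(d)$. Your extra care with the cases $\ell=0$ and $p=1$, and the appeal to Lemma \ref{proposdegrefixed} and Theorem \ref{B1=B0} to see that $p^r\in\bct$ rather than $\bunb$, merely makes explicit what the paper leaves implicit (its degree-one corollary $B=\{0\}\cup\bct$), so there is no substantive difference.
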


\begin{proof}
Take any $\ell\in B$ and let $r=\ord_p(\ell)$. By Lemma \ref{mainshort}, $p^r$ belongs to $B$.

The condition $r<\ord_p(d)$ implies $p^r<d$ and this contradicts 
Theorem
 \ref{minB1}.     
\end{proof}\e

\end{document}